\documentclass[11pt,a4paper]{amsart}
\usepackage[T1]{fontenc}
\usepackage[english]{babel}
\usepackage[backend=biber, style=alphabetic,sorting=nyt]{biblatex}
\usepackage{csquotes}
\usepackage{bbm}
\usepackage{eucal}

\usepackage[margin=1in]{geometry}
\usepackage{amsmath,amssymb,amsthm,mathtools}
\usepackage{mathrsfs}
\usepackage{stix2,microtype,xcolor}
\usepackage{enumitem}
\usepackage[colorlinks=true,linkcolor=blue,citecolor=magenta,urlcolor=blue]{hyperref}
\usepackage{aliascnt}
\usepackage[nameinlink,capitalize]{cleveref}
\usepackage{float}
\usepackage{tikz}
\usetikzlibrary{arrows.meta}
\numberwithin{equation}{section}

\newtheorem{theorem}{Theorem}[section]
\newaliascnt{lemma}{theorem}
\newtheorem{lemma}[lemma]{Lemma}
\aliascntresetthe{lemma}
\newaliascnt{proposition}{theorem}
\newtheorem{proposition}[proposition]{Proposition}
\aliascntresetthe{proposition}
\newaliascnt{corollary}{theorem}
\newtheorem{corollary}[corollary]{Corollary}
\aliascntresetthe{corollary}
\newaliascnt{assumption}{theorem}
\newtheorem{assumption}[assumption]{Assumption}
\aliascntresetthe{assumption}

\theoremstyle{definition}
\newaliascnt{definition}{theorem}
\newtheorem{definition}[definition]{Definition}
\aliascntresetthe{definition}
\newaliascnt{remark}{theorem}
\newtheorem{remark}[remark]{Remark}
\aliascntresetthe{remark}

\newcommand{\R}{\mathbb{R}}

\allowdisplaybreaks[1]
\makeatletter
\renewcommand\paragraph{\@startsection{paragraph}{4}{\z@}{.8\baselineskip}{-.5em}{\normalfont\bfseries}}
\makeatother
\title[Jump-modified MMD gradient flow]{A Jump-Modified Wasserstein Gradient Flow for Maximum Mean Discrepancy with Energy Kernels}
\author{Ansh Desai}
\address{Department of Mathematics, University of Chicago, Chicago IL 60637}
\email{ansh@uchicago.edu}
\author{Jason Liu}
\address{Department of Mathematics, Rutgers University, New Brunswick NJ 08901}
\email{jl3089@scarletmail.rutgers.edu}
\author{Srujan Roplekar}
\address{Department of Mathematics, University of Illinois Urbana-Champaign, Urbana IL 61801}
\email{srujanr2@illinois.edu}
\author{Matthew Rosenzweig}
\address{Department of Mathematical Sciences, Carnegie Mellon University, Pittsburgh PA 15213}
\email{mrosenz2@andrew.cmu.edu}
\urladdr{\url{https://matthewrosenzweigwork-max.github.io/}}
\thanks{A.D., J.L., and S.R. were supported by NSF Grant DMS-2441170. M.R. was supported by NSF grants DMS-2441170, DMS-2345533, and DMS-2342349, and by OpenAI through their academic researcher initiative.}
\date{}
\begin{document}
\begin{abstract}
We study a jump-modified Wasserstein gradient flow of the squared maximum mean discrepancy generated by the energy kernels $K(z)=-|z|^q$, $0<q<2$, in every dimension $d\ge2$ as well as the one-dimensional range $1\le q<2$. Under subcritical density and finite-moment assumptions, we prove global well-posedness. The added jumps transfer mass toward smaller values of the first variation and dissipate the same MMD energy as the transport term. Under our hypotheses on the target and jump rates, every Borel probability measure with finite MMD energy and zero jump dissipation is equal to the target. Combining this rigidity with compactness and a second-moment identity yields convergence to the target from finite-second-moment data, without additional uniform-in-time density or moment assumptions. We also construct a global collision-free particle process whose energy is nonincreasing along each path. A quantitative mean-field estimate applies to general initial empirical approximations, including deterministic configurations; propagation of chaos for exchangeable data and long-time target approximation follow as consequences.
\end{abstract}
\maketitle

\section{Introduction}
\label{sec:introduction}
Maximum mean discrepancy (MMD) is a kernel-based discrepancy between probability distributions, with applications to two-sample testing, distribution approximation, and generative learning \cite{gretton2012kernel,arbel2019mmd,hertrich2024riesz}. Its Wasserstein gradient flow transports a source distribution toward a prescribed target. In applications, the target is often accessible only through samples, and kernel averages can then be evaluated against its empirical measure. The analytical question is whether the resulting evolution actually approaches the target, rather than merely decreasing the discrepancy.

We consider the Euclidean energy kernels
\begin{equation}\label{eq:intro-kernel}
 K(z)=-|z|^q,\qquad 0<q<2,
\end{equation}
and add nonlocal jumps to the corresponding Wasserstein gradient flow. The transport part moves the density along the negative gradient of the first variation. The jump part instead transfers mass from larger to smaller values of that first variation, including between points that are far apart. Both mechanisms dissipate the same MMD energy. Their zero-dissipation conditions are different: vanishing transport dissipation only tests the gradient where the source carries mass, whereas vanishing jump dissipation compares those points with every possible destination.

This distinction is the main idea of the paper. We prove that any Borel probability measure with finite MMD energy and zero jump dissipation must be the target. We then obtain convergence of the continuum evolution from finite-second-moment data, without additional uniform-in-time density or moment assumptions. We also construct a collision-free particle process and prove a quantitative mean-field estimate for general initial empirical approximations, including deterministic configurations. The estimate yields propagation of chaos for exchangeable initial configurations satisfying the stated approximation condition.

\subsection{Energy MMD and the ordinary gradient flow}
\label{subsec:background-formulation}
For $a>0$, let $\mathcal P_a(\mathbb R^d)$ denote the probability measures with finite moment $\mathcal M_a(\nu)=\int |x|^a\,d\nu(x)$. Throughout, $\mu$ denotes the target probability measure. We use the same symbol for an absolutely continuous measure and its Lebesgue density when there is no confusion.

For a positive-semidefinite kernel $k$ with reproducing kernel Hilbert space $\mathcal H_k$, the associated MMD is the integral probability metric generated by the unit ball of $\mathcal H_k$:
\begin{equation}\label{eq:intro-RKHS-MMD}
 \sup_{\|u\|_{\mathcal H_k}\le1}\int u\,d(\rho-\mu)
 =\left(\iint k(x,y)\,d(\rho-\mu)(x)\,d(\rho-\mu)(y)\right)^{1/2},
\end{equation}
whenever the mean embeddings exist; see \cite{muller1997integral,sejdinovic2013equivalence}. The kernel $K(x-y)=-|x-y|^q$ is conditionally positive semidefinite, rather than positive semidefinite. Centering at the origin gives the positive-semidefinite kernel
\begin{equation}\label{eq:centered-kernel}
 k_q(x,y)=\frac12\bigl(|x|^q+|y|^q-|x-y|^q\bigr).
\end{equation}
We denote its MMD by $\operatorname{MMD}_q$, which is finite on $\mathcal P_{q/2}(\mathbb R^d)$. Under the stronger assumption $\rho,\mu\in\mathcal P_q(\mathbb R^d)$, the one-variable terms cancel against the zero-mass measure $\rho-\mu$, and
\begin{equation}\label{eq:energy}
 \mathcal E_q(\rho-\mu)
 :=\operatorname{MMD}_q(\rho,\mu)^2
 =\frac12\iint K(x-y)\,d(\rho-\mu)(x)\,d(\rho-\mu)(y).
\end{equation}
For $q=1$, this is the classical energy distance, up to normalization. The restriction $q<2$ is essential for distinguishing probability measures: at $q=2$, the centered kernel is $x\cdot y$ and the discrepancy detects only the difference of means \cite{sejdinovic2013equivalence}.

Two features are useful for approximation. First, $\operatorname{MMD}_q(\delta_x,\delta_y)^2=|x-y|^q$, so the discrepancy does not saturate as two point masses separate. Second, independent sampling of a measure in $\mathcal P_q$ gives expected squared MMD of order $N^{-1}$, independently of the dimension \cite[Proposition~A.1]{rosenzweig2026wassersteingradientflowsmaximum}; this differs from the generally dimension-dependent empirical Wasserstein rates \cite{fournier2015rate}. Slicing also permits efficient evaluation of the energy-distance discrepancy and its particle gradients \cite{kolouri2022sliced,hertrich2024sliced}. These properties motivate studying the underlying dynamics.

The first variation of \eqref{eq:energy}, defined up to an additive constant, is
\begin{equation}\label{eq:witness}
 f_\rho(x)=K*(\rho-\mu)(x).
\end{equation}
The ordinary Wasserstein gradient flow is the continuity equation
\begin{equation}\label{eq:ordinary}
 \partial_t\rho_t=\nabla\!\cdot(\rho_t\nabla f_{\rho_t}),
 \qquad v_{\rho_t}=-\nabla f_{\rho_t},\qquad \rho_t\big|_{t=0}=\rho_0.
\end{equation}
Its energy dissipation is $\int|\nabla f_\rho|^2\,d\rho$. A measure for which $\nabla f_\rho=0$ $\rho$-a.e. is called a \emph{Lagrangian critical point}, whenever this field is defined in the required integrability class. This is a condition on the energy, distinct from stationarity of the modified equation introduced below.

For sufficiently smooth kernels, the MMD flow was studied by Arbel, Korba, Salim, and Gretton \cite{arbel2019mmd}. The energy kernels present two different difficulties. The force $-\nabla K(z)=q|z|^{q-2}z$ is not Lipschitz at the diagonal for any $0<q<2$. In dimensions $d\ge2$, the energy is also not displacement semiconvex. Flat convexity does not imply convexity along Wasserstein geodesics, so these dynamics fall outside the standard displacement-semiconvex gradient-flow theory \cite{hertrich2024riesz,rosenzweig2026wassersteingradientflowsmaximum}.

The work of Slep\v{c}ev, Wang, and the fourth author \cite{rosenzweig2026wassersteingradientflowsmaximum}, which should be viewed as a companion to the present paper, establishes well-posedness and proves that absolutely continuous Lagrangian critical points in its rigidity range equal the target. It also proves asymptotic criticality and conditional target convergence. A priori, a long-time limit need not be absolutely continuous: the finite-time density bounds need not persist uniformly in time. For energy distance, the companion constructs non-target singular critical spherical shells and static smooth approximations with vanishing transport dissipation and positive limiting MMD \cite[Theorem~1.5, Proposition~5.21, and Corollary~5.22]{rosenzweig2026wassersteingradientflowsmaximum}. They explain why identifying a limit from its transport criticality alone requires care.

\subsection{The jump mechanism}
\label{subsec:jump-mechanism}
The first variation suggests allowing transfers between arbitrary locations, not only infinitesimal spatial displacements. In the signed-measure direction $\delta_y-\delta_x$, the quadratic energy satisfies
\begin{equation}
 \left.\frac{d}{d\varepsilon}
 \mathcal E_q\bigl(\rho+\varepsilon(\delta_y-\delta_x)-\mu\bigr)
 \right|_{\varepsilon=0}=f_\rho(y)-f_\rho(x).
\end{equation}
We use this first-variation calculation to define a rate for mass already present at the departure point. Write $[a]_+=\max\{a,0\}$ and set
\begin{equation}\label{eq:rate}
 \zeta_\rho(x,y):=[f_\rho(x)-f_\rho(y)]_+.
\end{equation}
Let $\kappa(z)\,dz$ be the distribution of a proposed displacement, and let $\lambda(x)$ be a nonnegative spatial rate. Given departure point $x$, transitions to $y$ have rate $\lambda(x)\zeta_\rho(x,y)\kappa(y-x)\,dy$. The corresponding mass flux is weighted by $d\rho(x)$.

A transition from $x$ to $y$ changes a test observable $\varphi$ by $\varphi(y)-\varphi(x)$. Integrating this increment against the rate gives the jump operator $J$ defined by the weak form:
\begin{equation}\label{eq:Jweak}
 \int\varphi\,dJ(\rho)
 =\iint (\varphi(y)-\varphi(x))\lambda(x)\zeta_\rho(x,y)
                  \kappa(y-x)\,dy\,d\rho(x).
\end{equation}
The moment and rate assumptions below make the outgoing intensity integrable. Taking $\varphi=1$ therefore gives $J(\rho)(\mathbb R^d)=0$. Thus, the jump mechanism conserves total mass.

We study the combined evolution
\begin{equation}\label{eq:pde}
 \partial_t\rho_t=\nabla\!\cdot(\rho_t\nabla f_{\rho_t})+J(\rho_t),
 \qquad \rho_t\big|_{t=0}=\rho_0.
\end{equation}
The target remains stationary because $f_\mu=0$. The energy-dissipation relation is
\begin{equation}\label{eq:intro-dissipation-relation}
 \frac{d}{dt}\mathcal E_q(\rho_t-\mu)=-\mathcal D(\rho_t),
\end{equation}
where
\begin{align}
 \mathcal D_{\mathrm{tr}}(\rho)&:=\int|\nabla f_\rho(x)|^2\,d\rho(x),\label{eq:transport-dissipation}\\
 \mathcal D_{\mathrm j}(\rho)&:=\iint\lambda(x)[f_\rho(x)-f_\rho(x+z)]_+^2
                        \kappa(z)\,dz\,d\rho(x),\label{eq:weighted-jump-dissipation}\\
 \mathcal D(\rho)&:=\mathcal D_{\mathrm{tr}}(\rho)+\mathcal D_{\mathrm j}(\rho).
 \label{eq:total-dissipation}
\end{align}
The second term is the additional dissipation due to jumps. Proposition~\ref{prop:energy-dissipation} justifies \eqref{eq:intro-dissipation-relation} in integrated form for the constructed solutions. Identifying the zero set of $\mathcal D_{\mathrm j}$ requires a separate argument, stated in Theorem~\ref{prop:no-spur-stat-soln-intro} below.

The following assumptions are used throughout the continuum and particle constructions.
\begin{assumption}\label{ass:kappa}
Let $r\ge\max\{1,q\}$. The proposal $\kappa\in\mathcal P_{r+1}(\mathbb R^d)$ is a probability density with full support. The spatial rate belongs to $W^{1,\infty}(\mathbb R^d)$, and its Lipschitz representative satisfies $\lambda(x)>0$ for every $x\in\mathbb R^d$.
\end{assumption}
A concrete choice is a Gaussian proposal and a capped spatial rate. For this example, assume $\rho_0\ne\mu$ and set
\begin{equation}\label{eq:kappa-lambda-ex}
 \kappa(z)=(2\pi\beta^2)^{-d/2}e^{-|z|^2/(2\beta^2)},\qquad
 \lambda(x)=\min\{W_1(\delta_x,\mu)^2,W_1(\rho_0,\mu)^2\},
 \qquad\beta>0.
\end{equation}
Here, $W_1$ is the standard 1-Wasserstein distance on $\mathcal P_1(\mathbb R^d)$. If $D_0=W_1(\rho_0,\mu)>0$, then this rate is bounded by $D_0^2$ and has Lipschitz constant at most $2D_0$. It is positive at every point because the target density is not a point mass. When $\rho_0=\mu$, the cap is zero, so \eqref{eq:kappa-lambda-ex} does not satisfy Assumption~\ref{ass:kappa}. Once chosen from the reference density $\rho_0$, $\lambda$ is held fixed, including in the particle approximation and when comparing the evolution with the stationary target.

The departure-mass weighting in \eqref{eq:Jweak} is the upwind structure of the nonlocal interaction equation of Esposito, Patacchini, Schlichting, and Slep\v{c}ev \cite{esposito2021nonlocal}. For constant $\lambda$ and even $\kappa$, the edge weight is symmetric, although the mass flux remains directed by the first variation. Our equation also contains the local Wasserstein transport term.

\subsection{Main results}
\label{subsec:main-results}
We first construct the continuum evolution. The new difficulty, relative to the transport equation, is that jumps can increase the density and its moments even while decreasing the MMD. The proof controls these quantities together through kernel regularization, a positive linear jump equation, and a fixed-point argument. A coupling estimate then proves uniqueness.

For $d+q-2\ge0$, define
\begin{equation}\label{eq:continuum-critical-exponent}
 p_c=\begin{cases}d/(d+q-2),&d+q-2>0,\\ \infty,&d+q-2=0.\end{cases}
\end{equation}
The strict inequality $p>p_c$ is the subcritical convolution condition for the Hessian of $K$. Equality in the dimensional condition means $d=q=1$ and is treated using the explicit one-dimensional force. We write $p_*$ for the H\"older conjugate of $p$; subscripts on constants indicate their dependence.

The maximum estimate below controls the $L^\infty$ norm of the source density. It uses the increment bound $|f_\nu(x)-f_\nu(y)|\le A_\nu |x-y|^{\min\{1,q\}}$, with
\begin{equation}\label{eq:acceptance-coefficient}
 A_\nu=\begin{cases}
 2,&0<q\le1,\\
 C_q\bigl(\mathcal M_{q-1}(\nu)+\mathcal M_{q-1}(\mu)\bigr),&1<q<2,
 \end{cases}
\end{equation}
where $C_q$ is chosen as in Lemma~\ref{lem:jump-increment}.

\begin{theorem}[Global well-posedness]
\label{thm:continuum-wellposedness}
Let \(r\ge\max\{1,q\}\), let \(0<q<2\) satisfy \(d+q-2\ge0\), and let
\begin{equation}
\label{eq:continuum-admissible-exponent}
  p_c\le p\le\infty,
  \qquad
  p>p_c\quad\text{if }d+q-2>0.
\end{equation}
Let
\(\rho_0,\mu\in\mathcal P_r(\mathbb R^d)\cap L^p(\mathbb R^d)\)
be probability densities, and assume Assumption~\ref{ass:kappa}. Then
\eqref{eq:pde} has a unique global weak solution in the sense of
Definition~\ref{def:continuum-weak-solution}. For every \(T>0\),
\begin{equation}
\label{eq:continuum-moment-velocity-regularity}
  \sup_{0\le t\le T}\mathcal M_r(\rho_t)<\infty,
  \qquad
  v_\rho\in
  L^\infty(0,T;W^{1,\infty}(\mathbb R^d;\mathbb R^d))
  \cap C([0,T]\times\mathbb R^d;\mathbb R^d).
\end{equation}
If \(p=\infty\), then
\begin{equation}
\label{eq:continuum-maximum-estimate}
  \|\rho_t\|_{L^\infty}
  \le
  \exp\!\left(
    \|(-\Delta K)*\mu\|_{L^\infty}t
    +\|\lambda\|_{L^\infty}\mathcal M_{\min\{1,q\}}(\kappa)
      \int_0^t A_{\rho_s}\,ds
  \right)\|\rho_0\|_{L^\infty},
  \qquad t\ge0,
\end{equation}
with $A_{\rho_s}$ as in \eqref{eq:acceptance-coefficient}.
\end{theorem}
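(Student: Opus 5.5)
The plan has three parts: build solutions for a regularized problem by a fixed point, pass to the limit with bounds uniform in the regularization, and prove uniqueness by a coupling estimate.

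\emph{Regularized problem.} Replace $K$ by a mollified kernel $K_\varepsilon=K*\eta_\varepsilon$, which has bounded Hessian locally. Then solve \eqref{eq:pde} by a fixed point in $C([0,T];\mathcal P_r)$, say with a $W_1$-type metric. For a given curve $\nu_t$:
\begin{enumerate}
\item freeze the velocity $v=-\nabla f^\varepsilon_{\nu_t}$ and the rate $\zeta_{\nu_t}$;
\item solve the resulting linear equation $\partial_t\rho=\nabla\cdot(\rho\nabla f_{\nu})+J_\nu(\rho)$, where $J_\nu$ is positive and linear in $\rho$;
\item write the solution by Duhamel, as transport along the Lipschitz flow of $v$ plus a positive jump kernel with integrable outgoing intensity.
\end{enumerate}
The outgoing intensity is bounded by $\|\lambda\|_{L^\infty}A_\nu\mathcal M_{\min\{1,q\}}(\kappa)(1+|x|^{\cdots})$, using the increment bound of Lemma~\ref{lem:jump-increment}. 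Mass and positivity are preserved. The map $\nu\mapsto\rho$ is a contraction for short time, with the time step depending only on moment and $L^p$ bounds. Iterating extends the solution globally, provided those bounds do not blow up.

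\emph{A priori bounds, uniform in $\varepsilon$.} These are the core of the proof.
\begin{itemize}
\item \textbf{Moments.} Test against $|x|^r$. The transport part is controlled by $|\nabla f|\lesssim 1+|x|^{q-1}$ together with $r\ge q$. The jump part is controlled by $|y|^r-|x|^r\lesssim |z|(|x|^{r-1}+|z|^{r-1})$, multiplied by $\zeta\le A|z|^{\min\{1,q\}}$. This is where $\kappa\in\mathcal P_{r+1}$ enters. The result is a Gronwall inequality in which $A_{\rho_s}$ depends on $\mathcal M_{q-1}(\rho_s)\le \mathcal M_r(\rho_s)+1$.
\item \textbf{$L^\infty$ bound \eqref{eq:continuum-maximum-estimate}.} Follow the density along characteristics. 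Transport contributes $\rho\,\Delta f_\rho=\rho\,(\Delta K*\rho-\Delta K*\mu)$, and $-\Delta K\ge0$ makes the $\rho$ part favorable, since $\Delta K*\rho\le0$. That leaves the factor $\|(-\Delta K)*\mu\|_\infty$. For the jumps, the gain term at $y$ is $\int\lambda(x)\zeta(x,y)\kappa(y-x)\rho(x)\,dx\le\|\rho\|_\infty\|\lambda\|_\infty A\,\mathcal M_{\min\{1,q\}}(\kappa)$, using evenness-free estimates via the increment bound. The loss term is nonpositive and is dropped. Gronwall then gives the exponent.
\item \textbf{Finite $p$.} An analogous $L^p$ estimate tests against $p\rho^{p-1}$. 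The transport term is handled by $\|\Delta K*\mu\|_{L^\infty}$, which is finite by Young/HLS exactly when $p>p_c$ (for $d=q=1$ use the explicit force). The jump gain is estimated by Young's inequality for the convolution against $\kappa$.
\item \textbf{Velocity regularity.} The same convolution bounds give $\nabla v=-\nabla^2K*(\rho-\mu)\in L^\infty$ for $p>p_c$, uniformly in $\varepsilon$, using the $L^p$ and $\mathcal M_r$ control. Continuity in $(t,x)$ follows from equicontinuity in time in $W_1$.
\end{itemize}

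\emph{Passage to the limit.} Tightness from the moments, the uniform Lipschitz velocity bound, and weak-$*$ $L^p$ compactness let us pass to the limit in the weak formulation of Definition~\ref{def:continuum-weak-solution}. The jump term converges because $f_{\rho^\varepsilon}\to f_\rho$ locally uniformly, $[\cdot]_+$ is Lipschitz, and the growth bounds give uniform integrability.

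\emph{Uniqueness, the main obstacle.} Take two solutions and couple them. Transport parts are coupled through their Lipschitz flows. Jump parts are coupled by synchronizing the proposal $z$ and using a common uniform acceptance variable, so that particles jump together with probability given by the minimum of the two rates. The goal is a Gronwall bound on $\mathbb E|X_t-Y_t|$, or more robustly on a bounded $W_1$-type distance, in which the jump mismatch contributes
\[
\mathbb E\!\int\bigl|\lambda\zeta_\rho-\lambda\zeta_\sigma\bigr|(X,Y,z)\,(|z|+|X-Y|)\kappa\,dz.
\]
This must be bounded by $C\,W_1(\rho,\sigma)+C\,\mathbb E|X-Y|$ using $\mathrm{Lip}(\lambda)$, $\|\nabla f\|_\infty$, and $|f_\rho-f_\sigma|\lesssim \|\nabla K*(\rho-\sigma)\|$. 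The difficulty is that $\zeta$ is only $\min\{1,q\}$-Hölder for $q<1$ globally. I would instead use the Lipschitz bound on $f$ supplied by $v\in W^{1,\infty}$ on $[0,T]$, and weight the distance by $1+|z|^{r}$-type moments so that the $\kappa$ tails close. Controlling $\nabla K*(\rho-\sigma)$ by $W_1(\rho,\sigma)$ requires the $W^{1,\infty}$ bound on $\nabla K*\cdot$, which again rests on the $L^p$ assumption. This closes the Gronwall estimate and gives uniqueness.
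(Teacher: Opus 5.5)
Your architecture matches the paper's. You mollify $K$, freeze the coefficients, solve the positive linear jump--transport equation by Duhamel, contract in the uniform-in-time $W_1$ metric, close bounds uniform in $\varepsilon$, pass to the limit, and prove uniqueness with a synchronized-proposal coupling. Your $L^p$ and maximum estimates (discard $\Delta K*\rho\le0$ and the outflow, bound the inflow by $\|\lambda\|_{L^\infty}A_\rho\mathcal M_{\min\{1,q\}}(\kappa)$) are exactly the paper's.

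The moment step, however, does not close globally as you state it. For $1<q<2$, both the speed bound $\|v_\rho\|_{L^\infty}\le C(1+\mathcal M_{q-1}(\rho))$ and $A_\rho$ carry a factor $1+\mathcal M_{q-1}(\rho)$. That factor multiplies the $\widetilde{\mathcal M}_{r-1}(\rho)$ produced by $|\nabla\langle x\rangle^r|$ and by the increment of $\langle x\rangle^r$. Bounding $\mathcal M_{q-1}\le 1+\mathcal M_r$, as you propose, gives only $m'\lesssim m^{2-1/r}$ for $m=\widetilde{\mathcal M}_r(\rho_t)$. Since $r>1$, this allows blow-up. You need the interpolation $\widetilde{\mathcal M}_{q-1}\widetilde{\mathcal M}_{r-1}\le\widetilde{\mathcal M}_r$ (H\"older, using $(q-1)+(r-1)\le r$ and $\widetilde{\mathcal M}_r\ge1$) to make the inequality linear.

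For $q<1$, the speed is bounded by $C(1+\|\rho\|_{L^p}^{\theta_1})$, not by a growth bound in $x$. There the $L^p$ inequality, which closes alone because $A_\rho=2$, must be closed before the moments; for $q>1$ the order is reversed.

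Weak-$*$ compactness also does not give the strong $C([0,T];L^p)$ and $C^1([0,T];W^{-1,p})$ regularity required by Definition~\ref{def:continuum-weak-solution}. The paper recovers it by freezing the limiting coefficients in the linear equation.

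The uniqueness step has a genuine gap. Since $|D^2K(z)|\sim|z|^{q-2}$ is unbounded (and $K''=-2\delta_0$ when $d=q=1$), $\nabla K*(\rho-\sigma)$ is not controlled in $L^\infty$ by $W_1(\rho,\sigma)$, even under uniform $L^p$ bounds. Moving a small blob across a point changes the field there by far more than its $W_1$ cost. So the step ``$|f_\rho-f_\sigma|\lesssim\|\nabla K*(\rho-\sigma)\|$, controlled by $W_1$'' fails pointwise; the difference can only be estimated after averaging over the second marginal.

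The paper does this as follows.
\begin{enumerate}
\item It writes $f_{\rho_t^{(1)}}-f_{\rho_t^{(2)}}=\int(K(\cdot-a)-K(\cdot-b))\,d\Pi_t(a,b)$ using the coupling itself.
\item It bounds the mixed second difference of $K$ by $C|a-b||z|\int_0^1(|y-c_\theta|^{q-2}+|y+z-c_\theta|^{q-2})\,d\theta$, with $c_\theta=b+\theta(a-b)$.
\item It integrates in $y$ against $\rho_t^{(2)}\in L^p$, $p>p_c$, obtaining $C_T|z|D(t)$.
\end{enumerate}
The transport mismatch $\int|v_{\rho^{(1)}_t}-v_{\rho^{(2)}_t}|\,d\rho^{(2)}_t$ is handled by the same device. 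Your remark about a $W^{1,\infty}$ bound on $\nabla K*\cdot$ can be made to work only in dual form: test against $w\rho^{(2)}_t$ for a unit vector field $w$ and use that $\nabla K*(w\rho^{(2)}_t)$ is Lipschitz.

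With this in place, the H\"older issue you raise is moot, because $\|D^2f_{\rho^{(1)}_t}\|_{L^\infty}<\infty$ in the solution class handles the same-field term. No $|z|^r$ weighting or bounded metric is needed either: a mismatched jump changes $|X-Y|$ by at most $|z|$, so only second moments of $\kappa$ enter.

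Finally, you must verify that the coupled process has marginals $\rho_t^{(1)},\rho_t^{(2)}$. The paper does this through uniqueness of narrowly continuous measure solutions of the frozen linear equation, via a total-variation Gronwall argument that uses the bounded total jump rate.
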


The solution is global, while the moment and density estimates are only asserted to be uniform on each finite time interval. Long-time convergence will follow from the dissipation, not from assuming these estimates remain bounded as time tends to infinity.

For an arbitrary Borel probability measure $\rho$, define
$\mathcal E_q(\rho-\mu)\in[0,\infty]$ by the Fourier formula
\eqref{eq:extended-fourier-energy}, with the normalization of \eqref{eq:energy}.
For finite-energy measures, the quantity $\mathcal D_{\mathrm j}$ introduced in \eqref{eq:weighted-jump-dissipation} is defined by
\eqref{eq:extended-weighted-jump-dissipation} using the normalized increments
\eqref{eq:extended-potential-increment}. These increments are well-defined by
Lemma~\ref{lem:energy-sublevels}. Since $\lambda$ is pointwise positive,
zero jump dissipation means that $\Delta_\rho(x,z)\le0$ for
$\rho(dx)\kappa(z)\,dz$-a.e. $(x,z)$. The Fourier conventions and
integrability details are given in Section~\ref{subsec:stationary-long-time}.

\begin{theorem}[Rigidity of jump dissipation]\label{prop:no-spur-stat-soln-intro}
Assume the target, kernel, proposal, and spatial-rate hypotheses of Theorem~\ref{thm:continuum-wellposedness}. If $\rho\in\mathcal P(\mathbb R^d)$ satisfies
\begin{equation}
 \mathcal E_q(\rho-\mu)<\infty,\qquad \mathcal D_{\mathrm j}(\rho)=0,
\end{equation}
then $\rho=\mu$.
In particular, under the hypotheses of Theorem~\ref{thm:continuum-wellposedness} with $r\ge2$, the target is the unique stationary solution of \eqref{eq:pde} among the densities $\nu\in\mathcal P_r(\mathbb R^d)\cap L^p(\mathbb R^d)$, in the sense of Definition~\ref{def:stationary-critical}.
\end{theorem}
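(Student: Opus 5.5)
Zero jump dissipation says that, for $\rho(dx)\kappa(z)\,dz$-a.e. $(x,z)$, the normalized increment $\Delta_\rho(x,z)=f_\rho(x+z)-f_\rho(x)$ is nonnegative. Since $\lambda>0$ everywhere and $\kappa$ has full support, this says $f_\rho(y)\ge f_\rho(x)$ for $\rho(dx)\,dy$-a.e. $(x,y)$. The plan is to turn this into a variational inequality that forces the energy to vanish. The key tool is the flat convexity of $\mathcal E_q$: it is a nonnegative quadratic form in $\rho-\mu$, so that, formally,
\[
  \mathcal E_q(\rho-\mu)=\tfrac12\int f_\rho\,d(\rho-\mu),
\]
with $f_\rho=K*(\rho-\mu)$.

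\textbf{Step 1: regularity of the potential.} I first use Lemma~\ref{lem:energy-sublevels} to show that the finite-energy condition makes $f_\rho$ (normalized, e.g.\ relative to a base point) a well-defined continuous function. In the Fourier picture, $\rho-\mu$ lies in the homogeneous negative Sobolev space $\dot H^{-(d+q)/2}$, and $f_\rho$ is its dual element in $\dot H^{(d+q)/2}$, modulo constants and polynomials of degree less than $q$. Since $(d+q)/2>d/2$ and $q<2$, this gives a locally H\"older continuous function, determined up to an additive constant. Continuity then upgrades the a.e.\ statement: the set of $x$ with $f_\rho(x)\le f_\rho(y)$ for a.e.\ $y$ is the set where $f_\rho(x)\le\operatorname{ess\,inf}f_\rho=\inf f_\rho=:m$. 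Hence $f_\rho=m$ on $\operatorname{supp}\rho$, and $f_\rho\ge m$ everywhere.

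\textbf{Step 2: energy identity.} Next I pair with $\rho-\mu$:
\[
  2\mathcal E_q(\rho-\mu)=\int f_\rho\,d\rho-\int f_\rho\,d\mu=m-\int f_\rho\,d\mu\le 0,
\]
where the inequality uses $f_\rho\ge m$ and the fact that $\mu$ is a probability measure. Therefore $\mathcal E_q(\rho-\mu)=0$. Positive definiteness of the Fourier formula (the symbol $c_{d,q}|\xi|^{-d-q}$ is strictly positive) then gives $\widehat{\rho-\mu}=0$ and hence $\rho=\mu$.

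\textbf{Main obstacle.} The hard part is justifying the pairing identity $2\mathcal E_q(\rho-\mu)=\int f_\rho\,d(\rho-\mu)$ for an arbitrary Borel probability measure $\rho$ with only finite energy and no moment assumption. Then $f_\rho$ may grow at infinity and fail to be $\rho$-integrable a priori, although $f_\rho$ is constant, hence bounded, on $\operatorname{supp}\rho$. Integrability against $\mu\in\mathcal P_r$ follows from the growth bound $|f_\rho(x)|\lesssim 1+|x|^q$, which I expect Lemma~\ref{lem:energy-sublevels} to supply.

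To handle this, I would mollify and truncate: set $\rho_\varepsilon=\rho*\eta_\varepsilon$ (and similarly for $\mu$), or use a Fourier cutoff on $\widehat{\rho-\mu}$. I would then verify the identity
\[
  \int f_\rho\,d(\rho-\mu)=\lim_{\varepsilon\to0}\int f_\rho\,d(\rho_\varepsilon-\mu_\varepsilon)=\lim_{\varepsilon\to0}\ \bigl(\text{Fourier pairing}\bigr),
\]
using dominated convergence on the Fourier side, which is justified by finite energy. Alternatively, one can avoid the full identity: use the one-sided perturbation $\rho_t=(1-t)\rho+t\mu$, note that $t\mapsto\mathcal E_q(\rho_t-\mu)=(1-t)^2\mathcal E_q(\rho-\mu)$, and compute the directional derivative at $t=0$ via increments $f_\rho(y)-f_\rho(x)$ integrated against $\rho(dx)\mu(dy)$. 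Since $\mu\ll dy$, the zero-dissipation condition applies $\rho\otimes\mu$-a.e., so this derivative is $\ge0$, while it also equals $-2\mathcal E_q(\rho-\mu)$. This version uses only the increments that Lemma~\ref{lem:energy-sublevels} controls, which is why I would pursue it first.

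\textbf{Stationarity statement.} For the ``in particular'' part: a stationary solution $\nu\in\mathcal P_r\cap L^p$ with $r\ge2$ has finite energy. Testing the stationary equation of Definition~\ref{def:stationary-critical} with $f_\nu$ gives $\mathcal D(\nu)=0$, justified through Proposition~\ref{prop:energy-dissipation}, where $r\ge2$ makes $f_\nu$ an admissible test function. In particular $\mathcal D_{\mathrm j}(\nu)=0$, and the rigidity result above gives $\nu=\mu$.
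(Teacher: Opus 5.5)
Your overall strategy is the paper's. Zero jump dissipation, positivity of $\lambda$, full support of $\kappa(z)\,dz$ and continuity of $z\mapsto\Delta_\rho(x,z)$ force the normalized first variation to attain its global minimum $m_\rho$ at $\rho$-a.e.\ point. Pairing with $\rho-\mu$ then gives $2\mathcal E_q(\rho-\mu)=m_\rho-\int\widetilde f_\rho\,d\mu\le0$. Two slips are harmless. With the paper's convention, $\Delta_\rho(x,z)=f_\rho(x)-f_\rho(x+z)$, so zero dissipation means $\Delta_\rho\le0$. Also, you only get the inequality for $\kappa(z)\,dz$-a.e.\ $z$, not $dz$-a.e., but continuity and full support upgrade it to every $z$.

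The genuine gap is the step you flag as the main obstacle: proving $\int\widetilde f_\rho\,d(\rho-\mu)=2\mathcal E_q(\rho-\mu)$ when $\rho$ has finite energy but possibly infinite $q/2$-moment. Lemma~\ref{lem:energy-sublevels} only yields moments of order $<q/2$. Neither of your devices supplies the identity.
\begin{itemize}
\item Mollification by $\eta_\varepsilon$, or a Fourier cutoff, acts at small scales. The function $\rho*\eta_\varepsilon-\mu*\eta_\varepsilon$ has the same tails as $\rho-\mu$, while $\widetilde f_\rho$ grows. Turning $\int\widetilde f_\rho\,(\rho_\varepsilon-\mu_\varepsilon)\,dx$ into the Fourier pairing by Fubini or Plancherel meets exactly the same large-scale obstruction. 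Dominated convergence as $\varepsilon\to0$ is not the issue; the identity at fixed $\varepsilon$ is.
\item The route through $\rho_t=(1-t)\rho+t\mu$ is circular. The derivative of $(1-t)^2\mathcal E_q(\rho-\mu)$ at $t=0$ is trivially $-2\mathcal E_q(\rho-\mu)$. Asserting that it also equals $\iint(f_\rho(y)-f_\rho(x))\,d\rho(x)\,d\mu(y)=-\int f_\rho\,d(\rho-\mu)$ is the pairing identity restated. Writing it through increments only removes the additive constant, which was never the difficulty.
\end{itemize}

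The missing idea is a spatial truncation that converges in the energy norm. The paper does this in Lemma~\ref{lem:normalized-witness-pairing}:
\begin{enumerate}
\item Set $\rho_R=\mathbf 1_{B(0,R)}\rho+\rho(B(0,R)^c)\delta_0$, define $\mu_R$ likewise, and put $\sigma_R=\rho_R-\mu_R$.
\item Use the nontrivial input \cite[Proposition~A.7]{rosenzweig2026wassersteingradientflowsmaximum}: $\widehat{\sigma_R}\to\widehat{\rho-\mu}$ in $L^2(|\xi|^{-d-q}\,d\xi)$, assuming only finite energy and $\mu\in\mathcal P_{q/2}$.
\item For the compactly supported $\sigma_R,\sigma_S$, the polarized Fourier identity holds. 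Let $S\to\infty$ using local uniform convergence of the truncated normalized potentials to $\widetilde f_\rho$. The normalization $\widetilde f_\rho(0)=0$ makes the mass relocated to the origin invisible.
\item Let $R\to\infty$ using $\widetilde f_\rho\in L^1(|\rho-\mu|)$. This holds because $\widetilde f_\rho=m_\rho$ $\rho$-a.e., and $\widetilde f_\rho$ is globally $q$-H\"older ($q\le1$) or Lipschitz ($q>1$), hence integrable against $\mu\in\mathcal P_r$.
\end{enumerate}

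Relatedly, define $\widetilde f_\rho(x)=\int(K(x-y)-K(-y))\,d(\rho-\mu)(y)$ directly, not as an abstract $\dot H^{s_q}$ representative. Its increments are then $\Delta_\rho$ by construction. For $q>1$, absolute convergence uses the $(q-1)$-moments from Lemma~\ref{lem:energy-sublevels}, since $q-1<q/2$. A Fourier-defined dual element would need the same physical/Fourier identification before it could be linked to $\mathcal D_{\mathrm j}$. It is also determined modulo constants only, since $s_q-d/2=q/2<1$.

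Your treatment of the stationarity consequence matches the paper's in substance. The constant curve is a weak solution, and the energy identity then forces $\mathcal D(\nu)=0$.
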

The first assertion imposes no absolute continuity assumption on $\rho$.
The full-support proposal and pointwise positivity of $\lambda$ imply that zero jump dissipation makes the normalized first variation attain its global minimum at $\rho$-a.e. source point. Pairing that function with $\rho-\mu$ gives twice the MMD energy, but the minimum property makes the pairing nonpositive. This identifies the source with the target. Section~\ref{subsec:stationary-long-time} proves this pairing for measures with finite energy, without assuming a finite $q/2$-moment.

\begin{theorem}[Convergence to the target]\label{thm:continuum-convergence}
Let $r\ge2$ and let $(\rho_t)_{t\ge0}$ be the solution of Theorem~\ref{thm:continuum-wellposedness}. Then
\begin{equation}
 \operatorname{MMD}_q(\rho_t,\mu)\longrightarrow0\qquad\text{as }t\to\infty.
\end{equation}
\end{theorem}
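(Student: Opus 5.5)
The proof combines the integrated energy-dissipation identity of Proposition~\ref{prop:energy-dissipation}, tightness of the trajectory, lower semicontinuity of the jump dissipation, and the rigidity statement Theorem~\ref{prop:no-spur-stat-soln-intro}.

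\textbf{Step 1: dissipation along a sequence.} From Proposition~\ref{prop:energy-dissipation},
\[
 \mathcal E_q(\rho_t-\mu)+\int_0^t\mathcal D(\rho_s)\,ds\le\mathcal E_q(\rho_0-\mu)<\infty .
\]
Hence $t\mapsto\mathcal E_q(\rho_t-\mu)$ is nonincreasing, has a limit $L\ge0$, and $\int_0^\infty\mathcal D_{\mathrm j}(\rho_s)\,ds<\infty$. We may therefore choose times $t_n\to\infty$ with $\mathcal D_{\mathrm j}(\rho_{t_n})\to0$. It suffices to show $L=0$.

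\textbf{Step 2: tightness from the second-moment identity.} We cannot assume uniform-in-time moment bounds. Instead we test \eqref{eq:pde} with $\varphi=|x|^2$, which is where $r\ge2$ enters. The transport part contributes
\[
 -2\int x\cdot\nabla f_{\rho}\,d\rho .
\]
Since $\nabla K(z)=-q|z|^{q-2}z$ is homogeneous of degree $q-1$, symmetrization of the $\rho\otimes\rho$ part gives $-q\iint|x-y|^q\,d\rho\,d\rho$ up to sign. The cross term with $\mu$ is bounded by $C\,\mathcal M_q(\rho)^{(\cdot)}$, a sublinear power of the second moment. The jump part gives
\[
 \iint(|y|^2-|x|^2)\lambda\zeta\kappa .
\]
Using $\|\lambda\|_\infty$, $\kappa\in\mathcal P_{r+1}$, and $\zeta_\rho\le A_\rho|x-y|^{\min\{1,q\}}$, this is bounded by a sublinear function of $\mathcal M_2(\rho)$.

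The $\rho\otimes\rho$ interaction term is the coercive piece: $\iint|x-y|^q\,d\rho\,d\rho$ controls $\mathcal M_q$ about the mean. The aim is a differential inequality
\[
 \tfrac{d}{dt}\mathcal M_2\le C\bigl(1+\mathcal M_2^{\theta}\bigr)-c\,\mathcal M_q^{\text{centered}} .
\]
This would give either a uniform bound on $\mathcal M_2(\rho_t)$ or at least sufficient control at the times $t_n$.

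Even without this, there is a more robust route to tightness of $\{\rho_{t_n}\}$. Bounded MMD energy already gives tightness, through Lemma~\ref{lem:energy-sublevels}: sublevels of $\mathcal E_q(\cdot-\mu)$ are tight because the Fourier weight $|\xi|^{-d-q}$ controls low frequencies of $\hat\rho-\hat\mu$, and hence forbids mass escaping to infinity. I would use this as the primary argument and the moment identity only where integrability is needed.

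\textbf{Step 3: passage to the limit.} Extract $\rho_{t_n}\rightharpoonup\rho_\infty$ narrowly, with $\rho_\infty\in\mathcal P(\mathbb R^d)$. The energy is lower semicontinuous, so $\mathcal E_q(\rho_\infty-\mu)\le L<\infty$. The key claim is lower semicontinuity of the jump dissipation:
\[
 \mathcal D_{\mathrm j}(\rho_\infty)\le\liminf_n\mathcal D_{\mathrm j}(\rho_{t_n})=0 .
\]
To prove it, I would show locally uniform convergence of the normalized increments $\Delta_{\rho_{t_n}}(x,z)\to\Delta_{\rho_\infty}(x,z)$. These are uniformly Hölder on compacts, by the increment bound behind \eqref{eq:acceptance-coefficient} in its normalized Fourier form with bounded energy. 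Then $[\cdot]_+^2$, the continuity and positivity of $\lambda$, and Fatou on $\rho_{t_n}\otimes\kappa$ close the argument. Theorem~\ref{prop:no-spur-stat-soln-intro} then gives $\rho_\infty=\mu$.

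\textbf{Step 4: convergence of energies, the main obstacle.} Knowing $\rho_\infty=\mu$ does not yet give $L=0$, because energy is only lower semicontinuous. To upgrade, I would use the identity from the rigidity proof:
\[
 2\mathcal E_q(\rho-\mu)=\int\Delta_\rho\,d(\rho-\mu).
\]
Zero jump dissipation forces $\Delta_\rho$ to sit near its global minimum on the support of $\rho$. Quantitatively, smallness of $\mathcal D_{\mathrm j}(\rho_{t_n})$ should make
\[
 \int\Delta_{\rho_{t_n}}\,d\rho_{t_n}-\inf\Delta_{\rho_{t_n}}
\]
small on compacts, using $\kappa>0$ and $\lambda>0$ there together with tightness. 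This yields $\limsup\mathcal E_q(\rho_{t_n}-\mu)\le\limsup\int\Delta_{\rho_{t_n}}\,d(\rho_{t_n}-\mu)\to0$, with the tails controlled by uniform integrability from Step 2. By monotonicity, $L=0$.

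I expect this uniform-integrability and tail control — equivalently, ruling out energy escaping to infinity or to high frequencies — to be the hardest step. That is exactly where the second-moment identity has to replace any uniform density bound.
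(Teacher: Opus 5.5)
Your Steps~1 and~3 are sound. They match the paper's identification of narrow limits at times of small dissipation, via Lemma~\ref{lem:weighted-jump-dissipation-lsc} and Proposition~\ref{prop:no-spur-stat-solns}.

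\textbf{The gap is Step~4, and it cannot be closed statically.} No argument using only $\rho_{t_n}\rightharpoonup\mu$, $\sup_n\mathcal E_q(\rho_{t_n}-\mu)<\infty$ and $\mathcal D_{\mathrm j}(\rho_{t_n})\to0$ can give $\mathcal E_q(\rho_{t_n}-\mu)\to0$. Take a smooth compactly supported probability density $\beta$, write $\beta_R=\beta(\cdot-Re_1)$, and set
\[
  \rho_\epsilon:=(1-\epsilon)\mu+\epsilon\,\beta_{R_\epsilon},\qquad R_\epsilon:=\epsilon^{-2/q}.
\]
Then $\rho_\epsilon-\mu=\epsilon(\beta_{R_\epsilon}-\mu)$, so $\rho_\epsilon\rightharpoonup\mu$ and
\[
  \mathcal E_q(\rho_\epsilon-\mu)=\epsilon^2\mathcal E_q(\beta_{R_\epsilon}-\mu)=\epsilon^2R_\epsilon^q(1+o(1))\to1 .
\]
On the other hand, $f_{\rho_\epsilon}=\epsilon K*(\beta_{R_\epsilon}-\mu)$, and Lemma~\ref{lem:jump-increment} gives $\zeta_{\rho_\epsilon}(x,x+z)\le C\epsilon R_\epsilon^{(q-1)_+}|z|^{1\wedge q}$. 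Hence $\mathcal D_{\mathrm j}(\rho_\epsilon)\le C\epsilon^2R_\epsilon^{2(q-1)_+}\to0$ since $q<2$, and the transport dissipation vanishes too.

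So everything you extract in Steps~1--3 holds along this family of densities in the solution class. Yet the energy stays near one, carried by a vanishing mass at distance $\epsilon^{-2/q}$. That is exactly the tail you defer to Step~2. Lemma~\ref{lem:energy-sublevels} controls only moments of order below $q/2$, far short of what is needed to pair $\widetilde f_\rho$ with $\rho_{t_n}$ uniformly.

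The near-global-minimum heuristic also has no quantitative content for $1<q<2$. With finite second moments, $f_\rho(x)=q|x|^{q-2}x\cdot(x_\rho-x_\mu)+O(|x|^{q-2})$, where $x_\rho,x_\mu$ are the barycenters. Hence $\inf\widetilde f_\rho=-\infty$ whenever the barycenters differ. (Also, the pairing identity is with $\widetilde f_\rho$, not with the two-variable increment $\Delta_\rho$.)

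\textbf{Step~2 cannot supply that control, and its bookkeeping is mis-signed.} The self-interaction is repulsive: it contributes $+q\iint|x-y|^q\,d\rho\,d\rho$ to $\frac{d}{dt}\mathcal M_2$, not a coercive $-c\,\mathcal M_q^{\rm centered}$. The attraction comes from the cross term $-2q\iint|x-y|^q\,d\rho\,d\mu$, which is of the same order. The two combine into \eqref{eq:critical-MMD-second-moment-transport}:
\[
  2\int x\cdot v_{\rho_t}\,d\rho_t=-2q\,\mathcal E_q(\rho_t-\mu)-2q\int G_\mu\,d(\rho_t-\mu).
\]
No uniform moment bound comes out; the paper gets only $\sup_{s\le T}\mathcal M_2(\rho_s)=o(T)$.

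\textbf{The missing idea} is to use this identity not for tightness but to produce the energy itself as the dissipative term. Integrate \eqref{eq:critical-MMD-second-moment-identity} over $[0,T]$ and drop $\mathcal M_2(\rho_T)\ge0$. This gives $\frac{2q}{T}\int_0^T\mathcal E_q(\rho_t-\mu)\,dt\le\frac{\mathcal M_2(\rho_0)}{T}+o(1)$, provided two things hold.
\begin{enumerate}[label=(\roman*)]
\item The jump terms are $o(T)$. This follows from $\int_0^T\mathcal D(\rho_t)^{1/2}\,dt=o(\sqrt T)$, Cauchy--Schwarz against $\mathcal M_2^{1/2}$, and the resulting sublinear growth of $\sup_{s\le T}\mathcal M_2(\rho_s)$.
\item $\int G_\mu\,d(\rho_t-\mu)\to0$. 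This requires $\rho_t\rightharpoonup\mu$ along the \emph{whole} trajectory, which your selected sequence does not give. The paper transfers narrow convergence from times $s_n\in[t_n,t_n+1]$ of small dissipation to arbitrary $t_n$ via Lemma~\ref{lem:W1-dissipation}. For $q>1$ it also uses moments of order $\alpha\in(q-1,q/2)$ to handle the growth of $G_\mu$.
\end{enumerate}
Monotonicity then gives $\mathcal E_q(\rho_T-\mu)\le\frac1T\int_0^T\mathcal E_q(\rho_t-\mu)\,dt\to0$.
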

The proof of Theorem~\ref{thm:continuum-convergence} first combines energy-sublevel tightness, lower semicontinuity of jump dissipation, and Theorem~\ref{prop:no-spur-stat-soln-intro} to identify narrow limits. An estimate of the displacement over a short time interval passes from selected times of small dissipation to arbitrary times. A separate second-moment identity and time-averaging argument then yield convergence in MMD. No uniform-in-time $L^p$ bound or additional uniform moment bound is assumed. For the ordinary flow, the energy-distance result of \cite[Theorem~1.7 and Corollary~5.27]{rosenzweig2026wassersteingradientflowsmaximum} also supplies the required long-time moment control from second moments, but still uses uniform density control for target identification; its general-exponent result assumes both uniform moment and density bounds.

We next describe a particle approximation of the continuum equation. Denote a configuration and its empirical measure by
\begin{equation}\label{eq:particle-configuration-notation}
 X=(x_1,\ldots,x_N),\qquad
 \rho_X^N=\frac1N\sum_{i=1}^N\delta_{x_i},\qquad
 \mathsf D_N=\{X:x_i\ne x_j\text{ whenever }i\ne j\}.
\end{equation}
Between jumps, the particles follow the diagonal-free transport dynamics
\begin{equation}
 \dot x_i=-\frac1N\sum_{j\ne i}\nabla K(x_i-x_j)+\nabla K*\mu(x_i).
\end{equation}
Moving one atom by $z$ transfers mass $1/N$, so the exact energy increment contains a quadratic correction. The nonnegative factor in the accepted jump rate is
\begin{equation}
 \zeta_i^N(X,z)=\left[f_{\rho_X^N}(x_i)-f_{\rho_X^N}(x_i+z)-\frac{|z|^q}{N}\right]_+.
\end{equation}
The rate for this move is $\lambda(x_i)\zeta_i^N(X,z)\kappa(z)\,dz$. For every accepted move, the energy decreases by $\zeta_i^N(X,z)/N$. Section~\ref{subsec:particle-construction} gives the precise process driven by independent Poisson random measures $\mathcal N_1,\ldots,\mathcal N_N$ of intensity $dt\,\kappa(z)\,dz\,du$. We denote its empirical measure by $\rho_t^N=\rho_{X(t)}^N$.

\begin{theorem}[Well-posedness and noncollision]
\label{thm:particle-wellposedness}
Assume the hypotheses of Theorem~\ref{thm:continuum-wellposedness}, and fix \(N\ge2\).
Let $X^0$ be independent of the entire family $(\mathcal N_i)_{i=1}^N$, and assume
\begin{equation}
\label{eq:particle-initial-moment}
  \mathbb P(X^0\in\mathsf D_N)=1,
  \qquad
  \mathbb E\left[\frac1N\sum_{i=1}^N(1+|X_i^0|^2)^{r/2}\right]<\infty.
\end{equation}
There exists a pathwise unique global collision-free particle
solution \(X\) with initial configuration \(X^0\), in the sense of
Definition~\ref{def:particle-solution}.  For every \(T>0\),
\begin{equation}
\label{eq:particle-finite-jump-count}
  \mathbb P\left(
  \#\{t\in(0,T]:X(t)\ne X(t-)\}<\infty
  \right)=1.
\end{equation}
Finally, the following particle-moment estimate holds:
\begin{equation}
\label{eq:particle-uniform-moment-bound}
  \mathbb E\left[\sup_{0\le t\le T}\frac1N\sum_{i=1}^N(1+|X_i(t)|^2)^{r/2}\right]
  \le C_T\mathbb E\left[\frac1N\sum_{i=1}^N(1+|X_i^0|^2)^{r/2}\right],
\end{equation}
where the constant $C_T$ is independent of $N$.
\end{theorem}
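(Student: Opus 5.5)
The plan is to build the particle process piecewise: deterministic collision-free transport between jumps, jump times generated by thinning the Poisson random measures $\mathcal N_i$, and a localization that rules out accumulation of jumps. I will carry this out in four steps.

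\textbf{Step 1: transport between jumps.} On $\mathsf D_N$ the field
\[
b_i(X)=-\frac1N\sum_{j\ne i}\nabla K(x_i-x_j)+\nabla K*\mu(x_i)
\]
is locally Lipschitz, since $\nabla K$ is smooth away from $0$. The term $\nabla K*\mu$ is bounded and Lipschitz, because $\mu\in L^p$ with $p>p_c$; this is the same estimate behind the velocity regularity in Theorem~\ref{thm:continuum-wellposedness}. So the ODE has a unique local solution up to the exit time from $\mathsf D_N$ or blow-up.

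\textbf{Step 2: no collisions or blow-up under pure transport.} The discrete energy
\[
\mathcal E^N(X)=\frac12\iint_{x\ne y}K(x-y)\,d(\rho^N_X-\mu)^{\otimes2}
\]
is nonincreasing along the flow, because $\frac{d}{dt}\mathcal E^N=-\frac1N\sum_i|b_i|^2$. This alone does not prevent collisions, since $K$ is bounded near $0$. I therefore control a log-distance functional $\sum_{i\ne j}\log|x_i-x_j|^{-1}$ directly. Writing
\[
\frac{d}{dt}|x_i-x_j|=\frac{(x_i-x_j)\cdot(b_i-b_j)}{|x_i-x_j|},
\]
the self-interaction term $-\frac2N\nabla K(x_i-x_j)=\frac{2q}{N}|x_i-x_j|^{q-2}(x_i-x_j)$ is repulsive. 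The $\mu$-term is Lipschitz and so contributes at most $C|x_i-x_j|$. The third-party terms are where the real work lies: $\nabla K$ is $(q-1)$-Hölder for $q>1$ and bounded-singular for $q\le1$. I will use a minimal-distance argument: at the pair achieving $\min_{i\ne j}|x_i-x_j|$, the repulsion dominates the three-body contributions up to a Grönwall-type loss, giving a lower bound $\delta(t)>0$ on every finite interval. For the one-dimensional case $d=q=1$ the force is an explicit sign function, and order preservation gives noncollision directly. Blow-up is excluded because $|b_i|\le C(1+|x_i|^{q-1}+\max_j|x_j|^{q-1})$ grows sublinearly.

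\textbf{Step 3: jumps, finiteness and uniqueness.} Particle $i$ jumps by $z$ at an atom $(t,z,u)$ of $\mathcal N_i$ exactly when $u\le\lambda(x_i(t-))\zeta^N_i(X(t-),z)$. The acceptance bound $\zeta^N_i\le A|z|^{\min\{1,q\}}$ holds with $A$ controlled by particle moments, as in \eqref{eq:acceptance-coefficient}. Hence the total intensity is at most $N\|\lambda\|_\infty A\,\mathcal M_{\min\{1,q\}}(\kappa)$, which is finite on the event that moments stay bounded. I localize with stopping times $\tau_R$, the first time the moment exceeds $R$, and use thinning of a dominating Poisson process of finite rate. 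This gives finitely many jumps on $[0,T\wedge\tau_R]$, and the construction proceeds jump by jump.

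The post-jump configuration lies in $\mathsf D_N$ almost surely: $\kappa$ has a density and the atoms are independent of the past, so $x_i+z=x_j$ has probability zero. The same independence handles the initial configuration $X^0$. Pathwise uniqueness follows because the ODE flow is unique between jumps and the jump times and sizes are read deterministically from the $\mathcal N_i$.

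\textbf{Step 4: moment estimate and removal of localization.} Apply Itô's formula to $\frac1N\sum_i\phi(x_i)$ with $\phi(x)=(1+|x|^2)^{r/2}$. The drift part is bounded by $C\frac1N\sum_i\phi(x_i)$, using the sublinear force and $\mu\in\mathcal P_r$. For the jump part, $|\phi(x+z)-\phi(x)|\le C(\phi(x)^{(r-1)/r}|z|+|z|^r)$. Multiplied by the rate, which is at most $A|z|^{\min\{1,q\}}$ with $A\lesssim 1+$ moments, this is integrable because $\kappa\in\mathcal P_{r+1}$, and is bounded by $C\,\frac1N\sum\phi$. To bring the supremum inside the expectation, I use BDG or a direct bound, since the martingale's jump compensator is dominated by the same quantity. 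Grönwall then gives \eqref{eq:particle-uniform-moment-bound} with $C_T$ independent of $N$. Consequently $\tau_R\to\infty$ almost surely, which yields a global solution and the finite jump count \eqref{eq:particle-finite-jump-count}.

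The main obstacle is the noncollision lower bound in Step 2 for $q\le1$ and $d\ge2$, where the three-body forces are as singular as the repulsion. If the minimal-distance argument fails there, I would instead show the Lebesgue-null collision set is not reached, via the divergence $\nabla\cdot b$ and a Liouville argument for a random initial law with density. Since Theorem~\ref{thm:particle-wellposedness} allows deterministic initial data, however, the minimal-distance route is needed.
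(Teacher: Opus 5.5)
\textbf{Main gap (Step~2, $0<q<1$).} Your closest-pair argument fails in exactly the case you flag. The hypotheses allow this case for every $d\ge2$.

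Write $\Phi(x)=|x|^{q-2}x$, so that $b_i=\frac qN\sum_{j\ne i}\Phi(x_i-x_j)+(\nabla K*\mu)(x_i)$. A closest pair at distance $\delta$, with $e=(x_i-x_j)/\delta$, satisfies
\[
\frac{d}{dt}|x_i-x_j|=\frac qN\Bigl(2\delta^{q-1}+\sum_{k\ne i,j}e\cdot\bigl(\Phi(x_i-x_k)-\Phi(x_j-x_k)\bigr)\Bigr)+O(\delta).
\]
For $q<1$ the three-body terms have no sign and are of the same order $\delta^{q-1}$ as the repulsion. For instance, in $d=2$ take $x_j=0$, $x_i=\delta e_1$, and the half-rows $\{(k\delta,m\delta):2\le k\le M+1\}$ for $|m|\le2$. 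Each row telescopes, and the bracket equals $\delta^{q-1}\bigl(2-\sum_{|m|\le2}(1+m^2)^{(q-2)/2}\bigr)+O(\delta^{q-1}M^{q-1})\le\delta^{q-1}\bigl(-0.4+O(M^{q-1})\bigr)$. So the minimum distance can shrink at a rate of order $\delta^{q-1}/N$, and no inequality $\dot\delta\ge-C\delta$ is available.

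For $q\ge1$ your idea does close. There $\Phi=\nabla(|x|^q/q)$ is monotone, so every three-body term is nonnegative and each pair obeys $\frac{d}{dt}|x_i-x_j|\ge-\operatorname{Lip}(\nabla K*\mu)\,|x_i-x_j|$.

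The paper does not reprove deterministic noncollision. It imports the global collision-free flow from the companion paper's Theorem~1.8, cases (i)--(ii).

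A self-contained replacement valid for all $0<q<2$ uses the energy rather than distances:
\begin{itemize}
\item Since $\mathcal E_q(\rho_X^N-\mu)\ge0$ and $\frac{d}{dt}\mathcal E_q(\rho_X^N-\mu)=-\frac1N\sum_i|b_i|^2$ along the flow, velocities are square integrable up to a putative first collision time $T^*$. Hence positions converge. This also rules out blow-up, where your force bound is false for $q<1$.
\item Take a cluster $C$, $|C|\ge2$, of particles with a common limit, and set $I_C=\sum_{i,j\in C}|x_i-x_j|^2$. By oddness of $\Phi$, the internal contribution to $\dot I_C$ equals $\frac{2q|C|}{N}\sum_{i\ne k\in C}|x_i-x_k|^q\ge\frac{2q|C|}{N}I_C^{q/2}$.
\item The field generated by the particles outside $C$ and by $\mu$ is Lipschitz, with some constant $L$, near the limit point. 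Hence $\dot I_C\ge\frac{2q|C|}{N}I_C^{q/2}-2LI_C$ near $T^*$, which contradicts $I_C\to0$.
\end{itemize}

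\textbf{Secondary gap (Step~4).} For $q<1$ the drift is not controlled by moments. The quantity $|b_i|$ contains $\frac qN\sum_{j\ne i}|x_i-x_j|^{q-1}$, which blows up near collisions. Estimating $\sum_ib_i\cdot\nabla_i\Psi_r$ through $|b_i|$ therefore produces a constant depending on the minimal distance, hence on $N$ and the configuration, and \eqref{eq:particle-uniform-moment-bound} is lost. The paper instead symmetrizes the pair term into
\[
-\frac{r}{2N^2}\sum_{i\ne j}\bigl(\langle x_i\rangle^{r-2}x_i-\langle x_j\rangle^{r-2}x_j\bigr)\cdot\nabla K(x_i-x_j).
\]
It then uses the mean-value theorem and $r\ge q$ to bound each summand by $C(\langle x_i\rangle^r+\langle x_j\rangle^r)$.

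For $1<q<2$, your bound through $\max_j|x_j|^{q-1}$ is likewise not $N$-uniform. Use instead $\frac1N\sum_j|x_i-x_j|^{q-1}\le|x_i|^{q-1}+\frac1N\sum_j|x_j|^{q-1}$ together with $\widetilde{\mathcal M}_{q-1}\widetilde{\mathcal M}_{r-1}\le\widetilde{\mathcal M}_r$. Note also that in this range $\nabla K*\mu$ is Lipschitz but grows like $\langle x\rangle^{q-1}$; it is not bounded.

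The remaining steps agree with the paper's proof: acceptance by $u\le\lambda\zeta_i^N$, the intensity bound $\mathscr R_N\le CN\Psi_r$, the compensation-formula argument that accepted jumps avoid the finite set $\{x_j-x_i\}_{j\ne i}$, the stopped Lyapunov estimate with positive parts and jump counting, and uniqueness by induction over jump times.
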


No independence among the coordinates of the initial configuration $X^0$ is required here. The independence assumption separates the initial configuration from the entire family of driving noises. The construction excludes both collisions and accumulation of accepted jump times. Its energy identity also gives pathwise monotonicity of the discrepancy from the target; see Proposition~\ref{prop:particle-pathwise-energy-dissipation}.

We measure the discrepancy between the particle empirical measure and the continuum solution using both MMD and the bounded-Lipschitz distance
\begin{equation}\label{eq:particle-BL-distance}
 d_{\rm BL}(\nu,\omega)=
 \sup_{\|\varphi\|_{L^\infty}+\operatorname{Lip}(\varphi)\le1}
        \left|\int\varphi\,d(\nu-\omega)\right|.
\end{equation}
The bounded-Lipschitz distance is needed only to estimate the jump contribution. Define the sampling factor
\begin{equation}\label{eq:particle-BL-dimensional-rate}
 \mathfrak n_d(N)=\begin{cases}
 N^{-1},&d=1,\\ N^{-1}\log(eN),&d=2,\\ N^{-2/d},&d\ge3,
 \end{cases}
\end{equation}
and the initial error
\begin{equation}\label{eq:particle-initial-empirical-error}
 \Theta_N=
 \mathbb E\left[\operatorname{MMD}_q(\rho_0^N,\rho_0)^2\right]
 +\mathbb E\left[d_{\rm BL}(\rho_0^N,\rho_0)^2\right]
 +\mathfrak n_d(N)^{r/(r+1)}.
\end{equation}
\begin{theorem}[Mean-field estimate for general initial configurations]\label{thm:particle-empirical-mean-field}
Let $(\rho_t)$ be the continuum solution of Theorem~\ref{thm:continuum-wellposedness}, and let $X$ satisfy the assumptions of Theorem~\ref{thm:particle-wellposedness}. For every $T>0$,
\begin{equation}\label{eq:particle-empirical-mean-field}
 \sup_{0\le t\le T}\mathbb E\left[
  \operatorname{MMD}_q(\rho_t^N,\rho_t)^2
  +d_{\rm BL}(\rho_t^N,\rho_t)^2\right]\le C_T\Theta_N.
\end{equation}
The constant $C_T$ depends on the fixed continuum data, the kernel, proposal, spatial rate, and $T$, but not on $N$ or the initial interacting law.
\end{theorem}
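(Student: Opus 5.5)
We prove the estimate by a stability argument on \([0,T]\): we control the squared error by a Gronwall inequality whose source terms are the initial error and the sampling/discreteness defects.

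\textbf{Error functional.} Set
\[
  \Phi(t)=\mathcal E_q(\rho_t^N-\rho_t)+d_{\rm BL}(\rho_t^N,\rho_t)^2 .
\]
Since \(\mathcal E_q(\rho^N_t-\rho_t)=\operatorname{MMD}_q(\rho_t^N,\rho_t)^2\), we have \(\mathbb E\,\Phi(0)\le\Theta_N\), and it suffices to bound \(\mathbb E\,\Phi(t)\).

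\textbf{Transport part of the MMD error.} By It\^o's formula for the jump process, we expand \(\mathcal E_q(\rho_t^N-\rho_t)\), which is quadratic in the measures. We follow the modulated-energy method of the companion paper \cite{rosenzweig2026wassersteingradientflowsmaximum}. The transport contribution is
\[
  -\iint_{x\ne y}\bigl(v_{\rho_t}(x)-v_{\rho_t}(y)\bigr)\cdot\nabla K(x-y)\,d(\rho^N_t-\rho_t)^{\otimes2}
\]
plus a term containing \(\int|v_{\rho^N}-v_{\rho}|^2\,d\rho^N\) with a favourable sign. The commutator is bounded by \(C\|\nabla v_{\rho_t}\|_{L^\infty}\bigl(\mathcal E_q(\rho^N_t-\rho_t)+\text{diagonal error}\bigr)\). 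The Lipschitz bound on \(v_{\rho_t}\) comes from Theorem~\ref{thm:continuum-wellposedness}. The diagonal error is controlled by the \(\mathfrak n_d(N)\) renormalization. This is where the subcritical hypothesis \(p>p_c\) enters, through \(\nabla^2K*\rho_t\in L^\infty\).

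\textbf{Jump part of the MMD error.} The jump contribution to \(d\,\mathcal E_q\) has two pieces. The first is the compensated drift
\[
  \int f_{\rho^N-\rho}\,d\bigl(J^N(\rho^N)-J(\rho)\bigr),
\]
where \(J^N\) uses \(\zeta^N_i\). The second is the quadratic variation term, of size \(O(N^{-1}\mathcal M_q(\kappa))\). We split
\[
  J^N(\rho^N)-J(\rho)=\bigl[J^N(\rho^N)-J(\rho^N)\bigr]+\bigl[J(\rho^N)-J(\rho)\bigr].
\]
The first bracket is \(O(N^{-1})\), since the \(|z|^q/N\) correction is \(1\)-Lipschitz inside \([\cdot]_+\). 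For the second bracket, we use that \([\cdot]_+\) is \(1\)-Lipschitz and that \(|f_{\rho^N}-f_\rho|(x)\) is controlled by \(\operatorname{MMD}_q(\rho^N,\rho)\,(1+|x|^{q/2})\). Test functions of the form \(\lambda(x)\zeta(x,y)\varphi\) are bounded-Lipschitz up to moment weights, which yields a bound \(C\Phi\) plus moment tails. The tails are handled by truncating at radius \(R\), using the \(r\)-moment bounds of Theorem~\ref{thm:particle-wellposedness} and Theorem~\ref{thm:continuum-wellposedness}, and optimizing \(R\); this produces the exponent \(r/(r+1)\) on \(\mathfrak n_d(N)\).

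\textbf{BL part.} For \(d_{\rm BL}\), we use a coupling. We couple each particle with a nonlinear process driven by the same Poisson measures \(\mathcal N_i\), via thinning with uniform \(u\) for acceptance. The continuum characteristics solve \(\dot y=v_{\rho}(y)\) with jumps at rate \(\lambda\zeta_\rho\). The coupling discrepancy grows through:
\begin{itemize}
  \item \(\|v_{\rho^N}-v_\rho\|\), controlled via the MMD error above;
  \item mismatched acceptances, whose probability is \(|\lambda\zeta_{\rho^N}-\lambda\zeta_\rho|\), bounded by \(A_\nu\)-type increments and \(\operatorname{MMD}_q\), each costing \(O(1)\) in BL.
\end{itemize}
Because the initial data are arbitrary (deterministic is allowed), we do not couple initial points. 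Instead, we compare against the flow-pushed initial measure \(\rho_0^N\) and use stability of the linear jump-transport equation in \(d_{\rm BL}\) with respect to initial data. This produces the term \(d_{\rm BL}(\rho_0^N,\rho_0)^2\).

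\textbf{Closing.} Summing the MMD and BL estimates gives
\[
  \frac{d}{dt}\mathbb E\,\Phi\le C_T\bigl(\mathbb E\,\Phi+\mathfrak n_d(N)^{r/(r+1)}\bigr).
\]
Gronwall's inequality then yields \eqref{eq:particle-empirical-mean-field}.

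\textbf{Main obstacle.} We expect the hardest step to be the cross term in the jump part, where the error in the rate \(\zeta\) is paired against \(f_{\rho^N-\rho}\), an unbounded H\"older function. It is not controlled by MMD alone, which is why \(d_{\rm BL}\) is included in \(\Phi\). The approach we will try first is to write the cross term as \(\int\psi\,d(\rho^N-\rho)\) with \(\psi\) locally Lipschitz, with constant \(\lesssim A\,(1+|x|)^{q}\), truncate it, and use BL inside the ball and moments outside.
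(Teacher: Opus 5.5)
\textbf{There is a genuine gap.} Two ingredients that the paper's proof rests on are missing, and without them your Gronwall inequality for $\Phi$ does not close.

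\textbf{First gap: the cross term in the jump part.} The term is $\iint u_{t,z}(x)\,\lambda(x)\zeta_{\rho_t}(x,x+z)\,d(\rho_t^N-\rho_t)(x)\,\kappa(z)\,dz$, with $u_{t,z}=g(\cdot+z)-g(\cdot)$ and $g=K*(\rho_t^N-\rho_t)$. Your plan treats $\psi=\lambda\zeta_{\rho_t}u_{t,z}$ as locally Lipschitz and pairs it with $d_{\rm BL}$ on a ball. This cannot work. The only smallness of $u_{t,z}$ is $\|u_{t,z}\|_\infty+[u_{t,z}]_{C^{q/2}}\lesssim\sqrt F$, where $F=\operatorname{MMD}_q(\rho_t^N,\rho_t)^2$; it comes from $|g(x)-g(y)|\le 2\sqrt F|x-y|^{q/2}$. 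Its Lipschitz constant is $O(1)$, not small, when $q\ge1$. When $q<1$ it is infinite at the atoms of $\rho_t^N$, where $K*\rho_t^N$ has cusps. So you get only $C\,d_{\rm BL}$ with an $O(1)$ factor, and $\dot\Phi\lesssim\Phi+\sqrt\Phi$ yields no smallness. Mollifying $u$ at a scale $h$ and optimizing gives at best $\sqrt F\,d_{\rm BL}^{q/2}$, which is still not quadratic. Appendix~\ref{app:particle-jump-obstruction} shows the $d_{\rm BL}$ contribution cannot be dropped, so it must come multiplied by $\sqrt F$.

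The missing idea is to use the MMD itself, as the $\dot H^{-s_q}$ norm of $\vartheta$, on the high frequencies of $au$, where $a=\lambda\zeta_{\rho_t}(\cdot,z)$. A paraproduct splitting $au=G+H$ gives two bounds:
\begin{itemize}
\item $\|G\|_{\dot H^{s_q}}\lesssim\|a\|_{W^{1,\infty}}\|u\|_{\dot H^{s_q}}\lesssim\sqrt F$;
\item $\|H\|_{L^\infty}+\operatorname{Lip}(H)\lesssim\|a\|_{W^{1,\infty}}(\|u\|_\infty+[u]_{C^{q/2}})\lesssim\sqrt F$.
\end{itemize}
This produces $C(F+\sqrt F\,d_{\rm BL})$; see Lemma~\ref{lem:particle-critical-product}.

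\textbf{Second gap: the coupling for $d_{\rm BL}$ cannot be closed through the MMD.} The drift mismatch is $b_i^N(X)-v_{\rho_t}(Y_i)=-\mathsf e_t^N(X_i)+v_{\rho_t}(X_i)-v_{\rho_t}(Y_i)$. The force error $\mathsf e_t^N$ at the particle positions is not bounded by $F$. The MMD controls $\nabla g$ only in $\dot H^{s_q-1}$ with $s_q-1<d/2$, which says nothing about its values on the atoms of $\rho_t^N$; for $q<1$ the error is singular.

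It is controlled only in time-integrated form, by the dissipation $\mathcal I_t^N=\frac1N\sum_i|\mathsf e_t^N(X_i)|^2$. You found this term with a favourable sign but then discarded it in $\frac{d}{dt}\mathbb E\Phi\le C(\mathbb E\Phi+\cdots)$. You must keep $\int_0^t\mathcal I_s^N\,ds$ on the left-hand side.

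Since $d_{\rm BL}^2$ cannot itself be differentiated, the Gronwall quantity must be $F_N+Z_N^2$, with $Z_N=\frac1N\sum_i(1-e^{-|X_i-Y_i|})$. Between jumps, $\frac{d}{dt}Z_N\le(\mathcal I_t^N)^{1/2}+\Lambda_TZ_N$, and the resulting $\varepsilon\mathcal I_t^N$ is absorbed (Lemma~\ref{lem:particle-coupling}, Proposition~\ref{prop:modulated-energy-closure}).

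\textbf{Smaller misattributions and an unaddressed step.}
\begin{itemize}
\item The factor $\mathfrak n_d(N)^{r/(r+1)}$ enters as the sampling error $d_{\rm BL}(\frac1N\sum_i\delta_{Y_i(t)},\rho_t)$ of the independent comparison processes, not through tail truncation in the jump term.
\item There is no diagonal error: $K(0)=0$, and $|\mathcal C_t^N|\le C\Lambda_TF_N$.
\item If you start $Y_i$ at $X_i^0$ and invoke BL-stability of the linear equation, the $Y_i(t)$ are independent but not identically distributed, with random moment bounds. You do not address the sampling estimate this requires.
\end{itemize}
The paper avoids that last issue. It matches $X^0$ optimally to an independent $\rho_0^{\otimes N}$ sample and relabels both by a common uniform permutation (Lemma~\ref{lem:particle-initial-matching}). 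This preserves the product law, and the estimate is transferred back to the original labels by permutation invariance.
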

In particular, the estimate includes deterministic collision-free configurations and does not require exchangeability.
The proof of Theorem~\ref{thm:particle-empirical-mean-field} is a modulated-energy argument coupled to independent comparison processes with law $\rho_t$. Its more precise estimate, Proposition~\ref{prop:modulated-energy-closure}, also controls the integrated squared drift and acceptance-factor errors. Each interacting particle and its comparison process use the same jump proposals; a proposal accepted by both moves them by the same displacement and does not change their separation. The two dissipative error terms absorb the mismatches in drift and acceptance. A symmetrized matching of the initial configurations converts this coupled estimate into \eqref{eq:particle-empirical-mean-field}.

\begin{corollary}[Independent initial sampling]
\label{thm:particle-iid-mean-field}
Assume the hypotheses of Theorem~\ref{thm:continuum-wellposedness},
and let \((\rho_t)_{t\geq 0}\) be its weak solution with initial datum \(\rho_0\).
Let \(X(t)\) be the particle solution of Theorem~\ref{thm:particle-wellposedness}, with
\(X_1^0,\ldots,X_N^0\) independent and of common law \(\rho_0\).
For every \(T>0\), there exists \(C_T<\infty\), independent of \(N\),
such that
\begin{align}
  \sup_{0\le t\le T}
  \mathbb E\left[\operatorname{MMD}_q(\rho_t^N,\rho_t)\right]
  &\le C_T \begin{cases}
    N^{-r/(2(r+1))},&d=1,\\
    (N^{-1}\log(eN))^{r/(2(r+1))},&d=2,\\
    N^{-r/(d(r+1))},&d\ge3.
  \end{cases}
  \label{eq:particle-iid-MMD-first-power}
\end{align}
\end{corollary}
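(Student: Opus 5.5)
The plan is to derive the corollary directly from Theorem~\ref{thm:particle-empirical-mean-field}. By Jensen, $\mathbb E[\operatorname{MMD}_q(\rho_t^N,\rho_t)]\le (\mathbb E[\operatorname{MMD}_q(\rho_t^N,\rho_t)^2])^{1/2}\le (C_T\Theta_N)^{1/2}$. So it suffices to show that, for i.i.d.\ initial data with law $\rho_0$, $\Theta_N\lesssim \mathfrak n_d(N)^{r/(r+1)}$. The square root of this quantity is exactly the displayed rate: $N^{-r/(2(r+1))}$ for $d=1$, $(N^{-1}\log(eN))^{r/(2(r+1))}$ for $d=2$, and $N^{-2r/(2d(r+1))}=N^{-r/(d(r+1))}$ for $d\ge3$.

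First I check the hypotheses of Theorem~\ref{thm:particle-wellposedness}. Since $\rho_0\in L^p$ is absolutely continuous, independent draws are almost surely distinct, so $\mathbb P(X^0\in\mathsf D_N)=1$. The moment condition holds because $\rho_0\in\mathcal P_r$. Independence of $X^0$ from the Poisson measures is part of the construction.

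Next I bound the three terms of $\Theta_N$. For the MMD term, \cite[Proposition~A.1]{rosenzweig2026wassersteingradientflowsmaximum} gives $\mathbb E[\operatorname{MMD}_q(\rho_0^N,\rho_0)^2]\le C\mathcal M_q(\rho_0)/N$, using $\rho_0\in\mathcal P_q\supset\mathcal P_r$. This is $\lesssim\mathfrak n_d(N)\le\mathfrak n_d(N)^{r/(r+1)}$, since $\mathfrak n_d(N)\le1$ up to a constant. Alternatively, writing $\mathcal E_q(\rho_0^N-\rho_0)$ as a $U$-statistic makes the $O(1/N)$ bound a variance computation: only the diagonal terms survive, and each is controlled by $\mathbb E|X-X'|^q$.

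For the bounded-Lipschitz term, $d_{\rm BL}\le\min\{W_1,2\}$, so $d_{\rm BL}^2\le 2W_1$. I then invoke the Fournier--Guillin bound \cite{fournier2015rate} for $\mathbb E[W_1(\rho_0^N,\rho_0)]$ under a finite $r$-th moment with $r\ge1$. This gives $N^{-1/2}$ for $d=1$, $N^{-1/2}\log$ for $d=2$, and $N^{-1/d}$ for $d\ge3$, plus moment-tail terms like $N^{-(r-1)/r}$. This crude route loses a square root compared with $\mathfrak n_d$, so instead I bound $\mathbb E[W_1^2]$ directly. Fournier--Guillin with $p=2$ needs moments of order $>4$, which we do not have. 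So I truncate: split $\rho_0$ at radius $R$, apply the compact-support rate $\mathfrak n_d(N)$ for $\mathbb E[W_1^2]$ (dyadic partition argument) on the ball, and bound the tail contribution by $R^{-r}$ times moment factors. Optimizing $R$ produces exactly the exponent $r/(r+1)$, namely $\mathbb E[d_{\rm BL}^2]\lesssim \mathfrak n_d(N)^{r/(r+1)}$. This is presumably why $\mathfrak n_d(N)^{r/(r+1)}$ appears in $\Theta_N$.

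The main obstacle is this last step: obtaining the squared bounded-Lipschitz rate with the $r/(r+1)$ exponent under only an $r$-th moment. Boundedness of test functions helps here, since $d_{\rm BL}\le 2$ caps the tail contribution by the mass outside the ball, which is at most $\mathcal M_r R^{-r}$. Inside the ball of radius $R$, the dyadic bound scales like $R^2\mathfrak n_d(N)$ for squared $W_1$, or $R\,\mathfrak n_d(N)^{1/2}$ before squaring. Balancing $R\,\mathfrak n_d^{1/2}$ against $R^{-r}$ gives $R=\mathfrak n_d^{-1/(2(r+1))}$, and the squared error is then $\mathfrak n_d^{r/(r+1)}$. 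If the paper has already recorded such a sampling lemma (likely in the appendix alongside the proof of Theorem~\ref{thm:particle-empirical-mean-field}), I would cite it. Combining the three bounds gives $\Theta_N\le C\mathfrak n_d(N)^{r/(r+1)}$, and Jensen concludes.
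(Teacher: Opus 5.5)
Your argument is correct, but it takes a longer route than the paper. You treat the i.i.d.\ case as an instance of Theorem~\ref{thm:particle-empirical-mean-field} and then bound $\Theta_N$. That forces you to estimate the initial term $\mathbb E[d_{\rm BL}(\rho_0^N,\rho_0)^2]$, which is exactly Lemma~\ref{lem:particle-iid-BL} with $\nu=\rho_0$ and $\mathfrak m=\mathcal M_r(\rho_0)$; citing it closes your proof.

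The paper instead applies Proposition~\ref{prop:modulated-energy-closure} directly and chooses $Y^0=X^0$ in \eqref{eq:particle-comparison-process}. This choice is admissible precisely because $X^0\sim\rho_0^{\otimes N}$ and is independent of the noises. Then $Z_N(0)=0$ and $\widetilde F_N(0)=F_N(0)$, with $\mathbb E[F_N(0)]=\frac1{2N}\iint|x-y|^q\,d\rho_0\,d\rho_0$. The sampling term $\mathfrak n_d(N)^{r/(r+1)}$ already in the proposition, coming from $d_{\rm BL}(\overline\rho_t^N,\rho_t)$ at positive times, dominates, and Cauchy--Schwarz finishes. This bypasses the initial $d_{\rm BL}$ estimate, the random-permutation matching of Lemma~\ref{lem:particle-initial-matching}, and the symmetrization step in the theorem's proof, all superfluous for product initial data. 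Because the same sampling lemma controls the positive-time term in both routes, the rates are identical; yours just does more work.

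One caution about your fallback sketch. If you prove the sampling bound yourself via a dyadic-partition estimate for $\mathbb E[W_1^2]$ on a ball, in $d=2$ it gives $R^2N^{-1}\log^2(eN)$ rather than $R^2\mathfrak n_2(N)$. The reason is that the multiscale sum $\sum_\ell 2^{-\ell}\sum_F|\overline\nu^N(F)-\nu(F)|$ picks up order $N^{-1/2}$ at each of about $\log N$ scales. The paper's lemma keeps a single logarithm: it mollifies at scale $h$ and bounds $\|\sigma_{N,h}\|_{\dot H^{-1}}$ through the Fourier variance $\mathbb E|\widehat\sigma_N(\xi)|^2=(1-|\widehat\nu(\xi)|^2)/N$, so the scales add in $L^2$ rather than linearly. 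Cite that lemma rather than relying on the sketch.
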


The order in \eqref{eq:particle-empirical-mean-field} and \eqref{eq:particle-iid-MMD-first-power} is a supremum of expectations. A separate path-space coupling gives propagation of chaos for exchangeable initial laws with $\Theta_N\to0$; see Corollary~\ref{thm:particle-propagation-of-chaos}. Empirical convergence alone cannot identify distinguished labels without such symmetry. Energy monotonicity and Theorem~\ref{thm:continuum-convergence} also yield target approximation when both time and particle number tend to infinity, as stated in Corollary~\ref{thm:particle-long-time-convergence}.

Under its well-posedness hypotheses, \cite[Proposition~1.14 and Lemma~7.1]{rosenzweig2026wassersteingradientflowsmaximum} shows that solutions from translations to distance $R$ of a fixed smooth compactly supported source remain near their initial locations for waiting times tending to infinity, while their MMD energy stays comparable to $R^q$. Consequently, no multiplicative decay envelope tending to zero is uniform over the initial separation.

For the ordinary flow with $q=1$, the force bound $|v_\rho|\le2$ gives $\mathcal D_{\mathrm{tr}}(\rho)\le4$. An exponential estimate with prefactor one,
\begin{equation}
 \mathcal E_1(\rho_t-\mu)\le e^{-ct}\mathcal E_1(\rho_0-\mu),
\end{equation}
on a positive interval beginning at zero would therefore require $c\mathcal E_1(\rho_0-\mu)\le4$. Since the initial energy of the translated source has order $R$, this rules out a positive $c$ independent of the separation. This initial-time argument concerns prefactor one; a larger prefactor leaves slack at zero.

For the jump-modified flow, we can overcome this separation obstruction. As a concrete example, we take $q=1$, a Gaussian proposal, and the capped spatial rate \eqref{eq:kappa-lambda-ex}, fixed from the initial density. Its cap has order $R^2$ along translated initial data. The estimate holds while the prescribed concentration and separation conditions persist, with parameters chosen in the stated a priori order. The precise statement of the result is the following proposition.
\begin{proposition}[Far-field energy decay]\label{thm:far-field-main}
Assume the hypotheses of Theorem~\ref{thm:continuum-wellposedness} with $q=1$, $r\ge2$, and the rate choice \eqref{eq:kappa-lambda-ex}. Fix $T>0$. Choose the concentration parameters and positive bandwidth in the a priori order specified in Proposition~\ref{thm:farfield-pl}, and assume its initial separation conditions. On the resulting stopped interval $[0,\tau_2]\subset[0,T]$,
\begin{equation}
 \mathcal E_1(\rho_t-\mu)
 \le e^{-\beta^2\widetilde C_{\rm PL}t}\mathcal E_1(\rho_0-\mu),
 \qquad 0\le t\le\tau_2,
\end{equation}
where $\widetilde C_{\rm PL}=C_{\rm PL}s_\mu/2$, with $C_{\rm PL}$ and $s_\mu$ as in that proposition. Its exit-time lower bounds give a positive interval when the stated initial margins are strict.
\end{proposition}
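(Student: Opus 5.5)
The plan is to derive a Polyak--Łojasiewicz type inequality on the stopped interval,
\[
 \mathcal D(\rho_t)\ge \mathcal D_{\mathrm j}(\rho_t)\ge \beta^2\widetilde C_{\rm PL}\,\mathcal E_1(\rho_t-\mu),\qquad 0\le t\le\tau_2,
\]
and then to integrate the energy--dissipation relation \eqref{eq:intro-dissipation-relation}. Proposition~\ref{prop:energy-dissipation} justifies that relation in integrated form. The conclusion then follows from a Grönwall argument applied to $t\mapsto\mathcal E_1(\rho_t-\mu)$. We use the integrated identity $\mathcal E_1(\rho_t-\mu)+\int_s^t\mathcal D(\rho_\tau)\,d\tau=\mathcal E_1(\rho_s-\mu)$. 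Because this identity gives absolute continuity of the energy, the differential inequality $\frac{d}{dt}\mathcal E_1\le-\beta^2\widetilde C_{\rm PL}\mathcal E_1$ holds a.e. on $[0,\tau_2]$, and the exponential bound follows. The transport dissipation is simply discarded. All of the gain therefore comes from the jump term, and this is exactly what lets the rate be uniform in the separation $R$.

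The main input is Proposition~\ref{thm:farfield-pl}, which I take as stated. It supplies a PL inequality of the form $\mathcal D_{\mathrm j}(\rho)\ge \beta^2 C_{\rm PL}\, s_\mu\,\mathcal E_1(\rho-\mu)$, possibly with a factor such as $\lambda$ restricted to the bulk of $\rho$. It holds for any density $\rho$ satisfying the concentration and separation conditions, with parameters chosen in the prescribed order.

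The heuristic behind that proposition is as follows. When most of $\rho$ sits in a ball at distance about $R$ from the bulk of $\mu$, the first variation $f_\rho=K*(\rho-\mu)$ is of order $R$ larger on the bulk of $\rho$ than near $\mu$. The capped rate $\lambda$ on that bulk is comparable to the cap $D_0^2\sim R^2$. The Gaussian proposal places mass of order $s_\mu$ (up to $\beta$-dependent factors) on displacements landing in the favorable region. Hence $\mathcal D_{\mathrm j}\gtrsim R^2\cdot\beta^2 s_\mu$. Meanwhile $\mathcal E_1(\rho-\mu)\lesssim R$, and in the far-field regime the ratio is bounded below after the a priori parameter choices. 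The factor $1/2$ in $\widetilde C_{\rm PL}=C_{\rm PL}s_\mu/2$ should come from evaluating the conditions at time $t$ rather than at time $0$: on $[0,\tau_2]$ the concentration margin degrades by at most half, so the constant obtained at time zero is halved.

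The remaining step, which I expect to be the main obstacle, is to verify that the hypotheses of the PL inequality persist on $[0,\tau_2]$. By definition, $\tau_2$ is the first exit time from the concentration and separation conditions, capped at $T$. Inside $[0,\tau_2]$ the conditions hold by construction, so the inequality applies pointwise in $t$. The only work left is to check measurability and continuity of the relevant functionals along the weak solution, namely masses of balls and moments. This holds narrowly, since $t\mapsto\rho_t$ is narrowly continuous by Definition~\ref{def:continuum-weak-solution} and the velocity bounds of Theorem~\ref{thm:continuum-wellposedness}. With these facts, $\tau_2$ is well defined and the inequality is valid for a.e. $t\le\tau_2$.

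Positivity of the interval comes from the exit-time lower bounds in Proposition~\ref{thm:farfield-pl}. With strict initial margins, the displacement estimate and the bounded jump intensity ($\lambda\le D_0^2$, with $A_\rho=2$ for $q=1$) show that the mass leaving the concentration region on a short interval is bounded by a constant times $t$. This gives $\tau_2>0$.
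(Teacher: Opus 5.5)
Your overall plan (a stopped Polyak--\L{}ojasiewicz inequality, the integrated energy identity of Proposition~\ref{prop:energy-dissipation}, an integrating factor, and the exit-time lower bounds for positivity) matches the paper. The step that produces the rate, however, is wrong. \Cref{thm:farfield-pl} does not give $\mathcal D_{\mathrm j}(\rho)\ge\beta^2C_{\rm PL}s_\mu\,\mathcal E_1(\rho-\mu)$. For a.e.\ $t<\tau_1$ it gives \eqref{eq:farfield-stopped-PL}, namely $\mathcal D(\rho_t)\ge C_{\rm PL}\beta^2R_t^2\,\mathcal E_1(\rho_t-\mu)/\bigl(2(R_t+s_t+s_\mu+\varepsilon_\rho+\varepsilon_\mu)\bigr)$, whose coefficient depends on the moving separation $R_t$.

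Turning this into a time-uniform rate requires the two stopping conditions, which your argument never invokes. For $t<\tau_2$ one has $R_t>s_t+s_\mu+\varepsilon_\rho+\varepsilon_\mu$, so the denominator is below $4R_t$ and $\mathcal D(\rho_t)\ge\frac{C_{\rm PL}\beta^2}{4}R_t\,\mathcal E_1(\rho_t-\mu)$. For $t<\tau_1$ one has $R_t>2s_\mu$, hence $\mathcal D(\rho_t)\ge\frac{C_{\rm PL}s_\mu}{2}\beta^2\,\mathcal E_1(\rho_t-\mu)$. So the factor $1/2$ in $\widetilde C_{\rm PL}$ is $\frac14\cdot2$. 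There is no step in which a ``concentration margin degrades by at most half,'' and an argument resting on that claim does not produce the stated constant. With this correction, your constant-rate Gr\"onwall argument goes through. It even avoids the measurability of $t\mapsto R_t$ that the paper needs for its sharper exponent $\frac{C_{\rm PL}\beta^2}{4}\int_0^tR_u\,du$.

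Your heuristic for the size of the jump dissipation is also inaccurate. The static lemmas give $\mathcal D_{\mathrm j}(\rho_t)\ge C_{\rm PL}\beta^2R_t^2$, with no factor of $s_\mu$. The bandwidth enters through the small-$\beta$ cone/Taylor bound \eqref{eq:farfield-cone-Taylor}, so the Gaussian effectively samples a directional derivative of $f_{\rho_t}$ rather than placing mass in a favourable region. That derivative is bounded below on average over the bulk of $\rho_t$. This comes from source self-repulsion when $I_1(\rho_t)\ge\theta_0s_t$ (\Cref{prop:far-field-pl-prop}), or from the target field $\nabla F_\mu\cdot e_x\ge S$ otherwise (\Cref{prop:spr-pl-ineq}). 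The factor $R_t^2$ comes from $\lambda\ge c_\lambda R_t^2$, and $s_\mu$ enters only through the floor $R_t>2s_\mu$.

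Persistence of the hypotheses is also not purely ``by construction.'' The lower bound on the rate, which is frozen at time zero, requires $(1-\eta-\eta_\mu)R_t\le W_1(\rho_t,\mu)\le D_0+\overline LT$. The bandwidth condition involves $H=\|D^2f_{\rho_t}\|_\infty$, which itself depends on $\beta$. This is why the bounds of \Cref{lem:farfield-uniform-bandwidth}, uniform over $\beta\in(0,b]$, and the prescribed parameter order are needed. Finally, the rate is not uniform in the separation: \Cref{rem:farfield-nonempty} notes that $\beta$ and $C_{\rm PL}$ may depend on $R$.
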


\subsection{Prior and related work}
\label{subsec:prior-work}
Several works analyze MMD gradient flows without the jump modification considered here. Arbel, Korba, Salim, and Gretton \cite{arbel2019mmd} study smooth-kernel MMD flows and sufficient conditions for convergence, including noise injection. For energy kernels, Hertrich, Gr\"af, Beinert, and Steidl \cite{hertrich2024riesz} study Wasserstein steepest descent using tangent plans. Earlier, Di Francesco, Fornasier, H\"utter, and Matthes \cite{difrancesco2014asymptotic} studied one-dimensional power-law attraction--repulsion flows through inverse distribution functions. For the distance kernel, they proved convergence to the target in \(W_2\) for bounded, compactly supported initial and target probability densities. More recently, Duong, Stein, Beinert, Hertrich, and Steidl \cite{Duo+26} analyze the one-dimensional distance-kernel flow through quantiles; related one-dimensional interaction flows and entropy solutions are treated in \cite{Bon+15}. Rux, Quellmalz, and Steidl \cite{rux2025smoothed} instead smooth the distance kernel while preserving conditional positive semidefiniteness and its large-scale behavior. Our equation uses the original nonsmooth kernel and modifies the evolution through jumps.

Recent works concern complementary pure-transport questions. Fornasier, Huang, and Sun \cite{fornasier2026nonlocal} consider power-law attraction--repulsion equations in which the exponents of the source and target interactions, and their total masses, need not coincide. Their confinement results concern dominance of attraction, rather than the equal-exponent, equal-mass MMD setting. Chizat, Colombo, Colombo, and Fern\'andez-Real \cite{chizat2026quantitative} study kernel discrepancies on the torus through negative Sobolev norms. Their quantitative results distinguish exponential convergence in the Coulomb case from local polynomial convergence for more regular discrepancies. Lin and Rigollet \cite{lin2026sharp} study pure-transport flows on closed manifolds for \(C^2\) kernels diagonal in a Laplace eigenbasis. When the nonconstant spectral coefficients are positive, they obtain convergence to uniform volume and \(o(t^{-1})\) decay of the energy gap for initial probability densities in \(L^p\), \(p>1\), using entropy dissipation without a positive lower bound on the density. The present paper treats a Euclidean equation with an additional nonlocal jump dissipation; neither its measure-level rigidity nor its particle jump approximation follows from those transport results.

The work \cite{rosenzweig2026wassersteingradientflowsmaximum} provides the closest analytical comparison. It provides the nonsmooth MMD transport theory, the natural-moment rigidity of absolutely continuous critical points, and the modulated-energy commutator used here. For $1\le q<2$, it proves asymptotic criticality without additional long-time bounds. At $q=1$, in dimensions at least two, finite initial and target second moments give compactness in every Wasserstein distance of order at least one and below two and equality between the energy of each subsequential limit and the limiting energy. Target identification still requires excluding singular critical limits; uniform $L^p$ control is a sufficient additional condition \cite[Theorem~5.23, Theorem~1.7, and Corollary~5.27]{rosenzweig2026wassersteingradientflowsmaximum}. Our jump dissipation identifies a finite-energy limit without that density condition.

Chodron de Courcel and the fourth author \cite{chodron2026coulomb} study Coulomb discrepancies, using the special structure of the Coulomb potential as the fundamental solution of $-\Delta$ (and its zero-mean periodic analogue) to obtain density regularization and convergence results that depend on the geometry and target assumptions. Their positive-order overlap with the present energy-kernel family is the endpoint $d=q=1$, while in dimensions at least two the Coulomb kernels lie outside that family.

We also mention static empirical-quantization results, such as \cite{fornasier2016consistency,colasanto2026sharp}. These concern globally optimal configurations for the discrete energy associated with the pure-transport particle system. They do not imply that every evolving particle configuration reaches the optimal quantization error.

More recently, Lee and Ghosal \cite{LeeGhosal2026CoulombMMD} obtained quantitative particle approximations for the periodic Coulomb discrepancy flow in dimensions at least two, uniformly in time and temperature (including the deterministic case), under sufficiently regular, strictly positive initial and target densities and independent initial sampling.

The directed mass transfers in \eqref{eq:Jweak} are closely related to the upwind nonlocal interaction equation of \cite{esposito2021nonlocal}. Weighting each directed transfer by its departure mass distinguishes that construction from the symmetric averages of the densities at both endpoints used in the reversible Markov-chain and nonlocal Wasserstein geometries of \cite{maas2011gradient,erbar2014gradient}; see also \cite{slepcev2023nonlocal} for metric and local-limit questions. We use the departure mass and the positive difference of first variations, and add continuous spatial transport. Under the construction hypotheses of Section~\ref{subsec:particle-construction}, the corresponding particle process is a piecewise-deterministic Markov process in the sense of Davis \cite{davis1984piecewise}, determined by its deterministic flow, total jump rate, and post-jump kernel.

For the mean-field limit, classical approaches such as Dobrushin's coupling estimate \cite{dobrushin1979vlasov} do not apply directly in their standard globally Lipschitz-force form, since $\nabla K$ is not Lipschitz at the origin. Singular forces stemming from a potential that generates a coercive energy can instead be compared through a modulated energy and its transport commutator, when the corresponding commutator estimate is available. The Coulomb and Riesz results in \cite{serfaty2020meanfield,rosenzweigserfaty2023global,rosenzweigserfaty2026sharp} require the stated regularity of the limiting density and velocity, with diagonal subtraction and finite-$N$ corrections in particle comparisons. The energy-kernel commutator used here is provided by \cite{rosenzweig2026wassersteingradientflowsmaximum}. Coupling jump systems with independent nonlinear processes driven by the same Poisson measures is an established strategy \cite{andreis2018mckean}. Here the combination is essential: the modulated energy controls the singular drift, while the bounded coupling cost also handles the jump mismatch. Appendix~\ref{app:particle-jump-obstruction} rules out the particular linear energy-only estimate that would omit this second control.

\subsection{Organization of the paper}
\label{subsec:organization}
Section~\ref{sec:continuum} constructs the continuum solution and proves stability and uniqueness. Section~\ref{subsec:stationary-long-time} establishes the energy identity, measure-level jump rigidity, and target convergence, followed by the stopped far-field estimate. Section~\ref{sec:particle-approximation} constructs the particle process and proves the mean-field estimate and its consequences. Section~\ref{sec:conclusion} summarizes the results and discusses further questions. Appendix~\ref{app:auxiliary-estimates} supplies the positive linear jump-transport equation, product, and empirical sampling estimates used in the proofs; Appendix~\ref{app:particle-jump-obstruction} disproves a linear upper bound on the particle jump comparison remainder in terms of modulated energy and the stated finite-particle correction.

\subsection{Acknowledgments}
The research behind this paper was primarily completed during the Summer Undergraduate Applied Mathematics Institute (SUAMI) under the auspices of the Mellon College of Science Summer Scholars Program at Carnegie Mellon University. The authors thank the staff, especially Jonathan Simone, for their support and hospitality.

\subsection{Use of AI}
The authors used generative-AI tools (OpenAI’s ChatGPT) during the development and preparation of this paper to identify potentially relevant literature, explore possible
proof strategies, test draft arguments for errors or gaps, improve exposition, and check internal consistency and cross-references. Outputs from these tools were treated as unverified suggestions. The
authors followed up on literature suggestions by consulting relevant sources and worked through
mathematical suggestions before deciding whether to incorporate them into the paper. Recognizing
that these tools may suggest ideas without reliably indicating their provenance, the authors made
particular efforts to identify and cite relevant antecedents in the literature. The authors made all
final decisions concerning the manuscript and take full responsibility for its contents.

\section{Well-posedness and stability of the continuum dynamics}
\label{sec:continuum}
\label{sec:continuum-wellposedness}
Unless otherwise stated, we assume the hypotheses of
Theorem~\ref{thm:continuum-wellposedness}. The auxiliary lemmas are stated
under their respective assumptions.
The transport estimates follow the aggregation-equation theory of
\cite{BLR11,Lau07} and its application to the MMD velocity in
\cite[Section~3]{rosenzweig2026wassersteingradientflowsmaximum}.
We regularize the kernel, freeze the dependence on the density, and solve
the resulting positive linear equation. A fixed-point argument then gives
the nonlinear regularized solution. The main additional issue is to control
the jump source uniformly in the regularization: its inflow can increase
the density, and its transitions can increase moments. These bounds must
be closed together before compactness can be used. Finally, a coupling
estimate identifies the limit uniquely. We first carry out this program
for finite \(p\); the bounded-density and one-dimensional endpoints are
treated in the proof of the main theorem.

For a probability measure \(\rho\geq 0\) and \(s\ge0\),
we distinguish its power and bracket moments:
\begin{equation}
\label{eq:moment}
  \mathcal M_s(\rho):=\int_{\mathbb R^d}|x|^s\,d\rho(x),
  \qquad
  \widetilde{\mathcal M}_s(\rho):=
     \int_{\mathbb R^d}\langle x\rangle^s\,d\rho(x),
  \qquad \langle x\rangle:=(1+|x|^2)^{1/2}.
\end{equation}
Note that 
\begin{equation}
\label{eq:power-bracket-moment-comparison}
  \mathcal M_s(\rho)\le\widetilde{\mathcal M}_s(\rho)
  \le2^{(s/2-1)_+}\bigl(1+\mathcal M_s(\rho)\bigr),
\end{equation}
and hence the two moment conditions are equivalent, but we distinguish the corresponding moment quantities when needed. We now give the precise solution concept.

The regularity class is fixed as follows. On a finite interval $[0,T]$, require
\begin{equation}\label{eq:weak-solution-common-class}
 \rho\in C([0,T];\mathcal P_1(\mathbb R^d)),\qquad
 \sup_{0\le t\le T}\mathcal M_r(\rho_t)<\infty,
\end{equation}
where $\mathcal P_1$ carries $W_1$, and require the associated velocity to satisfy
\begin{equation}\label{eq:weak-solution-velocity-class}
 v_{\rho_t}=-\nabla K*(\rho_t-\mu),\qquad
 v_\rho\in L^\infty(0,T;W^{1,\infty}(\mathbb R^d;\mathbb R^d))
       \cap C([0,T]\times\mathbb R^d;\mathbb R^d).
\end{equation}
For finite $p$, the density and jump source satisfy
\begin{equation}\label{eq:finite-p-weak-solution-class}
 \rho\in C([0,T];L^p)\cap C^1([0,T];W^{-1,p}),\qquad
 J(\rho)\in C([0,T];L^p).
\end{equation}
For $p=\infty$, replace \eqref{eq:finite-p-weak-solution-class} by
\begin{equation}\label{eq:infinite-p-weak-solution-class}
 \rho\in C_{w^*}([0,T];L^\infty)
       \cap\bigcap_{1\le s<\infty}C([0,T];L^s),\qquad
 \rho\in L^\infty((0,T)\times\mathbb R^d).
\end{equation}
Here the spatial spaces are over $\mathbb R^d$, and $C_{w^*}$ means continuity
in the $\sigma(L^\infty,L^1)$ topology.

\begin{definition}[Weak solution]\label{def:continuum-weak-solution}
Under the hypotheses of Theorem~\ref{thm:continuum-wellposedness}, a curve
in the preceding regularity class is a weak solution of \eqref{eq:pde}
on $[0,T]$ if, for every $\varphi\in C_c^\infty([0,T)\times\mathbb R^d)$,
\begin{align}
0={}&\int_0^T\!\!\int_{\mathbb R^d}
 (\partial_t\varphi+v_{\rho_t}\cdot\nabla\varphi)\rho_t\,dx\,dt
 +\int_{\mathbb R^d}\varphi(0,x)\rho_0(x)\,dx\nonumber \\
&+\int_0^T\!\!\int_{\mathbb R^d}\!\!\int_{\mathbb R^d}
 (\varphi(t,x+z)-\varphi(t,x))\lambda(x)\zeta_{\rho_t}(x,x+z)
 \kappa(z)\,dz\,\rho_t(x)\,dx\,dt.\label{eq:weak-jump-transport-formulation}
\end{align}
A curve on $[0,\infty)$ is a global weak solution if its restriction to
every finite interval is a weak solution in this sense.
\end{definition}
Let \(\eta\in C_c^\infty(B_1)\) be nonnegative and even, with
\(\int\eta=1\), and set
\begin{equation}
\label{eq:kernel-regularization}
  \eta_\varepsilon(x)=\varepsilon^{-d}\eta(x/\varepsilon),
  \qquad K_\varepsilon=\eta_\varepsilon*K
  \quad(\varepsilon>0),
  \qquad K_0=K,
\end{equation}
\begin{equation}
  f_\rho^\varepsilon=K_\varepsilon*(\rho-\mu),\qquad
  v_\rho^\varepsilon=-\nabla f_\rho^\varepsilon,\qquad
  \zeta_\rho^\varepsilon(x,y)
  =[f_\rho^\varepsilon(x)-f_\rho^\varepsilon(y)]_+ .
\end{equation}
We use \(\varepsilon=0\) for the unregularized quantities.
For
\(\nu\in\mathcal P_q(\mathbb R^d)\), define the inflow/outflow
operators by
\begin{subequations}\label{eq:frozen-inflow-outflow-operators}
\begin{align}
  \left(J_{\nu,\mathrm{in}}^\varepsilon u\right)(y)
  &:={}
  \int_{\mathbb R^d}
  \lambda(x)u(x)\zeta_\nu^\varepsilon(x,y)
  \kappa(y-x)\,dx,\\
  \left(J_{\nu,\mathrm{out}}^\varepsilon u\right)(x)
  &:={}
  \lambda(x)u(x)
  \int_{\mathbb R^d}
  \zeta_\nu^\varepsilon(x,y)\kappa(y-x)\,dy.
\end{align}
\end{subequations}
Thus \( J^\varepsilon(\rho)
  =J_{\rho,\mathrm{in}}^\varepsilon\rho
  -J_{\rho,\mathrm{out}}^\varepsilon\rho.\)
As above, we omit the superscript when \(\varepsilon=0\). We next recall the following singular-convolution and regularized-kernel estimates from
\cite[Lemmas~3.2-3.3]{rosenzweig2026wassersteingradientflowsmaximum}.
\begin{lemma}
\label{lem:transport-input}
Assume the hypotheses of
Theorem~\ref{thm:continuum-wellposedness} with \(p<\infty\), and let
\begin{equation}
\label{eq:transport-exponents}
  p_*=\frac{p}{p-1},\qquad
  \theta_1=\frac{(1-q)p_*}{d},\qquad
  \theta_2=\frac{(2-q)p_*}{d}<1 .
\end{equation}
For every probability density
\(\nu\in\mathcal P_r(\mathbb R^d)\cap L^p(\mathbb R^d)\),
\begin{equation}
\label{eq:transport-gradient-bound}
  \|\nabla v_\nu^\varepsilon\|_{L^\infty}
  \le C_{d,q,p}
  \left(1+\|\nu\|_{L^p}^{\theta_2}
           +\|\mu\|_{L^p}^{\theta_2}\right)
\end{equation}
uniformly for \(0\le\varepsilon\le1\). Moreover,
\begin{equation}
\label{eq:transport-velocity-bound}
  \|v_\nu^\varepsilon\|_{L^\infty}
  \le
  \begin{cases}
    C_{d,q,p}\left(1+\|\nu\|_{L^p}^{\theta_1} +\|\mu\|_{L^p}^{\theta_1}\right),
    &0<q<1,\\
    2,&q=1,\\
    C_q\left(1+\mathcal M_{q-1}(\nu)
                 +\mathcal M_{q-1}(\mu)\right),
       &1<q<2,
  \end{cases}
\end{equation}
where the exponents are defined in \eqref{eq:transport-exponents}. Finally,
\begin{equation}
\label{eq:transport-laplacian-bound}
  \Delta K_\varepsilon\le0,\qquad
  \|(-\Delta K_\varepsilon)*\mu\|_{L^\infty}
  \le C_{d,q,p}\left(1+\|\mu\|_{L^p}^{\theta_2}\right).
\end{equation}
\end{lemma}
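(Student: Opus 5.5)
Since this lemma is recalled from \cite[Lemmas~3.2--3.3]{rosenzweig2026wassersteingradientflowsmaximum}, I would only re-derive it from the pointwise kernel bounds plus a near/far splitting. The one structural input is commuting the mollifier onto the densities: $\nabla^k K_\varepsilon*(\nu-\mu)=\nabla^kK*(\eta_\varepsilon*\nu-\eta_\varepsilon*\mu)$. Here $\|\eta_\varepsilon*\nu\|_{L^p}\le\|\nu\|_{L^p}$, and, since $\varepsilon\le1$ and $\operatorname{supp}\eta\subset B_1$, $\mathcal M_{q-1}(\eta_\varepsilon*\nu)\le C(1+\mathcal M_{q-1}(\nu))$. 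So it suffices to prove every bound for $\varepsilon=0$ with constants depending only on these norms.

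For \eqref{eq:transport-gradient-bound}, note that $p<\infty$ forces $d+q-2>0$ and hence $q-2>-d$. Therefore $\nabla^2K(z)=-q|z|^{q-2}\bigl(I+(q-2)\hat z\otimes\hat z\bigr)$ is locally integrable, with no singular distributional part at the origin, and $|\nabla^2K(z)|\le C|z|^{q-2}$. For a probability density $\omega\in L^p$ and $R>0$, split $\int|x-y|^{q-2}\omega(y)\,dy$ over $|x-y|<R$ and $|x-y|\ge R$.
\begin{itemize}
\item The near part is bounded by Hölder: $\||z|^{q-2}\mathbf 1_{B_R}\|_{L^{p_*}}\|\omega\|_{L^p}\le C R^{q-2+d/p_*}\|\omega\|_{L^p}$. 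The $L^{p_*}$ norm is finite exactly when $(2-q)p_*<d$, i.e.\ $\theta_2<1$, i.e.\ $p>p_c$.
\item The far part is at most $R^{q-2}$, since $\omega$ has unit mass.
\end{itemize}
Choosing $R=\|\omega\|_{L^p}^{-p_*/d}$ balances the two terms and gives $C\|\omega\|_{L^p}^{\theta_2}$. Applying this to $\omega=\nu$ and $\omega=\mu$ and using the triangle inequality yields \eqref{eq:transport-gradient-bound}.

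For \eqref{eq:transport-velocity-bound}, use $|\nabla K(z)|=q|z|^{q-1}$ and treat the three ranges separately.
\begin{itemize}
\item For $0<q<1$, the same splitting applies with exponent $q-1$. The near part is finite because $(1-q)p_*<(2-q)p_*<d$, and optimizing $R$ gives the exponent $\theta_1$.
\item For $q=1$, $|\nabla K|\equiv1$, so each of the two convolutions is bounded by $1$ and $|v|\le2$.
\item For $1<q<2$, a naive bound $\int|x-y|^{q-1}d\nu(y)$ grows in $x$, so I would use the zero total mass of $\nu-\mu$. Write $\nabla K*(\nu-\mu)(x)=\int(\nabla K(x-y)-\nabla K(x))\,d(\nu-\mu)(y)$ and use that $\nabla K$ is $(q-1)$-Hölder: $|\nabla K(a)-\nabla K(b)|\le C_q|a-b|^{q-1}$. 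This gives the uniform bound $C_q(\mathcal M_{q-1}(\nu)+\mathcal M_{q-1}(\mu))$.
\end{itemize}
This last step, avoiding linear growth in $x$ through cancellation, is the only point that is not pure scaling.

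For \eqref{eq:transport-laplacian-bound}, $\Delta K(z)=-q(d+q-2)|z|^{q-2}\le0$ holds as a locally integrable function, again with no delta mass because $q>2-d$. Convolving with the nonnegative $\eta_\varepsilon$ preserves this sign, so $\Delta K_\varepsilon\le0$. The $L^\infty$ bound on $(-\Delta K_\varepsilon)*\mu=(-\Delta K)*(\eta_\varepsilon*\mu)$ is the near/far estimate above with $\omega=\eta_\varepsilon*\mu$.

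The main point requiring care is the claim that the distributional Hessian and Laplacian of $K$ coincide with their pointwise formulas. I would verify it by integrating by parts against test functions on $\{|z|>\delta\}$: the boundary terms are $O(\delta^{d+q-2})$, which tends to $0$ as $\delta\to0$ precisely because $d+q-2>0$.
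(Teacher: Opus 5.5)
Your proposal is correct and is essentially the argument behind the paper's citation of \cite[Lemmas~3.2--3.3]{rosenzweig2026wassersteingradientflowsmaximum}. Your near/far H\"older splitting is the unit-mass case of the interpolated singular-convolution bound the paper invokes in \eqref{eq:sublinear-velocity-time-difference}, the zero-mass recentering with the $(q-1)$-H\"older continuity of $\nabla K$ is the argument in the proof of Lemma~\ref{lem:jump-increment}, moving $\eta_\varepsilon$ onto the densities is how the paper treats the regularization in \eqref{eq:endpoint-regularized-target-coefficient}, and your check that $p<\infty$ forces $d+q-2>0$ (so $D^2K$ and $\Delta K$ have no singular part at the origin) is the right point of care.
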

Potential increments give jump bounds uniform in the departure point,
even for singular measures. We then bound the frozen inflow and outflow
operators and verify mass balance.
\begin{lemma}
\label{lem:jump-increment}
Let $\mu,\nu\in\mathcal P_q(\mathbb R^d)$. Choose $C_q$ in \eqref{eq:acceptance-coefficient} sufficiently large, depending only on $q$. Then, uniformly for
\(0\le\varepsilon\le1\),
\begin{equation}
\label{eq:jump-increment-bound}
  0\le\zeta_\nu^\varepsilon(x,x+z)
  \le A_\nu|z|^{1\wedge q},
  \qquad x,z\in\mathbb R^d.
\end{equation}
In particular, if \(r\ge q\), \(R>0\),
\(\mu,\nu\in\mathcal P_r(\mathbb R^d)\), and
\(\mathcal M_r(\nu)\le R\), then
\begin{equation}
\label{eq:acceptance-coefficient-on-moment-ball}
  A_\nu\le 
  A_R:=
  \begin{cases}
    2,&0<q\le1,\\
    C_q\left(2+R+\mathcal M_r(\mu)\right),&1<q<2.
  \end{cases}
\end{equation}
\end{lemma}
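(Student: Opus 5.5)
Lower bound: $\zeta\ge0$ holds by definition of the positive part, so the content of the lemma is the upper bound on the potential increment. Since $\zeta^\varepsilon_\nu(x,x+z)\le|f^\varepsilon_\nu(x)-f^\varepsilon_\nu(x+z)|$ and $f^\varepsilon_\nu=\eta_\varepsilon*f_\nu$, with $\eta_\varepsilon$ a probability density, it suffices to bound the increments of $f_\nu=K*(\nu-\mu)$. The bound will be uniform in $x$ and $\varepsilon$, because averaging over translations preserves it:
\[
|f^\varepsilon_\nu(x)-f^\varepsilon_\nu(x+z)|\le\int\eta_\varepsilon(w)\,|f_\nu(x-w)-f_\nu(x-w+z)|\,dw .
\]
Here $f_\nu$ is well defined because $\nu,\mu\in\mathcal P_q$.

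\emph{Case $0<q\le1$.} The map $t\mapsto t^q$ is subadditive, so $\bigl||a|^q-|b|^q\bigr|\le|a-b|^q$. Hence, for every $y$,
\[
\bigl|K(x-y)-K(x+z-y)\bigr|\le|z|^q .
\]
Integrating against $\nu$ and against $\mu$ separately gives $|f_\nu(x)-f_\nu(x+z)|\le2|z|^q$. This is $A_\nu=2$.

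\emph{Case $1<q<2$.} Use the mean value theorem: $\bigl||a|^q-|b|^q\bigr|\le q\,|a-b|\max(|a|,|b|)^{q-1}$. Since $q-1\in(0,1)$, we have $\max(|a|,|b|)^{q-1}\le|a|^{q-1}+|a-b|^{q-1}$. Taking $a=x-y$ and $b=x+z-y$ gives
\[
\bigl|K(x-y)-K(x+z-y)\bigr|\le q|z|\bigl(|x-y|^{q-1}+|z|^{q-1}\bigr).
\]
This depends on $x$, which is not what we want. I would instead use a form that does not depend on the departure point. One could write the difference against $\nu-\mu$ as $\int_0^1\nabla K*(\nu-\mu)(x+sz)\cdot z\,ds$ and use that $\nabla K*(\nu-\mu)$ is bounded. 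Indeed, by \eqref{eq:transport-velocity-bound} with $\varepsilon=0$,
\[
\|\nabla K*(\nu-\mu)\|_{L^\infty}\le C_q\bigl(1+\mathcal M_{q-1}(\nu)+\mathcal M_{q-1}(\mu)\bigr).
\]
That lemma is stated for densities, so I would re-derive the bound directly from $|\nabla K(w)|=q|w|^{q-1}$. The difference $\nabla K(x-y)-\nabla K(x)$ is $(q-1)$-Hölder with constant $C_q$, so integrating against the zero-mass measure $\nu-\mu$ gives
\[
\Bigl|\int\nabla K(x-y)\,d(\nu-\mu)(y)\Bigr|=\Bigl|\int\bigl(\nabla K(x-y)-\nabla K(x)\bigr)\,d(\nu-\mu)(y)\Bigr|\le C_q\bigl(\mathcal M_{q-1}(\nu)+\mathcal M_{q-1}(\mu)\bigr).
\]
This estimate is uniform in $x$, holds for arbitrary measures in $\mathcal P_q$, and has no additive constant. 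Therefore $|f_\nu(x)-f_\nu(x+z)|\le A_\nu|z|$ with $A_\nu$ as in \eqref{eq:acceptance-coefficient}. The exponent $1\wedge q=1$ matches.

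\emph{Moment-ball bound.} For \eqref{eq:acceptance-coefficient-on-moment-ball}, note $q-1<1\le r$, so $|x|^{q-1}\le1+|x|^r$. This gives $\mathcal M_{q-1}(\nu)\le1+R$ and $\mathcal M_{q-1}(\mu)\le1+\mathcal M_r(\mu)$, so $A_\nu\le C_q(2+R+\mathcal M_r(\mu))$.

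\emph{Main obstacle.} The delicate point is the uniform-in-$x$ gradient bound for $1<q<2$ for general (possibly singular) measures. The zero-mass cancellation via $\nabla K(x-y)-\nabla K(x)$ handles it.
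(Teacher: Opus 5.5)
Your proposal is correct and follows the paper's proof: the $q$-H\"older increment of $K$ gives $A_\nu=2$ for $q\le1$; for $1<q<2$ you recenter $\nabla K(x-y)-\nabla K(x)$ against the zero-mass measure $\nu-\mu$, use the $(q-1)$-H\"older continuity of $\nabla K$, and integrate along the segment; the moment-ball bound then follows from $|x|^{q-1}\le1+|x|^r$. The only cosmetic difference is that you move the regularization onto $f_\nu$ via $f_\nu^\varepsilon=\eta_\varepsilon*f_\nu$. The paper instead uses the uniform H\"older bounds for $K_\varepsilon$ and $\nabla K_\varepsilon$ directly, and these two devices are equivalent.
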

\begin{proof}
If \(0<q\le1\), the \(q\)-H\"older continuity of \(K\) and
\eqref{eq:kernel-regularization} give
\begin{equation}
\label{eq:regularized-kernel-sublinear-increment}
  |K_\varepsilon(x)-K_\varepsilon(y)|
  \le |x-y|^q,
  \qquad x,y\in\mathbb R^d.
\end{equation}
Since \(\nu\) and \(\mu\) are probability measures,
\eqref{eq:regularized-kernel-sublinear-increment} implies
\begin{equation}
\label{eq:sublinear-potential-increment-bound}
  |f_\nu^\varepsilon(x)-f_\nu^\varepsilon(x+z)|
  \le2|z|^q.
\end{equation}
Suppose that \(1<q<2\). Since
\((\nu-\mu)(\mathbb R^d)=0\),
\begin{equation}
\label{eq:superlinear-potential-gradient-recentering}
  \nabla f_\nu^\varepsilon(x)
  =
  \int_{\mathbb R^d}
  \left(\nabla K_\varepsilon(x-y)-\nabla K_\varepsilon(x)\right)
  \,d(\nu-\mu)(y).
\end{equation}
The \((q-1)\)-H\"older seminorms of
\(\nabla K_\varepsilon\) are bounded uniformly for
\(0\le\varepsilon\le1\). Therefore,
\eqref{eq:superlinear-potential-gradient-recentering} yields
\begin{equation}
\label{eq:superlinear-potential-gradient-bound}
  \|\nabla f_\nu^\varepsilon\|_{L^\infty}
  \le C_q\left(
  \mathcal M_{q-1}(\nu)+\mathcal M_{q-1}(\mu)
  \right).
\end{equation}
The fundamental theorem of calculus and
\eqref{eq:superlinear-potential-gradient-bound} give
\begin{equation}
\label{eq:superlinear-potential-increment-bound}
  |f_\nu^\varepsilon(x)-f_\nu^\varepsilon(x+z)|
  \le C_q\left(
  \mathcal M_{q-1}(\nu)+\mathcal M_{q-1}(\mu)
  \right)|z|.
\end{equation}
Equations \eqref{eq:sublinear-potential-increment-bound} and
\eqref{eq:superlinear-potential-increment-bound} prove
\eqref{eq:jump-increment-bound}, with \(A_{\nu}\) defined in \eqref{eq:acceptance-coefficient}.
Equation \eqref{eq:acceptance-coefficient-on-moment-ball} follows from
Lyapunov's inequality, \eqref{eq:power-bracket-moment-comparison}, and
\(r\ge q\).
\end{proof}

\begin{lemma}
\label{lem:jump-inflow-outflow}
Let \(r\ge q\), \(R>0\), and
\(\mu,\nu\in\mathcal P_r(\mathbb R^d)\), with \(\nu\) satisfying
\(\mathcal M_r(\nu)\le R\). For every \(1\le s\le\infty\),
\(u\in L^s(\mathbb R^d)\), and \(0\le\varepsilon\le1\),
\begin{equation}
\label{eq:frozen-inflow-outflow-bound}
\|J_{\nu,\mathrm{in}}^\varepsilon u\|_{L^s}
  +\|J_{\nu,\mathrm{out}}^\varepsilon u\|_{L^s}
  \le
  2\|\lambda\|_{L^\infty}A_R\mathcal{M}_{1\wedge q}(\kappa)\|u\|_{L^s}.
\end{equation}
Both operators are positive, and \(J_{\nu,\mathrm{out}}^\varepsilon\) is a
nonnegative multiplication operator. Moreover, for every
\(u\in L^1(\mathbb R^d)\),
\begin{equation}
\label{eq:frozen-mass-balance}
  \int_{\mathbb R^d}
  \left(J_{\nu,\mathrm{in}}^\varepsilon u\right)(y)\,dy
  =
  \int_{\mathbb R^d}
  \left(J_{\nu,\mathrm{out}}^\varepsilon u\right)(x)\,dx.
\end{equation}
Consequently, \(J^\varepsilon(\rho)\) has zero total mass whenever
\(\rho\) is a probability density in \(\mathcal P_r(\mathbb R^d)\).
\end{lemma}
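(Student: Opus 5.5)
The plan is to handle the pointwise bound first, then the $L^s$ estimates, and finally the mass identity. Everything rests on the acceptance bound of Lemma~\ref{lem:jump-increment}. Since $\mathcal M_r(\nu)\le R$ and $r\ge q$, that lemma gives
\[
0\le\zeta_\nu^\varepsilon(x,y)\le A_R|y-x|^{1\wedge q}
\]
uniformly in $0\le\varepsilon\le1$. Positivity of both operators is then immediate, because $\lambda$, $\zeta_\nu^\varepsilon$ and $\kappa$ are all nonnegative. For the outflow operator, set $m(x)=\lambda(x)\int\zeta_\nu^\varepsilon(x,y)\kappa(y-x)\,dy$; then $J_{\nu,\mathrm{out}}^\varepsilon$ is multiplication by $m$. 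Substituting $y=x+z$ gives
\[
0\le m(x)\le\|\lambda\|_{L^\infty}A_R\int|z|^{1\wedge q}\kappa(z)\,dz=\|\lambda\|_{L^\infty}A_R\mathcal M_{1\wedge q}(\kappa).
\]
This quantity is finite because $\kappa\in\mathcal P_{r+1}$ and $1\wedge q\le r+1$. Hence $\|J_{\nu,\mathrm{out}}^\varepsilon u\|_{L^s}\le\|m\|_{L^\infty}\|u\|_{L^s}$ for every $s$.

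For the inflow operator, I would not use Young's inequality directly, since the kernel $\lambda(x)\zeta_\nu^\varepsilon(x,y)\kappa(y-x)$ is not of convolution type. Instead I would dominate it pointwise:
\[
|J_{\nu,\mathrm{in}}^\varepsilon u|(y)\le\|\lambda\|_{L^\infty}A_R\int|u(x)|\,g(y-x)\,dx=\|\lambda\|_{L^\infty}A_R\,(|u|*g)(y),
\qquad g(z)=|z|^{1\wedge q}\kappa(z).
\]
Here $g\ge0$ and $\|g\|_{L^1}=\mathcal M_{1\wedge q}(\kappa)$. Young's inequality $\|\,|u|*g\|_{L^s}\le\|g\|_{L^1}\|u\|_{L^s}$, valid for all $1\le s\le\infty$, then gives the same constant. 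Adding the two bounds yields \eqref{eq:frozen-inflow-outflow-bound}, with the factor $2$ accounting for the two operators. One should also record that $J_{\nu,\mathrm{in}}^\varepsilon u$ is measurable and defined a.e.; this follows from Tonelli applied to the dominating integrand.

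For the mass balance \eqref{eq:frozen-mass-balance} with $u\in L^1$, I would integrate $J_{\nu,\mathrm{in}}^\varepsilon u$ in $y$ and exchange the order of integration by Fubini. The justification is the absolute bound just obtained: the integrand $\lambda(x)|u(x)|\zeta_\nu^\varepsilon(x,y)\kappa(y-x)$ has total integral at most $\|\lambda\|_{L^\infty}A_R\mathcal M_{1\wedge q}(\kappa)\|u\|_{L^1}<\infty$. Integrating first in $y$ produces exactly $\int (J_{\nu,\mathrm{out}}^\varepsilon u)(x)\,dx$.

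For the final claim, take $\nu=\rho$ and $u=\rho$, a probability density in $\mathcal P_r$; then $\mathcal M_r(\rho)\le R$ for $R=\mathcal M_r(\rho)+1$. Writing $J^\varepsilon(\rho)=J_{\rho,\mathrm{in}}^\varepsilon\rho-J_{\rho,\mathrm{out}}^\varepsilon\rho$, both terms are in $L^1$ with equal integrals, so the total mass is zero.

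No step is a real obstacle. The only point needing care is the non-convolution structure of the inflow kernel, which the pointwise domination by $|u|*g$ resolves, and the same domination supplies the integrability needed for Fubini.
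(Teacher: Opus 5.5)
Your proposal is correct and follows the paper's proof essentially step for step. The pointwise bound from Lemma~\ref{lem:jump-increment} makes $J_{\nu,\mathrm{out}}^\varepsilon$ multiplication by a function bounded by $\|\lambda\|_{L^\infty}A_R\mathcal M_{1\wedge q}(\kappa)$ and dominates the inflow by $\|\lambda\|_{L^\infty}A_R\,(|u|*g)$; Young's inequality, nonnegativity of the integrands, and Fubini then give the $L^s$ bound, positivity, and the mass balance. Your explicit Fubini justification through the absolute bound, and the choice $R=\mathcal M_r(\rho)+1$ for the final claim, only spell out details the paper leaves implicit.
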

\begin{proof}
Equations \eqref{eq:jump-increment-bound} and
\eqref{eq:acceptance-coefficient-on-moment-ball} imply
\begin{align}
    \label{eq:frozen-outflow-pointwise-bound}
  \left|\left(J_{\nu,\mathrm{out}}^\varepsilon u\right)(x)\right|
  &\le
  \|\lambda\|_{L^\infty}A_R
  \mathcal{M}_{1\wedge q}(\kappa)|u(x)|, \\
  \label{eq:frozen-inflow-pointwise-bound}
  \left|\left(J_{\nu,\mathrm{in}}^\varepsilon u\right)(y)\right|
  &\le\|\lambda\|_{L^\infty}A_R
  \int_{\mathbb R^d}|u(y-z)||z|^{1\wedge q}\kappa(z)\,dz.
\end{align}
Young's inequality applied to \eqref{eq:frozen-inflow-pointwise-bound},
together with \eqref{eq:frozen-outflow-pointwise-bound}, proves
\eqref{eq:frozen-inflow-outflow-bound}. Positivity follows from
\eqref{eq:frozen-inflow-outflow-operators}, and
\eqref{eq:frozen-mass-balance} follows from Fubini's theorem.
\end{proof}

The linear equation with fixed velocity and jump rates is treated in
Lemma~\ref{lem:linear-jump-transport}, in Appendix~\ref{app:positive-linear}.
Transport characteristics give its mild formulation \cite[Section~8.1]{AGS08},
and separating inflow from outflow preserves positivity.
Equivalence with the distributional formulation will recover strong time
continuity after compactness.

Contracting the frozen solution map in \(W_1\) gives local existence;
uniform \(L^p\) and moment bounds yield global continuation. For \(q\le1\),
the density estimate closes first; for \(q>1\), moments must first control
the acceptance coefficient.
\begin{lemma}
\label{prop:regularized-jump-transport}
Assume the hypotheses of
Theorem~\ref{thm:continuum-wellposedness} with \(p<\infty\).
For every \(\varepsilon>0\), the equation
\begin{equation}
\label{eq:regularized-jump-transport}
  \partial_t\rho^\varepsilon
  +\nabla\!\cdot(\rho^\varepsilon v_{\rho^\varepsilon}^\varepsilon)
  =J^\varepsilon(\rho^\varepsilon),
  \qquad \rho^\varepsilon_{t=0}=\rho_0,
\end{equation}
has a unique global nonnegative, mass-preserving mild solution. For every
\(T>0\), there is a constant \(C_T<\infty\), independent of
\(\varepsilon\in(0,1]\), such that
\begin{equation}
\label{eq:regularized-uniform-bounds}
  \sup_{0\le t\le T}
  \left(
    \|\rho_t^\varepsilon\|_{L^p}
    +\mathcal M_r(\rho_t^\varepsilon)
    +\|v_{\rho_t^\varepsilon}^\varepsilon\|_{W^{1,\infty}}
  \right)
  \le C_T.
\end{equation}
The solutions also satisfy
\begin{equation}
\label{eq:regularized-W1-time-modulus}
  W_1(\rho_t^\varepsilon,\rho_s^\varepsilon)
  \le C_T|t-s|,
  \qquad 0\le s,t\le T.
\end{equation}
\end{lemma}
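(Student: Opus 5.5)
We prove Lemma~\ref{prop:regularized-jump-transport} by a fixed point: construct local solutions in $W_1$, then extend them globally using a priori bounds that do not depend on $\varepsilon$. Fix $\varepsilon\in(0,1]$ and $R>0$, and consider curves $\nu\in C([0,T_0];\mathcal P_1)$ with $\sup_t\mathcal M_r(\nu_t)\le R$.

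\textbf{Step 1: the frozen solution map.} Given such a $\nu$, freeze the velocity $v^\varepsilon_{\nu_t}$ and the rates $\zeta^\varepsilon_{\nu_t}$, and let $\Phi(\nu)=\rho$ be the solution of the positive linear equation $\partial_t\rho+\nabla\cdot(\rho v^\varepsilon_{\nu_t})=J^\varepsilon_{\nu_t,\mathrm{in}}\rho-J^\varepsilon_{\nu_t,\mathrm{out}}\rho$ given by Lemma~\ref{lem:linear-jump-transport}. This uses two inputs:
\begin{itemize}
\item Since $\varepsilon>0$, $K_\varepsilon$ is smooth, and $\nabla K_\varepsilon$ is Lipschitz with $\varepsilon$-dependent constants. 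Hence $v^\varepsilon_{\nu_t}$ is Lipschitz, bounded via Lemma~\ref{lem:transport-input}, and continuous in $t$.
\item Lemma~\ref{lem:jump-inflow-outflow} bounds both jump operators on every $L^s$ by $2\|\lambda\|_{L^\infty}A_R\mathcal M_{1\wedge q}(\kappa)$, and makes them positive and mass-balanced.
\end{itemize}
So $\Phi(\nu)$ is a nonnegative probability density.

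\emph{$L^p$ bound.} Along characteristics, the Jacobian factor is controlled by $\|\nabla\cdot v^\varepsilon\|_{L^\infty}\le\|(-\Delta K_\varepsilon)*\mu\|_{L^\infty}+\dots$. Using $\Delta K_\varepsilon\le0$ from \eqref{eq:transport-laplacian-bound}, the term $-\Delta K_\varepsilon*\nu$ only helps. Together with the jump bounds this gives
\[
\frac{d}{dt}\|\rho_t\|_{L^p}^p\le p\bigl(C(1+\|\mu\|_{L^p}^{\theta_2})+C\|\lambda\|_{L^\infty}A_R\mathcal M_{1\wedge q}(\kappa)\bigr)\|\rho_t\|_{L^p}^p.
\]
\emph{Moment bound.} Test with $\langle x\rangle^r$. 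The transport term contributes at most $r\|v^\varepsilon\|_{L^\infty}\widetilde{\mathcal M}_r$. For the jump term, use $\langle x+z\rangle^r-\langle x\rangle^r\le C_r(|z|\langle x\rangle^{r-1}+|z|^r)$ and $\zeta\le A_R|z|^{1\wedge q}$; since $\kappa\in\mathcal P_{r+1}$, the jump term contributes at most $C\|\lambda\|_{L^\infty}A_R\widetilde{\mathcal M}_r$. Gronwall then shows that $\Phi$ maps the ball $\{\sup_t\mathcal M_r\le R,\ \sup_t\|\cdot\|_{L^p}\le R'\}$ into itself when $T_0$ is small.

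\textbf{Step 2: contraction in $\sup_t W_1$.} Take two frozen inputs $\nu,\tilde\nu$. Their velocities differ in $L^\infty$ by at most $\|\nabla^2K_\varepsilon\|_{L^\infty}W_1(\nu_t,\tilde\nu_t)$; for $q>1$ the growth of $\nabla K_\varepsilon$ is absorbed by the moment bound. Their rates satisfy $|\zeta^\varepsilon_\nu-\zeta^\varepsilon_{\tilde\nu}|\le 2\|\nabla K_\varepsilon\|_{\mathrm{Lip}}$-type bounds times $W_1(\nu_t,\tilde\nu_t)$, multiplied by $|z|$-weights integrable against $\kappa$.

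Compare the two solutions by duality against $1$-Lipschitz test functions, which are transported backward by the Lipschitz flow. The jump-mismatch terms are bounded using the moments. Stability in the initial data (here identical) together with Gronwall gives
\[
\sup_{t\le T_0}W_1(\Phi\nu,\Phi\tilde\nu)\le C_{\varepsilon,R}T_0\sup_{t\le T_0}W_1(\nu,\tilde\nu).
\]
Banach's fixed point theorem then yields a unique local mild solution.

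\textbf{Step 3: global continuation and $\varepsilon$-uniform bounds.} This is the main obstacle: the Step 1 bounds must be closed for the nonlinear solution itself, independently of $\varepsilon$ and of the local time.
\begin{itemize}
\item For $q\le1$, $A_\rho=2$ is universal. The $L^p$ Gronwall estimate from Step 1, now with $\nu=\rho$, closes directly. Then $\|v\|_{W^{1,\infty}}$ is bounded by Lemma~\ref{lem:transport-input} uniformly in $\varepsilon\le1$, and the moment Gronwall estimate follows.
\item For $1<q<2$, proceed in the opposite order. The velocity bound in \eqref{eq:transport-velocity-bound} and $A_\rho$ are both controlled by $1+\mathcal M_{q-1}(\rho)\le C\widetilde{\mathcal M}_r(\rho)$. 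The moment inequality is then only quadratic, $\frac{d}{dt}\widetilde{\mathcal M}_r\lesssim(1+\widetilde{\mathcal M}_r)\widetilde{\mathcal M}_r$, which could blow up. To avoid this, use the sharper estimate $\|v\|_{L^\infty},A_\rho\le C(1+\mathcal M_1(\rho))$ in the moment equation at exponent $1$ first. Testing with $\langle x\rangle$ gives $\frac{d}{dt}\widetilde{\mathcal M}_1\le C(1+\widetilde{\mathcal M}_1)$, which is linear. With $\mathcal M_1$ bounded on $[0,T]$, $A_\rho$ is bounded and the $r$-moment estimate becomes linear as well. The $L^p$ bound follows as in the previous case.
\end{itemize}
These bounds depend only on $T$ and the data, so the local existence time is uniform and iteration yields a global solution; this gives \eqref{eq:regularized-uniform-bounds}.

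Finally, \eqref{eq:regularized-W1-time-modulus} follows by testing the mild or weak form with $1$-Lipschitz $\varphi$. The transport contribution is bounded by $\|v\|_{L^\infty}$, and the jump contribution by
\[
\|\lambda\|_{L^\infty}A\iint|z|^{1+(1\wedge q)}\kappa(z)\,dz\,d\rho,
\]
which is finite because $\kappa\in\mathcal P_{r+1}$ and $r\ge1$.
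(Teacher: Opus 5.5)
Your proposal is correct and follows the paper's overall architecture: a frozen positive linear problem via Lemma~\ref{lem:linear-jump-transport}, a Banach fixed point in $\sup_t W_1$, $\varepsilon$-uniform bounds closed $L^p$-first for $q\le1$ and moments-first for $q>1$, continuation, and Lipschitz testing for the time modulus. It differs from the paper in two steps.

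\textbf{Contraction.} The paper builds an explicit common-proposal coupling of the two frozen dynamics by thinning a dominating Poisson proposal process. It identifies the marginals through a total-variation uniqueness argument for measure-valued solutions, and then applies Gronwall to $D(t)=\int|x-y|\,d\Pi_t$. You argue instead by duality with backward-transported $1$-Lipschitz tests. This avoids building the coupling and identifying its marginals, but it needs two things you only gesture at.
\begin{itemize}
\item You must justify the duality identity for mild solutions against unbounded Lipschitz tests.
\item Beyond the rate-mismatch bound you state, you must control the term in which the \emph{same} jump operator acts on $\Phi(\nu)-\Phi(\tilde\nu)$. That term requires $x\mapsto\lambda(x)\zeta^\varepsilon_{\nu_t}(x,x+z)$ to be Lipschitz with constant $C_{\varepsilon,R}|z|$, which follows from $\lambda\in W^{1,\infty}$ and $\|D^2K_\varepsilon\|_{L^\infty}<\infty$. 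This is the analytic counterpart of \eqref{eq:frozen-effective-rate-difference}--\eqref{eq:frozen-acceptance-rate-difference}. Your ``Gronwall'' step implicitly relies on it, so make it explicit.
\end{itemize}
The paper's coupling has a further benefit: it is reused for uniqueness, the one-dimensional case, and the particle comparison.

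\textbf{Moments for $1<q<2$.} The quadratic growth you fear does not arise in the paper. The correlation inequality \eqref{eq:bracket-moment-product-bound} gives $(1+\mathcal M_{q-1})\widetilde{\mathcal M}_{r-1}\le C\widetilde{\mathcal M}_r$ directly, so the $r$-moment inequality is already linear. Your preliminary first-moment estimate is linear because $q-1<1$ and $\mathcal M_2(\kappa)<\infty$. It is a valid, slightly longer alternative that avoids that inequality.
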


\begin{proof}
We divide the proof into three steps.

\emph{Step 1: Local construction.}
Fix \(R>\|\rho_0\|_{L^p}+\widetilde{\mathcal M}_r(\rho_0)+1\)
and, for \(\tau>0\), let
\begin{equation}
\label{eq:finite-p-fixed-point-space}
\begin{aligned}
\mathsf X_{\tau,R}:=\left\{
  \nu\in C([0,\tau];\mathcal P_1(\mathbb R^d)):\:\nu_0=\rho_0,\: \nu_t\in L^p(\mathbb R^d), \:\sup_{0\le t\le\tau}
  \left(\|\nu_t\|_{L^p}+\widetilde{\mathcal M}_r(\nu_t)\right)\le R
\right\}
\end{aligned}
\end{equation}
with metric
\begin{equation}
\label{eq:finite-p-fixed-point-metric}
  d_\tau(\nu,\omega):=\sup_{0\le t\le\tau}W_1(\nu_t,\omega_t).
\end{equation}
We claim that $(\textsf{X}_{\tau, R}, d_\tau)$ is complete: let $(\nu^{(n)})_n$ be a $d_\tau$-Cauchy sequence. By completeness of $(\mathcal P_1, W_1)$, there is $\nu\in C([0,\tau];\mathcal P_1(\mathbb R^d))$ with $\nu_0 = \rho_0$ such that $\nu^{(n)} \to \nu$ in $d_\tau$. When $t$ is fixed, the sequence $(\nu^{(n)}_t)_n$ is bounded in $L^p$, so Banach-Alaoglu gives a subsequence that converges weakly in $L^p$ to a limit. Since convergence in $W_1$ implies narrow convergence, we then use uniqueness of the distributional limit to deduce that this limit must be $\nu_t$; hence $\nu_t \in L^p$.  
Finally, lower semicontinuity of the $L^p$ norm and of $\widetilde{\mathcal M}_r$ demonstrates the third requirement.

By \eqref{eq:power-bracket-moment-comparison}, every
\(\nu\in\mathsf X_{\tau,R}\) satisfies \(\mathcal M_r(\nu_t)\le R\). For such
\(\nu\), consider
\begin{equation}
\label{eq:frozen-nonlinear-map-equation}
  \partial_tu+\nabla\!\cdot(uv_\nu^\varepsilon)
  =J_{\nu,\mathrm{in}}^\varepsilon u
  -J_{\nu,\mathrm{out}}^\varepsilon u,
  \qquad u(0)=\rho_0.
\end{equation}
By Lemma~\ref{lem:linear-jump-transport},
\eqref{eq:frozen-nonlinear-map-equation} has a unique solution, denoted by
\(\Phi_\varepsilon(\nu)\). The frozen analogues of
\eqref{eq:regularized-moment-differential-inequality} and
\eqref{eq:regularized-Lp-differential-inequality} imply, uniformly for
\(\nu\in\mathsf X_{\tau,R}\),
\begin{equation}
\label{eq:frozen-map-invariant-bound}
  \|\Phi_\varepsilon(\nu)_t\|_{L^p}
  +\widetilde{\mathcal M}_r(\Phi_\varepsilon(\nu)_t)
  \le
  \left(\|\rho_0\|_{L^p}
  +\widetilde{\mathcal M}_r(\rho_0)\right)
  e^{C_{\varepsilon,R}t},
  \qquad 0\le t\le\tau.
\end{equation}
The frozen version of \eqref{eq:regularized-dual-time-bound} also gives
\(W_1\)-continuity of \(\Phi_\varepsilon(\nu)\). Thus
\eqref{eq:frozen-map-invariant-bound} implies
\(\Phi_\varepsilon(\mathsf X_{\tau,R})
\subset\mathsf X_{\tau,R}\) for all sufficiently small \(\tau\).

For \(\nu,\omega\in\mathsf X_{\tau,R}\), set \(  c_\nu^\varepsilon(t,x,z)
  :=\lambda(x)\zeta_{\nu_t}^\varepsilon(x,x+z),\)
and define \(c_\omega^\varepsilon\) analogously. Start from the diagonal coupling \(  \Pi_0=(\mathrm{Id},\mathrm{Id})_\#\rho_0\)
and consider the common-proposal coupling with generator
\begin{align}
\mathcal L_t\Psi(x,y)
&=v_{\nu_t}^\varepsilon(x)\cdot\nabla_x\Psi
  +v_{\omega_t}^\varepsilon(y)\cdot\nabla_y\Psi
\nonumber \\
&+\int_{\R^d}
  \left(\Psi(x+z,y+z)-\Psi(x,y)\right)
  (c_\nu^\varepsilon(t,x,z)\wedge c_{\omega}^\varepsilon(t,y,z))
  \kappa(z)\,dz
\nonumber \\
&+\int_{\R^d}
  \left(\Psi(x+z,y)-\Psi(x,y)\right)
  [c_\nu^\varepsilon(t,x,z)-c_{\omega}^\varepsilon(t,y,z)]_+
  \kappa(z)\,dz
\nonumber \\
&+\int_{\R^d}
  \left(\Psi(x,y+z)-\Psi(x,y)\right)
  [c_{\omega}^\varepsilon(t,y,z)-c_\nu^\varepsilon(t,x,z)]_+
  \kappa(z)\,dz .\label{eq:common-proposal-generator}
\end{align} 
To construct this coupling, set
\begin{equation}
  h_R(z):=\|\lambda\|_{L^\infty}A_R|z|^{1\wedge q}\kappa(z),
  \qquad \Lambda:=\int_{\mathbb R^d}h_R(z)\,dz<\infty.
\end{equation}
Use a common Poisson proposal process with intensity
\(dt\,h_R(z)\,dz\,du\), where \(0\le u\le1\). At a proposal
\((t,z,u)\), the first coordinate accepts the displacement \(z\) if
\(u\le c_\nu^\varepsilon(t,x,z)\kappa(z)/h_R(z)\), and the second
coordinate uses the same mark with
\(c_\omega^\varepsilon(t,y,z)\kappa(z)/h_R(z)\); both ratios are set to
zero when \(h_R(z)=0\). These ratios lie in \([0,1]\) by
\eqref{eq:jump-increment-bound}. Between proposals, the coordinates
follow their respective transport flows. There are almost surely
finitely many proposals on every finite interval, so this construction
defines a process whose law \(\Pi_t\) is a narrowly continuous
probability solution with generator
\eqref{eq:common-proposal-generator}.

The frozen equations are also unique among narrowly continuous
probability-measure solutions. Indeed, their characteristic formula
holds with the time integral interpreted weakly against test functions.
Transport pushforward preserves total variation, while the jump
operator has total-variation norm at most \(2\Lambda\). Hence two
solutions \(\sigma,\widetilde\sigma\) with the same initial datum satisfy
\begin{equation}
  \|\sigma_t-\widetilde\sigma_t\|_{\mathrm{TV}}
  \le 2\Lambda\int_0^t
       \|\sigma_s-\widetilde\sigma_s\|_{\mathrm{TV}}\,ds,
\end{equation}
and Gronwall's inequality gives uniqueness.
Testing the coupled equation by functions of either coordinate,
with cutoffs in the other, shows that its marginals solve the
corresponding frozen equations. This uniqueness therefore identifies
them as \(\Phi_\varepsilon(\nu)_t\) and
\(\Phi_\varepsilon(\omega)_t\).

Then $c^{\varepsilon}_\nu$ satisfies
\begin{align}
|c_\nu^\varepsilon(t,x,z)&-c_{\omega}^\varepsilon(t,y,z)|
\nonumber \\
&\quad\le
\|\lambda\|_{L^\infty}
|\zeta_{\nu_t}^\varepsilon(x,x+z)
 -\zeta_{\omega_t}^\varepsilon(y,y+z)|
+\|\nabla\lambda\|_{L^\infty} A_R|z|^{1\wedge q}|x-y|,\label{eq:frozen-effective-rate-difference}
\end{align}
and the bound \(\|D^2K_\varepsilon\|_{L^\infty}<\infty\) gives
\begin{equation}
\label{eq:frozen-acceptance-rate-difference}
  |\zeta_{\nu_t}^\varepsilon(x,x+z)
 -\zeta_{\omega_t}^\varepsilon(y,y+z)|
  \le C_{\varepsilon,R}|z|
  \left(|x-y|+W_1(\nu_t,\omega_t)\right).
\end{equation}
With \(D\) defined by
\begin{equation}
\label{eq:frozen-coupling-distance}
  D(t):=\int_{\R^d\times \R^d}|x-y|\,d\Pi_t(x,y),
\end{equation}
testing the coupled equation~\eqref{eq:common-proposal-generator} against smooth truncations of \(\Psi(x, y) = |x - y|\) and then using 
\eqref{eq:frozen-effective-rate-difference} and
\eqref{eq:frozen-acceptance-rate-difference} yields 
\begin{equation} 
\label{eq:frozen-coupling-differential-inequality}
  D'(t)\le C_{\varepsilon,R}
  \left(D(t)+W_1(\nu_t,\omega_t)\right).
\end{equation}
Since $\Pi_0$ is the diagonal coupling, \(D(0)=0\), so
\eqref{eq:frozen-coupling-differential-inequality} gives
\begin{equation}
\label{eq:frozen-map-contraction-bound}
  \sup_{0\le t\le\tau}
  W_1(\Phi_\varepsilon(\nu)_t,\Phi_\varepsilon(\omega)_t)
  \le
  \tau C_{\varepsilon,R}e^{C_{\varepsilon,R}\tau}d_\tau(\nu,\omega).
\end{equation}
After decreasing \(\tau\), \eqref{eq:frozen-map-contraction-bound} shows that
\(\Phi_\varepsilon\) is a contraction.
Banach's theorem gives a unique local solution, and
Lemmas~\ref{lem:jump-inflow-outflow} and~\ref{lem:linear-jump-transport} give positivity and mass conservation.

\emph{Step 2: Uniform estimates.}
It remains to derive estimates that are uniform in \(\varepsilon\). We apply
the transport-moment argument from
\cite[proof of Lemma~3.5]{rosenzweig2026wassersteingradientflowsmaximum}
to $x \mapsto \langle x\rangle ^r$. 
We shall repeatedly use the interpolation inequality
\begin{equation}
\label{eq:bracket-moment-product-bound}
  \widetilde{\mathcal M}_a(\nu)
  \widetilde{\mathcal M}_b(\nu)
  \le
  \widetilde{\mathcal M}_r(\nu),
  \qquad
  a,b\ge0,\qquad a+b\le r.
\end{equation}
Since
\(
  |\nabla\langle x\rangle^r|
  \le C_r\langle x\rangle^{r-1},
\)
the transport contribution satisfies
\begin{equation}
\label{eq:transport-bracket-moment-contribution}
\begin{aligned}
\left|
  \int_{\mathbb R^d}
  \nabla\langle x\rangle^r\cdot
  v_{\rho^\varepsilon}^\varepsilon(x)
  \,d\rho^\varepsilon(x)
\right|
\le
\begin{cases}
C\left(1+\|\rho^\varepsilon\|_{L^p}^{\theta_1}\right)
\widetilde{\mathcal M}_r(\rho^\varepsilon),
&0<q<1,\\[1mm]
C\widetilde{\mathcal M}_r(\rho^\varepsilon),
&1\le q<2.
\end{cases}
\end{aligned}
\end{equation}
For \(0<q<1\), this follows directly from the first estimate in
\eqref{eq:transport-velocity-bound}. When \(q=1\), it follows from
\(\|v_{\rho^\varepsilon}^\varepsilon\|_{L^\infty}\le2\). Finally, when
\(1<q<2\), \eqref{eq:transport-velocity-bound} gives
\(
  \|v_{\rho^\varepsilon}^\varepsilon\|_{L^\infty}
  \le
  C\left(1+\mathcal M_{q-1}(\rho^\varepsilon)\right),
\)
where the contribution of \(\mu\) has been absorbed into \(C\). Therefore,
by \eqref{eq:power-bracket-moment-comparison} and
\eqref{eq:bracket-moment-product-bound},
\begin{equation}
\label{eq:superlinear-transport-moment-product}
\begin{aligned}
&\left(1+\mathcal M_{q-1}(\rho^\varepsilon)\right)
\widetilde{\mathcal M}_{r-1}(\rho^\varepsilon)\le
C\widetilde{\mathcal M}_{q-1}(\rho^\varepsilon)
 \widetilde{\mathcal M}_{r-1}(\rho^\varepsilon)
\le
C\widetilde{\mathcal M}_r(\rho^\varepsilon).
\end{aligned}
\end{equation}
The last inequality is valid because
\(
  (q-1)+(r-1)\le r
\).

We next estimate the jump contribution. Note
\begin{equation}
\label{eq:moment-weight-increment}
  |\langle x+z\rangle^r - \langle x\rangle^r|
  \le
  C_r|z|
  \left(
    \langle x\rangle^{r-1}
    +\langle z\rangle^{r-1}
  \right).
\end{equation}
Combining \eqref{eq:jump-increment-bound} with
\eqref{eq:moment-weight-increment}, we obtain
\begin{align}
&\left|
  \int_{\mathbb R^d}\int_{\mathbb R^d}
  \lambda(x)
  \left(\langle x+z\rangle^r - \langle x\rangle^r\right)
  \zeta_{\rho^\varepsilon}^\varepsilon(x,x+z)
  \kappa(z)\,dz\,d\rho^\varepsilon(x)
\right|
\nonumber \\
&\quad\le
C
\left(1+\mathcal M_{(q-1)_+}(\rho^\varepsilon)\right)
\left(1+\widetilde{\mathcal M}_{r-1}(\rho^\varepsilon)\right)
\int_{\mathbb R^d}
|z|^{1+(1\wedge q)}
\left(1+\langle z\rangle^{r-1}\right)
\kappa(z)\,dz
\nonumber \\
&\quad\le
C\widetilde{\mathcal M}_r(\rho^\varepsilon),\label{eq:jump-moment-contribution}
\end{align}
where we have absorbed the moments of $\kappa$ into the constant.

Combining \eqref{eq:transport-bracket-moment-contribution} and
\eqref{eq:jump-moment-contribution} gives
\begin{equation}
\label{eq:preliminary-regularized-moment-differential-inequality}
  \frac{d}{dt}
  \widetilde{\mathcal M}_r(\rho_t^\varepsilon)
  \le
  \begin{cases}
  C\left(1+\|\rho_t^\varepsilon\|_{L^p}^{\theta_1}\right)
  \widetilde{\mathcal M}_r(\rho_t^\varepsilon),
  &0<q<1,\\[1mm]
  C\widetilde{\mathcal M}_r(\rho_t^\varepsilon),
  &1\le q<2.
  \end{cases}
\end{equation}
When \(0<q<1\), we first close the \(L^p\)-estimate
\eqref{eq:regularized-Lp-differential-inequality}; its coefficient is
independent of the moments because
\(A_{\rho_t^\varepsilon}=2\). The resulting finite-time \(L^p\)-bound then
closes the first case of
\eqref{eq:preliminary-regularized-moment-differential-inequality}. When
\(1\le q<2\), the moment inequality already closes independently of the
\(L^p\)-norm. Thus, in either case,
\begin{equation}
\label{eq:regularized-moment-differential-inequality}
  \frac{d}{dt}
  \widetilde{\mathcal M}_r(\rho_t^\varepsilon)
  \le
  C_T\widetilde{\mathcal M}_r(\rho_t^\varepsilon),
  \qquad 0\le t\le T.
\end{equation}

For the \(L^p\) estimate, a renormalized test by
\((\rho^\varepsilon)^{p-1}\) gives
\begin{align}
\frac{d}{dt}\|\rho^\varepsilon\|_{L^p}^p
&=(p-1)\int_{\R^d}(\rho^\varepsilon)^p
  \Delta K_\varepsilon*(\rho^\varepsilon-\mu)\,dx\nonumber \\
&\quad
+p\int_{\R^d}(\rho^\varepsilon)^{p-1}
  \left(J_{\rho^\varepsilon,\mathrm{in}}^\varepsilon
  \rho^\varepsilon\right)\,dx
-p\int_{\R^d}(\rho^\varepsilon)^{p-1}
  \left(J_{\rho^\varepsilon,\mathrm{out}}^\varepsilon
  \rho^\varepsilon\right)\,dx\label{eq:regularized-Lp-identity}
\end{align}
and since \(\Delta K_\varepsilon\le 0\) by
\eqref{eq:transport-laplacian-bound} and
\(J_{\rho^\varepsilon,\mathrm{out}}^\varepsilon\) is positive, we have
\begin{equation}
\label{eq:Lp-nonpositive-contributions}
\begin{aligned}
  (p-1)\int_{\R^d}(\rho^\varepsilon)^p
  (\Delta K_\varepsilon*\rho^\varepsilon)\,dx\le0,\quad
  -p\int_{\R^d}(\rho^\varepsilon)^{p-1}
  \left(J_{\rho^\varepsilon,\mathrm{out}}^\varepsilon
  \rho^\varepsilon\right)\,dx\le0.
\end{aligned}
\end{equation}
The remaining jump inflow satisfies
\begin{equation}
\label{eq:jump-inflow-Lp-bound}
  \int_{\R^d}(\rho^\varepsilon)^{p-1}
  \left(J_{\rho^\varepsilon,\mathrm{in}}^\varepsilon
  \rho^\varepsilon\right)\,dx
  \le
  \|\lambda\|_{L^\infty}A_{\rho^\varepsilon}\mathcal{M}_{1\wedge q}(\kappa)
  \|\rho^\varepsilon\|_{L^p}^p.
\end{equation}
It follows from \eqref{eq:transport-laplacian-bound},
\eqref{eq:regularized-Lp-identity}, and \eqref{eq:jump-inflow-Lp-bound} that
\begin{equation}
\label{eq:regularized-Lp-differential-inequality}
\begin{aligned}
  \frac{d}{dt}\|\rho_t^\varepsilon\|_{L^p}^p
  \le{}&\Bigl[
  (p-1)
  \|(-\Delta K_\varepsilon)*\mu\|_{L^\infty}
  +p\|\lambda\|_{L^\infty}A_{\rho_t^\varepsilon}\mathcal{M}_{1\wedge q}(\kappa)
  \Bigr]
  \|\rho_t^\varepsilon\|_{L^p}^p.
\end{aligned}
\end{equation}
To close \eqref{eq:regularized-moment-differential-inequality} and
\eqref{eq:regularized-Lp-differential-inequality}, distinguish the two cases
as in \cite[Section~3.2]{rosenzweig2026wassersteingradientflowsmaximum}.
For \(q\le1\), \eqref{eq:acceptance-coefficient} gives
\(A_{\rho_t^\varepsilon}=2\), so
\eqref{eq:regularized-Lp-differential-inequality} closes independently of the
moment and then \eqref{eq:regularized-moment-differential-inequality} closes.
For \(q>1\), \eqref{eq:regularized-moment-differential-inequality} closes
independently of the \(L^p\)-norm; its bound controls
\(A_{\rho_t^\varepsilon}\) in
\eqref{eq:regularized-Lp-differential-inequality}. Gronwall's inequality in
both cases proves the \(L^p\)- and moment components of
\eqref{eq:regularized-uniform-bounds};
the bounds
\eqref{eq:transport-gradient-bound} and
\eqref{eq:transport-velocity-bound}, together with these \(L^p\)- and
moment estimates, give the velocity component of
\eqref{eq:regularized-uniform-bounds}. The estimates are justified for
the mild solution by approximation of the data and proposal density,
renormalization, and cutoff; the bounds are uniform in the approximation.

Set
\begin{equation}
\label{eq:uniform-acceptance-coefficient}
  A_T:=\sup_{0<\varepsilon\le1}\sup_{0\le t\le T}
  A_{\rho_t^\varepsilon}<\infty.
\end{equation}
The coefficient in \eqref{eq:uniform-acceptance-coefficient} is finite by
\eqref{eq:regularized-uniform-bounds} and
\eqref{eq:acceptance-coefficient} (and equals \(2\) when \(q\le1\)). Finally,
testing \eqref{eq:regularized-jump-transport} by smooth truncations of a
\(1\)-Lipschitz function and then removing the truncation gives
\begin{equation}
\label{eq:regularized-dual-time-bound}
  \left|\frac{d}{dt}\int\varphi\,d\rho_t^\varepsilon\right|
  \le
  \|v_{\rho_t^\varepsilon}^\varepsilon\|_{L^\infty}
  +\|\lambda\|_{L^\infty}A_T\mathcal{M}_{2 \wedge (q+1)}(\kappa)
  .
\end{equation}
Taking the supremum in \eqref{eq:regularized-dual-time-bound} over
\(1\)-Lipschitz \(\varphi\) yields
\eqref{eq:regularized-W1-time-modulus}.

\emph{Step 3: Global continuation.}
The local construction may be restarted at any time at which the \(L^p\)-norm
and bracket \(r\)-moment are finite. The bounds in
\eqref{eq:regularized-uniform-bounds} rule out blowup of either quantity on a
finite interval; hence the maximal regularized solution is global.
\end{proof}

The uniform estimates now provide compactness of the regularized curves.
At \(r=1\), a common proposal process supplies the uniform integrability
needed for \(W_1\) compactness. The jump source passes to the limit by
combining local convergence of potential increments with control of large
displacements.

\begin{lemma}
\label{prop:limit-regularized-jump-transport}
Assume the hypotheses of
Theorem~\ref{thm:continuum-wellposedness} with \(p<\infty\).
For every \(T>0\), the solutions from
\Cref{prop:regularized-jump-transport} have a subsequence
\(\varepsilon_n\downarrow0\) and a limit \(\rho\) such that
\begin{equation}
\label{eq:regularized-W1-convergence}
  \rho^{\varepsilon_n}\to\rho
  \quad\text{in }C([0,T];\mathcal P_1(\mathbb R^d))
  \text{ with respect to }W_1,
\end{equation}
and
\begin{equation}
\label{eq:regularized-weak-Lp-convergence}
  \rho^{\varepsilon_n}\rightharpoonup\rho
  \quad\text{weakly in }L^p((0,T)\times\mathbb R^d).
\end{equation}
The limit is a weak solution in the sense of
Definition~\ref{def:continuum-weak-solution}, and
\begin{equation}
\label{eq:limit-apriori-bounds}
  \sup_{0\le t\le T}
  \left(
    \|\rho_t\|_{L^p}
    +\widetilde{\mathcal M}_r(\rho_t)
    +\mathcal M_r(\rho_t)
  \right)
  \le C_T.
\end{equation}
Moreover,
\begin{equation}
\label{eq:limit-solution-regularity}
  \rho\in C([0,T];L^1(\mathbb R^d)\cap L^p(\mathbb R^d))
  \cap C^1([0,T];W^{-1,p}(\mathbb R^d)).
\end{equation}
The limiting velocity satisfies
\begin{equation}
\label{eq:limit-velocity-regularity}
  v_\rho\in L^\infty(0,T;W^{1,\infty}(\mathbb R^d;\mathbb R^d))
  \cap C([0,T];L^\infty(\mathbb R^d;\mathbb R^d)).
\end{equation}
\end{lemma}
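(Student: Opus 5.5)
The plan is to prove compactness in \(C([0,T];\mathcal P_1)\) by an Arzel\`a--Ascoli argument, pass to the limit in the weak formulation term by term, and then upgrade the time regularity through the linear equation with frozen coefficients.

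\emph{Compactness.} By \eqref{eq:regularized-uniform-bounds}, the family \(\{\rho^\varepsilon_t\}\) has uniformly bounded \(r\)-moments with \(r\ge1\). When \(r>1\), this gives uniform integrability of \(|x|\), so the family is relatively compact in \((\mathcal P_1,W_1)\) for each \(t\). When \(r=1\), the moment bound alone is not enough. In that case I would use a common proposal process as in Step~1 of \Cref{prop:regularized-jump-transport}. Each regularized solution is the law of a particle that moves with a bounded velocity and jumps with proposals dominated by \(h_{R}\). Hence \(|X^\varepsilon_t|\le |X_0|+C_Tt+\sum|z_k|\), where the sum runs over the accepted proposals. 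This right-hand side has an \(\varepsilon\)-independent law with a finite first moment, because \(\kappa\in\mathcal P_{r+1}\). It therefore dominates \(|x|\) uniformly in \(\varepsilon\) and gives the required uniform integrability. Together with the equi-Lipschitz bound \eqref{eq:regularized-W1-time-modulus}, Arzel\`a--Ascoli yields \eqref{eq:regularized-W1-convergence}. The uniform \(L^p\) bound gives weak (or weak-\(*\) in time) subsequential compactness in \(L^p((0,T)\times\mathbb R^d)\), and the limit agrees with \(\rho\) by narrow convergence. Lower semicontinuity then gives \eqref{eq:limit-apriori-bounds}.

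\emph{Passing to the limit.} First I would show that \(f^{\varepsilon_n}_{\rho^{\varepsilon_n}_t}\to f_{\rho_t}\) and \(v^{\varepsilon_n}_{\rho^{\varepsilon_n}_t}\to v_{\rho_t}\) locally uniformly in \(x\), uniformly in \(t\). This follows by writing \(K_\varepsilon-K\) as a small uniformly continuous perturbation, since \(\|K_\varepsilon-K\|_{L^\infty}\lesssim\varepsilon^q\) and similarly for \(\nabla K\) away from the singularity, and by using the singular-convolution bounds of \Cref{lem:transport-input} together with \(W_1\) convergence and uniform \(L^p\) bounds to control the near-diagonal part. The uniform \(W^{1,\infty}\) bound on the velocities gives equicontinuity, so Arzel\`a--Ascoli applies here as well.

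The transport term \(\int v\cdot\nabla\varphi\,\rho\) then converges, because the velocity converges uniformly on the compact support of \(\varphi\) and \(\rho^{\varepsilon_n}\) converges narrowly. For the jump term, I would split according to \(|z|\le M\) or \(|z|>M\), and \(|x|\le M\) or \(|x|>M\). On the bounded region, the integrand \((\varphi(t,x+z)-\varphi(t,x))\lambda(x)\zeta^{\varepsilon_n}(x,x+z)\) converges uniformly, so narrow convergence applies after integrating in \(z\) against \(\kappa\). On the tails, \Cref{lem:jump-increment} bounds the integrand by \(2\|\varphi\|_\infty\|\lambda\|_\infty A_T|z|^{1\wedge q}\). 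This is small when integrated against \(\kappa\) over \(|z|>M\). For \(|x|>M\) with \(|z|\le M\), the difference \(\varphi(x+z)-\varphi(x)\) vanishes once \(M\) exceeds twice the support radius. I expect this jump-term limit, specifically the convergence of the potential increments, to be the main obstacle.

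\emph{Regularity.} With the limit velocity in \(L^\infty W^{1,\infty}\) and the frozen jump operators bounded on \(L^p\) (\Cref{lem:jump-inflow-outflow}), \(\rho\) is a distributional solution of the linear equation of \Cref{lem:linear-jump-transport}. That lemma provides the equivalence between the distributional and mild formulations, and uniqueness of distributional solutions for Lipschitz velocity (DiPerna--Lions/characteristics) identifies \(\rho\) with the mild solution. The characteristic representation then gives \(C([0,T];L^1\cap L^p)\), and the equation gives \(C^1([0,T];W^{-1,p})\).

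Continuity of \(v_\rho\) into \(L^\infty\) follows from \(W_1\)-continuity in time combined with the \(L^p\) bounds. For \(q<1\) I would use interpolation of the singular convolution, and for \(q\ge1\) the Lipschitz dependence of \(\nabla K*\) on the measure in the relevant moment norm. Finally, \(J(\rho)\in C([0,T];L^p)\) follows from the bounds of \Cref{lem:jump-inflow-outflow} and the continuity of \(t\mapsto\zeta_{\rho_t}\).
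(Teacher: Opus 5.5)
Your proposal is correct and follows essentially the paper's route. You use dominated thinning for uniform integrability when $r=1$, together with the $W_1$ time modulus and Arzel\`a--Ascoli; a near/far identification of the velocity; a compact/tail split of the jump term; and freezing of the limit coefficients in Lemma~\ref{lem:linear-jump-transport}, followed by velocity and jump-source time continuity for the $C^1_tW^{-1,p}_x$ upgrade. Three small fixes are needed:
\begin{enumerate}
\item The $\varepsilon$-independent dominator must be the displacement sum over \emph{all} proposals. The sum over accepted proposals depends on $\varepsilon$.
\item For $|z|\le M$, the contributing $x$ lie in $\{|x|\le M+R_0\}$, where $R_0$ is the support radius of $\varphi$, not in $\{|x|\le M\}$. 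Your claim that $\varphi(x+z)-\varphi(x)$ vanishes for $|x|>M$ is false.
\item You only need the potential increments, and they converge because $|(f_n(x)-f_n(x+z))-(f(x)-f(x+z))|\le|z|\int_0^1|v_n-v|(x+\theta z)\,d\theta$. Local uniform convergence of $f$ itself would instead require uniform integrability of $q$-th moments when $q>1$ and $r=q$.
\end{enumerate}
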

\begin{proof}
If \(r>1\), the uniform moment bound
\eqref{eq:regularized-uniform-bounds} gives uniform integrability of the
first moments. In the remaining case \(r=1\), the assumption \(r\ge q\) forces
\(q\le1\). The velocity bound \eqref{eq:transport-velocity-bound} controls the
transport displacement, and
\begin{equation}
\label{eq:dominating-jump-rate}
  c_{\rho^\varepsilon}(t,x,z)
  =\lambda(x)\zeta_{\rho_t^\varepsilon}^\varepsilon(x,x+z)
  \le2\|\lambda\|_{L^\infty}|z|^q.
\end{equation}
By \eqref{eq:dominating-jump-rate}, the standard thinning construction
therefore couples every accepted jump to
a compound-Poisson process with L\'evy measure
\begin{equation}
\label{eq:dominating-Levy-measure-r1}
  \nu_J(dz):=2\|\lambda\|_{L^\infty}|z|^q\kappa(z)\,dz.
\end{equation}
Its accumulated absolute displacement on \([0,T]\) is integrable because
\begin{equation}
\label{eq:dominating-Levy-first-moment}
  \int_{\R^d}|z|\,\nu_J(dz)
  =2\|\lambda\|_{L^\infty}\mathcal{M}_{q+1}(\kappa)<\infty.
\end{equation}
The same measure has finite total mass, since
\begin{equation}
\label{eq:dominating-Levy-total-mass}
  \nu_J(\mathbb R^d)
  \le2\|\lambda\|_{L^\infty}\left(1+\mathcal{M}_2(\kappa)\right)<\infty.
\end{equation}
More precisely, take a common initial variable \(X_0\) of law \(\rho_0\)
and an independent Poisson random measure \(\mathcal N_J\) with intensity
\(dt\,\nu_J(dz)\,da\) on
\([0,T]\times\mathbb R^d\times[0,1]\).
Between its proposal times, let \(X^\varepsilon\) follow
\(v_{\rho_t^\varepsilon}^\varepsilon\). At a point \((t,z,a)\), add \(z\)
precisely when
\begin{equation}
  a\le
  \frac{c_{\rho^\varepsilon}(t,X^\varepsilon(t-),z)}
       {2\|\lambda\|_{L^\infty}|z|^q},
\end{equation}
with the ratio set to zero when the denominator vanishes.
The ratio lies in \([0,1]\) by \eqref{eq:dominating-jump-rate}.
Transport between proposals gives the measure-valued forward equation
with coefficients frozen at \(\rho^\varepsilon\). The measure-valued
uniqueness argument in the proof of
\Cref{prop:regularized-jump-transport} therefore identifies
\(\operatorname{Law}(X^\varepsilon(t))=\rho_t^\varepsilon\).
If
\begin{equation}
\label{eq:uniform-transport-speed}
  V_T:=\sup_{0<\varepsilon\le1}\sup_{0\le t\le T}
  \|v_{\rho_t^\varepsilon}^\varepsilon\|_{L^\infty}<\infty,
\end{equation}
which is finite by \eqref{eq:regularized-uniform-bounds}, and
\(N(dt,dz):=\mathcal N_J(dt,dz,[0,1])\) is the projection onto proposal
times and displacements, define
\(S_T:=\int_{[0,T]\times\mathbb R^d}|z|\,N(dt,dz)\). The common thinning
construction then gives
\begin{equation}
\label{eq:r1-common-integrable-dominator}
  \sup_{0<\varepsilon\le1}\sup_{0\le t\le T}|X^\varepsilon(t)|
  \le |X_0|+TV_T+S_T,
  \qquad
  \mathbb E[S_T]=T\int_{\R^d}|z|\,\nu_J(dz)<\infty.
\end{equation}
The right-hand side of \eqref{eq:r1-common-integrable-dominator} is an
integrable random variable independent of \(\varepsilon\). Thus the first
moments are uniformly integrable also when \(r=1\).
The time modulus \eqref{eq:regularized-W1-time-modulus} now gives
equicontinuity, so Arzel\`a-Ascoli yields
\eqref{eq:regularized-W1-convergence}. Weak compactness in \(L^p\) and
uniqueness of the narrow limit give \eqref{eq:regularized-weak-Lp-convergence}
for the same curve.

For each fixed \(t\), lower semicontinuity under the convergence in
\eqref{eq:regularized-W1-convergence}, the weak \(L^p\)-compactness furnished
by \eqref{eq:regularized-uniform-bounds}, and uniqueness of the narrow limit
give
\begin{equation}
\label{eq:limit-bracket-and-Lp-bound}
  \|\rho_t\|_{L^p}
  +\widetilde{\mathcal M}_r(\rho_t)
  \le C_T.
\end{equation}
Taking the supremum in \eqref{eq:limit-bracket-and-Lp-bound} and using
\eqref{eq:power-bracket-moment-comparison} proves
\eqref{eq:limit-apriori-bounds}. In particular, the power-moment condition in
\eqref{eq:weak-solution-common-class} is inherited by the limit.

For completeness, we identify the velocity uniformly in time. Fix a compact
set \(Q\Subset\mathbb R^d\) and use smooth cutoffs to split each velocity
convolution into near, intermediate, and far parts corresponding to
\begin{equation}
\label{eq:velocity-near-middle-far-decomposition}
  |x-y|\le\delta,\qquad
  \delta<|x-y|<M,\qquad
  |x-y|\ge M .
\end{equation}
The singular-convolution estimates of
\cite[Lemmas~3.2-3.3]{rosenzweig2026wassersteingradientflowsmaximum} applied to
\eqref{eq:velocity-near-middle-far-decomposition} bound the near field,
uniformly in \(t,n\), by
\(C_T\delta^{\alpha_0}\) for some \(\alpha_0>0\). On the intermediate
annulus, \(\nabla K_{\varepsilon_n}\to\nabla K\) uniformly, and the cut-off
kernels are uniformly bounded-Lipschitz; hence the uniform \(W_1\)
convergence of the measures gives uniform convergence of this part. Finally,
if \(\mathcal V_{n,M}^{\mathrm{far}}\) denotes the far-field contribution on
\(Q\), the power-moment bound in
\eqref{eq:regularized-uniform-bounds} gives
\begin{equation}
\label{eq:velocity-far-field-bound}
  \sup_{0\le t\le T}\sup_{x\in Q}
  |\mathcal V_{n,M}^{\mathrm{far}}(t,x)|
  \le C_{Q,T}M^{(q-1)_+-r}.
\end{equation}
Using \eqref{eq:velocity-far-field-bound}, and letting first
\(n\to\infty\), then \(\delta\downarrow0\), and finally
\(M\uparrow\infty\), gives \(v_{\rho_t^{\varepsilon_n}}^{\varepsilon_n}
  \to v_{\rho_t}\)
locally uniformly in \((t,x)\). This is the near-, intermediate-, and
far-field identification used in \cite[Section~3.3]{rosenzweig2026wassersteingradientflowsmaximum}, adapted here to
the present regularization. The gradient estimate
\eqref{eq:transport-gradient-bound} and
\eqref{eq:regularized-uniform-bounds}, followed by weak-star lower
semicontinuity, give
\(v_\rho\in L^\infty(0,T;W^{1,\infty})\). The convergence also identifies
the transport term.

We record the additional jump argument. For a test function \(\varphi\), set
\begin{equation}
\label{eq:limit-acceptance-coefficient}
  A_T:=\sup_n\sup_{0\le t\le T}A_{\rho_t^{\varepsilon_n}}<\infty,
\end{equation}
and define
\begin{equation}
\label{eq:jump-test-increment-and-rate}
  A_\varphi(t,x,z)=\varphi(t,x+z)-\varphi(t,x),\quad 
  R_n(t,x,z)=
  [f_{\rho_t^{\varepsilon_n}}^{\varepsilon_n}(x)
   -f_{\rho_t^{\varepsilon_n}}^{\varepsilon_n}(x+z)]_+ .
\end{equation}
With the notation in \eqref{eq:jump-test-increment-and-rate} and the bound
\eqref{eq:jump-increment-bound}, the coefficient
\eqref{eq:limit-acceptance-coefficient} gives, uniformly in \(n\),
\begin{align}
&\int_0^T\int_{|z|>L}\int_{\R^d}
 |A_\varphi(t,x,z)|\lambda(x)R_n(t,x,z)
 \,d\rho_t^{\varepsilon_n}(x)\kappa(z)\,dz\,dt
\nonumber \\
&\hspace{8em}\le
2T\|\varphi\|_{L^\infty}\|\lambda\|_{L^\infty}A_T
\int_{|z|>L}|z|^{1\wedge q}\kappa(z)\,dz,\label{eq:jump-tail-bound}
\end{align}
and the right-hand side tends to zero as \(L\to\infty\).

For \(|z|\le L\), only a fixed compact set of \(x\)'s contributes.
The convergence \(v_{\rho_t^{\varepsilon_n}}^{\varepsilon_n}
  \to v_{\rho_t}\) implies
uniform convergence of the potential increments, since
\begin{equation}
\label{eq:potential-increment-convergence-bound}
\begin{aligned}
&\left|
(f_n(x)-f_n(x+z))-(f(x)-f(x+z))
\right|\le
|z|\int_0^1
|v_n(x+\theta z)-v(x+\theta z)|\,d\theta.
\end{aligned}
\end{equation}
For \(|z|\le L\), consider the functions \(x\mapsto
  A_\varphi(t,x,z)\lambda(x)
  [f_{\rho_t}(x)-f_{\rho_t}(x+z)]_+\), which
 are supported in a fixed compact
set and, uniformly in \((t,z)\), are bounded
and Lipschitz in \(x\). Here we use
\(\lambda\in W^{1,\infty}\) and the uniform Hessian bound on
\(f_{\rho_t}\). The convergence in
\eqref{eq:regularized-W1-convergence} therefore controls the measure term
uniformly on compact \(z\)-sets. Combining this with
\eqref{eq:jump-tail-bound}--\eqref{eq:potential-increment-convergence-bound} proves convergence of the jump term in
\eqref{eq:weak-jump-transport-formulation}.

Finally, freeze the limiting coefficients in
Lemma~\ref{lem:linear-jump-transport}. The distributional-to-mild
identification gives \( \rho\in C([0,T];L^1(\mathbb R^d))
  \cap C([0,T];L^p(\mathbb R^d))\).
We next verify the time continuity needed for the final regularity upgrade.
If \(0<q<1\), the
singular-convolution estimate with \(a=1-q\) gives
\begin{equation}
\label{eq:sublinear-velocity-time-difference}
  \|v_{\rho_t}-v_{\rho_s}\|_{L^\infty}
  \le
  C\|\rho_t - \rho_s\|_{L^1}^{1-\theta_1}
   \|\rho_t - \rho_s\|_{L^p}^{\theta_1}
  \to0
\end{equation}
as \(s\to t\). If \(q=1\), convergence to zero follows from
\begin{equation}
\label{eq:linear-velocity-time-difference}
  \|v_{\rho_t}-v_{\rho_s}\|_{L^\infty}
  \le\|\rho_t - \rho_s\|_{L^1}.
\end{equation}
If \(1<q<2\), the fact that $\rho_t - \rho_s$ has zero total mass, together with the global
\((q-1)\)-H\"older continuity of \(\nabla K\), implies, for \(M>1\),
\begin{equation}
\label{eq:superlinear-velocity-time-difference}
  \|v_{\rho_t}-v_{\rho_s}\|_{L^\infty}
  \le
  C_M\|\rho_t-\rho_s\|_{L^1}
  +C_T M^{q-1-r}.
\end{equation}
The strong continuity \( \rho\in C([0,T];L^1(\mathbb R^d))
  \cap C([0,T];L^p(\mathbb R^d))\) gives the
limit in \eqref{eq:sublinear-velocity-time-difference} and
\eqref{eq:linear-velocity-time-difference}; in
\eqref{eq:superlinear-velocity-time-difference}, first let \(s\to t\) and
then \(M\to\infty\). Thus in every case \( v_\rho\in C([0,T];L^\infty(\mathbb R^d;\mathbb R^d))\),
and consequently \( \rho v_\rho\in C([0,T];L^p).\)

It remains to treat the jump source. The one-Lipschitz property of the
positive part gives
\begin{equation}
\label{eq:acceptance-time-difference}
  |\zeta_{\rho_t}(x,x+z)-\zeta_{\rho_s}(x,x+z)|
  \le |z|\|v_{\rho_t}-v_{\rho_s}\|_{L^\infty}.
\end{equation}
Then, \eqref{eq:acceptance-time-difference} gives
\begin{equation}
\label{eq:inflow-outflow-operator-time-continuity}
  \|J_{\rho_t,\mathrm{in}}-J_{\rho_s,\mathrm{in}}\|_{\mathcal L(L^p)}
  +\|J_{\rho_t,\mathrm{out}}-J_{\rho_s,\mathrm{out}}\|_{\mathcal L(L^p)}
  \le
  2\|\lambda\|_{L^\infty}\mathcal{M}_1(\kappa)
  \|v_{\rho_t}-v_{\rho_s}\|_{L^\infty}.
\end{equation}
Together, \eqref{eq:inflow-outflow-operator-time-continuity}, the uniform
operator bounds in \eqref{eq:frozen-inflow-outflow-bound}, and the strong continuity of \(\rho\)
give, with \(A_T=\sup_{t\le T}A_{\rho_t}\),
\begin{align}
\|J(\rho_t)&-J(\rho_s)\|_{L^p}
\le \nonumber \\ &2\|\lambda\|_{L^\infty}A_T\mathcal{M}_{1\wedge q}(\kappa)
       \|\rho_t-\rho_s\|_{L^p}
+2\|\lambda\|_{L^\infty}\mathcal{M}_1(\kappa)\|\rho_s\|_{L^p}
  \|v_{\rho_t}-v_{\rho_s}\|_{L^\infty}.\label{eq:jump-source-time-difference}
\end{align}
\(v_\rho\in C([0,T];L^\infty(\mathbb R^d;\mathbb R^d))\) and
\eqref{eq:jump-source-time-difference} imply \(J(\rho)\in C([0,T];L^p)\).
Since \(\rho v_\rho\in C([0,T];L^p)\) and
\(J(\rho)\in C([0,T];L^p)\), the right-hand side is
continuous as a \(W^{-1,p}\)-valued map. This yields the
\(C^1\)-regularity in \eqref{eq:limit-solution-regularity}. Together with
the preceding weak-star gradient bound,
\( v_\rho\in C([0,T];L^\infty(\mathbb R^d;\mathbb R^d))\) also proves
\eqref{eq:limit-velocity-regularity}.
\end{proof}
Uniqueness follows from a Dobrushin coupling argument
\cite{dobrushin1979vlasov}: evolve a coupling
of the initial laws and estimate its mean separation. Simultaneous accepted jumps preserve this
separation, so the jump contribution comes only from mismatched
acceptances; density bounds control this contribution and the transport
term. 
\begin{proposition}[\(W_1\)-stability]
\label{prop:continuum-W1-stability}
Assume \(d+q-2>0\) and \(p<\infty\).
Let \(\rho^{(1)}\) and \(\rho^{(2)}\) be weak solutions
of \eqref{eq:pde} on \([0,T]\) such that
\begin{equation}
\label{eq:stability-solution-class}
  \rho^{(i)}\in C([0,T];\mathcal P_1(\mathbb R^d))
  \cap C([0,T];L^p(\mathbb R^d)),
  \qquad
  \sup_{0\le t\le T}\mathcal M_r(\rho_t^{(i)})<\infty.
\end{equation}
Then
\begin{equation}
\label{eq:W1-stability-estimate}
  W_1(\rho_t^{(1)},\rho_t^{(2)})
  \le e^{C_Tt}W_1(\rho_0^{(1)},\rho_0^{(2)}),
  \qquad 0\le t\le T,
\end{equation}
where \(C_T\) depends only on the fixed data and on uniform
\(L^p\)- and \(r\)-th moment bounds for \(\rho^{(1)}\)
and \(\rho^{(2)}\) on \([0,T]\).
\end{proposition}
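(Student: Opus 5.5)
We follow the common-proposal coupling from Step~1 of the proof of Lemma~\ref{prop:regularized-jump-transport}, now with the unregularized, nonlinear coefficients of the two given solutions frozen in. Set $c^{(i)}(t,x,z):=\lambda(x)\zeta_{\rho^{(i)}_t}(x,x+z)$. Let $\Pi_0$ be an optimal $W_1$-coupling of $\rho^{(1)}_0,\rho^{(2)}_0$. We build a process $(X_t,Y_t)$ as follows. Between proposals, $X$ follows $v_{\rho^{(1)}_t}$ and $Y$ follows $v_{\rho^{(2)}_t}$; both fields are Lipschitz by Lemma~\ref{lem:transport-input} and the uniform $L^p$ bounds. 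Proposals come from a common Poisson process with intensity $dt\,h_R(z)\,dz\,du$, and each coordinate accepts by thinning against its own rate. The marginal law of $X_t$ solves the linear frozen equation with coefficients $(v_{\rho^{(1)}},c^{(1)})$, and $\rho^{(1)}$ solves the same equation. The measure-valued uniqueness argument (total-variation Gronwall, using $\|c^{(i)}\|\le h_R/\kappa$ by Lemma~\ref{lem:jump-increment}) therefore identifies $\operatorname{Law}(X_t)=\rho^{(1)}_t$, and likewise $\operatorname{Law}(Y_t)=\rho^{(2)}_t$. Hence $W_1(\rho^{(1)}_t,\rho^{(2)}_t)\le D(t):=\mathbb E|X_t-Y_t|$, and it suffices to prove $D'\le C_T D$ in integrated form.

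Next we test the coupled generator \eqref{eq:common-proposal-generator} against smooth truncations of $|x-y|$.

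\emph{Transport term.} It is bounded by $\mathbb E|v_{\rho^{(1)}}(X)-v_{\rho^{(2)}}(Y)|$, which we split as
\[
\|\nabla v_{\rho^{(1)}}\|_{L^\infty}|X-Y|+\|v_{\rho^{(1)}}-v_{\rho^{(2)}}\|_{L^\infty}.
\]
For the field difference we use the standard aggregation estimate: $\nabla K*(\rho^{(1)}-\rho^{(2)})$ is controlled by $W_1(\rho^{(1)},\rho^{(2)})$. This holds because, when $d+q-2>0$ and $p>p_c$, the Hessian $D^2K$ convolved with densities bounded in $L^p$ is bounded. Concretely, write
\[
\nabla K(x-X)-\nabla K(x-Y)=\int_0^1 D^2K(x-Y-\theta(X-Y))(Y-X)\,d\theta
\]
and average over the coupling. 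The interpolated laws of $Y+\theta(X-Y)$ need not have $L^p$ densities, so instead we use the $L^p$ bound on each marginal, via the estimate $\|\nabla K*(\nu-\omega)\|_{L^\infty}\le C(\|\nu\|_{L^p}+\|\omega\|_{L^p})^{\theta}W_1(\nu,\omega)$ from the Loeper-type argument in \cite[Section~3]{rosenzweig2026wassersteingradientflowsmaximum}. This gives a term $\le C_T W_1(\rho^{(1)}_t,\rho^{(2)}_t)\le C_T D(t)$.

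\emph{Jump term.} A simultaneous acceptance moves both coordinates by the same $z$, so it does not change $|X-Y|$. Each mismatched acceptance changes $|X-Y|$ by at most $|z|$. The jump contribution is therefore bounded by
\[
\mathbb E\int |z|\,|c^{(1)}(t,X,z)-c^{(2)}(t,Y,z)|\,\kappa(z)\,dz .
\]
By \eqref{eq:frozen-effective-rate-difference}, the rate difference splits into a term $\|\nabla\lambda\|_{L^\infty}A_R|z|^{1\wedge q}|X-Y|$ and a term $\|\lambda\|_{L^\infty}$ times a $\zeta$-difference. The $\zeta$-difference is handled with the $1$-Lipschitz positive part:
\[
|\zeta_{\rho^{(1)}}(X,X+z)-\zeta_{\rho^{(2)}}(Y,Y+z)|
\le|z|\bigl(\|\nabla v_{\rho^{(1)}}\|_{L^\infty}|X-Y|+\|v_{\rho^{(1)}}-v_{\rho^{(2)}}\|_{L^\infty}\bigr).
\]
To get this, write the increment $f(x)-f(x+z)$ as $-\int_0^1\nabla f(x+\theta z)\cdot z\,d\theta$. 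The first part uses the Hessian bound on $f_{\rho^{(1)}}$, i.e.\ the $W^{1,\infty}$ bound on the velocity; the second uses the same field-difference estimate as the transport term. Since $\kappa\in\mathcal P_{r+1}$ with $r\ge1$, the weights $|z|^2$ and $|z|^{1+(1\wedge q)}$ are integrable, and all contributions are $\le C_T(D+W_1)\le C_TD$.

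Gronwall then gives $D(t)\le e^{C_Tt}D(0)=e^{C_Tt}W_1(\rho^{(1)}_0,\rho^{(2)}_0)$. The main obstacle is the field-difference estimate $\|v_{\rho^{(1)}}-v_{\rho^{(2)}}\|_{L^\infty}\lesssim W_1$ under only $L^p$ control, which requires $p>p_c$. This is exactly the reason for the hypothesis $d+q-2>0$, and we would import it from the companion paper. A secondary technical point is justifying the truncated tests and the identification of the coupled marginals, which requires the moment bounds to remove cutoffs; we handle it as in Lemma~\ref{prop:limit-regularized-jump-transport}.
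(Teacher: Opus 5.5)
\textbf{The gap: the field-difference estimate you import is false.} Your transport bound and your jump bound both rest on
\[
\|\nabla K*(\nu-\omega)\|_{L^\infty}\le C\bigl(\|\nu\|_{L^p}+\|\omega\|_{L^p}\bigr)^{\theta}W_1(\nu,\omega).
\]
No such inequality holds, even under uniform $L^\infty$ bounds on the densities.

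The uniform bound on $D^2K*\nu$ gives Lipschitz continuity of one field in $x$. It does not give Lipschitz dependence of the field on the measure in sup norm: Kantorovich duality would need $\nabla K(x-\cdot)$ to be Lipschitz, and it is not at the diagonal. (Loeper's estimate is an $L^2$ bound in terms of $W_2$, not an $L^\infty$ bound.)

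Here is a counterexample. Let $\nu$ be a bounded density with $\nu\ge1$ on $B(0,1)$. Let $\phi$ be a radial bump with $0\le\phi\le1$ and support in $B(0,1)$, put $\phi_\ell=\phi(\cdot/\ell)$, and set $\omega=\nu+\phi_\ell(\cdot-he_1)-\phi_\ell$ with $h=\epsilon\ell$. Then $\|\omega\|_{L^\infty}\le\|\nu\|_{L^\infty}+1$ and $W_1(\nu,\omega)\le h\ell^d\|\phi\|_{L^1}$. The first-order term of $\nabla K*(\omega-\nu)(0)$ is $-h\,(D^2K*\phi_\ell)(0)e_1$. Here
\[
(D^2K*\phi_\ell)(0)=-q\tfrac{d+q-2}{d}\Bigl(\int|y|^{q-2}\phi_\ell(y)\,dy\Bigr)I,
\]
so this term has size $\sim h\ell^{d+q-2}$, while the second-order remainder is $O(h^2\ell^{d+q-3})$. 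For small fixed $\epsilon$, the ratio of the field difference to $W_1$ is therefore $\gtrsim\ell^{q-2}\to\infty$.

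With only density control, the sup-norm dependence on $W_1$ is at best sublinear. That leads to $D'\le CD^{\alpha}$ with $\alpha<1$, which gives neither \eqref{eq:W1-stability-estimate} nor uniqueness.

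\textbf{What is needed instead.} Never take a sup norm in the evaluation variable; keep the velocity difference averaged against the $L^p$ marginal. Write
\[
\mathbb E|v_{\rho^{(1)}_t}(X)-v_{\rho^{(2)}_t}(Y)|\le\|\nabla v_{\rho^{(1)}_t}\|_{L^\infty}D(t)+\int|v_{\rho^{(1)}_t}-v_{\rho^{(2)}_t}|(y)\,\rho^{(2)}_t(y)\,dy .
\]
Then proceed in three steps.
\begin{enumerate}
\item Represent $v_{\rho^{(1)}_t}-v_{\rho^{(2)}_t}$ through the coupling $\Pi_t$ of the sources.
\item Bound $|\nabla K(y-a)-\nabla K(y-b)|\le C|a-b|\int_0^1|y-c_\theta|^{q-2}\,d\theta$, with $c_\theta=b+\theta(a-b)$.
\item Apply Tonelli so that the singular factor is integrated in $y$ first, using
\[
\sup_{c\in\R^d}\int|y-c|^{q-2}\rho^{(2)}_t(y)\,dy\le C\bigl(1+\|\rho^{(2)}_t\|_{L^p}\bigr).
\]
This holds exactly because $(2-q)p_*<d$, i.e.\ $p>p_c$.
\end{enumerate}
This yields $C_TD(t)$; it is the companion estimate the paper cites for \eqref{eq:stability-transport-contribution}.

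Your observation that the laws of $Y+\theta(X-Y)$ need not lie in $L^p$ is correct. It is precisely why the singularity must be absorbed by the evaluation density $\rho^{(2)}_t$ rather than by the source variables.

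The jump term is handled the same way. The paper integrates $|\delta_z(f_{\rho^{(1)}_t}-f_{\rho^{(2)}_t})(y)|$ against $\rho^{(2)}_t(y)\,dy$, using the mixed second-difference bound \eqref{eq:mixed-kernel-increment-bound}, to obtain \eqref{eq:mixed-potential-increment-bound}. Your segment formula also works once averaged. The pointwise bound is $|\delta_z(f_{\rho^{(1)}_t}-f_{\rho^{(2)}_t})(y)|\le|z|\int_0^1|v_{\rho^{(1)}_t}-v_{\rho^{(2)}_t}|(y+\theta z)\,d\theta$. After integrating in $y$, the weight becomes the translate $\rho^{(2)}_t(\cdot-\theta z)$, which has the same $L^p$ norm, so the averaged estimate above applies.

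With this replacement, the rest of your argument coincides with the paper's proof: the common-proposal coupling, the identification of the marginals by the total-variation uniqueness argument, the $\nabla\lambda$ term, the integrability of $|z|^2\kappa$, and Gronwall.
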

\begin{proof}
Start from an optimal coupling of the initial data and apply the
common-proposal generator \eqref{eq:common-proposal-generator}, now with the
unregularized velocities and rates \(c_i(t,x,z)=\lambda(x)\zeta_{\rho_t^{(i)}}(x,x+z)\).
The total rates are bounded on \([0,T]\), so the common-proposal construction
in the proof of \Cref{prop:regularized-jump-transport}
gives a probability measure \(\Pi_t\). Testing its equation by functions of
\(x\) alone and of \(y\) alone, and using the measure-valued uniqueness
argument given there, verifies that its marginals are
\(\rho_t^{(1)}\) and \(\rho_t^{(2)}\). Set
\begin{equation}
\label{eq:stability-coupling-distance}
  D(t)=\int_{\mathbb R^d\times\mathbb R^d}|x-y|\,d\Pi_t(x,y).
\end{equation}
Testing the coupled equation by smooth approximations of \(|x-y|\) gives
\begin{align}
D'(t)\le{}&
\int_{\R^d\times \R^d}|v_{\rho_t^{(1)}}(x)-v_{\rho_t^{(2)}}(y)|\,d\Pi_t(x,y)
\nonumber \\
&+
\int_{\R^d\times \R^d}\int_{\R^d}
|z|\,|c_1(t,x,z)-c_2(t,y,z)|
\kappa(z)\,dz\,d\Pi_t(x,y).\label{eq:stability-coupling-inequality}
\end{align}
The transport estimate in the proof of
\cite[Proposition~3.7]{rosenzweig2026wassersteingradientflowsmaximum}, based on
\cite[Lemma~3.6]{rosenzweig2026wassersteingradientflowsmaximum}, gives
\begin{equation}
\label{eq:stability-transport-contribution}
  \int_{\R^d\times \R^d}|v_{\rho_t^{(1)}}(x)-v_{\rho_t^{(2)}}(y)|\,d\Pi_t(x,y)
  \le C_TD(t).
\end{equation}

For the jump term, set
\begin{equation}
\label{eq:stability-acceptance-coefficient}
  A_T:=\left(\sup_{0\le t\le T}A_{\rho_t^{(1)}}\right)\vee\left(\sup_{0\le t\le T}A_{\rho_t^{(2)}}\right)<\infty.
\end{equation}
Then
\begin{align}
|c_1(t,x,z)&-c_2(t,y,z)|
\nonumber \\
&\le
\|\lambda\|_{L^\infty}
|\zeta_{\rho_t^{(1)}}(x,x+z)
 -\zeta_{\rho_t^{(2)}}(y,y+z)|+
\|\nabla\lambda\|_{L^\infty} A_T|z|^{1\wedge q}|x-y|.\label{eq:stability-effective-rate-difference}
\end{align}
Write \(\delta_zf(x)=f(x)-f(x+z)\). Since the positive part is
\(1\)-Lipschitz,
\begin{align}
|\zeta_{\rho_t^{(1)}}(x,x+z)
 -\zeta_{\rho_t^{(2)}}(y,y+z)|
&\le
|\delta_zf_{\rho_t^{(1)}}(x)
 -\delta_zf_{\rho_t^{(1)}}(y)|+
|\delta_z(f_{\rho_t^{(1)}}-f_{\rho_t^{(2)}})(y)| \nonumber \\
&\le  \|D^2f_{\rho_t^{(1)}}\|_{L^\infty}|x-y||z|+
|\delta_z(f_{\rho_t^{(1)}}-f_{\rho_t^{(2)}})(y)|.\label{eq:acceptance-difference-decomposition}
\end{align}
For the second term above, use the fact that 
\begin{equation}
\label{eq:potential-difference-coupling-representation}
  (f_{\rho_t^{(1)}}-f_{\rho_t^{(2)}})(y)
  =
  \int_{\R^d\times \R^d}\left(K(y-a)-K(y-b)\right)\,d\Pi_t(a,b).
\end{equation}
Substituting \eqref{eq:potential-difference-coupling-representation},
writing \(c_\theta=b+\theta(a-b)\), and applying the fundamental theorem of calculus and
the kernel-gradient difference estimate give
\begin{align}
|K(y-a)-K(y+z-a)&-K(y-b)+K(y+z-b)|
\nonumber \\
&\le
C_{d,q}|a-b||z|
\int_0^1
\left(|y-c_\theta|^{q-2}
     +|y+z-c_\theta|^{q-2}\right)\,d\theta .\label{eq:mixed-kernel-increment-bound}
\end{align}
After integrating \eqref{eq:mixed-kernel-increment-bound} against
\(\rho_t^{(2)}(y)\,dy\),
\cite[Lemma~3.2]{rosenzweig2026wassersteingradientflowsmaximum} yields
\begin{equation}
\label{eq:mixed-potential-increment-bound}
  \int_{\R^d}
  |\delta_z(f_{\rho_t^{(1)}}-f_{\rho_t^{(2)}})(y)|
  \rho_t^{(2)}(y)\,dy
  \le C_T|z|D(t).
\end{equation}
Using both marginals of \(\Pi_t\) and combining
\eqref{eq:acceptance-difference-decomposition}
and
\eqref{eq:mixed-potential-increment-bound}, we conclude that
\begin{equation}
\label{eq:coupled-acceptance-difference-bound}
\begin{aligned}
&\int_{\R^d\times \R^d}\int_{\R^d}
|z|\,
|\zeta_{\rho_t^{(1)}}(x,x+z)
 -\zeta_{\rho_t^{(2)}}(y,y+z)|
\kappa(z)\,dz\,d\Pi_t(x,y)
\le C_T\mathcal{M}_2(\kappa)D(t).
\end{aligned}
\end{equation}
The term containing \(\|\nabla\lambda\|_{L^\infty}\) is bounded by \(  C_T\mathcal{M}_{2 \wedge (q+1)}(\kappa)D(t)\).
Combining then gives
\begin{equation}
\label{eq:stability-jump-contribution}
  \int_{\R^d\times \R^d}\int_{\R^d} |z|\,|c_1(t,x,z)-c_2(t,y,z)|
  \kappa(z)\,dz\,d\Pi_t(x,y)
  \le C_TD(t).
\end{equation}
It follows from \eqref{eq:stability-coupling-inequality},
\eqref{eq:stability-transport-contribution}, and
\eqref{eq:stability-jump-contribution} that \( D'(t)\le C_TD(t)\).
Since \(W_1(\rho_t^{(1)},\rho_t^{(2)})\le D(t)\), with \(D\) defined in
\eqref{eq:stability-coupling-distance}, Gronwall's inequality then proves
\eqref{eq:W1-stability-estimate}.
\end{proof}
The remaining endpoint arguments use the same jump estimates.
For \(p=\infty\) and \(d+q-2>0\), a finite admissible exponent gives the
construction, followed by a maximum estimate. For \(d=q=1\), the
cumulative-distribution formula replaces the singular-convolution
estimates.

\begin{proof}[Proof of Theorem~\ref{thm:continuum-wellposedness}]
Assume first that \(p<\infty\).
\Cref{prop:regularized-jump-transport,prop:limit-regularized-jump-transport} give a global solution on each
finite interval. A diagonal extraction gives a solution on
\([0,\infty)\), and Proposition~\ref{prop:continuum-W1-stability} gives
uniqueness. The strong \(L^p\)-continuity, velocity regularity, and
\(C^1_tW^{-1,p}_x\)-regularity were obtained in
\eqref{eq:limit-solution-regularity} and
\eqref{eq:limit-velocity-regularity}; the \(L^p\)- and moment bounds are
recorded in \eqref{eq:limit-apriori-bounds}.

Assume now that \(p=\infty\). If \(d+q-2>0\), choose \(p_c<p_0<\infty\).
Since \(L^1\cap L^\infty\subset L^{p_0}\), the case \(p=p_0\) proved above
gives a unique global weak solution satisfying
\eqref{eq:finite-p-weak-solution-class} with \(p=p_0\). We next prove
\eqref{eq:continuum-maximum-estimate}. To justify the calculation, freeze
the coefficients at this solution on \([0,T]\).
Lemma~\ref{lem:jump-inflow-outflow} bounds the frozen operators uniformly
on \(L^\infty\), and the transport evolution from
Appendix~\ref{app:positive-linear} satisfies
\(\|U_v(t,s)\|_{L^\infty\to L^\infty}\le e^{C_T(t-s)}\) for \(s\le t\).
Since \(\rho_0\in L^\infty\), the positive Picard iterates used there
are uniformly bounded in \(L^\infty((0,T)\times\mathbb R^d)\), by the
iterated-integral estimate. Their convergence in
\(C([0,T];L^1\cap L^{p_0})\) therefore gives
\(\rho\in L^\infty((0,T)\times\mathbb R^d)\).
Along a.e. characteristic \(\Phi(t)\) of the transport velocity,
the mild formula gives, for a.e. \(t\),
\begin{equation}
\label{eq:endpoint-characteristic-identity}
  \frac{d}{dt}\rho_t(\Phi(t))
  =
  -\rho_t\nabla\!\cdot v_{\rho_t}(\Phi(t))
  +\left(J_{\rho_t,\mathrm{in}}\rho_t\right)(\Phi(t))
  -\left(J_{\rho_t,\mathrm{out}}\rho_t\right)(\Phi(t)).
\end{equation}
Because \(\Delta K\le0\), the self-interaction part of the divergence term in
\eqref{eq:endpoint-characteristic-identity} is nonpositive. For the
regularized kernels in \eqref{eq:kernel-regularization},
\begin{equation}
\label{eq:endpoint-regularized-target-coefficient}
  \|(-\Delta K_\varepsilon)*\mu\|_{L^\infty}
  =\|\eta_\varepsilon*((-\Delta K)*\mu)\|_{L^\infty}
  \le\|(-\Delta K)*\mu\|_{L^\infty}.
\end{equation}
The target contribution therefore satisfies
\begin{equation}
\label{eq:endpoint-divergence-contribution}
  -\rho_t\nabla\!\cdot v_{\rho_t}
  \le\|(-\Delta K)*\mu\|_{L^\infty}\rho_t.
\end{equation}
The outflow term is also nonpositive, and
\begin{equation}
\label{eq:endpoint-inflow-bound}
  \|J_{\rho_t,\mathrm{in}}\rho_t\|_{L^\infty}
  \le\|\lambda\|_{L^\infty}A_{\rho_t}\mathcal{M}_{1\wedge q}(\kappa)\|\rho_t\|_{L^\infty}.
\end{equation}
Taking essential suprema in the integral form of
\eqref{eq:endpoint-characteristic-identity}, using
\eqref{eq:endpoint-divergence-contribution} and
\eqref{eq:endpoint-inflow-bound}, and applying Gronwall's inequality
gives \eqref{eq:continuum-maximum-estimate}. The estimate holds for every
\(t\) by strong \(L^1\)-continuity and lower semicontinuity.
For the regularized solutions, the same computation uses
\eqref{eq:endpoint-regularized-target-coefficient} and is uniform in
\(\varepsilon\) by \eqref{eq:regularized-uniform-bounds}. The finite-time
power-\(r\)-moment bound
in \eqref{eq:limit-apriori-bounds} and the acceptance coefficient
\eqref{eq:acceptance-coefficient} make its exponent finite. Strong continuity in \(L^1\cap L^{p_0}\), interpolation below \(p_0\), and
interpolation with \eqref{eq:continuum-maximum-estimate} above \(p_0\) give
\(\rho\in C([0,T];L^s)\) for every finite \(s\). Uniform boundedness in
\(L^\infty\), together with strong \(L^1\)-continuity, gives weak-star
continuity in \(L^\infty\). For every weak solution satisfying
\eqref{eq:infinite-p-weak-solution-class}, the time-continuity argument in
the proof of \Cref{prop:limit-regularized-jump-transport}
also gives \(J(\rho)\in C([0,T];L^{p_0})\) and
\(\rho\in C^1([0,T];W^{-1,p_0})\). Thus it satisfies
\eqref{eq:finite-p-weak-solution-class} with \(p=p_0\), and uniqueness
follows from Proposition~\ref{prop:continuum-W1-stability}.

It remains to treat \(d+q-2=0\), that is,
\((d,q)=(1,1)\). Here
\begin{equation}
\label{eq:one-dimensional-velocity-identities}
  K''=-2\delta_0,\qquad
  v_\rho(x)=2(F_\rho(x)-F_\mu(x)),\qquad
  \partial_xv_\rho=2(\rho-\mu),
\end{equation}
where \(F_\nu(x):=\int_{-\infty}^x\nu(y)\,dy\)
denotes the cumulative distribution function of \(\nu\). The pure-transport problem for \((d,q)=(1,1)\) has been studied
in \cite{Bon+15,Duo+26}; the point here is to control the additional jump
perturbation. The identity for \(\partial_xv_\rho\) in
\eqref{eq:one-dimensional-velocity-identities} shows that bounded densities
generate globally Lipschitz velocities, while the velocity formula and its
mollified analogue give
\begin{equation}
\label{eq:one-dimensional-jump-increment}
  0\le\zeta_\nu^\varepsilon(x,x+z)\le2|z|,
  \qquad 0\le\varepsilon\le1.
\end{equation}
We divide the remainder of the proof into three steps.

\emph{Step 1: Regularized construction.}
Fix \(1<s<\infty\). For \(R,M>0\), use the complete metric space
\begin{equation}
\label{eq:one-dimensional-fixed-point-space}
\begin{aligned}
\mathsf Y_{\tau,R,M}:=\left\{
  \nu\in C([0,\tau];\mathcal P_1(\mathbb R)):\:
  \nu_0=\rho_0,\:
  \sup_{t\le\tau}\widetilde{\mathcal M}_r(\nu_t)\le R,\:\operatorname*{ess\,sup}_{t\le\tau}
  \|\nu_t\|_{L^\infty}\le M
\right\}.
\end{aligned}
\end{equation}
The space in \eqref{eq:one-dimensional-fixed-point-space} is equipped with
the uniform \(W_1\) metric. Completeness follows from weak-star
compactness in \(L^\infty\), lower semicontinuity of
\(\widetilde{\mathcal M}_r\), and uniqueness of the narrow limit.

For a frozen \(\nu\in\mathsf Y_{\tau,R,M}\),
\begin{equation}
\label{eq:one-dimensional-frozen-velocity-bound}
  \partial_xv_\nu^\varepsilon
  =2\eta_\varepsilon*(\nu-\mu),\qquad
  \|\partial_xv_\nu^\varepsilon\|_{L^\infty}
  \le2(M+\|\mu\|_{L^\infty}).
\end{equation}
The derivative estimate \eqref{eq:one-dimensional-frozen-velocity-bound} and
Lemma~\ref{lem:jump-inflow-outflow}, together with
\eqref{eq:one-dimensional-jump-increment}, bound the inflow and outflow
operators on
\(L^1\), \(L^s\), and \(L^\infty\). Thus
Lemma~\ref{lem:linear-jump-transport} defines the frozen map in
\(L^1\cap L^s\). The bounded Picard-iteration argument above also gives
\(u\in L^\infty((0,\tau)\times\mathbb R)\). Along a.e.
characteristic, for a.e. \(t\),
\begin{equation}
\label{eq:one-dimensional-frozen-characteristic-inequality}
  \frac{d}{dt}u_t(\Phi(t))
  \le
  2\|\mu\|_{L^\infty}u_t(\Phi(t))
  +\left(J_{\nu,\mathrm{in}}^\varepsilon u_t\right)(\Phi(t)).
\end{equation}
Taking the spatial essential supremum in the integral form of
\eqref{eq:one-dimensional-frozen-characteristic-inequality} and using the
\(L^\infty\) inflow bound in \eqref{eq:frozen-inflow-outflow-bound} gives
\begin{equation}
\label{eq:one-dimensional-frozen-maximum-bound}
  \|u_t\|_{L^\infty}
  \le
  e^{(2\|\mu\|_{L^\infty}
       +2\|\lambda\|_{L^\infty}\mathcal{M}_1(\kappa))t}
  \|\rho_0\|_{L^\infty}.
\end{equation}
The frozen version of the calculation leading to
\eqref{eq:regularized-moment-differential-inequality} gives
\begin{equation}
\label{eq:one-dimensional-frozen-moment-bound}
  \widetilde{\mathcal M}_r(u_t)
  \le e^{C_{r,\lambda,\kappa}t}
       \widetilde{\mathcal M}_r(\rho_0),
  \qquad 0\le t\le\tau.
\end{equation}
Together, \eqref{eq:one-dimensional-frozen-maximum-bound} and
\eqref{eq:one-dimensional-frozen-moment-bound} show that the frozen map sends
\(\mathsf Y_{\tau,R,M}\) into itself for suitable \(R,M\) and sufficiently
small \(\tau\). Smoothness of \(K_\varepsilon\) and the common-proposal
generator \eqref{eq:common-proposal-generator} give, by the argument leading
to \eqref{eq:frozen-map-contraction-bound},
\begin{equation}
\label{eq:one-dimensional-frozen-map-contraction-bound}
\begin{aligned}
&\sup_{0\le t\le\tau}
W_1\left(\Phi_\varepsilon(\nu)_t,\Phi_\varepsilon(\omega)_t\right)
\le
\tau C_{\varepsilon,R,M}e^{C_{\varepsilon,R,M}\tau}
\sup_{0\le t\le\tau}W_1(\nu_t,\omega_t).
\end{aligned}
\end{equation}
After decreasing \(\tau\),
\eqref{eq:one-dimensional-frozen-map-contraction-bound} makes the frozen map
a contraction, and hence gives a unique local regularized solution.

At a fixed point \(\nu=u=\rho^\varepsilon\), the characteristic identity is
\begin{align}
  \frac{d}{dt}\rho_t^\varepsilon(\Phi(t))
  ={}&
  \left(J_{\rho_t^\varepsilon,\mathrm{in}}^\varepsilon
     \rho_t^\varepsilon\right)(\Phi(t))
  -\left(J_{\rho_t^\varepsilon,\mathrm{out}}^\varepsilon
     \rho_t^\varepsilon\right)(\Phi(t))
  \nonumber \\
  &\quad\quad\quad+2(\eta_\varepsilon*\mu)(\Phi(t))\rho_t^\varepsilon(\Phi(t))
  -2(\eta_\varepsilon*\rho_t^\varepsilon)(\Phi(t))
     \rho_t^\varepsilon(\Phi(t)).\label{eq:one-dimensional-fixed-point-characteristic-identity}
\end{align}
Discarding the outflow and self-interaction terms in
\eqref{eq:one-dimensional-fixed-point-characteristic-identity} gives,
uniformly in \(\varepsilon\),
\begin{equation}
\label{eq:one-dimensional-uniform-maximum-bound}
  \|\rho_t^\varepsilon\|_{L^\infty}
  \le
  e^{(2\|\mu\|_{L^\infty}
       +2\|\lambda\|_{L^\infty}\mathcal{M}_1(\kappa))t}
  \|\rho_0\|_{L^\infty}.
\end{equation}
Using \(\|v_{\rho^\varepsilon}^\varepsilon\|_{L^\infty}\le2\),
\eqref{eq:moment-weight-increment}, and
\eqref{eq:one-dimensional-jump-increment}, the moment calculation
gives
\begin{equation}
\label{eq:one-dimensional-moment-differential-inequality}
  \frac{d}{dt}\widetilde{\mathcal M}_r(\rho_t^\varepsilon)
  \le C_{r,\lambda,\kappa}
  \widetilde{\mathcal M}_r(\rho_t^\varepsilon).
\end{equation}
Here \(C_{r,\lambda,\kappa}<\infty\) by
\(\mathcal{M}_{r+1}(\kappa)<\infty\). Gronwall's inequality in
\eqref{eq:one-dimensional-moment-differential-inequality} yields
\begin{equation}
\label{eq:one-dimensional-uniform-moment-bound}
  \sup_{0<\varepsilon\le1}\sup_{0\le t\le T}
  \widetilde{\mathcal M}_r(\rho_t^\varepsilon)\le C_T.
\end{equation}

The bounds \eqref{eq:one-dimensional-uniform-maximum-bound} and
\eqref{eq:one-dimensional-uniform-moment-bound} give global continuation.
Testing by truncated \(1\)-Lipschitz functions also yields
\begin{equation}
\label{eq:one-dimensional-W1-time-modulus}
  W_1(\rho_t^\varepsilon,\rho_s^\varepsilon)
  \le C_T|t-s|,
\end{equation}
uniformly in \(\varepsilon\); here \(\mathcal{M}_2(\kappa)<\infty\) controls the jump
displacements.

\emph{Step 2: Compactness and passage to the limit.}
If \(r>1\), the uniform power-moment bound
\eqref{eq:one-dimensional-uniform-moment-bound} and
\eqref{eq:one-dimensional-W1-time-modulus} yield compactness in
\(C([0,T];\mathcal P_1(\mathbb R))\). If \(r=1\), the thinning argument in
\Cref{prop:limit-regularized-jump-transport}, using
\eqref{eq:dominating-Levy-measure-r1} with \(q=1\), gives the required
uniform integrability of first moments and hence compactness in
\(C([0,T];\mathcal P_1(\mathbb R))\). If
\(\rho^{\varepsilon_n}\to\rho\) in this space, lower semicontinuity and
\eqref{eq:power-bracket-moment-comparison} give
\begin{equation}
\label{eq:one-dimensional-limit-moment-bounds}
  \sup_{0\le t\le T}\mathcal M_r(\rho_t)\le C_T.
\end{equation}
Moreover,
\begin{equation}
\label{eq:one-dimensional-regularized-CDF-velocity}
  v_{\rho^{\varepsilon_n}}^{\varepsilon_n}
  =
  2\left(F_{\eta_{\varepsilon_n}*\rho^{\varepsilon_n}}
         -F_{\eta_{\varepsilon_n}*\mu}\right).
\end{equation}
The bound \eqref{eq:one-dimensional-uniform-maximum-bound} makes all the CDFs
uniformly Lipschitz. Consequently, convergence in
\(C([0,T];\mathcal P_1)\), together with these Lipschitz bounds, gives
convergence of the CDFs uniformly in \((t,x)\), while
\begin{equation}
\label{eq:CDF-mollification-error}
  \|F_{\eta_{\varepsilon_n}*\nu}
       -F_\nu\|_{L^\infty}
  \le
  \varepsilon_n\|\nu\|_{L^\infty}
  \int|w|\eta(w)\,dw .
\end{equation}
It follows from \eqref{eq:one-dimensional-regularized-CDF-velocity} and
\eqref{eq:CDF-mollification-error} that
\begin{equation}
\label{eq:one-dimensional-uniform-velocity-convergence}
  \left\|v_{\rho^{\varepsilon_n}}^{\varepsilon_n}-v_\rho\right\|_{L^\infty([0,T]\times\mathbb R)}
  \to0.
\end{equation}
This convergence and
the compact/large-\(z\) argument in
\Cref{prop:limit-regularized-jump-transport}, using
\eqref{eq:one-dimensional-jump-increment}, allow passage to the limit in the jump
term; its tail is controlled exactly as in \eqref{eq:jump-tail-bound}.
Freezing the limiting coefficients gives strong \(L^s\)-continuity for every
finite \(s\), and \eqref{eq:one-dimensional-uniform-maximum-bound} gives
weak-star \(L^\infty\)-continuity. Passing to the limit in
\eqref{eq:one-dimensional-uniform-maximum-bound} gives
\eqref{eq:continuum-maximum-estimate}, because
\((-\Delta K)*\mu=2\mu\), \(A_{\rho_s}=2\), and
\(\mathcal M_{1\wedge q}(\kappa)=\mathcal M_1(\kappa)\).
The moment bounds in \eqref{eq:one-dimensional-limit-moment-bounds} and the
maximum estimate \eqref{eq:continuum-maximum-estimate} prove existence and
\eqref{eq:infinite-p-weak-solution-class}.

\emph{Step 3: Uniqueness.}
Apply the common-proposal generator \eqref{eq:common-proposal-generator} to
two weak solutions satisfying
\eqref{eq:infinite-p-weak-solution-class}, with the rates  $c_i(t,x,z)$, starting from an optimal coupling of
their initial data. As in the proof of
Proposition~\ref{prop:continuum-W1-stability}, the resulting curve \(\Pi_t\)
has marginals \(\rho_t^{(1)}\) and \(\rho_t^{(2)}\). Define \(D_1\) by
\begin{equation}
\label{eq:one-dimensional-coupling-distance}
  D_1(t):=\int_{\mathbb R\times\mathbb R}|x-y|\,d\Pi_t(x,y).
\end{equation}
Testing the coupled equation by smooth approximations of \(|x-y|\) gives
\begin{align}
  D_1'(t)\le{}&
  \int_{\R\times \R}|v_{\rho_t^{(1)}}(x)-v_{\rho_t^{(2)}}(y)|\,d\Pi_t(x,y)
  \nonumber \\
  &+\int_{\R\times \R}\int_{\R} |z|\,|c_1(t,x,z)-c_2(t,y,z)|
  \kappa(z)\,dz\,d\Pi_t(x,y).\label{eq:one-dimensional-coupling-inequality}
\end{align}
Since \(v_\rho=2(F_\rho-F_\mu)\), the finite-time density bounds give
\(\operatorname{Lip}(v_{\rho_t^{(1)}})\le C_T\). Moreover, the coupling
representation and
\begin{equation}
  \int_{\R}
  |\operatorname{sgn}(y-a)-\operatorname{sgn}(y-b)|
  \rho_t^{(2)}(y)\,dy
  \le2\|\rho_t^{(2)}\|_{L^\infty}|a-b|
\end{equation}
give
\(\int_{\R}|v_{\rho_t^{(1)}}-v_{\rho_t^{(2)}}|
\,d\rho_t^{(2)}
\le2\|\rho_t^{(2)}\|_{L^\infty}D_1(t)\).
Splitting the velocity difference at \(y\), we therefore obtain
\begin{equation}
\label{eq:one-dimensional-transport-coupling-bound}
  \int_{\R\times\R}
  |v_{\rho_t^{(1)}}(x)-v_{\rho_t^{(2)}}(y)|
  \,d\Pi_t(x,y)
  \le C_TD_1(t).
\end{equation}
For the jump term, writing \(\delta_zf(x)=f(x)-f(x+z)\), we have
\begin{align}
  |c_1(t,x,z)-c_2(t,y,z)|
  &\le \|\lambda\|_{W^{1,\infty}}
  \Bigl(
    |\delta_zf_{\rho_t^{(1)}}(x)-\delta_zf_{\rho_t^{(1)}}(y)|
    +|\delta_z(f_{\rho_t^{(1)}}-f_{\rho_t^{(2)}})(y)|
    +2|x-y||z|
  \Bigr)\nonumber \\
  &\le C_T\bigl(|x-y||z|+D_1(t)\bigr).
\end{align}
Multiplying by \(|z|\) and integrating yields
\begin{equation}
\label{eq:one-dimensional-jump-contribution}
\begin{aligned}
\int_{\R\times\R}\int_{\R}
  |z|\,|c_1(t,x,z)-c_2(t,y,z)|
  \le C_TD_1(t).
\end{aligned}
\end{equation}
Combining \eqref{eq:one-dimensional-coupling-inequality}--\eqref{eq:one-dimensional-jump-contribution}, and then
applying Gronwall's inequality yields
\begin{equation}
\label{eq:one-dimensional-W1-stability}
\begin{aligned}
  W_1(\rho_t^{(1)},\rho_t^{(2)})
  &\le D_1(t)
  \le e^{C_Tt}D_1(0)=e^{C_Tt}W_1(\rho_0^{(1)},\rho_0^{(2)}).
\end{aligned}
\end{equation}
Equation \eqref{eq:one-dimensional-W1-stability} proves uniqueness. The
maximum estimate \eqref{eq:continuum-maximum-estimate} was obtained in
Step~2.
\end{proof}

\section{Stationary solutions and long-time behavior}
\label{subsec:stationary-long-time}
\subsection{Stationarity and energy dissipation}
\label{subsec:stationarity-energy-dissipation}
For statements about the evolving source density in this section, assume the hypotheses of
Theorem~\ref{thm:continuum-wellposedness} under the stronger assumption \(r\ge2\), and let
\(\rho_t\) be its global weak solution. In particular,
\(\rho_0,\mu\in\mathcal P_2(\mathbb R^d)\) and
\(\kappa\in\mathcal P_3(\mathbb R^d)\).
The measure definitions and rigidity statements below have their own explicitly
stated domains.

The long-time problem is to determine whether the solution converges to
the target \(\mu\). Although \(\mu\) is the unique minimizer of
\(\mathcal E_q(\,\cdot-\mu)\), decrease of the energy alone does not
identify the limiting measure. One must also understand which measures
can have vanishing dissipation. We first distinguish stationarity of
the equation from Lagrangian criticality of the energy. For the latter
terminology, we follow Boufad\`ene--Vialard
\cite[Definition~3.2]{boufadene2023global}, using the a.e.
canonical-force formulation of
\cite[Definition~4.3 and Remark~4.4]{rosenzweig2026wassersteingradientflowsmaximum}.

\begin{definition}[Stationary solutions and Lagrangian critical points]
\label{def:stationary-critical}
Let $\nu,\mu\in\mathcal P(\mathbb R^d)$, let $\lambda$ be a bounded,
nonnegative Borel function, and let $\kappa$ be a probability density
in $\mathcal P_{1\wedge q}(\mathbb R^d)$. When $1<q<2$, assume
$\int |y|^{q-1}\,d|\nu-\mu|(y)<\infty$.
Use the normalized increment $\Delta_\nu$ in
\eqref{eq:extended-potential-increment}. With the diagonal convention
$\nabla K(0)=0$, define the canonical force by
\begin{equation}\label{eq:stationary-canonical-force}
 \nabla f_\nu(x):=\int_{\mathbb R^d}\nabla K(x-y)\,d(\nu-\mu)(y)
\end{equation}
where the integral is absolutely convergent, and set it to zero elsewhere.
Require
\begin{equation}\label{eq:stationary-force-integrability}
 \int_{\mathbb R^d}|\nabla K(x-y)|\,d|\nu-\mu|(y)<\infty
 \quad\text{for $\nu$-a.e. $x$},\qquad
 \nabla f_\nu\in L^1_{\mathrm{loc}}(\nu).
\end{equation}
\begin{enumerate}[label=\textup{(\roman*)}]
\item
We call $\nu$ a \emph{stationary solution} of \eqref{eq:pde} if
\eqref{eq:stationary-jump-transport-weak} holds for every
$\varphi\in C_c^\infty(\mathbb R^d)$, with an absolutely convergent jump integral.
\item
We call $\nu$ a \emph{Lagrangian critical point} of
$\mathcal E_q(\,\cdot-\mu)$ if
$\mathcal E_q(\nu-\mu)<\infty$ and
$\nabla f_\nu=0$ $\nu$-a.e.
\end{enumerate}
A stationary solution or Lagrangian critical point is
\emph{spurious} if it differs from $\mu$.
\end{definition}

The stationary identity in the definition is
\begin{align}
  0={}&-\int_{\mathbb R^d}
       \nabla\varphi(x)\cdot\nabla f_\nu(x)\,d\nu(x)\nonumber \\
     &+\int_{\mathbb R^d}\int_{\mathbb R^d}
       \bigl(\varphi(x+z)-\varphi(x)\bigr)
       \lambda(x)[\Delta_\nu(x,z)]_+\kappa(z)\,dz\,d\nu(x).\label{eq:stationary-jump-transport-weak}
\end{align}
The normalized increments are absolutely convergent and continuous under
the stated moment condition. The argument of
Lemma~\ref{lem:jump-increment}, applied directly to these increments, gives
\begin{equation}\label{eq:stationary-increment-bound}
 |\Delta_\nu(x,z)|\le
 \begin{cases}
  2|z|^q,&0<q\le1,\\
  C_q|z|\displaystyle\int |y|^{q-1}\,d|\nu-\mu|(y),&1<q<2.
 \end{cases}
\end{equation}
Indeed, for $q\le1$ this follows from the $q$-H\"older continuity of $K$.
For $q>1$, subtract $\nabla K(x)$ in
\eqref{eq:stationary-canonical-force}, use the $(q-1)$-H\"older continuity
of $\nabla K$, and integrate along the segment from $x$ to $x+z$.
Thus, boundedness of $\lambda$ and
$\kappa\in\mathcal P_{1\wedge q}$ ensure absolute convergence of every weak
jump test. For $q\ge1$, the canonical force is also absolutely convergent
and bounded under these assumptions. For $0<q<1$, the local force
conditions in \eqref{eq:stationary-force-integrability} must be checked
separately; finite energy alone does not imply them.

Lagrangian criticality implies pure-transport stationarity because the
transport pairing in \eqref{eq:stationary-jump-transport-weak} vanishes.
Adding jumps changes the stationary equation while leaving the energy
and its Lagrangian critical points unchanged. Boufad\`ene--Vialard use
vanishing of the force on the support. This agrees with the
a.e. condition above whenever the canonical force is
continuous. The diagonal convention specifies the force at atoms.

The rigidity theorem \cite[Theorem~1.5]{rosenzweig2026wassersteingradientflowsmaximum} excludes non-target Lagrangian critical points for $\mu,\nu\in\mathcal P_q(\mathbb R^d)$, with $\nu$ absolutely continuous and $d+q\ge2$, under the canonical-force and integrability convention above. A narrow limit can lose absolute continuity because the finite-time density bounds need not be uniform. The jump condition below identifies such limits. For comparison, the ordinary-flow asymptotic-criticality theorem is \cite[Theorem~5.23]{rosenzweig2026wassersteingradientflowsmaximum}; its energy-distance compactness refinement and conditional target-convergence result are \cite[Theorem~1.7 and Corollary~5.27]{rosenzweig2026wassersteingradientflowsmaximum}.

Recall the dissipations $\mathcal D_{\mathrm{tr}}$, $\mathcal D_{\mathrm j}$, and $\mathcal D$ from \eqref{eq:transport-dissipation}--\eqref{eq:total-dissipation}, using the canonical force and normalized increments on the domain of Definition~\ref{def:stationary-critical}.
For finite-energy measures in this domain, Lagrangian criticality is
equivalent to $\mathcal D_{\mathrm{tr}}(\nu)=0$. If both dissipations vanish, the
stationary identity follows directly on that domain.
For the converse, let $\nu\in\mathcal P_r(\mathbb R^d)\cap L^p(\mathbb R^d)$
and assume the target, kernel, proposal, and spatial-rate hypotheses of
Theorem~\ref{thm:continuum-wellposedness}, with $r\ge2$.
In this class,
\begin{equation}
\label{eq:stationarity-zero-total-dissipation}
  \nu\text{ is stationary for \eqref{eq:pde}}
  \quad\iff\quad
  \mathcal D_{\mathrm{tr}}(\nu)=\mathcal D_{\mathrm j}(\nu)=0.
\end{equation}
Indeed, the convolution and inflow--outflow bounds from
Section~\ref{sec:continuum-wellposedness} put the constant curve
$\rho_t\equiv\nu$ in the regularity class of
Definition~\ref{def:continuum-weak-solution}. Its weak formulation is
equivalent to \eqref{eq:stationary-jump-transport-weak}. If $\nu$ is
stationary, Proposition~\ref{prop:energy-dissipation} applied to this
constant curve gives $t\mathcal D(\nu)=0$ for every $t>0$.
Nonnegativity gives vanishing of both terms, proving the remaining implication.

The jump term yields a rigidity argument that also applies to the
narrow limits considered below; see Proposition~\ref{prop:no-spur-stat-solns}.
To describe the long-time limits, we denote the $\omega$-limit set of an initial source $\rho_0$ by
\begin{equation}
  \omega(\rho_0):=
  \bigl\{\nu\in\mathcal P(\mathbb R^d):
  \rho_{t_n}\rightharpoonup\nu
  \text{ for some }t_n\to\infty\bigr\}.
\end{equation}
We prove that \(\omega(\rho_0)=\{\mu\}\) by combining tightness of
energy sublevels, lower semicontinuity of jump dissipation, and
rigidity of zero jump dissipation (Proposition~\ref{prop:no-spur-stat-solns}).
Lemma~\ref{lem:W1-dissipation} transfers convergence from
times of small dissipation to the full trajectory. The second-moment
argument then gives MMD convergence.

\begin{proposition}[Energy-dissipation identity]
\label{prop:energy-dissipation}
The map
\(
  t\mapsto\mathcal E_q(\rho_t-\mu)
\)
is locally absolutely continuous.  For every \(0\le s\le t<\infty\),
\begin{equation}
\label{eq:energy-dissipation-identity}
  \mathcal E_q(\rho_t-\mu)
  +\int_s^t\mathcal D(\rho_\tau)\,d\tau
  =\mathcal E_q(\rho_s-\mu).
\end{equation}
In particular,
\begin{equation}
\label{eq:energy-dissipation-differential}
  \frac{d}{dt}\mathcal E_q(\rho_t-\mu)
  =-\mathcal D(\rho_t)
  \quad\text{for a.e. }t>0.
\end{equation}
\end{proposition}

\begin{proof}
The corresponding pure-transport identity is
\cite[Proposition~3.8]
{rosenzweig2026wassersteingradientflowsmaximum}.  Here we use the
regularized approximations from Section~\ref{sec:continuum-wellposedness} and compute the additional jump
pairing. Set
\begin{equation}
\label{eq:regularized-energy-definition}
  \mathcal E_q^\varepsilon(\rho_t^\varepsilon-\mu)
  :=\frac12
  \int_{\mathbb R^d}\int_{\mathbb R^d}
  K_\varepsilon(x-y)
  \,d(\rho_t^\varepsilon-\mu)(x)
  \,d(\rho_t^\varepsilon-\mu)(y).
\end{equation}
Using the regularized mild equation, one first tests against compactly
supported spatial truncations of the regularized first variation and then applies
a time regularization.  The uniform moment bounds allow both
regularizations to be removed, so that
\eqref{eq:regularized-energy-definition} may be differentiated, and hence the regularized analogue of
\eqref{eq:energy-dissipation-identity} follows.

When \(d+q-2>0\), we
pass to the limit along the approximation obtained in
\Cref{prop:limit-regularized-jump-transport}; for
\(d=q=1\), we use the approximation in Step~2 of the proof of
Theorem~\ref{thm:continuum-wellposedness},
with \eqref{eq:one-dimensional-uniform-moment-bound}--\eqref{eq:one-dimensional-W1-time-modulus}.  Thus, in either
case, the approximations converge uniformly in \(W_1\) on \([0,T]\). Combined with the MMD-transport comparison
\cite[Proposition~A.3(1)]
{rosenzweig2026wassersteingradientflowsmaximum}, this gives, uniformly for
\(t\in[0,T]\),
\begin{equation}
\label{eq:regularized-MMD-convergence}
  \operatorname{MMD}_q(\rho_t^{\varepsilon_n},\rho_t)
  \le C_{d,q}
  W_1(\rho_t^{\varepsilon_n},\rho_t)^{q/2}
  \to0.
\end{equation}
The kernel regularization satisfies
\begin{equation}
\label{eq:regularized-energy-kernel-error}
  \left|
  \mathcal E_q^\varepsilon(\nu-\mu)
  -\mathcal E_q(\nu-\mu)
  \right|
  \le
  \begin{cases}
    C\varepsilon^q,&0<q\le1,\\
    C\varepsilon
    \bigl(
      1+\mathcal M_{q-1}(\nu)+\mathcal M_{q-1}(\mu)
    \bigr)+C\varepsilon^q,&1<q<2.
  \end{cases}
\end{equation}
Equations
\eqref{eq:regularized-MMD-convergence}-
\eqref{eq:regularized-energy-kernel-error}, together with
\eqref{eq:regularized-uniform-bounds} when \(d+q-2>0\) and
\eqref{eq:one-dimensional-uniform-moment-bound} when
\((d,q)=(1,1)\), identify the energies at both ends of the integrated
regularized identity.

When \(d+q-2>0\), the convergence \(v_{\rho_t^{\varepsilon_n}}^{\varepsilon_n}
  \to v_{\rho_t}\), the global velocity bound
in \eqref{eq:regularized-uniform-bounds}, and uniform tightness identify
the transport dissipation after splitting \(x\) into a ball and its
complement. When \((d,q)=(1,1)\), one uses instead
\eqref{eq:one-dimensional-uniform-velocity-convergence}, the bound
\(\|v_{\rho^{\varepsilon_n}}^{\varepsilon_n}\|_{L^\infty}\le2\), and
\eqref{eq:one-dimensional-uniform-moment-bound}.  For the jump
dissipation, the local uniform convergence of the potential increments
from \eqref{eq:potential-increment-convergence-bound} is combined with
\eqref{eq:jump-increment-bound}, the boundedness and
continuity of \(\lambda\), and a compact-tail decomposition.  Uniformly
in \(n\) and \(t\in[0,T]\), \(  \lambda(x)
  \zeta_{\rho_t^{\varepsilon_n}}^{\varepsilon_n}(x,x+z)^2
  \le C_T|z|^{2(1\wedge q)},\)
where the finiteness of \(C_T\) follows from
\eqref{eq:regularized-uniform-bounds} and
\eqref{eq:acceptance-coefficient}; in the case \((d,q)=(1,1)\), it follows
instead from \eqref{eq:one-dimensional-jump-increment}. Uniform tightness handles the \(x\)-tail, while
local uniform increment convergence handles the compact core.  Thus both
dissipation terms converge with an integrable bound uniform in time.
Dominated convergence passes directly in the integrated regularized
identity and gives
\eqref{eq:energy-dissipation-identity}.  Absolute continuity and
\eqref{eq:energy-dissipation-differential} follow.
\end{proof}

\begin{lemma}
\label{lem:W1-dissipation}
For every \(0\le s\le t<\infty\),
\begin{align}
  W_1(\rho_t,\rho_s)
  &\le \left(1+(\|\lambda\|_{L^\infty}\mathcal M_2(\kappa))^{1/2}\right)
  \int_s^t\mathcal D(\rho_\tau)^{1/2}\,d\tau,
  \label{eq:W1-dissipation-speed}\\
  W_1(\rho_t,\rho_s)
  &\le \left(1+(\|\lambda\|_{L^\infty}\mathcal M_2(\kappa))^{1/2}\right)|t-s|^{1/2}
  \left(
    \int_s^t\mathcal D(\rho_\tau)\,d\tau
  \right)^{1/2}.
  \label{eq:W1-dissipation-time-modulus}
\end{align}
\end{lemma}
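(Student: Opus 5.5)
The plan is to bound the $W_1$ metric derivative of $t\mapsto\rho_t$ by Kantorovich--Rubinstein duality. We test the weak formulation \eqref{eq:weak-jump-transport-formulation} against a $1$-Lipschitz function $\varphi$; since $\varphi$ is not compactly supported, we first use smooth truncations and remove them using the finite-time moment bounds \eqref{eq:continuum-moment-velocity-regularity}, exactly as was done for \eqref{eq:regularized-dual-time-bound}. This gives, for a.e.\ $\tau$,
\begin{equation*}
 \frac{d}{d\tau}\int\varphi\,d\rho_\tau
 =\int\nabla\varphi\cdot v_{\rho_\tau}\,d\rho_\tau
 +\iint(\varphi(x+z)-\varphi(x))\lambda(x)\zeta_{\rho_\tau}(x,x+z)\kappa(z)\,dz\,d\rho_\tau(x).
\end{equation*}
The right-hand side is locally integrable in time by the velocity bound and Lemma~\ref{lem:jump-increment}, so $\int\varphi\,d\rho_t-\int\varphi\,d\rho_s$ equals its time integral over $[s,t]$.

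Next we bound each term by Cauchy--Schwarz. Since $|\nabla\varphi|\le1$ and $\rho_\tau$ is a probability measure, the transport term is at most $\bigl(\int|\nabla f_{\rho_\tau}|^2\,d\rho_\tau\bigr)^{1/2}=\mathcal D_{\mathrm{tr}}(\rho_\tau)^{1/2}$. For the jump term we use $|\varphi(x+z)-\varphi(x)|\le|z|$ and write $\lambda\zeta=\lambda^{1/2}\cdot\lambda^{1/2}\zeta$. Cauchy--Schwarz in $(x,z)$ against $\kappa(z)\,dz\,d\rho_\tau(x)$ then gives the bound
\begin{equation*}
 \Bigl(\iint\lambda(x)|z|^2\kappa(z)\,dz\,d\rho_\tau(x)\Bigr)^{1/2}
 \Bigl(\iint\lambda(x)\zeta_{\rho_\tau}(x,x+z)^2\kappa(z)\,dz\,d\rho_\tau(x)\Bigr)^{1/2}
 \le(\|\lambda\|_{L^\infty}\mathcal M_2(\kappa))^{1/2}\mathcal D_{\mathrm j}(\rho_\tau)^{1/2}.
\end{equation*}
Here $\zeta_{\rho}(x,x+z)=[f_\rho(x)-f_\rho(x+z)]_+$ is exactly the integrand of \eqref{eq:weighted-jump-dissipation}. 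Since $\mathcal D_{\mathrm{tr}}^{1/2},\mathcal D_{\mathrm j}^{1/2}\le\mathcal D^{1/2}$, the time derivative is bounded by $\bigl(1+(\|\lambda\|_{L^\infty}\mathcal M_2(\kappa))^{1/2}\bigr)\mathcal D(\rho_\tau)^{1/2}$.

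Integrating from $s$ to $t$ and taking the supremum over $1$-Lipschitz $\varphi$ yields \eqref{eq:W1-dissipation-speed}. Cauchy--Schwarz in time, $\int_s^t\mathcal D^{1/2}\le|t-s|^{1/2}\bigl(\int_s^t\mathcal D\bigr)^{1/2}$, then gives \eqref{eq:W1-dissipation-time-modulus}. By Proposition~\ref{prop:energy-dissipation} the dissipation integral is finite, bounded by $\mathcal E_q(\rho_s-\mu)$, so both right-hand sides are finite.

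The main obstacle is justifying the identity for unbounded Lipschitz test functions. The weak formulation only allows $C_c^\infty$ space--time tests, and the identity must also hold at a.e.\ time. To handle this, I would take $\varphi_R=\varphi\chi(\cdot/R)$, mollified and combined with a time cutoff approximating $\mathbf 1_{[s,t]}$, and then pass $R\to\infty$ by dominated convergence. The domination comes from $|\varphi(x)|\le|\varphi(0)|+|x|$, the bounds $\sup_{[s,t]}\mathcal M_r(\rho_\tau)<\infty$ with $r\ge2$, $\mathcal M_3(\kappa)<\infty$, and the increment bound \eqref{eq:jump-increment-bound}. Continuity of $\rho$ in $W_1$ handles the endpoint times. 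Equivalently, one can apply the Cauchy--Schwarz bounds directly to the truncated tests, whose Lipschitz constants are $1+O(1/R)$ with error terms supported where $|x|\gtrsim R$, and let $R\to\infty$.
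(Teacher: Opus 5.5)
Your proposal is correct and is essentially the paper's argument: the weak formulation tested against $1$-Lipschitz functions, Cauchy--Schwarz on the transport and jump terms exactly as you do, integration over $[s,t]$, Kantorovich--Rubinstein duality, and Cauchy--Schwarz in time. The only difference is the order of the truncation step. The paper proves the derivative bound for $C_c^\infty$ $1$-Lipschitz tests and then extends the integrated inequality to all of $\mathrm{Lip}_1$ by density, which is your ``equivalent'' second route, rather than first extending the weak identity to unbounded Lipschitz tests. In that route, the cutoff $\varphi\chi(\cdot/R)$ of an unbounded $\varphi$ has Lipschitz constant $1+O(1)$ rather than $1+O(1/R)$. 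What does the work is that the annulus contributions vanish, by tightness and $\mathcal M_3(\kappa)<\infty$, as you indicate.
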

\begin{proof}
Let \(\varphi\in C_c^\infty(\mathbb R^d)\) be \(1\)-Lipschitz.  The weak formulation
\eqref{eq:weak-jump-transport-formulation} and Cauchy-Schwarz give
\begin{align}
  \left|
  \frac{d}{dt}\int_{\mathbb R^d}\varphi\,d\rho_t
  \right|
  &\le
  \left(1+(\|\lambda\|_{L^\infty}\mathcal M_2(\kappa))^{1/2}\right)\mathcal D(\rho_t)^{1/2}.
  \label{eq:test-function-dissipation-speed}
\end{align}
Integrating over $[s,t]$ gives 
\begin{equation}
    \sup_{\phi \in C_c^\infty(\R^d) \cap \mathrm{Lip}_1(\R^d)}\int \phi \, d(\rho_t - \rho_s) \leq \left(1+(\|\lambda\|_{L^\infty}\mathcal M_2(\kappa))^{1/2}\right) \int_s^t \mathcal D(\rho_\tau)^{1/2} \, d\tau.
    \label{eqn:kr-C_inf-est}
\end{equation}
By an appeal to density, see that (\ref{eqn:kr-C_inf-est}) holds when the supremum is taken over all of $\mathrm{Lip}_1(\R^d)$. Kantorovich-Rubinstein duality then establishes (\ref{eq:W1-dissipation-speed}), which together with Cauchy-Schwarz implies (\ref{eq:W1-dissipation-time-modulus}).
\end{proof}

\subsection{Energy preliminaries and rigidity of jump dissipation}
\label{subsec:energy-preliminaries-rigidity}

Narrow limits need not preserve the moments available at finite times. We therefore use a representation of the energy that remains meaningful without
these moment assumptions. With the conventions
\begin{equation}\label{eq:fourier-convention}
 \widehat\nu(\xi)=\int e^{-2\pi i\xi\cdot x}\,d\nu(x),
 \qquad \widehat u(\xi)=\int e^{-2\pi i\xi\cdot x}u(x)\,dx,
\end{equation}
set, for finite signed measures of total mass zero,
\begin{equation}\label{eq:extended-fourier-energy}
 \mathcal E_q(\sigma)=c_{d,q}\int_{\mathbb R^d}
       \frac{|\widehat\sigma(\xi)|^2}{|\xi|^{d+q}}\,d\xi\in[0,\infty],
 \qquad \sigma(\mathbb R^d)=0.
\end{equation}
Set \(s_q:=(d+q)/2\), so \(d/2<s_q<d/2+1\). We use the homogeneous Sobolev norm
\begin{equation}
\label{eq:particle-homogeneous-Sobolev-norm}
  \|u\|_{\dot H^{s_q}}^2:=\int_{\mathbb R^d}
           |\xi|^{2s_{q}}|\widehat u(\xi)|^2\,d\xi,
  \qquad \widehat{|\nabla|^{s_q} u}(\xi):=|\xi|^{s_q}\widehat u(\xi),
\end{equation}
initially on Schwartz functions and then by homogeneous completion modulo constants. Since \(d/2<s_q<d/2+1\), choosing the continuous representative with \(u(0)=0\) identifies this space, up to norm normalization, with the RKHS \(\mathcal H_{k_q}\). Constants do not affect its pairing with a zero-mass measure; see \cite[Chapter~10]{wendland2005scattered} and
\cite[Remark~2.2]{rosenzweig2026wassersteingradientflowsmaximum}. For a finite signed measure $\sigma$ of zero total mass, \eqref{eq:extended-fourier-energy} reads $\mathcal E_q(\sigma)=c_{d,q}\|\sigma\|_{\dot H^{-s_q}}^2$.
The constant $c_{d,q}>0$ agrees with \eqref{eq:energy}. For differences of measures in \(\mathcal{P}_{q/2}(\R^d)\), this is the centered squared MMD; see
\cite{fornasier2016consistency} and
\cite[Lemma~2.1]{rosenzweig2026wassersteingradientflowsmaximum}.
This expression is lower semicontinuous under narrow convergence of probability measures by Fatou's lemma, and
\begin{equation}
\label{eq:extended-energy-rigidity}
  \mathcal E_q(\sigma)=0\quad\iff\quad\sigma=0,
\end{equation}
by continuity and uniqueness of Fourier transforms. 

The next measure-level lemmas use only $\mu\in\mathcal P_{q/2}$ and the displayed energy and increment hypotheses, not the assumption $r\ge2$ on the evolving solution. They first extract tightness and moments below $q/2$ from bounded energy.

\begin{lemma}
\label{lem:energy-sublevels}
Let $\mu\in\mathcal P_{q/2}(\mathbb R^d)$ and $(\nu_i)_{i\in I}\subset\mathcal P(\mathbb R^d)$ satisfy
\begin{equation}
\label{eq:energy-sublevel-assumption}
  \sup_{i\in I}\mathcal E_q(\nu_i-\mu)<\infty.
\end{equation}
Then it holds that
\begin{equation}
\label{eq:energy-sublevel-moment-bound}
  \sup_{i\in I}\mathcal M_\alpha(\nu_i)<\infty,\qquad \forall\:\alpha\in(0,q/2),
\end{equation}
and in particular $(\nu_i)_{i \in I}$ is tight.
\end{lemma}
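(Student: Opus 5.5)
The plan is to dualize the Fourier representation \eqref{eq:extended-fourier-energy} against a fixed smooth function that grows like $|x|^\alpha$. For a finite signed measure $\sigma$ of zero mass and a test function $g\in C_c^\infty(\mathbb R^d)$, Plancherel/Parseval gives $\int g\,d\sigma=\int\widehat g(\xi)\overline{\widehat\sigma(\xi)}\,d\xi$. This is legitimate because $\widehat\sigma$ is bounded and $\widehat g$ is Schwartz. Cauchy--Schwarz with weights $|\xi|^{\pm(d+q)/2}$ then yields
\[
\Bigl|\int g\,d\sigma\Bigr|\le \|g\|_{\dot H^{s_q}}\Bigl(\int\frac{|\widehat\sigma(\xi)|^2}{|\xi|^{d+q}}\,d\xi\Bigr)^{1/2}
= c_{d,q}^{-1/2}\|g\|_{\dot H^{s_q}}\,\mathcal E_q(\sigma)^{1/2}.
\]
I will apply this with $\sigma=\nu_i-\mu$. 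No moment assumption on $\nu_i$ is needed, since only $\widehat\sigma$ appears.

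Fix $\alpha\in(0,q/2)$ and set $g(x)=\langle x\rangle^\alpha-1$. First I check that $\|g\|_{\dot H^{s_q}}<\infty$. Away from the origin in frequency, $g$ is smooth with symbol-type derivative bounds $|\partial^\beta g|\lesssim\langle x\rangle^{\alpha-|\beta|}$, so $\widehat g$ decays rapidly as $|\xi|\to\infty$. Near $\xi=0$, the growth $|x|^\alpha$ produces $|\widehat g(\xi)|\lesssim|\xi|^{-d-\alpha}$. Hence $\int_{|\xi|\le1}|\xi|^{d+q}|\widehat g|^2\,d\xi\lesssim\int_{|\xi|\le1}|\xi|^{q-2\alpha-d}\,d\xi<\infty$ precisely because $\alpha<q/2$. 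Here $\widehat g$ is understood as a tempered distribution that is a function off the origin, with the homogeneous completion modulo constants.

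Next, set $g_n(x)=g(x)\theta(x/n)$ with a fixed bump $\theta\in C_c^\infty$ equal to $1$ on $B_1$ and $0\le\theta\le1$. The key estimate is $\sup_n\|g_n\|_{\dot H^{s_q}}<\infty$. Write $g_n=g-g(1-\theta(\cdot/n))$. The correction lives at scale $n$, where $g\approx n^\alpha h(\cdot/n)$ with $h$ essentially of the same type but supported away from the origin. The scaling $\|h(\cdot/n)\|_{\dot H^{s}}^2=n^{d-2s}\|h\|_{\dot H^s}^2$ then gives a contribution of order $n^{2\alpha+d-2s_q}=n^{2\alpha-q}\to0$. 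To make this rigorous I would interpolate: bound the integer-order $\dot H^k$ seminorms ($k=\lfloor s_q\rfloor,\lceil s_q\rceil$) of the cutoff error by Leibniz and the derivative bounds on $g$, then interpolate to $s_q$. I expect this uniform bound to be the main technical obstacle, particularly the endpoint bookkeeping when $s_q$ is not an integer. If it becomes messy, a fallback is to replace the sharp cutoff by the capped function $n^\alpha\psi(g/n^\alpha)$ with $\psi$ smooth, concave and bounded, and estimate in the same way.

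Finally, apply the duality bound to $g_n\ge0$:
\[
\int g_n\,d\nu_i\le\int g_n\,d\mu+C\,\mathcal E_q(\nu_i-\mu)^{1/2}
\le \mathcal M_{\alpha}(\mu)+C\sup_{i}\mathcal E_q(\nu_i-\mu)^{1/2}.
\]
The right side is finite since $\mu\in\mathcal P_{q/2}\subset\mathcal P_\alpha$, and it is uniform in $n$ and $i$. Monotone convergence as $n\to\infty$ (since $g_n\uparrow g$ for radially decreasing $\theta$) bounds $\int\langle x\rangle^\alpha\,d\nu_i$ uniformly in $i$. Together with $|x|^\alpha\le\langle x\rangle^\alpha$, this proves \eqref{eq:energy-sublevel-moment-bound}. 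Tightness then follows from Markov's inequality: $\nu_i(|x|>R)\le R^{-\alpha}\sup_i\mathcal M_\alpha(\nu_i)$.
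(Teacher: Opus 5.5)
Your overall strategy is sound, and it differs from the paper's. The paper quotes the inequality $\mathcal M_\alpha(\nu)\le\mathcal M_\alpha(\mu)+C\mathcal E_q(\nu-\mu)^{\alpha/q}$ from the companion paper for $\nu\in\mathcal P_{q/2}$. It then removes that moment hypothesis by truncating $\nu_i$ and invoking a separate energy-continuity result. Your duality pairing against compactly supported test functions needs no moment of $\nu_i$ and no truncation, and your Parseval/Cauchy--Schwarz and monotone-convergence steps are correct.

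The gap is in the step you flag as the crux. The uniform bound $\sup_n\|g_n\|_{\dot H^{s_q}}<\infty$ cannot be obtained by interpolating between the integer seminorms $\dot H^k$ and $\dot H^{k+1}$, $k=\lfloor s_q\rfloor$, because the lower endpoint is not uniformly bounded. Since $g_n=g$ on $B_n$ and $\sum_{|\beta|=k}|\partial^\beta g(x)|^2$ behaves like $|x|^{2\alpha-2k}$ at infinity, $\|g_n\|_{\dot H^k}^2$ is bounded below by a multiple of $\int_{1\le|x|\le n}|x|^{2\alpha-2k}\,dx$. This is unbounded in $n$ whenever $2k\le d+2\alpha$, which happens in the following cases:
\begin{itemize}
\item every even $d$, where $k=d/2$;
\item odd $d$ with $q<1$;
\item odd $d$ with $1<q<2$ and $\alpha\ge1/2$, which covers all $\alpha$ near $q/2$, the case that matters.
\end{itemize}
For $d=2$, $q=1$, one gets $\|g_n\|_{\dot H^1}^2\sim n^{2\alpha}$, so interpolation yields only $\|g_n\|_{\dot H^{3/2}}^2\le\|g_n\|_{\dot H^1}\|g_n\|_{\dot H^2}\lesssim n^{\alpha}$. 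The cutoff error $g(1-\theta(\cdot/n))$ is not even in $\dot H^1(\mathbb R^2)$, and your capped fallback has the same defect. Finiteness at order $s_q$ comes from the exact low-frequency balance $|\xi|^{d+q}|\widehat g(\xi)|^2\sim|\xi|^{q-2\alpha-d}$, which no lower integer endpoint sees. So this is a wrong method for the key step, not merely endpoint bookkeeping.

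To repair it, estimate directly at order $s_q$. If $s_q=k+s$ with $0<s<1$, use the Gagliardo form $\|u\|_{\dot H^{s_q}}^2\simeq\sum_{|\beta|=k}\iint|\partial^\beta u(x)-\partial^\beta u(y)|^2|x-y|^{-d-2s}\,dx\,dy$ for $u\in C_c^\infty$. The $n$-uniform bounds $|\partial^\beta g_n(x)|\le C\langle x\rangle^{\alpha-|\beta|}$ for $|\beta|\le k+1$, together with a split at $|x-y|=\langle x\rangle/2$, bound this by $C\int\langle x\rangle^{2\alpha-2s_q}\,dx$. That integral is finite exactly because $\alpha<q/2$. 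When $s_q\in\mathbb Z$, the same integral bounds $\sum_{|\beta|=s_q}\|\partial^\beta g_n\|_{L^2}^2$ directly.

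A simpler repair avoids spatial cutoffs altogether. Set $g^\epsilon(x):=\int_{|\xi|>\epsilon}(1-\cos 2\pi\xi\cdot x)|\xi|^{-d-\alpha}\,d\xi$, which is bounded, nonnegative, and increases to $c_{d,\alpha}|x|^\alpha$ as $\epsilon\downarrow0$. For zero-mass $\sigma$, Fubini gives $\int g^\epsilon\,d\sigma=-\int_{|\xi|>\epsilon}\operatorname{Re}\widehat\sigma(\xi)|\xi|^{-d-\alpha}\,d\xi$. Cauchy--Schwarz on $\{|\xi|\le1\}$, together with $|\widehat\sigma|\le2$ on $\{|\xi|>1\}$, bounds this uniformly in $\epsilon$ and $i$ by $C(1+\mathcal E_q(\nu_i-\mu)^{1/2})$.

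Either repair completes your argument. The resulting dependence $C(1+\mathcal E_q^{1/2})$ is cruder than the paper's $\mathcal E_q^{\alpha/q}$, but it is all the lemma requires.
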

\begin{proof}
First suppose that $\nu_i\in\mathcal P_{q/2}(\mathbb R^d)$. By
\cite[Proposition~A.2]{rosenzweig2026wassersteingradientflowsmaximum},
\begin{equation}\label{eq:RSW-subcritical-moment-control}
 \mathcal M_\alpha(\nu_i)
 \le\mathcal M_\alpha(\mu)+C_{d,q,\alpha}\mathcal E_q(\nu_i-\mu)^{\alpha/q}.
\end{equation}
To remove this moment assumption, fix $i$ and set
$\nu_{i,R}=\mathbf1_{B(0,R)}\nu_i+\nu_i(B(0,R)^c)\delta_0$.
Since $\mu\in\mathcal P_{q/2}$ and $\mathcal E_q(\nu_i-\mu)<\infty$,
the Fourier transform $\widehat\nu_i-1$ belongs to the weighted Hilbert space in
\cite[Proposition~A.7]{rosenzweig2026wassersteingradientflowsmaximum}.
That proposition gives
\begin{equation}
 \widehat\nu_{i,R}-\widehat\nu_i\longrightarrow0
 \quad\text{in }L^2(\mathbb R^d,|\xi|^{-d-q}\,d\xi).
\end{equation}
Therefore $\mathcal E_q(\nu_{i,R}-\mu)\to\mathcal E_q(\nu_i-\mu)$.
Apply \eqref{eq:RSW-subcritical-moment-control} to the compactly supported
$\nu_{i,R}$ and let $R\to\infty$. The moments on the left increase to
$\mathcal M_\alpha(\nu_i)$, so the same bound holds for $\nu_i$.
Taking the supremum over $i$ proves \eqref{eq:energy-sublevel-moment-bound};
Markov's inequality gives tightness. In particular, when $q>1$, finite energy already implies a finite $(q-1)$-moment because $q-1<q/2$.
\end{proof}

For limits that need not have a finite \(q\)-moment, define the normalized
potential increment
\begin{equation}
\label{eq:extended-potential-increment}
  \Delta_\rho(x,z)
  :=\int_{\mathbb R^d}
  \bigl(K(x-y)-K(x+z-y)\bigr)
  \,d(\rho-\mu)(y).
\end{equation}
If \(0<q\le1\), the integral in
\eqref{eq:extended-potential-increment} is absolutely convergent because
\begin{equation}
\label{eq:sublinear-kernel-increment}
  \bigl|K(x-y)-K(x+z-y)\bigr|\le |z|^q.
\end{equation}
If \(1<q<2\), it is locally uniformly absolutely convergent whenever
$\int |y|^{q-1}\,d|\rho-\mu|(y)<\infty$, in particular when
$\rho,\mu\in\mathcal P_{q-1}(\mathbb R^d)$, since,
for every compact \(Q\Subset\mathbb R^d\times\mathbb R^d\),
\begin{equation}
\label{eq:superlinear-kernel-increment}
  \bigl|K(x-y)-K(x+z-y)\bigr|
  \le C_Q(1+|y|^{q-1}),
  \qquad (x,z)\in Q.
\end{equation}
Because \(q-1<q/2\), when \(1<q<2\) we may choose
\(\alpha\in(q-1,q/2)\). Lemma~\ref{lem:energy-sublevels} supplies a uniform
\(\alpha\)-moment bound on bounded energy sublevels. We therefore extend
\eqref{eq:weighted-jump-dissipation} to these limits by
\begin{equation}
\label{eq:extended-weighted-jump-dissipation}
  \mathcal D_{\mathrm j}(\rho)
  :=\int_{\mathbb R^d}\int_{\mathbb R^d}
  \lambda(x)[\Delta_\rho(x,z)]_+^2
  \kappa(z)\,dz\,d\rho(x)
  \in[0,\infty].
\end{equation}
This agrees with \eqref{eq:weighted-jump-dissipation} in the solution
class. 

\begin{lemma}
\label{lem:potential-increment-convergence}
Suppose that $(\rho_n)_n\subset\mathcal P(\mathbb R^d)$ satisfies
\begin{equation}
\label{eq:potential-increment-convergence-assumptions}
  \rho_n\rightharpoonup\rho_\infty
  \quad\text{narrowly},
  \qquad
  \sup_n\mathcal E_q(\rho_n-\mu)<\infty.
\end{equation}
Then for every \(0<\alpha<q/2\)
\begin{equation}
\label{eq:limit-subcritical-moments-and-energy}
  \mathcal M_\alpha(\rho_\infty)<\infty,
  \qquad
  \mathcal E_q(\rho_\infty-\mu)<\infty,
\end{equation}
and
\begin{equation}
\label{eq:local-uniform-potential-increment-convergence}
  \Delta_{\rho_n}\to\Delta_{\rho_\infty}
  \quad\text{locally uniformly on }
  \mathbb R^d\times\mathbb R^d.
\end{equation}
\end{lemma}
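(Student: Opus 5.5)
The plan has three parts: the moment and energy bounds for the limit, then convergence of the increments for $0<q\le1$, then for $1<q<2$.

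\emph{Moment and energy of the limit.} Lemma~\ref{lem:energy-sublevels} applied to the family $(\rho_n)$ gives $\sup_n\mathcal M_\alpha(\rho_n)<\infty$ for every $\alpha<q/2$. Since $x\mapsto|x|^\alpha$ is nonnegative and continuous, narrow lower semicontinuity yields $\mathcal M_\alpha(\rho_\infty)\le\liminf_n\mathcal M_\alpha(\rho_n)<\infty$. The energy bound follows from the Fourier formula \eqref{eq:extended-fourier-energy}. Narrow convergence gives $\widehat{\rho_n}(\xi)\to\widehat{\rho_\infty}(\xi)$ pointwise, so Fatou's lemma gives $\mathcal E_q(\rho_\infty-\mu)\le\liminf_n\mathcal E_q(\rho_n-\mu)<\infty$.

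\emph{Increments for $0<q\le1$.} For fixed $(x,z)$, write $g_{x,z}(y):=K(x-y)-K(x+z-y)$. Then $\Delta_{\rho_n}(x,z)-\Delta_{\rho_\infty}(x,z)=\int g_{x,z}\,d(\rho_n-\rho_\infty)$, because the $\mu$ terms cancel. By \eqref{eq:sublinear-kernel-increment}, $g_{x,z}$ is bounded by $|z|^q$, and it is continuous in $y$. So narrow convergence gives pointwise convergence at each fixed $(x,z)$.

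To upgrade this to local uniform convergence, I will show equicontinuity on compact sets $Q$ and use Arzel\`a--Ascoli (or a finite net argument). The $q$-H\"older continuity of $K$ gives $|g_{x,z}(y)-g_{x',z'}(y)|\le 2|x-x'|^q+|z-z'|^q$, up to constants, uniformly in $y$. Integrating against the probability measures $\rho_n,\mu$ makes $\{\Delta_{\rho_n}\}$ uniformly equicontinuous on $Q$. Pointwise convergence together with equicontinuity on a compact set then gives uniform convergence.

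\emph{Increments for $1<q<2$.} Here $g_{x,z}$ is unbounded, with growth $|g_{x,z}(y)|\le C_Q(1+|y|^{q-1})$ by \eqref{eq:superlinear-kernel-increment}. This is the main obstacle. I will choose $\alpha\in(q-1,q/2)$, which Lemma~\ref{lem:energy-sublevels} permits, so that $\sup_n\mathcal M_\alpha(\rho_n)<\infty$ and hence the weight $|y|^{q-1}$ is uniformly integrable along the sequence.

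I then truncate with a cutoff $\chi_M$ equal to $1$ on $B_M$. The part $\int g_{x,z}\chi_M\,d(\rho_n-\rho_\infty)$ tends to $0$ by narrow convergence, since $g_{x,z}\chi_M$ is bounded and continuous. The tail is bounded, uniformly in $n$ and in $(x,z)\in Q$, by $C_Q\sup_n\int_{|y|>M}(1+|y|^{q-1})\,d(\rho_n+\rho_\infty)\le C_QM^{q-1-\alpha}$, which is small for large $M$.

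Equicontinuity comes from the identity $g_{x,z}(y)=-\int_0^1\nabla K(x+\theta z-y)\cdot z\,d\theta$. I recenter by subtracting $\nabla K(-y)$, which is legitimate because $\rho_n-\mu$ has zero mass. The $(q-1)$-H\"older continuity of $\nabla K$ then bounds the $(x,z)$-Lipschitz constants of $\Delta_{\rho_n}$ on $Q$ by $C_Q(1+\mathcal M_{q-1}(\rho_n)+\mathcal M_{q-1}(\mu))$, which is uniformly bounded in $n$. Pointwise convergence plus this equicontinuity on compacts again gives \eqref{eq:local-uniform-potential-increment-convergence}.
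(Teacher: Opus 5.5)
Your proof is correct and follows the paper's argument. You take the limit's moment and energy bounds from lower semicontinuity and Fatou in the Fourier formula. You then combine narrow convergence with a modulus in $(x,z)$ that is uniform in $n$; the paper phrases this as a finite-net argument over the test functions $G_{x,z}$, and for $q>1$ it truncates the $y$-tail using a moment of order $\alpha\in(q-1,q/2)$, as you do. One small correction: zero mass of $\rho_n-\mu$ justifies subtracting the $y$-independent vector $\nabla K(x+\theta z)$, not $\nabla K(-y)$; the latter leaves the constant vector $\int\nabla K(-y)\,d(\rho_n-\mu)(y)$, which is bounded by $C(\mathcal M_{q-1}(\rho_n)+\mathcal M_{q-1}(\mu))$ and so harmless, and either way your stated uniform Lipschitz bound holds.
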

\begin{proof}
The moment assertion follows from
\eqref{eq:energy-sublevel-moment-bound} and lower semicontinuity.  Fatou's
lemma in \eqref{eq:extended-fourier-energy} gives the energy assertion. For fixed \((x,z)\), let
\begin{equation}
\label{eq:increment-test-function}
  G_{x,z}(y):=K(x-y)-K(x+z-y).
\end{equation}
When \(0<q\le1\), \eqref{eq:sublinear-kernel-increment} and
\begin{equation}
\label{eq:increment-parameter-equicontinuity}
  \|G_{x,z}-G_{x',z'}\|_{L^\infty}
  \le |x-x'|^q+|x+z-x'-z'|^q
\end{equation}
reduce \eqref{eq:local-uniform-potential-increment-convergence} on each
compact parameter set to narrow convergence and a finite-net argument.

When \(1<q<2\), choose \(\alpha\in(q-1,q/2)\).  On a compact parameter
set, \eqref{eq:superlinear-kernel-increment} and
\eqref{eq:energy-sublevel-moment-bound} make the contribution from
\(|y|>R\) uniformly small as \(R\to\infty\).  After truncating to
\(|y|\le R\), the family in \eqref{eq:increment-test-function} is bounded
and equicontinuous, so narrow convergence and another finite-net argument
give uniform convergence.  This proves
\eqref{eq:local-uniform-potential-increment-convergence}.
\end{proof}

\begin{lemma}
\label{lem:weighted-jump-dissipation-lsc}
Under the hypotheses of
Lemma~\ref{lem:potential-increment-convergence},
\begin{equation}
\label{eq:weighted-jump-dissipation-lsc}
  \mathcal D_{\mathrm j}(\rho_\infty)
  \le\liminf_{n\to\infty}
  \mathcal D_{\mathrm j}(\rho_n).
\end{equation}
\end{lemma}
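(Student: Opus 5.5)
The plan is to treat this as a lower semicontinuity statement for an integral of a nonnegative integrand against a product of narrowly convergent measures, with the integrand converging locally uniformly. First pass to a subsequence along which $\liminf_n\mathcal D_{\mathrm j}(\rho_n)$ is attained as a limit; we may assume it is finite, otherwise there is nothing to prove. Set
\begin{equation*}
 g_n(x,z):=\lambda(x)[\Delta_{\rho_n}(x,z)]_+^2,\qquad g_\infty(x,z):=\lambda(x)[\Delta_{\rho_\infty}(x,z)]_+^2 .
\end{equation*}
By Lemma~\ref{lem:potential-increment-convergence}, $\Delta_{\rho_n}\to\Delta_{\rho_\infty}$ locally uniformly on $\mathbb R^d\times\mathbb R^d$. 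Since $\lambda$ is bounded and continuous and $a\mapsto[a]_+^2$ is continuous, $g_n\to g_\infty$ locally uniformly, and every $g_n,g_\infty$ is continuous and nonnegative. Moreover, $\rho_n\otimes\kappa(z)\,dz\rightharpoonup\rho_\infty\otimes\kappa(z)\,dz$ narrowly on $\mathbb R^{2d}$, because the second factor is fixed.

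The conclusion then follows from a truncation argument that avoids any need for tail bounds. Fix $M>0$ and a cutoff $\chi_R\in C_c(\mathbb R^{2d})$ with $0\le\chi_R\le1$ and $\chi_R=1$ on $B_R$. The function $h:=\chi_R\,(g_\infty\wedge M)$ is bounded, continuous and compactly supported. By nonnegativity,
\begin{equation*}
 \mathcal D_{\mathrm j}(\rho_n)\ge\iint\chi_R\,(g_n\wedge M)\,\kappa\,dz\,d\rho_n
 \ge\iint h\,\kappa\,dz\,d\rho_n-\sup_{\operatorname{supp}\chi_R}|g_n\wedge M-g_\infty\wedge M|.
\end{equation*}
The last term tends to zero by local uniform convergence, since $a\mapsto a\wedge M$ is $1$-Lipschitz. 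The first term converges to $\iint h\,\kappa\,dz\,d\rho_\infty$ by narrow convergence of the product measures. Hence $\liminf_n\mathcal D_{\mathrm j}(\rho_n)\ge\iint\chi_R(g_\infty\wedge M)\,\kappa\,dz\,d\rho_\infty$. Letting $R\to\infty$ and then $M\to\infty$, monotone convergence gives \eqref{eq:weighted-jump-dissipation-lsc}.

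The only point needing care is well-definedness of $\Delta_{\rho_n}$ and $\Delta_{\rho_\infty}$ when $1<q<2$. Here, however, Lemma~\ref{lem:energy-sublevels} supplies uniform $\alpha$-moments with $\alpha\in(q-1,q/2)$, and Lemma~\ref{lem:potential-increment-convergence} already records the finiteness for the limit. No uniform integrability of $g_n$ is required, since only the lower bound is claimed; so the main obstacle has already been handled by the local uniform increment convergence.
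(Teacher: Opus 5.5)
Your argument is correct and is essentially the paper's proof: you truncate to a compact set, pass to the limit there using the local uniform convergence $\Delta_{\rho_n}\to\Delta_{\rho_\infty}$ from Lemma~\ref{lem:potential-increment-convergence} together with narrow convergence, and then remove the truncation by monotone convergence. The differences are cosmetic. The paper first integrates out $z$ against a compactly supported cutoff $\Theta_R$, which gives uniform convergence $H_{n,R}\to H_{\infty,R}$ in $C_c(\mathbb R^d)$ and needs only narrow convergence of $\rho_n$ rather than of the product measures. Your height truncation $g\wedge M$ is redundant, since $g_\infty$ is continuous and hence bounded on compact sets.
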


\begin{proof}
For \(R>0\), let \(\chi_R,\Theta_R\in C_c^\infty(B(0,R))\) be nonnegative cutoffs such that
\begin{equation}
0\leq \chi_R,\Theta_R\leq 1,\qquad
\chi_R=\Theta_R=1\quad\text{on }B(0,R/2),
\end{equation}
and such that \(\chi_R\uparrow 1\) and \(\Theta_R\uparrow 1\) pointwise as \(R\to\infty\).  By
\eqref{eq:local-uniform-potential-increment-convergence}, define
\begin{equation}
\label{eq:truncated-weighted-dissipation-function}
  H_{n,R}(x)
  :=\chi_R(x)\lambda(x)
  \int_{\mathbb R^d}\Theta_R(z)
  [\Delta_{\rho_n}(x,z)]_+^2\kappa(z)\,dz
\end{equation}
so that \(H_{n,R}\to H_{\infty,R}\) uniformly and
\(H_{\infty,R}\in C_c(\mathbb R^d)\).  Narrow convergence therefore
identifies the corresponding truncated integrals.  Since the integrands
are nonnegative, monotone convergence as \(R\to\infty\) gives
\eqref{eq:weighted-jump-dissipation-lsc}.
\end{proof}

Normalizing the first variation removes a potentially divergent constant and
permits pairing with the Fourier energy without a \(q\)-moment. For
\(\rho\in\mathcal P(\mathbb R^d)\), set
\begin{equation}
\label{eq:normalized-witness}
  \widetilde f_\rho(x)
  :=\int_{\mathbb R^d}
  \bigl(K(x-y)-K(-y)\bigr)\,d(\rho-\mu)(y),
\end{equation}
whenever the integral is defined. For $\rho,\mu\in\mathcal P_q(\mathbb R^d)$,
this is precisely $\widetilde f_\rho(x)=f_\rho(x)-f_\rho(0)$. Under the hypotheses used in
\eqref{eq:extended-potential-increment}, \(\widetilde f_\rho\) is finite and continuous
and satisfies
\begin{equation}
\label{eq:normalized-witness-increment}
  \widetilde f_\rho(x)-\widetilde f_\rho(x+z)=\Delta_\rho(x,z).
\end{equation}
\begin{lemma}
\label{lem:normalized-witness-pairing}
Let \(\rho\in\mathcal P(\mathbb R^d)\) and assume that
\(\mathcal E_q(\rho-\mu)<\infty\). If
\(\widetilde f_\rho\in L^1(|\rho-\mu|)\), then
\begin{equation}
\label{eq:normalized-witness-pairing}
  \int_{\mathbb R^d}\widetilde f_\rho\,d(\rho-\mu)
  =2\mathcal E_q(\rho-\mu).
\end{equation}
\end{lemma}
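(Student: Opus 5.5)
The plan is to reduce the identity to the compactly supported case, where it is the classical statement that the first variation paired with $\sigma:=\rho-\mu$ equals twice the quadratic energy. Then we remove the truncation, using the Fourier representation \eqref{eq:extended-fourier-energy} for the right-hand side and dominated convergence, justified by $\widetilde f_\rho\in L^1(|\rho-\mu|)$, for the left-hand side.

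\emph{Step 1 (regular case).} Suppose first that $\rho$ and $\mu$ lie in $\mathcal P_q(\mathbb R^d)$; for the truncation step we really only need $\rho$ compactly supported. Then $\widetilde f_\rho=f_\rho-f_\rho(0)$. Since $\sigma(\mathbb R^d)=0$, the constant drops out, and
\begin{equation}
\int\widetilde f_\rho\,d\sigma=\iint K(x-y)\,d\sigma(y)\,d\sigma(x)=2\mathcal E_q(\sigma),
\end{equation}
by \eqref{eq:energy} together with Fubini. Fubini applies because $|K(x-y)|\le C(|x|^q+|y|^q)$, which is integrable against $|\sigma|\otimes|\sigma|$ under $q$-moments. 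The identification of \eqref{eq:energy} with \eqref{eq:extended-fourier-energy} on $\mathcal P_{q/2}$ is the cited result \cite[Lemma~2.1]{rosenzweig2026wassersteingradientflowsmaximum}.

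\emph{Step 2 (truncation).} For general $\rho$, I would write $\widetilde f_\rho(x)=\int (K(x-y)-K(-y))\,d\sigma(y)$, which is the centered kernel $-2k_q(x,y)+|x|^q$ in disguise. Note that $K(x-y)-K(-y)=2k_q(x,y)\cdot(-1)+|x|^q$, and the $|x|^q$ term integrates to zero against $\sigma$ in $y$. Hence $\widetilde f_\rho(x)=-2\int k_q(x,y)\,d\sigma(y)$, up to sign conventions. So $\widetilde f_\rho$ is the RKHS mean embedding of $\sigma$, an element of $\mathcal H_{k_q}\cong\dot H^{s_q}$ normalized to vanish at $0$.

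This embedding exists as an element of $\dot H^{s_q}$ precisely when $\mathcal E_q(\sigma)<\infty$. The Fourier description is $\widehat{\widetilde f_\rho}(\xi)\propto|\xi|^{-d-q}\widehat\sigma(\xi)$ away from the origin. The pairing $\int\widetilde f_\rho\,d\sigma$ should then equal the Fourier integral $c\int|\widehat\sigma|^2|\xi|^{-d-q}d\xi$.

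To make this rigorous without moments, I would use the truncations $\rho_R=\mathbf 1_{B_R}\rho+\rho(B_R^c)\delta_0$ from Lemma~\ref{lem:energy-sublevels}, and do the same for $\mu$ if needed. By \cite[Proposition~A.7]{rosenzweig2026wassersteingradientflowsmaximum}, $\sigma_R:=\rho_R-\mu$ satisfies $\widehat\sigma_R\to\widehat\sigma$ in $L^2(|\xi|^{-d-q}d\xi)$. Hence $\mathcal E_q(\sigma_R)\to\mathcal E_q(\sigma)$, and the embeddings satisfy $\widetilde f_{\rho_R}\to\widetilde f_\rho$ in $\dot H^{s_q}$. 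Since $s_q>d/2$ and the functions are normalized at $0$, this convergence is locally uniform (Sobolev/RKHS point evaluation, with $k_q(x,x)=|x|^q$). Step 1 gives $\int\widetilde f_{\rho_R}\,d\sigma_R=2\mathcal E_q(\sigma_R)$.

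\emph{Step 3 (passing the left side to the limit; main obstacle).} We need $\int\widetilde f_{\rho_R}\,d\sigma_R\to\int\widetilde f_\rho\,d\sigma$. Since $\widetilde f_\rho(0)=0$, the atom at $0$ contributes nothing, so $\int\widetilde f_{\rho_R}\,d\sigma_R=\int_{B_R}\widetilde f_{\rho_R}\,d\rho-\int\widetilde f_{\rho_R}\,d\mu$. The difficulty is that the convergence is only locally uniform, while the integrability hypothesis concerns $\widetilde f_\rho$, not $\widetilde f_{\rho_R}$.

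I would first try to avoid this by pairing at the Hilbert level. Write $\int\widetilde f_{\rho_R}\,d\sigma=\langle \widetilde f_{\rho_R},\widetilde f_\rho\rangle_{\mathcal H}$. This identity holds because $\sigma_R$ is compactly supported on the $\rho$ side, and the reproducing property combined with Fubini is legitimate when one factor is a finite compactly supported measure; $\mu$ has $q/2$-moments, which suffices. The right side converges to $\|\widetilde f_\rho\|^2_{\mathcal H}\propto\mathcal E_q(\sigma)$.

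It then remains to identify $\int\widetilde f_{\rho_R}\,d\sigma$ with the limit $\int\widetilde f_\rho\,d\sigma$. By symmetry of the argument, $\int\widetilde f_{\rho_R}\,d\sigma=\int\widetilde f_{\rho}\,d\sigma_R$, obtained by interchanging roles with Fubini on the compact side. The latter equals $\int_{B_R}\widetilde f_\rho\,d\rho-\int\widetilde f_\rho\,d\mu$. This converges to $\int\widetilde f_\rho\,d\sigma$ by dominated convergence, precisely because $\widetilde f_\rho\in L^1(|\rho-\mu|)$.

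The delicate point I expect is justifying these Fubini interchanges when $\rho$ has no $q/2$-moment. One must check that $\int\widetilde f_\rho\,d\sigma_R$ equals the Hilbert pairing, that is, that $\widetilde f_\rho(x)=-2\int k_q(x,y)\,d\sigma(y)$ holds pointwise. The $y$-integral is only conditionally convergent through the increment bounds \eqref{eq:sublinear-kernel-increment}--\eqref{eq:superlinear-kernel-increment} together with the $\alpha$-moments of Lemma~\ref{lem:energy-sublevels}. I would handle this by checking that the pointwise definition \eqref{eq:normalized-witness} agrees with the $\dot H^{s_q}$ limit via the locally uniform convergence from Step 2, combined with Lemma~\ref{lem:potential-increment-convergence} applied to $\rho_R\rightharpoonup\rho$.
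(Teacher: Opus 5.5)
\paragraph{Where Step~3 breaks.}
Your overall architecture matches the paper's. You truncate, use \cite[Proposition~A.7]{rosenzweig2026wassersteingradientflowsmaximum} for convergence of the truncated Fourier transforms in $L^2(|\xi|^{-d-q}\,d\xi)$, prove a pairing identity between $\widetilde f_\rho$ and a truncated difference $\sigma_R$, and finish by dominated convergence using $\widetilde f_\rho(0)=0$ and $\widetilde f_\rho\in L^1(|\rho-\mu|)$. The gap is in how Step~3 obtains that pairing identity.

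For a signed measure $\beta$ with $\int|y|^{q/2}\,d|\beta|(y)<\infty$, the reproducing property reads $\langle h,m_\beta\rangle_{\mathcal H_{k_q}}=\int h\,d\beta$, where $m_\beta=\int k_q(\cdot,y)\,d\beta(y)$. With $\beta=\sigma_R$ this integrates $h$ against $\sigma_R$. It does not integrate $\widetilde f_{\rho_R}$ against $\sigma$. Your identity $\int\widetilde f_{\rho_R}\,d\sigma=\langle\widetilde f_{\rho_R},\widetilde f_\rho\rangle$ would need the property with $\beta=\sigma$, hence a $q/2$-moment of $\rho$, which is exactly what is unavailable; pairing an element of $\mathcal H_{k_q}$ against $\sigma$ is precisely what the lemma is about.

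The ``symmetry'' swap $\int\widetilde f_{\rho_R}\,d\sigma=\int\widetilde f_\rho\,d\sigma_R$ is not a plain Fubini either. The two iterated integrals have different integrands, $K(x-y)-K(-y)$ and $K(x-y)-K(-x)$, and discarding the difference uses $\int K\,d\sigma$, which needs $q$-moments. After rewriting both via $k_q$ you would still need $\iint|k_q|\,d|\sigma|\,d|\sigma_R|<\infty$. That integral contains the cross term $\rho\otimes\mu$, in which neither factor is compactly supported, and you have not checked it.

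\paragraph{The repair.}
The fix uses only ingredients you already list, and it removes the detour through $\int\widetilde f_{\rho_R}\,d\sigma$ entirely.
\begin{enumerate}
\item Correct the sign. Since $K(x-y)-K(-y)=2k_q(x,y)-|x|^q$, you get $\widetilde f_\rho=+2m_\sigma$ pointwise. Your sign would give $-2\mathcal E_q$, and the sign is exactly what Proposition~\ref{prop:no-spur-stat-solns} exploits.
\item Show $\widetilde f_\rho\in\mathcal H_{k_q}$. With $\sigma_R=\rho_R-\mu$, the functions $m_{\sigma_R}=\frac12\widetilde f_{\rho_R}$ form a Cauchy sequence in $\mathcal H_{k_q}$. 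Indeed $\|m_{\sigma_R}-m_{\sigma_{R'}}\|^2=\mathcal E_q(\rho_R-\rho_{R'})$ is a Fourier integral controlled by Proposition~A.7. Continuity of point evaluations, together with dominated convergence in \eqref{eq:normalized-witness}, identifies the limit with $\frac12\widetilde f_\rho$. Hence $\widetilde f_\rho\in\mathcal H_{k_q}$ and $\|\frac12\widetilde f_\rho\|^2=\lim_R\mathcal E_q(\sigma_R)=\mathcal E_q(\sigma)$.
\item Apply the reproducing property on the admissible side: $\frac12\int\widetilde f_\rho\,d\sigma_R=\langle\frac12\widetilde f_\rho,m_{\sigma_R}\rangle\to\mathcal E_q(\sigma)$.
\item Conclude by dominated convergence: $\int\widetilde f_\rho\,d\sigma_R=\int_{B_R}\widetilde f_\rho\,d\rho-\int\widetilde f_\rho\,d\mu\to\int\widetilde f_\rho\,d\sigma$.
\end{enumerate}
Step~1 then becomes unnecessary. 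As written, it also needs $\mu\in\mathcal P_q$, since the bound $|K(x-y)|\le C(|x|^q+|y|^q)$ is not integrable when $\mu$ is only in $\mathcal P_{q/2}$.

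\paragraph{Comparison with the paper.}
The paper never needs $\widetilde f_\rho\in\dot H^{s_q}$. It truncates both $\rho$ and $\mu$ and uses the polarized Fourier identity
\[
\int U_S\,d\sigma_R=2c_{d,q}\operatorname{Re}\int\widehat\sigma_S\overline{\widehat\sigma_R}\,|\xi|^{-d-q}\,d\xi
\]
for two compactly supported truncations. It lets $S\to\infty$ first, where local uniform convergence $U_S\to\widetilde f_\rho$ on the compact support of $\sigma_R$ suffices, and then lets $R\to\infty$ exactly as you do. Your repaired version reaches the same intermediate identity through Hilbert-space completeness and truncates only $\rho$. It also shows that $\widetilde f_\rho$ is twice the mean embedding of $\sigma$. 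The cost is the RKHS--Sobolev identification and the pointwise identification of the limit.
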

\begin{proof}
Set $\sigma:=\rho-\mu$ and, for $R>0$, define
\begin{equation}
  \rho_R:=\mathbf 1_{B(0,R)}\rho+\rho(B(0,R)^c)\delta_0,
  \qquad
  \mu_R:=\mathbf 1_{B(0,R)}\mu+\mu(B(0,R)^c)\delta_0,
  \qquad \sigma_R:=\rho_R-\mu_R.
\end{equation}
Since $\mu\in\mathcal P_{q/2}(\mathbb R^d)$ and
$\mathcal E_q(\sigma)<\infty$,
\cite[Proposition~A.7]{rosenzweig2026wassersteingradientflowsmaximum}
gives
\begin{equation}
\label{eq:normalized-witness-truncation-energy}
  \widehat\sigma_R\to\widehat\sigma
  \quad\text{in }L^2(\mathbb R^d,|\xi|^{-d-q}\,d\xi).
\end{equation}
Define
\begin{equation}
  U_R(x):=\int_{\mathbb R^d}
  \bigl(K(x-y)-K(-y)\bigr)\,d\sigma_R(y).
\end{equation}
The increment bounds
\eqref{eq:sublinear-kernel-increment}--\eqref{eq:superlinear-kernel-increment}
imply $U_R\to\widetilde f_\rho$ locally uniformly. For $q>1$, the tail
is controlled by the $(q-1)$-moments of $\rho$ and $\mu$, which are finite by Lemma~\ref{lem:energy-sublevels} and $q-1<q/2$.
For finite $R,S$, the measures $\sigma_R,\sigma_S$ have compact support
and zero mass. Polarization of the Fourier identity in
\cite[Lemma~2.1]{rosenzweig2026wassersteingradientflowsmaximum} gives
\begin{equation}
\label{eq:normalized-witness-truncation-pairing}
  \int_{\mathbb R^d}U_S\,d\sigma_R
  =2c_{d,q}\operatorname{Re}
  \int_{\mathbb R^d}
  \frac{\widehat\sigma_S(\xi)\overline{\widehat\sigma_R(\xi)}}
       {|\xi|^{d+q}}\,d\xi.
\end{equation}
First let $S\to\infty$, using local uniform convergence on the left
and \eqref{eq:normalized-witness-truncation-energy} on the right.
Since $\widetilde f_\rho(0)=0$, this yields
\begin{equation}
  \int_{B(0,R)}\widetilde f_\rho\,d\sigma
  =2c_{d,q}\operatorname{Re}
  \int_{\mathbb R^d}
  \frac{\widehat\sigma(\xi)\overline{\widehat\sigma_R(\xi)}}
       {|\xi|^{d+q}}\,d\xi.
\end{equation}
Now let $R\to\infty$. The assumed integrability
$\widetilde f_\rho\in L^1(|\sigma|)$ and
\eqref{eq:normalized-witness-truncation-energy} identify the two limits,
which proves \eqref{eq:normalized-witness-pairing}.
\end{proof}
For comparison, Lagrangian criticality permits non-target singular measures. For $d\ge2$, $q=1$, and a rotation-invariant absolutely continuous target $\mu\in\mathcal P_1(\mathbb R^d)$, \cite[Proposition~5.21]{rosenzweig2026wassersteingradientflowsmaximum} gives a unique positive radius for which the uniform probability measure on the sphere centered at the origin is a non-target Lagrangian critical point. Its MMD energy is finite and positive, whereas its transport dissipation vanishes. If the target is smooth and compactly supported, \cite[Corollary~5.22]{rosenzweig2026wassersteingradientflowsmaximum} gives smooth approximations supported in a common compact set, with positive limiting MMD energy and transport dissipation tending to zero. Their $L^p$ norms diverge for every $p>1$. This is a static approximation of a critical measure; the occurrence of such a measure as a limit of a density trajectory remains a separate dynamical question.

We now establish jump-dissipation rigidity for finite-energy measures,
using only their normalized first-variation increments. Together with
\eqref{eq:stationarity-zero-total-dissipation} in its stated density class,
this proves Theorem~\ref{prop:no-spur-stat-soln-intro}. The additional condition $r\ge2$ is not needed for this proposition.
\begin{proposition}[Rigidity of jump dissipation]
\label{prop:no-spur-stat-solns}
Assume the target, kernel, proposal, and spatial-rate hypotheses of Theorem~\ref{thm:continuum-wellposedness}. Let $\rho\in\mathcal P(\mathbb R^d)$ satisfy $\mathcal E_q(\rho-\mu)<\infty$. If \(\mathcal{D}_{\mathrm j}(\rho)=0\), then $\rho = \mu$.
\end{proposition}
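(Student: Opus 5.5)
The plan is to convert zero jump dissipation into a global minimum property of the normalized first variation $\widetilde f_\rho$ from \eqref{eq:normalized-witness} on the support of $\rho$, and then pair it with $\rho-\mu$ via Lemma~\ref{lem:normalized-witness-pairing}. First, finite energy and Lemma~\ref{lem:energy-sublevels} (applied to the singleton family) give $\mathcal M_\alpha(\rho)<\infty$ for $\alpha<q/2$. For $q>1$ we take $\alpha\in(q-1,q/2)$, so that $\int|y|^{q-1}\,d|\rho-\mu|<\infty$. Hence $\Delta_\rho$ and $\widetilde f_\rho$ are finite and continuous, and \eqref{eq:normalized-witness-increment} holds. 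Since $\lambda>0$ pointwise and $\mathcal D_{\mathrm j}(\rho)=0$, we get $[\Delta_\rho(x,z)]_+=0$ for $\rho(dx)\kappa(z)\,dz$-a.e. $(x,z)$. Fubini then yields a Borel set $G$ with $\rho(G)=1$ such that, for each $x\in G$, $\widetilde f_\rho(x)\le\widetilde f_\rho(x+z)$ for Lebesgue-a.e. $z$, because $\kappa$ has full support and is therefore positive a.e. (I will read ``full support'' as $\kappa>0$ a.e.; otherwise the density of the positivity set is still enough in the next step). Continuity of $\widetilde f_\rho$ upgrades this to all $z$. Thus $\widetilde f_\rho(x)=m:=\inf\widetilde f_\rho$ for every $x\in G$, and in particular $m>-\infty$ is attained.

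Next, the pairing. We have $\widetilde f_\rho-m\ge0$ everywhere and $\widetilde f_\rho=m$ $\rho$-a.e. If $\widetilde f_\rho\in L^1(\mu)$, then $\widetilde f_\rho\in L^1(|\rho-\mu|)$, since it is constant $\rho$-a.e. Lemma~\ref{lem:normalized-witness-pairing} then gives
\[
2\mathcal E_q(\rho-\mu)=\int\widetilde f_\rho\,d(\rho-\mu)=\int(\widetilde f_\rho-m)\,d(\rho-\mu)=-\int(\widetilde f_\rho-m)\,d\mu\le0,
\]
where zero total mass is used to insert the constant $m$. Combined with \eqref{eq:extended-energy-rigidity}, this gives $\rho=\mu$.

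The main obstacle is verifying $\widetilde f_\rho\in L^1(\mu)$ without a $q$-moment of $\rho$. The growth of $\widetilde f_\rho$ is controlled by the increment bounds: $|\widetilde f_\rho(x)|=|\Delta_\rho(0,x)|\le 2|x|^q$ for $q\le1$, and $\le C(1+|x|)$ for $1<q<2$ by \eqref{eq:stationary-increment-bound}. So we need $\mu\in\mathcal P_{\max(q,1)}$, which holds since the hypotheses of Theorem~\ref{thm:continuum-wellposedness} give $\mu\in\mathcal P_r$ with $r\ge\max\{1,q\}$. The remaining point is that the $q>1$ increment bound needs $\int|y|^{q-1}\,d|\rho-\mu|<\infty$, which the first step secured. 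As a fallback, if integrability were delicate, one could avoid it by applying the truncation argument of Lemma~\ref{lem:normalized-witness-pairing} directly to the nonnegative function $\widetilde f_\rho-m$ and using monotone convergence for the $\mu$-integral. In that version the sign $-\int(\widetilde f_\rho-m)\,d\mu\le 0$ persists in the limit, possibly as $-\infty$, which contradicts positive energy unless $\rho=\mu$.

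Finally, the stationary statement of Theorem~\ref{prop:no-spur-stat-soln-intro} follows by combining this with \eqref{eq:stationarity-zero-total-dissipation}. A stationary density in the stated class has $\mathcal D_{\mathrm j}=0$ and finite energy, since it lies in $\mathcal P_q$; hence it equals $\mu$.
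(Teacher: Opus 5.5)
Your proposal is correct and follows the paper's proof essentially step for step. Zero jump dissipation with $\lambda>0$, the full support of $\kappa$, and continuity of $z\mapsto\Delta_\rho(x,z)$ make $\widetilde f_\rho$ attain its global minimum at $\rho$-a.e.\ point. The growth of $\widetilde f_\rho$ (H\"older for $q\le1$, Lipschitz for $q>1$ via the $(q-1)$-moments from Lemma~\ref{lem:energy-sublevels}), together with $\mu\in\mathcal P_r$, $r\ge\max\{1,q\}$, gives $\widetilde f_\rho\in L^1(|\rho-\mu|)$, and Lemma~\ref{lem:normalized-witness-pairing} then yields $2\mathcal E_q(\rho-\mu)\le0$.

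Your monotone-convergence fallback is therefore not needed. Your density-of-the-positivity-set reading of ``full support'' is exactly what the paper's continuity argument uses.
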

\begin{proof}
Since \(\mathcal D_j(\rho)=0\) and \(\lambda(x)>0\), for
\(\rho\)-a.e. \(x\),
\begin{equation}
  \int_{\mathbb R^d}
  [\Delta_\rho(x,z)]_+^2\kappa(z)\,dz=0.
\end{equation}
Fix such an \(x\).  The map
\(
  z\mapsto [\Delta_\rho(x,z)]_+
\)
is continuous.  Since the measure \(\kappa(z)\,dz\) has full support, it follows that
\([\Delta_\rho(x,z)]_+=0\) for every \(z\in\mathbb R^d\).
Using \eqref{eq:normalized-witness-increment}, we obtain
\(\widetilde f_\rho(x)\le\widetilde f_\rho(x+z)\)
  for every \(z\in\mathbb R^d\).
Thus, for \(\rho\)-a.e. \(x\), \(\widetilde f_\rho(x)\) is the
global minimum of \(\widetilde f_\rho\).  Denote this minimum by
\(m_\rho\).  Then
\(\widetilde f_\rho=m_\rho\)  \(\rho\)-a.e., and \(\widetilde f_\rho\ge m_\rho\) on \(\mathbb R^d\).

For \(0<q\le1\), the function \(\widetilde f_\rho\) is globally
\(q\)-H\"older continuous.  For \(1<q<2\), it is globally Lipschitz
because $\rho,\mu\in\mathcal P_{q-1}(\mathbb R^d)$ by Lemma~\ref{lem:energy-sublevels}.
Since \(\mu\in\mathcal P_r(\mathbb R^d)\), where
\(r\ge\max\{1,q\}\), it follows that
\(\widetilde f_\rho\in L^1(\mu)\).
Moreover, \(\widetilde f_\rho=m_\rho\) \(\rho\)-a.e., so
\(\widetilde f_\rho\in L^1(\rho)\).  Hence
\(\widetilde f_\rho\in L^1(|\rho-\mu|)\), and
Lemma~\ref{lem:normalized-witness-pairing} gives
\begin{equation}
\begin{aligned}
  2\mathcal E_q(\rho-\mu)
  &=\int_{\mathbb R^d}\widetilde f_\rho\,d(\rho-\mu)=m_\rho-\int_{\mathbb R^d}\widetilde f_\rho\,d\mu
  \le0.
\end{aligned}
\end{equation}
Since \(\mathcal E_q(\rho-\mu)\ge0\), we conclude that
\(\mathcal E_q(\rho-\mu)=0\).  The Fourier representation
\eqref{eq:extended-fourier-energy} then implies \(\rho=\mu\).
\end{proof}

\subsection{Convergence to the target}
\label{subsec:convergence-target}

We now prove Theorem~\ref{thm:continuum-convergence}, beginning with
narrow convergence.

\begin{lemma}
\label{thm:narrow-convergence}
\begin{equation}
\label{eq:unconditional-narrow-convergence}
  \rho_t\rightharpoonup\mu
  \quad\text{narrowly in }\mathcal P(\mathbb R^d)
  \quad\text{as }t\to\infty.
\end{equation}
\end{lemma}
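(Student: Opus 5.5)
We prove that \(\omega(\rho_0)=\{\mu\}\) and that the trajectory is tight. Together these give \eqref{eq:unconditional-narrow-convergence}: if narrow convergence failed, some sequence \(t_n\to\infty\) would stay at positive distance from \(\mu\) in a metric for the narrow topology. Tightness and Prokhorov would then give a further subsequence converging to some \(\nu\in\omega(\rho_0)\) with \(\nu\neq\mu\), a contradiction.

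\emph{Tightness.} Proposition~\ref{prop:energy-dissipation} shows that \(t\mapsto\mathcal E_q(\rho_t-\mu)\) is nonincreasing. Hence \(\sup_{t\ge0}\mathcal E_q(\rho_t-\mu)\le\mathcal E_q(\rho_0-\mu)<\infty\), and Lemma~\ref{lem:energy-sublevels} gives tightness of \(\{\rho_t\}_{t\ge0}\). The same identity also yields
\[
\int_0^\infty\mathcal D(\rho_\tau)\,d\tau\le\mathcal E_q(\rho_0-\mu)<\infty .
\]

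\emph{Identification of limit points.} Let \(t_n\to\infty\) with \(\rho_{t_n}\rightharpoonup\nu\). We cannot evaluate \(\mathcal D_{\mathrm j}\) at \(t_n\) directly, because the dissipation is only integrable in time. Instead, fix \(h>0\). The integrability of \(\mathcal D\) gives \(\int_{t_n}^{t_n+h}\mathcal D(\rho_\tau)\,d\tau\to0\). Hence there exist times \(s_n\in[t_n,t_n+h]\) with \(\mathcal D(\rho_{s_n})\le h^{-1}\int_{t_n}^{t_n+h}\mathcal D\to0\). By \eqref{eq:W1-dissipation-time-modulus} in Lemma~\ref{lem:W1-dissipation},
\[
W_1(\rho_{s_n},\rho_{t_n})\le C h^{1/2}\Bigl(\int_{t_n}^{\infty}\mathcal D\Bigr)^{1/2}\to0 .
\]
Therefore \(\rho_{s_n}\rightharpoonup\nu\) as well, since \(W_1\) controls the bounded-Lipschitz distance and thus narrow convergence for tight families. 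Lemma~\ref{lem:potential-increment-convergence} applies along \(s_n\) because the energies are uniformly bounded. It gives \(\mathcal E_q(\nu-\mu)<\infty\), and Lemma~\ref{lem:weighted-jump-dissipation-lsc} gives
\[
\mathcal D_{\mathrm j}(\nu)\le\liminf_n\mathcal D_{\mathrm j}(\rho_{s_n})\le\liminf_n\mathcal D(\rho_{s_n})=0 .
\]
Proposition~\ref{prop:no-spur-stat-solns} then forces \(\nu=\mu\).

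\emph{Main obstacle.} The delicate step is passing from the given times \(t_n\) to the nearby low-dissipation times \(s_n\). Two points need care. First, the \(W_1\) estimate of Lemma~\ref{lem:W1-dissipation} must be used only as a bounded-Lipschitz (dual) bound. Its proof tests with Lipschitz functions, so no uniform first moments of \(\rho_t\) are needed. Second, we must confirm that the extended dissipation \eqref{eq:extended-weighted-jump-dissipation} coincides with \(\mathcal D_{\mathrm j}\) on the solution class. This holds as noted after \eqref{eq:extended-weighted-jump-dissipation}, so the lower semicontinuity lemma applies to \(\rho_{s_n}\).
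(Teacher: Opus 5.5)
Your proof is correct and follows essentially the same route as the paper. It uses energy-sublevel tightness, selects nearby times $s_n\in[t_n,t_n+h]$ of small dissipation, identifies the limit via lower semicontinuity of $\mathcal D_{\mathrm j}$ and the rigidity proposition, and links $t_n$ to $s_n$ through Lemma~\ref{lem:W1-dissipation}. The only difference is the order of the last two steps: you move a given limit point from $t_n$ to $s_n$ before identifying it, whereas the paper first identifies the subsequential limits of $\rho_{s_n}$ and then carries the convergence back to $t_n$.
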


\begin{proof}
Equation~\eqref{eq:energy-dissipation-identity} gives
\begin{equation}
\label{eq:uniform-energy-and-integrated-dissipation}
  \sup_{t\ge0}\mathcal E_q(\rho_t-\mu)
  \le\mathcal E_q(\rho_0-\mu),
  \qquad
  \int_0^\infty\mathcal D(\rho_t)\,dt
  \le\mathcal E_q(\rho_0-\mu)<\infty,
\end{equation}
so Lemma~\ref{lem:energy-sublevels} applies to give tightness of $(\rho_t)_{t \geq 0}$. Now fix any increasing sequence $(t_n)_n$ that grows to $\infty$. By (\ref{eq:uniform-energy-and-integrated-dissipation}), we may choose $s_n \in [t_n, t_n + 1]$ such that
\begin{equation}
\label{eqn:selected-small-dissipiation-time}
    \mathcal D(\rho_{s_n}) \leq \int_{t_n}^{t_n + 1} \mathcal D(\rho_\tau) \, d\tau \to 0.
\end{equation}
By tightness, every subsequence of $(\rho_{s_n})_n$ has a narrowly convergent further subsequence $(\rho_{s_{n_j}})_j$; let the limit be denoted $\rho_\infty$. By (\ref{eqn:selected-small-dissipiation-time}) and Lemma~\ref{lem:weighted-jump-dissipation-lsc}, we conclude that $\mathcal D_{\mathrm j}(\rho_\infty) = 0$.  When \(q>1\), choose
\(\alpha\in(q-1,q/2)\) in
\eqref{eq:limit-subcritical-moments-and-energy} to see that $\rho_\infty \in \mathcal P_{q - 1}(\R^d)$. Proposition~\ref{prop:no-spur-stat-solns} then shows that $\rho_\infty = \mu$, and therefore $(\rho_{s_n})_n \to \mu$ narrowly. Finally, Lemma~\ref{lem:W1-dissipation} proves that 
\begin{equation}
  W_1(\rho_{t_n},\rho_{s_n})
  \le \left(1+(\|\lambda\|_{L^\infty}\mathcal M_2(\kappa))^{1/2}\right)\left(\int_{t_n}^{t_n+1}\mathcal D(\rho_t)\,dt\right)^{1/2}\to 0
\end{equation}
and therefore \(\rho_{t_n}\rightharpoonup\mu\). Since $(t_n)_n$ was arbitrary, conclude that \eqref{eq:unconditional-narrow-convergence} is true.
\end{proof}

To upgrade narrow convergence to MMD convergence, we use the
second-moment identity to show that the time-averaged energy vanishes.
Energy monotonicity then gives convergence at every time.

\begin{proposition}[Convergence in MMD]
\label{thm:critical-MMD-second-moment-convergence}
\begin{equation}
\label{eq:critical-MMD-second-moment-convergence}
  \mathcal E_q(\rho_t-\mu)
  =\operatorname{MMD}_q^2(\rho_t,\mu)
  \to0
  \qquad\text{as }t\to\infty.
\end{equation}
\end{proposition}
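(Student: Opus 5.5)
The plan is to test the equation against the second moment $|x|^2$ and show that $\int_0^\infty \mathcal E_q(\rho_t-\mu)\,dt<\infty$. Energy monotonicity from Proposition~\ref{prop:energy-dissipation} then gives the claim. Indeed, if $\mathcal E_q(\rho_t-\mu)$ is nonincreasing and integrable on $[0,\infty)$, then $t\,\mathcal E_q(\rho_t-\mu)\le 2\int_{t/2}^t\mathcal E_q(\rho_s-\mu)\,ds\to0$. A weaker version also suffices: it is enough that the time averages $\frac1T\int_0^T\mathcal E_q(\rho_t-\mu)\,dt$ tend to zero, since monotonicity gives $\mathcal E_q(\rho_T-\mu)\le \frac1T\int_0^T\mathcal E_q(\rho_t-\mu)\,dt$.

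\emph{Second-moment identity.} Since $r\ge2$, we may test the weak formulation \eqref{eq:weak-jump-transport-formulation} with smooth truncations of $|x|^2$ and remove the truncation using the finite-time moment bounds of Theorem~\ref{thm:continuum-wellposedness} together with $\kappa\in\mathcal P_3$. This should give, for a.e.\ $t$,
\begin{equation*}
 \frac{d}{dt}\mathcal M_2(\rho_t)=-2\int x\cdot\nabla f_{\rho_t}(x)\,d\rho_t(x)+\iint\bigl(2x\cdot z+|z|^2\bigr)\lambda(x)\zeta_{\rho_t}(x,x+z)\kappa(z)\,dz\,d\rho_t(x).
\end{equation*}
For the transport term, Euler's identity $z\cdot\nabla K(z)=qK(z)$ and symmetrization in $(x,y)$ give
\begin{equation*}
 \int x\cdot\nabla f_\rho\,d\rho = q\,\mathcal E_q(\rho-\mu)+\tfrac{q}{2}\Bigl(\iint K\,d\rho\,d\rho-\iint K\,d\mu\,d\mu\Bigr)+R(\rho),
\end{equation*}
where $R(\rho)$ is a cross term of the form $\iint (x-y)\cdot\nabla K(x-y)\,d\rho\,d\mu$ minus its $\mu$ analogue. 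The intended outcome is an inequality of the form
\begin{equation*}
 \frac{d}{dt}\mathcal M_2(\rho_t)+2q\,\mathcal E_q(\rho_t-\mu)\le C\bigl(1+\mathcal M_2(\rho_t)\bigr)^{1/2}\mathcal D(\rho_t)^{1/2}+(\text{lower-order terms}).
\end{equation*}
To get it, I would write $\int x\cdot\nabla f_\rho\,d(\rho-\mu)$ plus $\int x\cdot\nabla f_\rho\,d\mu$. The first piece equals $q\cdot 2\mathcal E_q$ by homogeneity, using the bilinear pairing extended from Lemma~\ref{lem:normalized-witness-pairing}. The second piece is controlled by $\mathcal D_{\mathrm{tr}}$ only after comparison.

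\emph{Main obstacle.} The hardest step is controlling $\int x\cdot\nabla f_\rho\,d\mu$ and the jump term without uniform moment bounds. For the jump term, I would use Cauchy--Schwarz against $\mathcal D_{\mathrm j}$. The factor $\zeta$ is dominated by $\zeta^2$ weighted by $\lambda$, with the remaining factor $\int\lambda(x)(|x||z|+|z|^2)^2\kappa\,d\rho\lesssim 1+\mathcal M_2(\rho_t)$. This bounds the jump term by $C(1+\mathcal M_2)^{1/2}\mathcal D_{\mathrm j}^{1/2}$, and the transport part is handled analogously by $\mathcal M_2^{1/2}\mathcal D_{\mathrm{tr}}^{1/2}$.

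Setting $m(t)=1+\mathcal M_2(\rho_t)$, the differential inequality $m'\le C m^{1/2}\mathcal D^{1/2}-2q\,\mathcal E_q+(\dots)$ gives $(m^{1/2})'\le C\mathcal D^{1/2}$. Hence $m(T)^{1/2}\lesssim 1+\int_0^T\mathcal D^{1/2}\le 1+\sqrt{T\,\mathcal E_q(\rho_0-\mu)}$, so $\mathcal M_2(\rho_T)=O(T)$. Integrating the inequality again gives
\begin{equation*}
 2q\int_0^T\mathcal E_q\,dt\le m(0)+C\int_0^T m^{1/2}\mathcal D^{1/2}\,dt\le m(0)+C\sup_{t\le T}m(t)^{1/2}\,T^{1/2}\Bigl(\int_0^T\mathcal D\Bigr)^{1/2}=O(T).
\end{equation*}
This is not quite $o(T)$. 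To get $o(T)$, I would split the dissipation integral into $[0,T_0]$ and $[T_0,T]$ with $\int_{T_0}^\infty\mathcal D$ small. I would also use narrow convergence to $\mu$ from Lemma~\ref{thm:narrow-convergence} to make the $\mu$-tested term asymptotically small. That $\mu$-term equals $\int x\cdot\nabla f_\rho\,d\mu$ with $\nabla f_\rho$ bounded locally and tending to zero locally uniformly, by Lemma~\ref{lem:potential-increment-convergence}, which handles increments. The result should be $\frac1T\int_0^T\mathcal E_q\to0$, and monotonicity concludes the proof.
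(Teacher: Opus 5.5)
Your overall architecture matches the paper's: the second-moment identity, $\int_0^T\mathcal D^{1/2}=o(\sqrt T)$, $\sup_{s\le T}\mathcal M_2(\rho_s)=o(T)$, vanishing time averages of the energy, and then monotonicity. However, two steps do not go through as written.

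\textbf{The $\mu$-tested transport term.} You correctly isolate $\int x\cdot\nabla f_{\rho_t}\,d\mu$, but Lemma~\ref{lem:potential-increment-convergence} only gives $\Delta_{\rho_t}\to0$ locally uniformly. That is zeroth-order convergence of $\widetilde f_{\rho_t}$. Upgrading it to $\nabla f_{\rho_t}\to0$ locally uniformly would need equicontinuity of the gradients, i.e.\ a uniform-in-time Hessian bound. That bound depends on $\|\rho_t\|_{L^p}$, which is exactly what is not available. For $0<q<1$, even $\|\nabla f_{\rho_t}\|_{L^\infty}$ is not known to be bounded uniformly in $t$. Since $\mu$ is not compactly supported, you would also need uniform control of $\int_{|x|>R}|x||\nabla f_{\rho_t}|\,d\mu$.

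The missing idea is to move the derivative onto the fixed target. By Fubini, $\int x\cdot\nabla f_{\rho_t}\,d\mu=q\int G_\mu\,d(\rho_t-\mu)$, where $G_\mu(x)=\int y\cdot(x-y)|x-y|^{q-2}\,d\mu(y)$ does not depend on $t$. This function is bounded and continuous for $q\le1$; for $q<1$ that uses $\mu\in L^p$ with $p>p_c$. For $q>1$ it is continuous with growth $\lesssim1+|x|^{q-1}$. Narrow convergence (Lemma~\ref{thm:narrow-convergence}) together with the uniform $\alpha$-moment bound of Lemma~\ref{lem:energy-sublevels}, for $\alpha\in(q-1,q/2)$, then gives $\int G_\mu\,d(\rho_t-\mu)\to0$.

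For $q\ge1$ your route can also be repaired directly. There $\nabla f_{\rho_t}$ is uniformly bounded, so $\nabla\widetilde f_{\rho_t}\to0$ weak-$*$ in $L^\infty$, and pairing with $x\mu\in L^1$ suffices. This fails for $q<1$.

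\textbf{The quadratic jump term.} Your bound $\int\lambda(|x||z|+|z|^2)^2\kappa\,dz\,d\rho_t\lesssim1+\mathcal M_2(\rho_t)$ requires $\mathcal M_4(\kappa)<\infty$. The hypotheses only give $\kappa\in\mathcal P_{r+1}$, which is $\mathcal P_3$ when $r=2$. Without Cauchy--Schwarz against $\mathcal D_{\mathrm j}$, the term $\iint|z|^2\lambda\zeta_{\rho_t}\kappa\,dz\,d\rho_t$ is only $O(1)$ per unit time. It then contributes $O(T)$ with a positive sign, which destroys the $o(T)$ conclusion.

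The paper instead splits in $|z|$. On $\{|z|\le R\}$ it applies Cauchy--Schwarz against $\mathcal D_{\mathrm j}$. On $\{|z|>R\}$ it uses the uniform acceptance bound $\lambda(x)\zeta_{\rho_t}(x,x+z)\le C|z|^{1\wedge q}$ together with $\mathcal M_{2+(1\wedge q)}(\kappa)<\infty$. For $q>1$ this needs $\sup_tA_{\rho_t}<\infty$, which follows from the energy bound and the subcritical moment estimate because $q-1<q/2$. Your proposal never establishes this uniform acceptance bound.

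\textbf{Two smaller points.} The growth estimate for $\mathcal M_2$ must bound the whole transport term by Cauchy--Schwarz against $\mathcal D_{\mathrm{tr}}$, not use the Euler decomposition: the $\mu$-term there is not sign-definite and cannot simply be dropped. Also, the $(\rho_t-\mu)$-tested piece equals $q\,\mathcal E_q$, not $2q\,\mathcal E_q$; the factor $2$ comes from the prefactor in the moment identity.
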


\begin{proof}
By \eqref{eq:energy-dissipation-identity},
\begin{equation}
\label{eq:critical-MMD-energy-control}
  \sup_{t\ge0}\mathcal E_q(\rho_t-\mu)
  \le\mathcal E_q(\rho_0-\mu),
  \qquad
  \int_0^\infty\mathcal D(\rho_t)\,dt
  \le\mathcal E_q(\rho_0-\mu)<\infty.
\end{equation}
For \(q\le1\), \eqref{eq:acceptance-coefficient} gives
\(A_{\rho_t}=2\). For \(q>1\), use \(q-1<q/2\) in
\eqref{eq:RSW-subcritical-moment-control} and then
\eqref{eq:critical-MMD-energy-control}. Thus, in either case,
\begin{equation}
\label{eq:critical-MMD-uniform-acceptance}
  \sup_{t\ge0}A_{\rho_t}<\infty,
  \qquad
  \lambda(x)\zeta_{\rho_t}(x,x+z)
  \le C|z|^{1\wedge q}.
\end{equation}
All constants in this proof may depend on the fixed data, but are
independent of time. Cauchy-Schwarz and
\eqref{eq:transport-dissipation}--\eqref{eq:total-dissipation} give
\begin{subequations}\label{eq:critical-MMD-second-moment-pairings}
\begin{align}
  \int_{\mathbb R^d}
       |x||v_{\rho_t}(x)|\,d\rho_t(x)
  &\le
  \mathcal M_2(\rho_t)^{1/2}
  \mathcal D_{\mathrm{tr}}(\rho_t)^{1/2},\\
  \int_{\mathbb R^d}\int_{\mathbb R^d}
       |x||z|\lambda(x)\zeta_{\rho_t}(x,x+z)
       \kappa(z)\,dz\,d\rho_t(x)
  &\le
  \bigl(\|\lambda\|_{L^\infty}\mathcal M_2(\kappa)\bigr)^{1/2}
  \mathcal M_2(\rho_t)^{1/2}
  \mathcal D_{\mathrm j}(\rho_t)^{1/2}.
\end{align}
\end{subequations}
Moreover, \eqref{eq:critical-MMD-uniform-acceptance} implies
\begin{equation}
\label{eq:critical-MMD-quadratic-jump-bound}
  \int_{\mathbb R^d}\int_{\mathbb R^d}
       |z|^2\lambda(x)\zeta_{\rho_t}(x,x+z)
       \kappa(z)\,dz\,d\rho_t(x)
  \le C\mathcal M_{2+(1\wedge q)}(\kappa)<\infty.
\end{equation}

To test \eqref{eq:pde} with \(|x|^2\), choose
\(\chi\in C_c^\infty(B_2)\), equal to one on \(B_1\), and set
\(\varphi_R(x)=|x|^2\chi(x/R)\). Then
\begin{equation}
  |\nabla\varphi_R(x)|\le C|x|,
  \qquad
  \|D^2\varphi_R\|_{L^\infty}\le C,
  \qquad
  |\varphi_R(x+z)-\varphi_R(x)|
  \le C\bigl(|x||z|+|z|^2\bigr).
\end{equation}
The finite-time second-moment bound in
Definition~\ref{def:continuum-weak-solution},
\eqref{eq:critical-MMD-energy-control}, and
\eqref{eq:critical-MMD-second-moment-pairings}-
\eqref{eq:critical-MMD-quadratic-jump-bound}
justify dominated convergence as \(R\to\infty\).
For each \(R\), time cutoffs in the weak formulation and continuity
of \(t\mapsto\int\varphi_R\,d\rho_t\) give its integrated form
on every interval \([s,t]\).
Passing \(R\to\infty\) in this identity for every \(0\le s\le t\)
shows that the actual map \(t\mapsto\mathcal M_2(\rho_t)\) is locally
absolutely continuous and, for a.e. \(t>0\),
\begin{equation}
\label{eq:critical-MMD-second-moment-identity}
  \frac{d}{dt}\mathcal M_2(\rho_t)
  =2\int_{\mathbb R^d}x\cdot v_{\rho_t}(x)\,d\rho_t(x)
  +\int_{\mathbb R^d}\int_{\mathbb R^d}
    \bigl(2x\cdot z+|z|^2\bigr)\lambda(x)\zeta_{\rho_t}(x,x+z)
    \kappa(z)\,dz\,d\rho_t(x).
\end{equation}

We next estimate the time integrals in
\eqref{eq:critical-MMD-second-moment-identity}.
For \(0<S<T\), \eqref{eq:critical-MMD-energy-control} gives
\begin{equation}
  \frac1{\sqrt T}\int_0^T\mathcal D(\rho_t)^{1/2}\,dt
  \le
  \frac1{\sqrt T}\int_0^S\mathcal D(\rho_t)^{1/2}\,dt
  +\left(\int_S^\infty\mathcal D(\rho_t)\,dt\right)^{1/2}.
\end{equation}
Taking \(T\to\infty\) and then \(S\to\infty\), we obtain
\begin{equation}
\label{eq:critical-MMD-square-root-dissipation}
  \int_0^T\mathcal D(\rho_t)^{1/2}\,dt=o(\sqrt T).
\end{equation}
For each \(R\ge1\), Cauchy-Schwarz on \(\{|z|\le R\}\) and
\eqref{eq:critical-MMD-uniform-acceptance} on \(\{|z|>R\}\) yield
\begin{align}
  &\frac1T\int_0^T\int_{\mathbb R^d}\int_{\mathbb R^d}
       |z|^2\lambda(x)\zeta_{\rho_t}(x,x+z)
       \kappa(z)\,dz\,d\rho_t(x)\,dt\nonumber \\
  &\quad\le
  \left(\|\lambda\|_{L^\infty}
         \int_{|z|\le R}|z|^4\kappa(z)\,dz\right)^{1/2}
  \left(\frac{\mathcal E_q(\rho_0-\mu)}T\right)^{1/2}+
  C\int_{|z|>R}|z|^{2+(1\wedge q)}\kappa(z)\,dz.\label{eq:critical-MMD-quadratic-jump-average}
\end{align}
Letting \(T\to\infty\) and then \(R\to\infty\) proves that the
left-hand side of
\eqref{eq:critical-MMD-quadratic-jump-average} tends to zero.

Integrate \eqref{eq:critical-MMD-second-moment-identity}, use
\eqref{eq:critical-MMD-second-moment-pairings}, and take the supremum
over \(0\le s\le T\). Young's inequality gives
\begin{align}
  \sup_{0\le s\le T}\mathcal M_2(\rho_s)
  \le{}&2\mathcal M_2(\rho_0)
  +C\left(\int_0^T\mathcal D(\rho_t)^{1/2}\,dt\right)^2\nonumber \\
  &+2\int_0^T\int_{\mathbb R^d}\int_{\mathbb R^d}
       |z|^2\lambda(x)\zeta_{\rho_t}(x,x+z)
       \kappa(z)\,dz\,d\rho_t(x)\,dt
  =o(T).\label{eq:critical-MMD-sublinear-second-moment}
\end{align}
Combining \eqref{eq:critical-MMD-second-moment-pairings},
\eqref{eq:critical-MMD-square-root-dissipation}, and
\eqref{eq:critical-MMD-sublinear-second-moment}, we also have
\begin{equation}
\label{eq:critical-MMD-linear-jump-average}
  \frac1T\int_0^T
    \left|\iint_{\mathbb R^{2d}}x\cdot z\,\lambda(x)
      \zeta_{\rho_t}(x,x+z)\kappa(z)\,dz\,d\rho_t(x)\right|dt
  \le\frac C T\sup_{0\le s\le T}\mathcal M_2(\rho_s)^{1/2}
    \int_0^T\mathcal D_{\mathrm j}(\rho_t)^{1/2}\,dt,
\end{equation}
whose right-hand side tends to zero.

Define the scalar function
\begin{equation}
\label{eq:critical-MMD-target-function}
  G_\mu(x):=
  \int_{\mathbb R^d}
       y\cdot(x-y)|x-y|^{q-2}\,d\mu(y).
\end{equation}
If \(q=1\), then \(G_\mu\) is bounded by \(\mathcal M_1(\mu)\);
dominated convergence, using the absolute continuity of \(\mu\),
gives continuity.
If \(0<q<1\), Cauchy-Schwarz gives
\begin{equation}
  |G_\mu(x)|
  \le\mathcal M_2(\mu)^{1/2}
     \left(\int_{\mathbb R^d}
       |x-y|^{-2(1-q)}\,d\mu(y)\right)^{1/2}.
\end{equation}
The admissibility condition on \(p\) implies
\(2(1-q)p_*<(2-q)p_*<d\), with \(p_*=1\) when \(p=\infty\).
Splitting at \(|x-y|=1\) proves boundedness. In addition,
\begin{equation}
  \sup_x
  \left|\int_{|x-y|<\delta}
       y\cdot(x-y)|x-y|^{q-2}\,d\mu(y)\right|
  \le C\mathcal M_2(\mu)^{1/2}\|\mu\|_{L^p}^{1/2}
       \delta^{d/(2p_*)-(1-q)}
  \to 0.
\end{equation}
A continuous cutoff outside this ball, followed by dominated
convergence against \(|y|\,d\mu(y)\), proves continuity.
Thus \(G_\mu\in C_b(\mathbb R^d)\) when \(q\le1\). If \(1<q<2\), dominated convergence gives continuity, and
\begin{equation}
\label{eq:critical-MMD-target-growth}
  |G_\mu(x)|
  \le C_q\left(
      \mathcal M_1(\mu)|x|^{q-1}+\mathcal M_q(\mu)
  \right).
\end{equation}
Choose \(q-1<\alpha<q/2\).
Equations \eqref{eq:RSW-subcritical-moment-control} and
\eqref{eq:critical-MMD-energy-control} imply
\begin{equation}
  \sup_{t\ge0}\mathcal M_\alpha(\rho_t)<\infty,
  \qquad
  \sup_{t\ge0}\int_{|x|>R}|x|^{q-1}\,d\rho_t(x)
  \le R^{q-1-\alpha}
       \sup_{t\ge0}\mathcal M_\alpha(\rho_t)
  \to 0.
\end{equation}
Consequently, in all cases,
\Cref{thm:narrow-convergence} implies
\begin{equation}
\label{eq:critical-MMD-target-pairing-limit}
  \int_{\mathbb R^d}G_\mu(x)\,d(\rho_t-\mu)(x)
  \to 0.
\end{equation}

Since \(v_{\rho_t}(x)
=q\int(x-y)|x-y|^{q-2}\,d(\rho_t-\mu)(y)\), oddness gives
\begin{align}
  2\int_{\mathbb R^d}x\cdot v_{\rho_t}(x)\,d\rho_t(x)
  ={}&
  q\iint_{\mathbb R^d\times\mathbb R^d}
       |x-y|^q\,d\rho_t(x)\,d\rho_t(y)\nonumber \\
  &-2q\iint_{\mathbb R^d\times\mathbb R^d}
       |x-y|^q\,d\rho_t(x)\,d\mu(y)
       -2q\int_{\mathbb R^d}G_\mu\,d\rho_t\nonumber \\
  ={}&
  -2q\mathcal E_q(\rho_t-\mu)
       -2q\int_{\mathbb R^d}G_\mu\,d(\rho_t-\mu).\label{eq:critical-MMD-second-moment-transport}
\end{align}
The integrals are absolutely convergent: for \(q\ge1\) this follows
from the second moments; for \(q<1\), split at \(|x-y|=1\) and use
the finite-time \(L^p\) bounds and \((1-q)p_*<d\).

Substitute \eqref{eq:critical-MMD-second-moment-transport} into
\eqref{eq:critical-MMD-second-moment-identity}, integrate over
\([0,T]\), and divide by \(T\). Since
\(\mathcal M_2(\rho_T)\ge0\), we obtain
\begin{equation}
  0\le
  \frac{2q}{T}\int_0^T\mathcal E_q(\rho_t-\mu)\,dt
  \le\frac{\mathcal M_2(\rho_0)}T+o(1)
  \to 0
\end{equation}
by \eqref{eq:critical-MMD-quadratic-jump-average},
\eqref{eq:critical-MMD-linear-jump-average}, and
\eqref{eq:critical-MMD-target-pairing-limit}.
Finally, \eqref{eq:energy-dissipation-identity} implies
\begin{equation}
\label{eq:energy-decay}
      0\le\mathcal E_q(\rho_T-\mu)
  \le\frac1T\int_0^T\mathcal E_q(\rho_t-\mu)\,dt
  \to 0.
\end{equation}
\end{proof}

\begin{remark}[Comparison with pure transport]
\label{rem:pure-transport-critical-MMD}
The second-moment argument also applies to the pure transport equation
\eqref{eq:ordinary}. Let $\rho_0,\mu\in\mathcal P_2(\mathbb R^d)\cap
L^p(\mathbb R^d)$, with $d,q,p$ as in
Theorem~\ref{thm:continuum-wellposedness}, and let $(\rho_t)_{t\ge0}$
be the corresponding pure transport solution. If
$\rho_t\rightharpoonup\mu$ narrowly, then the same argument leads to
\eqref{eq:critical-MMD-target-pairing-limit}.
Since the jump contribution is absent,
\eqref{eq:critical-MMD-second-moment-transport} yields
\begin{equation}
      \frac{d}{dt}\mathcal M_2(\rho_t)
  =-2q\mathcal E_q(\rho_t-\mu)
   -2q\int_{\mathbb R^d}G_\mu\,d(\rho_t-\mu).
\end{equation}
Integrating over $[0,T]$, using $\mathcal M_2(\rho_T)\ge0$, and applying
energy monotonicity, we obtain the same conclusion \eqref{eq:energy-decay}.
Thus narrow convergence to the target implies convergence in
$\operatorname{MMD}_q$ under finite initial second moments. The assumption $\rho_t\rightharpoonup\mu$ must still be established.
Vanishing transport dissipation gives only
$\nabla f_\rho=0$ $\rho$-a.e., which does not identify
$\rho$ with $\mu$ outside the hypotheses of the rigidity results in
\cite[Theorem~1.5 and Section~5]
{rosenzweig2026wassersteingradientflowsmaximum}.
Finite-time moment and $L^p$ bounds do not by themselves place narrow limits
in these rigidity classes.
\end{remark}

\subsection{Conditional far-field decay}
\label{subsec:farfield}
We now specialize to energy distance, $q=1$, and assume $r\ge2$.
The goal is the stopped exponential decay in Proposition~\ref{thm:farfield-pl},
which gives Proposition~\ref{thm:far-field-main}. We use a Gaussian proposal
and the capped spatial rate in \eqref{eq:kappa-lambda-ex}, fixed from
$\rho_0\ne\mu$. The proof combines two static dissipation estimates,
bounds uniform over a preliminary bandwidth interval, and control of the
concentration radii. We then choose the parameters in a priori order and
apply the estimates until the concentration or separation conditions fail.
The exit-time bounds give a positive interval under strict initial margins;
the full statement follows the required parameter definitions below.

We first fix the notation for concentration balls and the Gaussian proposal. Write $r_*=r/(r-1)$ and let $\gamma_d$ denote the standard Gaussian density,
so $\kappa_\beta(z)=\beta^{-d}\gamma_d(z/\beta)$. Constants associated with
$\gamma_d$ will be independent of the bandwidth $\beta$.
For a probability measure $\nu$ with a finite first moment, set
\begin{equation}
 x_\nu=\int x\,d\nu(x),\qquad
 F_\nu(x)=\int |x-y|\,d\nu(y),\qquad
 I_1(\nu)=\iint|x-y|\,d\nu(x)\,d\nu(y).
\end{equation}
Thus $f_\rho=F_\mu-F_\rho$. In this subsection, $B_s(x)$ denotes the open
ball of radius $s$ centered at $x$. Choose positive radii $s,s_\mu$ such that
\begin{equation}
 \rho(B_s(x_\rho))\ge1-\eta,\qquad
 \mu(B_{s_\mu}(x_\mu))\ge1-\eta_\mu,
 \qquad 0<\eta,\eta_\mu<1,
\end{equation}
and write
\begin{subequations}\label{eq:farfield-static-notation}
\begin{gather}
 B_\rho=B_s(x_\rho),\qquad B_\mu=B_{s_\mu}(x_\mu),\qquad
 R=\operatorname{dist}(B_\rho,B_\mu)>0,\\
 \varepsilon_\rho=\eta^{1/r_*}\mathcal M_r(\rho)^{1/r},\qquad
 \varepsilon_\mu=\eta_\mu^{1/r_*}\mathcal M_r(\mu)^{1/r}.
\end{gather}
\end{subequations}
The separation $R$ is always calculated using these same balls.

The static estimates assume $H=\|D^2f_\rho\|_\infty<\infty$, so we use
the $C^{1,1}$ representative of $f_\rho$. This assumption excludes atoms
of $\rho$: at an atom, its contribution to $F_\rho$ is a nonzero multiple
of a norm, whereas the remaining nonatomic contribution and $F_\mu$ are
differentiable there by dominated convergence. In particular, the force
integrals used below are well defined. Set
\begin{equation}
 Z(x)=\int [f_\rho(x)-f_\rho(x+z)]_+\kappa_\beta(z)\,dz,
\end{equation}
and, for any unit vector $e$, define
\begin{equation}\label{eq:farfield-Gaussian-cone-constant}
 a_d=\int_{\{-e\cdot z\ge |z|/2\}}(-e\cdot z)\gamma_d(z)\,dz>0.
\end{equation}
Rotation invariance makes this constant independent of $e$. The vector
integral over the cone is $-a_de$, and the Gaussian second moment is
$d\beta^2$. Taylor's inequality and $[u]_+\ge u$ therefore give
\begin{equation}\label{eq:farfield-cone-Taylor}
 Z(x)\ge a_d\beta\nabla f_\rho(x)\cdot e-\frac d2\beta^2H.
\end{equation}
We may choose $e$ along $\nabla f_\rho(x)$; if the gradient is zero, the
resulting nonpositive lower bound is still valid.

\begin{lemma}\label{prop:far-field-pl-prop}
Suppose that $\rho,\mu\in\mathcal P_r(\mathbb R^d)$ and the preceding
notation and regularity assumptions hold. Assume
\begin{equation}
 I_1(\rho)\ge\theta s,\qquad
 \varepsilon_\rho\le\theta s/16,\qquad
 \eta_\mu\le\theta/8,\qquad R\ge56s/\theta,
 \qquad\beta H\le a_d\theta/(16d).
\end{equation}
If $\lambda(x)\ge c_\lambda R^2$ on $B_\rho$, then
\begin{equation}\label{eq:farfield-static-one-dissipation}
 \mathcal D(\rho)\ge\mathcal D_{\mathrm j}(\rho)
 \ge\frac{c_\lambda a_d^2\theta^2}{1024}\,\beta^2R^2.
\end{equation}
Moreover,
\begin{equation}\label{eq:farfield-static-one-energy}
 \mathcal E_1(\rho-\mu)
 \le R+2s+2s_\mu+2\varepsilon_\rho+2\varepsilon_\mu-\theta s/2.
\end{equation}
\end{lemma}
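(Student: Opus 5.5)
The energy bound \eqref{eq:farfield-static-one-energy} is a direct estimate from the interaction form of the energy. The dissipation bound \eqref{eq:farfield-static-one-dissipation} comes from the cone--Taylor inequality \eqref{eq:farfield-cone-Taylor}, applied with one fixed direction and averaged over the concentration ball $B_\rho$. I would prove the energy bound first.

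\emph{Energy bound.} For $q=1$ and $\rho,\mu\in\mathcal P_r$, expanding \eqref{eq:energy} gives
\[
\mathcal E_1(\rho-\mu)=\int F_\mu\,d\rho-\tfrac12 I_1(\rho)-\tfrac12 I_1(\mu)\le \int F_\mu\,d\rho-\tfrac12\theta s .
\]
To bound $\int F_\mu\,d\rho=\iint|x-y|\,d\rho\,d\mu$, I split each variable into its ball and the complement. For $x\in B_\rho$ and $y\in B_\mu$, the triangle inequality through $x_\rho$ and $x_\mu$ gives $|x-y|\le R+2s+2s_\mu$. On the complements, write $|x-y|\le|x-x_\rho|+|x_\rho-x_\mu|+|x_\mu-y|$ and estimate the tail contributions by H\"older with exponents $(r,r_*)$, for example $\int_{B_\rho^c}|x|\,d\rho\le\eta^{1/r_*}\mathcal M_r(\rho)^{1/r}=\varepsilon_\rho$. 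The recentering errors produce at most the factors $2\varepsilon_\rho$ and $2\varepsilon_\mu$. Only bookkeeping remains, namely checking that the mixed terms are indeed absorbed into $2\varepsilon_\rho+2\varepsilon_\mu$.

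\emph{Dissipation bound.} Since $\kappa_\beta$ is a probability density, Jensen gives $\int[\Delta_\rho(x,z)]_+^2\kappa_\beta\,dz\ge Z(x)^2$. Restricting to $B_\rho$, where $\lambda\ge c_\lambda R^2$, and applying Cauchy--Schwarz yields
\[
\mathcal D_{\mathrm j}(\rho)\ge c_\lambda R^2\Bigl(\int_{B_\rho}Z\,d\rho\Bigr)^2 .
\]
I would then apply \eqref{eq:farfield-cone-Taylor} with the single fixed unit vector $e=(x_\rho-x_\mu)/|x_\rho-x_\mu|$ for every $x\in B_\rho$, rather than the pointwise gradient direction. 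This gives
\[
\int_{B_\rho}Z\,d\rho\ge a_d\beta\int_{B_\rho}\nabla f_\rho\cdot e\,d\rho-\tfrac d2\beta^2H.
\]
By the assumption $\beta H\le a_d\theta/(16d)$, the last term is at most $a_d\beta\theta/32$. It therefore suffices to show $\int_{B_\rho}\nabla f_\rho\cdot e\,d\rho\ge c\,\theta$ with explicit room, say $\ge\theta/8+\theta/32$. Squaring $a_d\beta\theta/32$ and multiplying by $c_\lambda R^2$ then produces the constant $1/1024$.

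\emph{Splitting the gradient.} Write $\nabla f_\rho=\nabla F_\mu-\nabla F_\rho$.
\begin{itemize}[label=$\circ$]
\item For the target part: when $x\in B_\rho$ and $y\in B_\mu$, the unit vector $(x-y)/|x-y|$ makes an angle $O(s/R+s_\mu/R)$ with $e$. The separation $R\ge56s/\theta$ makes this a small multiple of $\theta$, and the $\mu$-mass outside $B_\mu$ costs at most $2\eta_\mu\le\theta/4$. Hence $\nabla F_\mu\cdot e\ge1-\theta/4-O(\theta/56)$ on $B_\rho$. (The term $s_\mu/R$ needs separate care; it may require the hypotheses of Proposition~\ref{thm:farfield-pl}.)
\item For the self-interaction part, antisymmetry gives $\iint\frac{x-y}{|x-y|}\,d\rho\,d\rho=0$ (recall there are no atoms). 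Therefore
\[
\int_{B_\rho}\nabla F_\rho\cdot e\,d\rho=-\int_{B_\rho^c}\nabla F_\rho\cdot e\,d\rho\le\rho(B_\rho^c)=\eta .
\]
\end{itemize}
Combining,
\[
\int_{B_\rho}\nabla f_\rho\cdot e\,d\rho\ge(1-\eta)(1-\theta/4-\cdots)-\eta .
\]

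\emph{Main obstacle.} The difficulty is turning this into a lower bound of order $\theta$ that uses the stated hypotheses rather than an explicit smallness of $\eta$. I would try to extract a bound on $\eta$ from $\varepsilon_\rho\le\theta s/16$ together with $I_1(\rho)\ge\theta s$, and use it to show that $\eta$ is bounded away from $1/2$. If that fails, I would replace the crude bound $|\nabla F_\rho\cdot e|\le1$ on $B_\rho^c$ by a tail estimate in terms of $\varepsilon_\rho/s$: mass far outside $B_\rho$ is controlled by the $r$-moment. The constant $1/1024$ suggests the intended route produces a factor $\theta/32$ after all losses. I would track constants only at the end, fixing $e$ once and for all so that the cone constant $a_d$ is used uniformly.
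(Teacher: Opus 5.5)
Your energy bound is correct and follows the paper's argument. The reduction of the dissipation bound, via Jensen and Cauchy--Schwarz, to $\int_{B_\rho}Z\,d\rho\ge a_d\beta\theta/32$ is also the paper's; it only requires a lower bound of $\theta/16$ for the gradient integral, less than you ask for.

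The gap is the choice of a single fixed direction $e$ along $x_\rho-x_\mu$. Your antisymmetry computation shows that, in any fixed direction, the self-interaction contributes only an error of size at most $\eta$, of either sign. The whole bound must therefore come from the target attraction $\nabla F_\mu\cdot e$ on $B_\rho$, and the hypotheses of this lemma do not control it. Geometry alone gives $\nabla F_\mu\cdot e\ge(1-\eta_\mu)R/(R+2s+2s_\mu)-\eta_\mu$, and nothing relates $s_\mu$ to $R$. Consider a target whose ball mass sits near the tangent set seen from $B_\rho$, with its barycenter balanced by a distant tail on the far side. It gives $\nabla F_\mu\cdot e\approx\eta_\mu+O((R/s_\mu)^{1/2})$ on $B_\rho$. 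A source tail of mass $\eta$ placed between the two balls makes your self term comparable to $-\eta$. Since $\eta$ and $\eta_\mu$ are unrelated, your integral can then be negative while every hypothesis holds. Conditions such as $R>2s_\mu$ belong to Lemma~\ref{prop:spr-pl-ineq} and to the proposition, not to this statement. Tellingly, your argument never uses the central hypothesis $I_1(\rho)\ge\theta s$, except perhaps to bound $\eta$.

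The missing idea is that this lemma extracts dissipation from the spread of the source rather than from the target. That forces a direction field varying with $x$, because self-repulsion points outward from the bulk in different directions at different points. The paper tests $-\nabla f_\rho$ against $x-\bar x$, where $\bar x$ is the barycenter of $\rho|_{B_\rho}$.

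First, $\eta\le1/8$, since $\varepsilon_\rho\le\theta s/16\le I_1(\rho)/16\le\mathcal M_r(\rho)^{1/r}/8$. The tail estimate then gives $J_\rho:=\iint_{B_\rho^2}|x-y|\,d\rho\,d\rho\ge\tfrac34 I_1(\rho)$. Pair symmetrization and Jensen yield $\int_{B_\rho}\nabla F_\rho\cdot(x-\bar x)\,d\rho\ge(\tfrac12-\tfrac{\eta}{1-\eta})J_\rho\ge\tfrac{15}{56}I_1(\rho)$.

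Because $\int_{B_\rho}(x-\bar x)\,d\rho=0$, a constant may be subtracted from the target field. The field of $\mu|_{B_\mu}$ has Hessian at most $R^{-1}$ on the convex ball $B_\rho$, so it costs at most $s^2/R\le\theta s/56$. The target tail costs at most $\eta_\mu s\le\theta s/8$. No relation between $s_\mu$ and $R$ is needed.

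Hence $\int_{B_\rho}-\nabla f_\rho\cdot(x-\bar x)\,d\rho\ge\theta s/8$. Since $|x-\bar x|\le 2s$, this gives $\int_{B_\rho}|\nabla f_\rho|\,d\rho\ge\theta/16$. Finally, the cone estimate applied pointwise with $e=\nabla f_\rho(x)/|\nabla f_\rho(x)|$ gives $\int_{B_\rho}Z\,d\rho\ge a_d\beta\theta/16-a_d\beta\theta/32$.
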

\begin{proof}
Since $I_1(\rho)\le2\mathcal M_r(\rho)^{1/r}$, the assumptions imply
$\eta^{1/r_*}\le1/8$, hence $\eta\le1/8$. By H\"older's inequality on the
tail set, whose mass is at most $\eta$,
\begin{equation}\label{eqn:tail-err-est-1}
 \int_{B_\rho^c}|x|\,d\rho(x)\le\varepsilon_\rho\le\theta s/16.
\end{equation}
Using $|x-y|\le|x|+|y|$ and $\eta\le\eta^{1/r_*}$, the contribution to
$I_1(\rho)$ outside $B_\rho^2$ is at most $4\varepsilon_\rho$. Consequently,
\begin{equation}
 J_\rho:=\iint_{B_\rho^2}|x-y|\,d\rho(x)\,d\rho(y)\ge3I_1(\rho)/4.
\end{equation}
Let $m=\rho(B_\rho)$ and $\bar x=m^{-1}\int_{B_\rho}x\,d\rho(x)$.
Pair symmetrization inside the ball and the bounded force of the tail give
\begin{equation}
 \int_{B_\rho}\nabla F_\rho(x)\cdot(x-\bar x)\,d\rho(x)
 \ge J_\rho/2-\eta\int_{B_\rho}|x-\bar x|\,d\rho(x).
\end{equation}
Jensen's inequality bounds the last integral by $J_\rho/m$. It follows that
\begin{equation}\label{eqn:F-rho-est-1}
 \int_{B_\rho}\nabla F_\rho(x)\cdot(x-\bar x)\,d\rho(x)
 \ge\left(\frac12-\frac{\eta}{1-\eta}\right)J_\rho
 \ge\frac{15}{56}I_1(\rho).
\end{equation}
The field of $\mu|_{B_\mu}$ has Hessian norm at most $R^{-1}$ throughout
the convex source ball. Subtracting its value at $\bar x$ gives
\begin{equation}\label{eqn:F-mu-est-1}
 \left|\int_{B_\rho}\nabla F_{\mu|_{B_\mu}}(x)\cdot(x-\bar x)\,d\rho(x)\right|
 \le R^{-1}\int_{B_\rho}|x-\bar x|^2\,d\rho(x)\le s^2/R.
\end{equation}
Here the variance about $\bar x$ is at most the second moment about
$x_\rho$. The remaining target field is bounded by $\eta_\mu$, so
\begin{equation}\label{eqn:F-mu-est-2}
 \left|\int_{B_\rho}\nabla F_{\mu|_{B_\mu^c}}(x)\cdot(x-\bar x)\,d\rho(x)\right|
 \le\eta_\mu s.
\end{equation}
Combining these bounds yields
\begin{equation}
 \int_{B_\rho}-\nabla f_\rho(x)\cdot(x-\bar x)\,d\rho(x)
 \ge15I_1(\rho)/56-s^2/R-\eta_\mu s
 \ge\theta s/7-s^2/R\ge\theta s/8.
\end{equation}
Since $|x-\bar x|\le2s$ on the source ball,
\begin{equation}\label{eqn:witness-lb}
 \int_{B_\rho}|\nabla f_\rho|\,d\rho\ge\theta/16.
\end{equation}
Integrating \eqref{eq:farfield-cone-Taylor} in the gradient direction,
and using the bandwidth assumption, gives
$\int_{B_\rho}Z\,d\rho\ge a_d\beta\theta/32$.
Jensen's inequality in the proposal and Cauchy--Schwarz against the source
subprobability give
\begin{equation}
 \mathcal D_{\mathrm j}(\rho)
 \ge\int_{B_\rho}\lambda Z^2\,d\rho
 \ge c_\lambda R^2\left(\int_{B_\rho}Z\,d\rho\right)^2,
\end{equation}
which proves \eqref{eq:farfield-static-one-dissipation}.

For the energy, expand the source and target terms:
\begin{equation}\label{eqn:far-field-energy-est-1}
 2\mathcal E_1(\rho-\mu)
 =2\iint|x-y|\,d\rho(x)\,d\mu(y)-I_1(\rho)-I_1(\mu)
 \le2\iint|x-y|\,d\rho(x)\,d\mu(y)-\theta s.
\end{equation}
The center distance is $R+s+s_\mu$, while
\begin{equation}
 \int|x-x_\rho|\,d\rho(x)\le s+2\varepsilon_\rho.
\end{equation}
Indeed, the tail of this integral is at most
$\int_{B_\rho^c}|x|\,d\rho+\eta|x_\rho|\le2\varepsilon_\rho$.
The analogous target bound proves \eqref{eq:farfield-static-one-energy}.
\end{proof}

\begin{lemma}\label{prop:spr-pl-ineq}
With the preceding notation and regularity assumptions, suppose $R>s_\mu$,
and set
\begin{equation}
 S=(1-\eta_\mu)\sqrt{1-s_\mu^2/R^2}-\eta_\mu>0,
 \qquad L_0=S(1-\eta)-I_1(\rho)/R-2\eta>0.
\end{equation}
Assume
\begin{equation}\label{eqn:beta-choice-2}
 \beta H\le a_dS/d,
\end{equation}
and $\lambda(x)\ge c_\lambda R^2$ for $|x-x_\mu|\ge R$. Then
\begin{equation}\label{eq:farfield-static-two-dissipation}
 \mathcal D(\rho)\ge\mathcal D_{\mathrm j}(\rho)
 \ge\frac{c_\lambda a_d^2}{4}\beta^2R^2L_0^2,
\end{equation}
and
\begin{equation}\label{eq:farfield-static-two-energy}
 \mathcal E_1(\rho-\mu)\le R+2(s+s_\mu+\varepsilon_\rho+\varepsilon_\mu).
\end{equation}
\end{lemma}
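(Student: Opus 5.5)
Our plan mirrors the proof of Lemma~\ref{prop:far-field-pl-prop}. The difference is that the lower bound now comes from the \emph{target} field rather than from self-interaction. Throughout, $q=1$, so $-\nabla f_\rho=\nabla F_\rho-\nabla F_\mu$, and both $\nabla F_\rho$ and $\nabla F_\mu$ are averages of unit vectors. The idea is that, on most of the source mass, the field $-\nabla f_\rho$ points roughly toward $x_\mu$ with magnitude close to $S$. The self-repulsion from $\rho$ only partially cancels this, by an amount controlled by $I_1(\rho)/R$ and $\eta$. We then convert this gradient into jump dissipation through the cone--Taylor bound \eqref{eq:farfield-Gaussian-cone-constant}--\eqref{eq:farfield-cone-Taylor}, exactly as before.

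\emph{Step 1: target attraction.} Fix $x$ with $|x-x_\mu|\ge R$ and let $e=(x_\mu-x)/|x_\mu-x|$. For $y\in B_\mu$, the angle between $(y-x)/|y-x|$ and $e$ has cosine at least $\sqrt{1-s_\mu^2/|x-x_\mu|^2}\ge\sqrt{1-s_\mu^2/R^2}$. The remaining mass, at most $\eta_\mu$, contributes at least $-\eta_\mu$. Hence $-\nabla F_\mu(x)\cdot e\ge S$. Here one uses $\nabla F_\mu(x)=\int (x-y)/|x-y|\,d\mu$, so that $-\nabla F_\mu\cdot e=\int e\cdot(y-x)/|y-x|\,d\mu$.

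\emph{Step 2: averaging over the source.} We must still handle the repulsion term $\nabla F_\rho$. Our first attempt is to integrate $-\nabla f_\rho\cdot e_x$ over $\rho$ restricted to the set $\{|x-x_\mu|\ge R\}$, which contains $B_\rho$ since $\operatorname{dist}(B_\rho,B_\mu)=R$. On this set, the target term contributes at least $S(1-\eta)$. For the self term, fix a common direction near $e_{x_\rho}$. Pair symmetrization makes the self-interaction pairing against a \emph{constant} vector vanish, up to the tail mass $2\eta$. The variation of $e_x$ across the source is of relative size $|x-x'|/R$, and integrating this against $\rho\otimes\rho$ gives a correction of at most $I_1(\rho)/R$. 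The outcome is
\[
 \int_{\{|x-x_\mu|\ge R\}}|\nabla f_\rho|\,d\rho\ \ge\ S(1-\eta)-I_1(\rho)/R-2\eta=L_0 .
\]
The careful bookkeeping here is the main obstacle: one has to check that the direction-variation error really is bounded by $I_1(\rho)/R$, and that the tail mass costs exactly $2\eta$. A more robust alternative is to use the single fixed direction $e_{x_\rho}$ and bound the antisymmetric self term directly.

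\emph{Step 3: dissipation and energy.} By \eqref{eq:farfield-cone-Taylor} and \eqref{eqn:beta-choice-2}, we have $Z\ge a_d\beta|\nabla f_\rho|-\tfrac d2\beta^2H\ge a_d\beta|\nabla f_\rho|-\tfrac12 a_d\beta S$ pointwise. This pointwise form may need to be applied in the averaged sense, and with the $S$ loss absorbed into the bound on $\int|\nabla f_\rho|$. Integrating it, and then applying Jensen and Cauchy--Schwarz together with $\lambda\ge c_\lambda R^2$ on the region, as in Lemma~\ref{prop:far-field-pl-prop}, gives $\mathcal D_{\mathrm j}\ge c_\lambda R^2(a_d\beta L_0/2)^2$. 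This is \eqref{eq:farfield-static-two-dissipation}. For the energy, use \eqref{eqn:far-field-energy-est-1} and drop $-I_1(\rho)-I_1(\mu)\le0$. Then bound $\iint|x-y|\,d\rho\,d\mu$ by $|x_\rho-x_\mu|+\int|x-x_\rho|\,d\rho+\int|y-x_\mu|\,d\mu\le(R+s+s_\mu)+(s+2\varepsilon_\rho)+(s_\mu+2\varepsilon_\mu)$, using the tail estimate from the previous lemma. Halving yields \eqref{eq:farfield-static-two-energy}.
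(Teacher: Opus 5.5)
\textbf{There is a genuine gap, in Step 3.} Your Step 1 and your energy bound are correct and agree with the paper.

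Write $A=\{|x-x_\mu|\ge R\}$. From $\int_A|\nabla f_\rho|\,d\rho\ge L_0$, integrating $Z\ge a_d\beta|\nabla f_\rho|-\tfrac12 a_d\beta S$ over $A$ only gives
\[
 \int_A Z\,d\rho\ \ge\ a_d\beta\Bigl(L_0-\tfrac12 S\rho(A)\Bigr).
\]
Since $L_0<S(1-\eta)\le S\rho(A)$, this is strictly smaller than the needed $\tfrac12 a_d\beta L_0$. It is even negative whenever $L_0<\tfrac12 S\rho(A)$, which the hypotheses (only $L_0>0$) allow.

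Once Step 2 has spent the target attraction $S$ to produce $L_0$, nothing is left to absorb the Taylor loss $\tfrac d2\beta^2H$. Applying the bound ``in the averaged sense'' does not repair this. Your bookkeeping also misreads $L_0$: the terms $I_1(\rho)/R+2\eta$ are \emph{twice} the sharp self-interaction bound, and the factor two comes from exactly this absorption step.

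\textbf{The missing idea.} Apply the cone estimate \eqref{eq:farfield-cone-Taylor} in the fixed direction $e_x=(x-x_\mu)/|x-x_\mu|$, not along $\nabla f_\rho$. Then insert the pointwise bound $\nabla f_\rho(x)\cdot e_x\ge S-\nabla F_\rho(x)\cdot e_x$ from Step 1. Half of the pointwise attraction now absorbs the loss:
\[
 Z(x)\ge\tfrac{a_d\beta}{2}\bigl(S-2\nabla F_\rho(x)\cdot e_x\bigr)\qquad\text{on } A .
\]
You then need
\[
 \int_A\nabla F_\rho\cdot e_x\,d\rho\le\frac{I_1(\rho)}{2R}+\eta .
\]
This follows from three facts:
\begin{itemize}
\item Symmetrizing on $A\times A$ turns the integrand into $\tfrac12\frac{x-y}{|x-y|}\cdot(e_x-e_y)$.
\item The map $x\mapsto e_x$ is $R^{-1}$-Lipschitz on the nonconvex set $A$, via $\bigl|\frac{u}{|u|}-\frac{v}{|v|}\bigr|^2=\frac{|u-v|^2-(|u|-|v|)^2}{|u||v|}$.
\item The cross term on $A\times A^c$ costs at most $\eta$.
\end{itemize}
Integrating then gives $\int_A Z\,d\rho\ge\frac{a_d\beta}{2}\bigl(S\rho(A)-I_1(\rho)/R-2\eta\bigr)\ge\frac{a_d\beta}{2}L_0$.

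Equivalently, your $|\nabla f_\rho|$ route works if Step 2 is sharpened to $\int_A|\nabla f_\rho|\,d\rho\ge S\rho(A)-I_1(\rho)/(2R)-\eta$, keeping $S\rho(A)$ rather than $S(1-\eta)$. Your fixed-direction treatment of the self term is too crude by exactly this factor of two. Its error $\int_A|e_x-e_{x_\rho}|\,d\rho\le I_1(\rho)/R$ would, after doubling, produce $2I_1(\rho)/R$ instead of $I_1(\rho)/R$.
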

\begin{proof}
Let $A=\{|x-x_\mu|\ge R\}$ and $e_x=(x-x_\mu)/|x-x_\mu|$. The source
ball is contained in $A$, so $\rho(A)\ge1-\eta$. For $x\in A$, put
$u=|x-x_\mu|\ge R$. Write $y-x_\mu=ae_x+b$, where $b\perp e_x$.
If $y\in B_\mu$, then $a^2+|b|^2\le s_\mu^2$, and
\begin{equation}
 \frac{u-a}{\sqrt{(u-a)^2+|b|^2}}\ge\sqrt{1-s_\mu^2/u^2}.
\end{equation}
This follows by squaring and using $(ua-s_\mu^2)^2\ge0$.
The target tail contributes at least $-\eta_\mu$; hence
\begin{equation}\label{eqn:mu-grad-geo-lb}
 \nabla F_\mu(x)\cdot e_x\ge S\qquad(x\in A).
\end{equation}
The map $x\mapsto e_x$ is $R^{-1}$-Lipschitz on $A$, although $A$ is not
convex. In fact, for nonzero $u,v$,
\begin{equation}
 \left|\frac{u}{|u|}-\frac{v}{|v|}\right|^2
 =\frac{|u-v|^2-(|u|-|v|)^2}{|u||v|}.
\end{equation}
Pair symmetrization on $A^2$, followed by the bound on the mass of $A^c$,
gives
\begin{equation}
 \int_A\nabla F_\rho(x)\cdot e_x\,d\rho(x)
 \le I_1(\rho)/(2R)+\eta.
\end{equation}
The cone estimate \eqref{eq:farfield-cone-Taylor}, in direction $e_x$, and
\eqref{eqn:beta-choice-2} yield
\begin{equation}\label{eqn:kappa-cone-integral}
 Z(x)\ge\frac{a_d\beta}{2}
                 \bigl(S-2\nabla F_\rho(x)\cdot e_x\bigr)
 \qquad(x\in A).
\end{equation}
Therefore $\int_A Z\,d\rho\ge(a_d\beta/2)L_0>0$.
Jensen's inequality and the rate lower bound on the same set give
\eqref{eq:farfield-static-two-dissipation}. The center-and-tail estimate
from the preceding proof, with both nonpositive self-interaction terms
dropped, gives \eqref{eq:farfield-static-two-energy}.
\end{proof}

We next bound how long the far-field concentration and separation conditions
can persist along the flow. Two elementary estimates suffice.
\begin{lemma}\label{prop:W_1-enlargement-lemma}
Let $\rho,\nu\in\mathcal P_1(\mathbb R^d)$, let $B$ be a Borel set, and let
$B_\delta=\{x:\operatorname{dist}(x,B)<\delta\}$ for $\delta>0$. Then
\begin{equation}\label{eq:farfield-mass-transfer}
 \nu(B_\delta)\ge\rho(B)-\delta^{-1}W_1(\rho,\nu).
\end{equation}
\end{lemma}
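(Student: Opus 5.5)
The plan is to pass through Kantorovich--Rubinstein duality, testing against a single Lipschitz cutoff adapted to $B$. If $W_1(\rho,\nu)=\infty$ there is nothing to prove, so we may assume both measures lie in $\mathcal P_1(\mathbb R^d)$ (which they do by hypothesis) and that $\delta>0$ is fixed. We define
\begin{equation*}
 \psi(x):=\bigl(1-\delta^{-1}\operatorname{dist}(x,B)\bigr)_+ .
\end{equation*}
We then check three properties of $\psi$. First, $0\le\psi\le1$. Second, $\psi=1$ on $B$, because the distance vanishes there. Third, $\psi=0$ outside $B_\delta$, because $\operatorname{dist}(x,B)\ge\delta$ off $B_\delta$. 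In addition, $\psi$ is $\delta^{-1}$-Lipschitz, since $x\mapsto\operatorname{dist}(x,B)$ is $1$-Lipschitz and the positive part is $1$-Lipschitz. In particular $\psi$ is bounded, continuous and Borel, so it is integrable against both measures.

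These properties give the chain
\begin{equation*}
 \nu(B_\delta)\ge\int\psi\,d\nu,\qquad \int\psi\,d\rho\ge\rho(B).
\end{equation*}
Kantorovich--Rubinstein duality, applied to the $1$-Lipschitz function $\delta\psi$, gives
\begin{equation*}
 \Bigl|\int\psi\,d(\rho-\nu)\Bigr|\le\delta^{-1}W_1(\rho,\nu).
\end{equation*}
Chaining these yields $\nu(B_\delta)\ge\int\psi\,d\rho-\delta^{-1}W_1(\rho,\nu)\ge\rho(B)-\delta^{-1}W_1(\rho,\nu)$, which is \eqref{eq:farfield-mass-transfer}.

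There is no substantial obstacle. The only points to watch are that $\psi$ is supported where it should be, namely $\{\psi>0\}\subset B_\delta$ with the strict-inequality definition of $B_\delta$, and that duality is legitimately applied to a bounded Lipschitz function, which is immediate for measures in $\mathcal P_1$. Alternatively, one could argue with an optimal coupling $\pi$ and Markov's inequality,
\begin{equation*}
 \pi\bigl(\{x\in B,\ |x-y|\ge\delta\}\bigr)\le\delta^{-1}W_1(\rho,\nu).
\end{equation*}
This also gives the result, since $x\in B$ and $|x-y|<\delta$ force $y\in B_\delta$. I would use the cutoff argument as the primary proof and keep the coupling argument in reserve.
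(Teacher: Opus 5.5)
Your argument is correct, and the coupling argument you keep in reserve is exactly the paper's proof. For an arbitrary coupling $\pi$ of $\rho$ and $\nu$, every pair in $B\times B_\delta^c$ is at distance at least $\delta$. Hence $\rho(B)\le\nu(B_\delta)+\delta^{-1}\iint|x-y|\,d\pi$, and one takes the infimum over $\pi$, so no optimal coupling is needed.

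Your primary route is the dual form of the same estimate. It uses only the elementary half of Kantorovich--Rubinstein duality: for every coupling, $\int f\,d(\rho-\nu)=\iint(f(x)-f(y))\,d\pi\le\operatorname{Lip}(f)\iint|x-y|\,d\pi$. So the two proofs differ only in which pointwise inequality they integrate against $\pi$. You integrate $\psi(x)-\psi(y)\le\delta^{-1}|x-y|$, whereas the paper integrates $\mathbf 1_B(x)-\mathbf 1_{B_\delta}(y)\le\delta^{-1}|x-y|$.

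What each buys:
\begin{itemize}
\item The coupling form is a line shorter and needs no auxiliary function. It also adapts directly to $W_p$ through $\pi(B\times B_\delta^c)\le\delta^{-p}\iint|x-y|^p\,d\pi$.
\item The test-function form transfers verbatim to any distance that controls bounded Lipschitz tests. Since $\|\psi\|_{L^\infty}+\operatorname{Lip}(\psi)\le1+\delta^{-1}$, it gives $\nu(B_\delta)\ge\rho(B)-(1+\delta^{-1})d_{\rm BL}(\rho,\nu)$ with no moment assumption.
\end{itemize}

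One small point: your claim that $\operatorname{dist}(\cdot,B)$ is $1$-Lipschitz needs $B\ne\varnothing$. The empty case is trivial in either proof.
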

\begin{proof}
Every coupling between $\rho$ and $\nu$ transports its mass on
$B\times B_\delta^c$ over a distance at least $\delta$. Thus
$\rho(B)\le\nu(B_\delta)+\delta^{-1}\iint|x-y|\,d\pi(x,y)$.
Taking the infimum over couplings proves the result.
\end{proof}

\begin{lemma}\label{prop:expl-w1-stability}
Under the hypotheses of Theorem~\ref{thm:continuum-wellposedness} with $q=1$,
\begin{equation}\label{eq:farfield-W1-speed}
 W_1(\rho_t,\rho_s)\le L^\star|t-s|,\qquad
 L^\star=2+2\|\lambda\|_\infty\mathcal M_2(\kappa).
\end{equation}
\end{lemma}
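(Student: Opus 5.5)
The plan is to test the weak formulation \eqref{eq:weak-jump-transport-formulation} against $1$-Lipschitz functions and then apply Kantorovich--Rubinstein duality, as in the proof of Lemma~\ref{lem:W1-dissipation}. The difference is that we use pointwise bounds on the velocity and the jump rate in place of the dissipation.

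Fix $\varphi\in C_c^\infty(\mathbb R^d)$ with $\operatorname{Lip}(\varphi)\le1$. Using time cutoffs in the weak formulation, together with the continuity of $t\mapsto\int\varphi\,d\rho_t$, gives for $0\le s\le t$
\begin{equation*}
 \int\varphi\,d(\rho_t-\rho_s)
 =\int_s^t\!\!\int v_{\rho_\tau}\cdot\nabla\varphi\,d\rho_\tau\,d\tau
 +\int_s^t\!\!\iint(\varphi(x+z)-\varphi(x))\lambda(x)\zeta_{\rho_\tau}(x,x+z)\kappa(z)\,dz\,d\rho_\tau(x)\,d\tau.
\end{equation*}

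For the transport term, take $q=1$ in \eqref{eq:transport-velocity-bound}. This gives $\|v_{\rho_\tau}\|_{L^\infty}\le2$, since $v_\rho=-\nabla K*(\rho-\mu)$ is a difference of averages of unit vectors. The same bound holds in the endpoint case $d=1$, where $v_\rho=2(F_\rho-F_\mu)$. So the first integrand is bounded by $2$.

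For the jump term, use $|\varphi(x+z)-\varphi(x)|\le|z|$ together with Lemma~\ref{lem:jump-increment} at $q=1$, which gives $A_\nu=2$ and hence $\zeta_{\rho_\tau}(x,x+z)\le2|z|$. In dimension one, use \eqref{eq:one-dimensional-jump-increment} instead. The inner integral is then at most $2\|\lambda\|_{L^\infty}\int|z|^2\kappa(z)\,dz=2\|\lambda\|_\infty\mathcal M_2(\kappa)$. This is finite because $\kappa\in\mathcal P_{r+1}$ with $r\ge1$, and it is uniform in $x$ and $\tau$. Summing the two bounds gives $|\int\varphi\,d(\rho_t-\rho_s)|\le L^\star|t-s|$.

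It remains to pass to all $1$-Lipschitz $\varphi$. Replace $\varphi$ by $\varphi-\varphi(0)$, truncate it by smooth cutoffs $\chi(\cdot/R)$, and mollify. The finite first moments of $\rho_t$ and $\rho_s$ control the error as $R\to\infty$, exactly as in Lemma~\ref{lem:W1-dissipation}. Kantorovich--Rubinstein duality then gives \eqref{eq:farfield-W1-speed}.

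The main point needing care is this density step: the truncated test functions have Lipschitz constant $1+O(|x|/R)$ on the support of the cutoff. This is handled with the uniform first-moment bounds on $[s,t]$ from Theorem~\ref{thm:continuum-wellposedness}, so I do not expect a genuine obstacle.
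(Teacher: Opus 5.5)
Your proposal is correct and follows the paper's own proof. The paper likewise uses $|v_\rho|\le2$ and $\zeta_\rho(x,x+z)\le2|z|$ in the weak formulation, integrates from $s$ to $t$, extends to all $1$-Lipschitz tests by cutoff and mollification, and concludes with Kantorovich--Rubinstein duality.

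One imprecision in your density step: the Lipschitz excess of $\varphi\chi(\cdot/R)$ on the annulus is $O(1)$, not small. It matters only on $\{|x|\ge R\}$ or for $|z|\ge R/2$, and that weight vanishes uniformly on $[s,t]$ by the moment bounds, so the argument still closes. Alternatively, clamp $\varphi$ between $\pm(2R-|x|)_+$ before mollifying. This keeps the Lipschitz constant exactly one, so you only need finiteness of $\mathcal M_1(\rho_s)$ and $\mathcal M_1(\rho_t)$.
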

\begin{proof}
The force bound gives $|v_\rho|\le2$ and
$\zeta_\rho(x,x+z)\le2|z|$. For a smooth compactly supported
$1$-Lipschitz test function, the weak equation gives
\begin{equation}
 \left|\frac{d}{dt}\int\varphi\,d\rho_t\right|
 \le2+2\|\lambda\|_\infty\int|z|^2\kappa(z)\,dz.
\end{equation}
Integrate between $s$ and $t$. Cutoff and mollification extend this estimate
to all $1$-Lipschitz tests, using the finite first moments and finite-horizon
bounds of the solution. Kantorovich--Rubinstein duality gives
\eqref{eq:farfield-W1-speed}.
\end{proof}

\paragraph{Bounds uniform in a preliminary bandwidth interval.}
The bandwidth must not be chosen from estimates whose dependence on that
same bandwidth is unspecified. Fix $T>0$, the initial and target densities,
and the capped rate. Write $L=\|\lambda\|_\infty$ and set
\begin{equation}\label{eq:farfield-preliminary-bandwidth}
 b=\min\{1,(1+L)^{-1}\}.
\end{equation}
For $0<\beta\le b$, both $L\beta$ and $L\beta^2$ are at most one.
\begin{lemma}\label{lem:farfield-uniform-bandwidth}
There are finite constants $\overline M_{r,T}$ and $\overline H_T$,
determined before $\beta\in(0,b]$ is chosen, such that every corresponding
solution satisfies
\begin{equation}\label{eq:farfield-uniform-bounds}
 \sup_{t\le T}\mathcal M_r(\rho_t)\le\overline M_{r,T},\qquad
 \mathop{\rm ess\,sup}_{t\le T}\|D^2f_{\rho_t}\|_\infty\le\overline H_T.
\end{equation}
One can take
\begin{equation}\label{eq:farfield-uniform-moment}
 \overline M_{r,T}=\left(\widetilde{\mathcal M}_r(\rho_0)^{1/r}
                             +C_{d,r}T\right)^r.
\end{equation}
If $m_1=\int|z|\gamma_d(z)\,dz$ and
$V_\mu=\|(-\Delta K)*\mu\|_\infty$, then
\begin{equation}\label{eq:farfield-uniform-density}
 \sup_{t\le T}\|\rho_t\|_p\le P_T
 :=\|\rho_0\|_p\exp\bigl((V_\mu+2m_1)T\bigr).
\end{equation}
For $d\ge2$ one may choose
$\overline H_T=C_{d,p}(2+\|\mu\|_p+P_T)$; for $d=1$ take
$\overline H_T=2(\|\mu\|_\infty+P_T)$. In addition,
\begin{equation}\label{eq:farfield-uniform-speed}
 W_1(\rho_t,\rho_s)\le\overline L|t-s|,\qquad
 \overline L=2+2d,\qquad 0\le s,t\le T.
\end{equation}
\end{lemma}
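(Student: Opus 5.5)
The plan is to rerun the a priori estimates from the proof of Lemma~\ref{prop:regularized-jump-transport} with $q=1$, the Gaussian proposal $\kappa_\beta$, and $0<\beta\le b$. The goal is to make every constant explicit in $L$ and in moments of $\gamma_d$, and to remove all dependence on $\beta$ by using $L\beta\le1$ and $L\beta^2\le1$. The estimates will be obtained uniformly in $\varepsilon$ for the regularized solutions and then transferred to the limit by lower semicontinuity, as in Lemma~\ref{prop:limit-regularized-jump-transport}. For $d=1$ the one-dimensional construction in the proof of Theorem~\ref{thm:continuum-wellposedness} plays the same role.

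\emph{Density bound.} For $q=1$ we have $A_\rho=2$. The Gaussian proposal gives $\mathcal M_1(\kappa_\beta)=\beta m_1$. In \eqref{eq:regularized-Lp-differential-inequality}, after dividing by $p\|\rho\|_p^p$, the growth rate is at most $\frac{p-1}{p}V_\mu+2L\beta m_1\le V_\mu+2m_1$. Here we use \eqref{eq:endpoint-regularized-target-coefficient} to replace the regularized coefficient by $V_\mu$. Gronwall's inequality then gives \eqref{eq:farfield-uniform-density}, and the case $p=\infty$ follows directly from \eqref{eq:continuum-maximum-estimate}.

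\emph{Hessian bound.} The Hessian bound then follows from \eqref{eq:transport-gradient-bound}. Note that $\theta_2=p_*/d<1$ and that $x^{\theta_2}\le1+x$. For $d\ge2$ this gives $\|D^2f\|_\infty=\|\nabla v\|_\infty\le C_{d,p}(2+\|\mu\|_p+P_T)$. For $d=1$, \eqref{eq:one-dimensional-velocity-identities} gives $\|\partial_xv\|_\infty\le2(\|\rho_t\|_\infty+\|\mu\|_\infty)$.

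\emph{Moment bound.} Here I would not use the exponential Gronwall bound \eqref{eq:regularized-moment-differential-inequality}, since it would give the wrong form. Instead I would estimate $\frac{d}{dt}\widetilde{\mathcal M}_r^{1/r}$ directly. The transport term gives $r\int\langle x\rangle^{r-1}|v|\,d\rho\le 2r\widetilde{\mathcal M}_{r-1}\le2r\widetilde{\mathcal M}_r^{(r-1)/r}$ by H\"older's inequality. For the jump term, apply \eqref{eq:moment-weight-increment} with $\zeta\le2|z|$. This gives a bound
\[
C_rL\int|z|^2\bigl(\langle x\rangle^{r-1}+\langle z\rangle^{r-1}\bigr)\kappa_\beta\,dz\,d\rho .
\]
The term with $\langle x\rangle^{r-1}$ is bounded by $C_rL\beta^2 d\,\widetilde{\mathcal M}_r^{(r-1)/r}$. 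The term with $\langle z\rangle^{r-1}$ is bounded by $C_{d,r}L\beta^2\le C_{d,r}\le C_{d,r}\widetilde{\mathcal M}_r^{(r-1)/r}$, because $\widetilde{\mathcal M}_r\ge1$ and $\beta\le1$. Hence
\[
\frac{d}{dt}\widetilde{\mathcal M}_r\le C_{d,r}\widetilde{\mathcal M}_r^{(r-1)/r},
\]
that is, $\frac{d}{dt}\widetilde{\mathcal M}_r^{1/r}\le C_{d,r}$. Integrating and using $\mathcal M_r\le\widetilde{\mathcal M}_r$ gives \eqref{eq:farfield-uniform-moment}. The step I expect to need the most care is this one: justifying the differential inequality for the $1/r$ power in the regularized mild setting, with the $\beta$-dependence tracked through the $\langle z\rangle^{r-1}$ moments of $\kappa_\beta$. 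Scaling handles this, since $\int|z|^2\langle z\rangle^{r-1}\kappa_\beta\,dz\le\beta^2C_{d,r}$ for $\beta\le1$.

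\emph{Speed bound.} Finally, \eqref{eq:farfield-uniform-speed} follows from Lemma~\ref{prop:expl-w1-stability}. There $L^\star=2+2L\mathcal M_2(\kappa_\beta)=2+2Ld\beta^2\le2+2d$.
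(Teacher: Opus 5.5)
Your proposal is correct and follows the paper's proof essentially step by step. It uses the same $L^p$ Gronwall estimate with $A_\rho=2$, $\mathcal M_1(\kappa_\beta)=\beta m_1$ and $L\beta\le1$, and the same sublinear inequality $\frac{d}{dt}\widetilde{\mathcal M}_r\le C_{d,r}\widetilde{\mathcal M}_r^{1-1/r}$ (from H\"older, $L\beta^2\le1$ and $\widetilde{\mathcal M}_r\ge1$) integrated for the $1/r$ power. It also uses the same appeal to Lemma~\ref{prop:expl-w1-stability} with $L\mathcal M_2(\kappa_\beta)=Ld\beta^2\le d$. The only difference is cosmetic: for $d\ge2$ you quote \eqref{eq:transport-gradient-bound} with $\theta_2=p_*/d<1$ rather than redoing the near--far decomposition. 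When $p=\infty$, note that this estimate, although stated for $p<\infty$, still holds with $p_*=1$; alternatively, interpolate to a finite admissible exponent.
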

\begin{proof}
For $q=1$ the bounds $|v_\rho|\le2$ and $\zeta_\rho(x,x+z)\le2|z|$
do not depend on the density or its moments. Let
$m(t)=\int\langle x\rangle^r\,d\rho_t(x)\ge1$. The mean-value theorem gives
\begin{equation}
 |\langle x+z\rangle^r-\langle x\rangle^r|
 \le C_r|z|\bigl(\langle x\rangle^{r-1}+\langle z\rangle^{r-1}\bigr).
\end{equation}
The moment testing argument from Section~\ref{sec:continuum}, first with
cutoffs and then by localization, yields
\begin{equation}
 m'(t)\le C_r\int\langle x\rangle^{r-1}\,d\rho_t(x)
 +C_rL\left(d\beta^2\int\langle x\rangle^{r-1}\,d\rho_t(x)
       +\int|z|^2\langle z\rangle^{r-1}\kappa_\beta(z)\,dz\right).
\end{equation}
For $\beta\le1$, the last integral is bounded by
$\beta^2\int|z|^2\langle z\rangle^{r-1}\gamma_d(z)\,dz$.
H\"older's inequality, $L\beta^2\le1$, and $m\ge1$ give
$m'\le C_{d,r}m^{1-1/r}$. Integrating the inequality for $m^{1/r}$ proves
\eqref{eq:farfield-uniform-moment}.

For finite $p$, the density estimate used in Section~\ref{sec:continuum}
discards the nonpositive self-interaction divergence and jump outflow, and
bounds the positive inflow by its $L^p$ operator norm. It gives
\begin{equation}
 \frac{d}{dt}\|\rho_t\|_p^p
 \le\bigl((p-1)V_\mu+2pL\mathcal M_1(\kappa_\beta)\bigr)\|\rho_t\|_p^p.
\end{equation}
Since $\mathcal M_1(\kappa_\beta)=\beta m_1$ and $L\beta\le1$,
Gronwall's inequality gives \eqref{eq:farfield-uniform-density}.
The maximum estimate of Theorem~\ref{thm:continuum-wellposedness} gives the
same bound when $p=\infty$. These estimates hold first for the regularized
solutions, with constants independent of the regularization, and pass to
the weak solution.

For $d\ge2$, the Hessian kernel is bounded by $C_d|x|^{-1}$ away from the
origin and has no distributional point mass. A near--far decomposition
and H\"older's inequality give the asserted Hessian bound because $p_*<d$;
they also give $V_\mu<\infty$. For $d=1$, $K''=-2\delta_0$, so
$f_{\rho_t}''=-2(\rho_t-\mu)$. Finally,
\eqref{eq:farfield-W1-speed} and
$L\mathcal M_2(\kappa_\beta)=Ld\beta^2\le d$ give
\eqref{eq:farfield-uniform-speed}.
\end{proof}

\paragraph{The order of parameter choices.}
For a density $\nu$, write
\begin{equation}
 s_\nu(\alpha)=\inf\{s>0:\nu(B_s(x_\nu))\ge1-\alpha\},\qquad0<\alpha<1.
\end{equation}
Its radial distribution function is continuous, so this radius is positive,
finite, and attains the required mass. Fix
$0<\eta_\mu<\sqrt3/(2+\sqrt3)$ and put
$a_0=\frac{\sqrt3}{2}(1-\eta_\mu)-\eta_\mu>0$.
Choose $0<\eta\le a_0/(8+a_0)$, set
$s_0=s_{\rho_0}(\eta/2)$ and $s_\mu=s_\mu(\eta_\mu)$, and choose
\begin{subequations}\label{eq:farfield-ordered-parameters}
\begin{align}
 \varepsilon_\rho&=\eta^{1/r_*}\overline M_{r,T}^{1/r},
 &\varepsilon_\mu&=\eta_\mu^{1/r_*}\mathcal M_r(\mu)^{1/r},\\
 \theta_0&\ge\max\{8\eta_\mu,16\varepsilon_\rho/s_0\},
 &\widetilde R&\ge\max\{56/\theta_0,4\theta_0/(a_0(1-\eta))\},\\
 0<\beta&\le\min\left\{b,
 \frac{a_d\min\{\theta_0,a_0\}}{16d\max\{1,\overline H_T\}}\right\}.
\end{align}
\end{subequations}
All bounds used to choose $\beta$ have already been determined. This
specifies a positive bandwidth; the initial separation conditions below
remain conditions to check on the prescribed source and target.

For the resulting solution, let
\begin{equation}\label{eq:farfield-moving-balls}
 x_t=x_{\rho_t},\qquad s_t=\max\{s_{\rho_t}(\eta),s_0\},\qquad
 B_t=B_{s_t}(x_t),\qquad R_t=\operatorname{dist}(B_t,B_\mu).
\end{equation}
In particular $s_t|_{t=0}=s_0$ and
$R_0=\operatorname{dist}(B_{s_0}(x_0),B_\mu)$.
With the convention $\inf\varnothing=\infty$, define the deterministic exit times
\begin{align}
 \tau_1&=\min\{T,\inf\{t\in[0,T]:R_t\le\widetilde R s_t
                                    \text{ or }R_t\le2s_\mu\}\},
 \label{eq:farfield-first-exit}\\
 \tau_2&=\min\{\tau_1,\inf\{t\in[0,T]:
                     R_t\le s_t+s_\mu+\varepsilon_\rho+\varepsilon_\mu\}\}.
 \label{eq:farfield-second-exit}
\end{align}

\begin{proposition}[Far-field inequality and stopped decay]\label{thm:farfield-pl}
Assume \eqref{eq:farfield-ordered-parameters} and
$R_0\ge\max\{4s_\mu,\widetilde R s_0\}$. Let
\begin{equation}\label{eq:farfield-cap-constant}
 c_\lambda=\min\left\{\frac{(1-\eta_\mu)^2}{4},
 \left(\frac{(1-\eta-\eta_\mu)D_0}{D_0+\overline LT}\right)^2\right\}>0,
 \qquad D_0=W_1(\rho_0,\mu),
\end{equation}
and set
\begin{equation}\label{eq:farfield-PL-constant}
 C_{\rm PL}=c_\lambda a_d^2
   \min\{\theta_0^2/1024,a_0^2(1-\eta)^2/16\}>0.
\end{equation}
For a.e. $0\le t<\tau_1$,
\begin{equation}\label{eq:farfield-stopped-PL}
 \mathcal D(\rho_t)\ge C_{\rm PL}\beta^2R_t^2
 \frac{\mathcal E_1(\rho_t-\mu)}
      {2(R_t+s_t+s_\mu+\varepsilon_\rho+\varepsilon_\mu)}.
\end{equation}
For every $0\le t\le\tau_2$,
\begin{equation}\label{eq:farfield-stopped-decay}
 \mathcal E_1(\rho_t-\mu)
 \le\exp\left(-\frac{C_{\rm PL}\beta^2}{4}\int_0^tR_u\,du\right)
 \mathcal E_1(\rho_0-\mu)
 \le e^{-C_{\rm PL}\beta^2s_\mu t/2}\mathcal E_1(\rho_0-\mu).
\end{equation}
The exit times satisfy
\begin{equation}\label{eq:farfield-first-exit-lower}
 \tau_1\ge\min\left\{T,
 \frac{R_0-\widetilde R s_0}{(2+2/\eta+\widetilde R(1+2/\eta))\overline L},
 \frac{R_0-2s_\mu}{(2+2/\eta)\overline L}\right\},
\end{equation}
\begin{equation}\label{eq:farfield-second-exit-lower}
 \tau_2\ge\min\left\{\tau_1,
 \frac{R_0-s_0-s_\mu-\varepsilon_\rho-\varepsilon_\mu}
      {(3+4/\eta)\overline L}\right\}.
\end{equation}
In particular, the right-hand sides guarantee a positive interval if
\begin{equation}\label{eq:farfield-strict-margins}
 R_0>\max\{\widetilde R s_0,2s_\mu,s_0+s_\mu+\varepsilon_\rho+\varepsilon_\mu\}.
\end{equation}
\end{proposition}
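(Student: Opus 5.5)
The plan has three parts. First, on $[0,\tau_1)$, check the hypotheses of Lemmas~\ref{prop:far-field-pl-prop} and~\ref{prop:spr-pl-ineq} at each time, with $\rho=\rho_t$, $s=s_t$, $B_\rho=B_t$ and $R=R_t$. Then integrate the resulting Polyak--\L{}ojasiewicz-type inequality \eqref{eq:farfield-stopped-PL} through the energy identity. Finally, prove the exit-time lower bounds from the Lipschitz-in-$W_1$ motion \eqref{eq:farfield-uniform-speed} and Lemma~\ref{prop:W_1-enlargement-lemma}.

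\textbf{Verifying the hypotheses.} Since $\beta\le b$, Lemma~\ref{lem:farfield-uniform-bandwidth} gives $\mathcal M_r(\rho_t)\le\overline M_{r,T}$ and $H_t\le\overline H_T$ for a.e.\ $t$. Hence the $\varepsilon_\rho$ of \eqref{eq:farfield-ordered-parameters} dominates $\eta^{1/r_*}\mathcal M_r(\rho_t)^{1/r}$. The bandwidth choice gives both $\beta H_t\le a_d\theta_0/(16d)$ and $\beta H_t\le a_dS/d$.

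For the $S$ bound, note that $R_t>2s_\mu$ gives $\sqrt{1-s_\mu^2/R_t^2}\ge\sqrt3/2$, so $S\ge a_0$.

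Next, the rate lower bound. On $\{|x-x_\mu|\ge R_t\}$, and in particular on $B_t$, we have $W_1(\delta_x,\mu)\ge|x-x_\mu|-\mathcal M_1$-type corrections. More carefully, $W_1(\delta_x,\mu)\ge(1-\eta_\mu)(|x-x_\mu|-s_\mu)$, which is at least $(1-\eta_\mu)R_t/2$ when $R_t\ge 2s_\mu$. This handles the first branch of $\min$ in \eqref{eq:kappa-lambda-ex}. For the cap branch, $D_0^2$, compare $R_t$ with $D_0+\overline LT$: the triangle inequality and \eqref{eq:farfield-uniform-speed} give $R_t\le W_1(\rho_t,\mu)/(1-\eta-\eta_\mu)\le(D_0+\overline LT)/(1-\eta-\eta_\mu)$ up to the ball bookkeeping. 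This yields $\lambda\ge c_\lambda R_t^2$ with the constant \eqref{eq:farfield-cap-constant}.

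\textbf{Case split for \eqref{eq:farfield-stopped-PL}.} Split according to whether $I_1(\rho_t)\ge\theta_0 s_t$.

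If $I_1(\rho_t)\ge\theta_0 s_t$, Lemma~\ref{prop:far-field-pl-prop} applies, using $R_t\ge\widetilde Rs_t\ge56s_t/\theta_0$ and $\varepsilon_\rho\le\theta_0 s_0/16\le\theta_0s_t/16$.

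Otherwise $I_1(\rho_t)/R_t<\theta_0/\widetilde R\le a_0(1-\eta)/4$. Then $L_0\ge a_0(1-\eta)-a_0(1-\eta)/4-2\eta\ge a_0(1-\eta)/2$, using $\eta\le a_0/(8+a_0)$, and Lemma~\ref{prop:spr-pl-ineq} applies.

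In both cases $\mathcal D(\rho_t)\ge C_{\rm PL}\beta^2R_t^2$. Combined with the energy bounds \eqref{eq:farfield-static-one-energy} and \eqref{eq:farfield-static-two-energy}, namely $\mathcal E_1\le 2(R_t+s_t+s_\mu+\varepsilon_\rho+\varepsilon_\mu)$, this gives \eqref{eq:farfield-stopped-PL}.

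\textbf{Decay on $[0,\tau_2]$.} On $[0,\tau_2]$ we have $R_t\ge s_t+s_\mu+\varepsilon_\rho+\varepsilon_\mu$, so the denominator in \eqref{eq:farfield-stopped-PL} is at most $4R_t$. Hence $\mathcal D\ge (C_{\rm PL}\beta^2R_t/4)\mathcal E_1$. Proposition~\ref{prop:energy-dissipation} and Gronwall then give the first inequality in \eqref{eq:farfield-stopped-decay}. Before $\tau_1$ we have $R_u\ge2s_\mu$, which gives the second.

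\textbf{Exit-time bounds (main obstacle).} The hard part is controlling how fast $x_t$ and $s_t$ move.

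The center moves at most at speed $\overline L$, because $|x_{\rho_t}-x_{\rho_s}|\le W_1(\rho_t,\rho_s)$.

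For the radius, apply Lemma~\ref{prop:W_1-enlargement-lemma} to the ball $B_{s_0}(x_0)$, which carries mass $1-\eta/2$, with $\delta$ chosen so that $\delta^{-1}\overline Lt=\eta/2$, i.e.\ $\delta=2\overline Lt/\eta$. The enlarged ball then still carries mass at least $1-\eta$. This gives $s_{\rho_t}(\eta)\le s_0+\overline Lt+2\overline Lt/\eta$, so $s_t-s_0\le(1+2/\eta)\overline Lt$.

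Since $R_t\ge R_0-|x_t-x_0|-(s_t-s_0)$, we get $R_t\ge R_0-(2+2/\eta)\overline Lt$. Solving the exit conditions $R_t\le\widetilde Rs_t$, $R_t\le 2s_\mu$ and $R_t\le s_t+s_\mu+\varepsilon_\rho+\varepsilon_\mu$ for $t$ yields \eqref{eq:farfield-first-exit-lower} and \eqref{eq:farfield-second-exit-lower}; the constants there have some slack. The strict margins \eqref{eq:farfield-strict-margins} make every lower bound positive.

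I expect the delicate points to be the cap comparison in $c_\lambda$ and the measurability and a.e.\ issues in $s_t$. I would handle the latter by using only the upper bounds on $s_t$ above.
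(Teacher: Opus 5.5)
Your proposal is correct and follows the paper's proof essentially step for step. It uses the same coupling bound $(1-\eta-\eta_\mu)R_t\le W_1(\rho_t,\mu)\le D_0+\overline LT$ for the cap, the same case split on $I_1(\rho_t)\ge\theta_0 s_t$ feeding Lemmas~\ref{prop:far-field-pl-prop} and~\ref{prop:spr-pl-ineq}, the same integrating-factor argument, and the same enlargement of the initial ball by $2\overline Lt/\eta$; that enlargement gives $R_t\ge R_0-(2+2/\eta)\overline Lt$, which reproduces the exit-time constants exactly rather than with slack. The one loose end is measurability of $t\mapsto R_t$, which is needed for $\int_0^tR_u\,du$ to be defined: upper bounds on $s_t$ do not supply it, but it follows as in the paper from narrow continuity of $\rho_t$, continuity of $x_t$, and testing ball masses at rational radii.
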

\begin{proof}
For $t<\tau_1$, every coupling of $\rho_t$ and $\mu$ assigns mass at least
$1-\eta-\eta_\mu$ to $B_t\times B_\mu$. Thus
\begin{equation}\label{eqn:W_1-rho_0-cap-est}
 (1-\eta-\eta_\mu)R_t\le W_1(\rho_t,\mu)\le D_0+\overline LT.
\end{equation}
If $|x-x_\mu|\ge R_t>2s_\mu$, then
$F_\mu(x)\ge(1-\eta_\mu)(R_t-s_\mu)\ge(1-\eta_\mu)R_t/2$.
By the two terms in the minimum defining the fixed spatial rate,
\begin{equation}\label{eq:farfield-cap-lower}
 \lambda(x)\ge c_\lambda R_t^2\qquad(|x-x_\mu|\ge R_t).
\end{equation}
This lower bound also holds on $B_t$.

If $I_1(\rho_t)\ge\theta_0s_t$, then
\begin{equation}\label{eqn:pl-first-case}
 \eta^{1/r_*}\mathcal M_r(\rho_t)^{1/r}
 \le\varepsilon_\rho\le\theta_0s_0/16\le\theta_0s_t/16.
\end{equation}
All assumptions of \Cref{prop:far-field-pl-prop} hold for
a.e. such $t$, and hence
\begin{equation}\label{eqn:pl-1-appl}
 \mathcal D(\rho_t)\ge
       c_\lambda a_d^2\theta_0^2\beta^2R_t^2/1024.
\end{equation}
If $I_1(\rho_t)<\theta_0s_t$, then $R_t>2s_\mu$ gives $S\ge a_0$, and
\begin{equation}
 I_1(\rho_t)/R_t\le\theta_0/\widetilde R\le a_0(1-\eta)/4,
 \qquad 2\eta\le a_0(1-\eta)/4.
\end{equation}
Therefore $L_0\ge a_0(1-\eta)/2$. The bandwidth bound is sufficient for
\Cref{prop:spr-pl-ineq}, which gives
\begin{equation}\label{eqn:pl-2-appl}
 \mathcal D(\rho_t)\ge
       c_\lambda a_d^2a_0^2(1-\eta)^2\beta^2R_t^2/16.
\end{equation}
In both cases, $\mathcal D(\rho_t)\ge C_{\rm PL}\beta^2R_t^2$.
The corresponding energy upper bounds are at most
$2(R_t+s_t+s_\mu+\varepsilon_\rho+\varepsilon_\mu)$, proving
\eqref{eq:farfield-stopped-PL}.

For $t<\tau_2$, the denominator in \eqref{eq:farfield-stopped-PL} is at
most $4R_t$. Proposition~\ref{prop:energy-dissipation} therefore implies,
for a.e. such $t$,
\begin{equation}
 \frac{d}{dt}\mathcal E_1(\rho_t-\mu)
 \le-\frac{C_{\rm PL}\beta^2}{4}R_t\mathcal E_1(\rho_t-\mu).
\end{equation}
The functions $s_t$ and $R_t$ are Borel measurable: for any fixed radius,
the mass of the ball centered at $x_t$ is continuous in $t$, because
$\rho_t$ is narrowly continuous, $x_t$ is continuous, and sphere boundaries
have zero mass. The radius can then be tested against rational radii.
The finite-horizon moment bound makes the radii and $R_t$ bounded.
The integrating-factor argument proves the first inequality in
\eqref{eq:farfield-stopped-decay} for $t<\tau_2$; continuity of the energy
and boundedness of $R_t$ pass it to $t=\tau_2$. Since $R_t>2s_\mu$ before
the exit, the second inequality follows. No continuity of the
concentration quantile at the exit time is assumed.

For the exit-time bounds, $|x_t-x_0|\le\overline Lt$.
Apply Lemma~\ref{prop:W_1-enlargement-lemma} to the initial ball,
which contains mass $1-\eta/2$, with enlargement
$2W_1(\rho_t,\rho_0)/\eta$, then shift the center to $x_t$.
When that distance is zero the same conclusion is immediate. We obtain
\begin{equation}\label{eq:farfield-radius-motion}
 s_t-s_0\le(1+2/\eta)\overline Lt,\qquad
 R_t\ge R_0-(2+2/\eta)\overline Lt.
\end{equation}
These inequalities show that none of the exit conditions can hold at a
time strictly below the corresponding right-hand side in
\eqref{eq:farfield-first-exit-lower}--\eqref{eq:farfield-second-exit-lower}.
Taking infima proves those bounds. Their numerators are strictly positive
under \eqref{eq:farfield-strict-margins}; if a numerator is nonpositive,
that particular lower bound is merely trivial.
\end{proof}

\begin{remark}[Nonempty far-field parameter class]\label{rem:farfield-nonempty}
The conditions above can be met by sufficiently separated translates.
Fix smooth compactly supported probability densities $\vartheta,\mu$
with zero barycenters, and let $\rho_0^R(x)=\vartheta(x-Re_1)$.
For fixed concentration fractions, $s_0$ is independent of $R$ and the
initial ball distance is $R-s_0-s_\mu$. The moment bound
\eqref{eq:farfield-uniform-moment} gives
$\overline M_{r,T}^{1/r}\le R+C$, with $C$ independent of $R$.
Choose a fixed $\eta>0$ small enough that
\begin{equation}
 \eta\le a_0/(8+a_0),\qquad
 \frac{64\eta^{1/r_*}}{a_0(1-\eta)}<\frac14.
\end{equation}
Taking
$\theta_0=8\eta_\mu+16\eta^{1/r_*}(R+C)/s_0$ and
$\widetilde R=\max\{56/\theta_0,4\theta_0/(a_0(1-\eta))\}$ gives
\begin{equation}
 \limsup_{R\to\infty}\frac{\widetilde R s_0}{R}
 \le\frac{64\eta^{1/r_*}}{a_0(1-\eta)}<\frac14,
 \qquad
 \limsup_{R\to\infty}\frac{\varepsilon_\rho}{R}\le\eta^{1/r_*}<1.
\end{equation}
Thus the strict margins and $R_0>4s_\mu$ hold for large $R$, and
\eqref{eq:farfield-ordered-parameters} permits a positive bandwidth.
Both the bandwidth and the decay coefficient may depend on $R$;
no uniform rate as $R\to\infty$ is asserted.
\end{remark}

\section{Particle approximation and mean-field limit}
\label{sec:particle-approximation}
The goal of this section is to construct a global collision-free particle
system and prove its quantitative approximation of the continuum evolution.
Sections~\ref{subsec:particle-construction}, \ref{subsec:particle-mean-field},
\ref{subsec:particle-initial-matching}, and
\ref{subsec:particle-long-time} treat construction and noncollision,
quantitative mean-field comparison, initial matching and propagation of chaos,
and long-time target approximation, respectively.

Throughout this section, we assume the hypotheses of
Theorem~\ref{thm:continuum-wellposedness} on
\(d,q,p,r,\mu,\kappa,\lambda\), and fix \(N\ge2\).
Let $\mathcal N_1,\ldots,\mathcal N_N$ be independent Poisson random measures on
$[0,\infty)\times\mathbb R^d\times(0,\infty)$ with intensity
$dt\,\kappa(z)\,dz\,du$. The initial configuration is independent of this
entire family. Coordinates of the initial configuration need not be independent.
The auxiliary coordinate $u$ implements the state-dependent acceptance rule.
An atom $(s,z,u)$ of $\mathcal N_i$ is an \emph{accepted mark} if
$u\le\lambda(X_i(s-))\zeta_i^N(X(s-),z)$, with $\zeta_i^N$ defined in
\eqref{eq:particle-potential-acceptance}. The threshold is evaluated at the
pre-jump configuration, and only accepted marks contribute to the particle path.

\subsection{Construction and global noncollision}
\label{subsec:particle-construction}
We construct the process by following the deterministic particle flow
until an accepted jump occurs, updating one coordinate, and restarting
the flow. This is a piecewise deterministic Markov process in the sense
of \cite{davis1984piecewise}. 

The deterministic flow is noncolliding. The accepted displacement kernel is absolutely continuous, even though acceptance changes its density from the proposal density. We verify below that an accepted mark cannot create a collision.
The moment and intensity estimates below exclude finite-time accumulation of jumps.
In the stopped moment estimate, we include the positive increment of the jump
that first exits the localization set $\{\Psi_r<\ell\}$, even if its
post-jump value exceeds $\ell$; this increment is controlled using the
pre-jump state. See \eqref{eq:particle-stopped-moment-identity}.

Recall \(\rho_X^N\) and \(\mathsf D_N\) from
\eqref{eq:particle-configuration-notation}. For \(X\in\mathsf D_N\), define the diagonal-free transport drift
\begin{equation}
\label{eq:particle-transport-drift}
  b_i^N(X)
  :=-\frac1N\sum_{\substack{1\le j\le N\\j\ne i}}
       \nabla K(x_i-x_j)
     +\nabla K*\mu(x_i),
  \qquad 1\le i\le N.
\end{equation}
The first variation and corrected acceptance factor at the \(i\)-th particle are
\begin{equation}
\label{eq:particle-potential-acceptance}
  f_X^N:=f_{\rho_X^N},
  \qquad
  \zeta_i^N(X,z)
  :=\left[f_X^N(x_i)-f_X^N(x_i+z)-\frac{|z|^q}{N}\right]_+.
\end{equation}
The correction $-|z|^q/N$ inside the positive part accounts for the
quadratic term $|z|^q/N^2$ in the exact one-particle energy increment
\eqref{eq:particle-exact-jump-energy}. It makes every accepted jump decrease
\(\mathcal E_q(\rho_X^N-\mu)\); see
Proposition~\ref{prop:particle-pathwise-energy-dissipation}.
\begin{definition}[Collision-free particle solution]
\label{def:particle-solution}
Let \(T>0\), and let \(X^0\) be a \(\mathsf D_N\)-valued random
variable independent of the family $(\mathcal N_i)_{i=1}^N$.
Let \((\mathcal F_t)_{t\ge0}\) be the right-continuous completion
of the filtration generated by \(X^0\) and the restrictions of
\(\mathcal N_1,\ldots,\mathcal N_N\) to
\([0,t]\times\mathbb R^d\times[0,\infty)\). An \((\mathcal F_t)\)-adapted process
\(X=(X_1,\ldots,X_N)\) is a \emph{collision-free particle solution}
on \([0,T]\) with initial configuration \(X^0\) if, almost surely,
its paths are right-continuous with left limits,
\begin{equation}
\label{eq:particle-solution-noncollision}
  X(t)\in\mathsf D_N\quad(0\le t\le T),\qquad
  X(t-)\in\mathsf D_N\quad(0<t\le T),
\end{equation}
and, for every \(t\in[0,T]\) and \(1\le i\le N\),
\begin{equation}
\label{eq:jump-particle-system}
  X_i(t)
  ={}X_i^0+\int_0^t b_i^N(X(s-))\,ds
  +\int_{(0,t]\times\mathbb R^d\times(0,\infty)}
  z\,\mathbf 1_{\{
    u\le\lambda(X_i(s-))\zeta_i^N(X(s-),z)
  \}}\,\mathcal N_i(ds,dz,du).
\end{equation}
A process on \([0,\infty)\) is a global collision-free particle
solution if its restriction to every interval \([0,T]\) satisfies
these conditions.
\end{definition}
For \(1\le i\le N\), let
\begin{equation}
\label{eq:particle-coordinate-insertion}
  \iota_i(z):=(0,\ldots,0,z,0,\ldots,0)\in(\mathbb R^d)^N,
\end{equation}
where the nonzero entry is in the \(i\)-th component.  For every
configuration in \(\mathsf D_N\), set
\begin{equation}
\label{eq:particle-Lyapunov-function}
  \Psi_r(X):=\frac1N\sum_{i=1}^N\langle x_i\rangle^r, \quad   \mathscr R_N(X)
  :=\sum_{i=1}^N\lambda(x_i)\int_{\R^d}
     \zeta_i^N(X,z)\kappa(z)\,dz.
\end{equation}

\begin{lemma}
\label{lem:particle-Lyapunov-intensity}
There exists \(C<\infty\), independent of \(N\) and
\(X\in\mathsf D_N\), such that
\begin{equation}
\label{eq:particle-positive-variation-Lyapunov-bound}
  \left[
    \sum_{i=1}^N b_i^N(X)\cdot\nabla_i\Psi_r(X)
  \right]_+
  +
  \sum_{i=1}^N\int_{\mathbb R^d}
  \left[\Psi_r(X+\iota_i(z))-\Psi_r(X)\right]_+
  \lambda(x_i)\zeta_i^N(X,z)\kappa(z)\,dz
  \le C\Psi_r(X).
\end{equation}
Moreover,
\begin{equation}
\label{eq:particle-total-jump-intensity-bound}
  \mathscr R_N(X)\le CN\Psi_r(X).
\end{equation}
\end{lemma}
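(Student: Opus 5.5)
The plan is to reduce both bounds to a uniform, $N$-independent control of the drift and the acceptance factor in terms of low moments of $\rho_X^N$, and then absorb those moments into $\Psi_r(X)$ with the interpolation inequality \eqref{eq:bracket-moment-product-bound} applied to the empirical measure. The first step is a particle analogue of Lemma~\ref{lem:jump-increment}. Since $[a-|z|^q/N]_+\le[a]_+$, we have
\[
0\le\zeta_i^N(X,z)\le\zeta_{\rho_X^N}(x_i,x_i+z)\le A_{\rho_X^N}|z|^{1\wedge q}.
\]
Here $A_{\rho_X^N}=2$ for $q\le1$, and for $1<q<2$
\[
A_{\rho_X^N}=C_q\bigl(\mathcal M_{q-1}(\rho_X^N)+\mathcal M_{q-1}(\mu)\bigr)\le C\,\widetilde{\mathcal M}_{q-1}(\rho_X^N).
\]
This step is legitimate for atomic measures: the proof of Lemma~\ref{lem:jump-increment} uses only the Hölder continuity of $K$ (for $q\le1$) or of $\nabla K$ (for $q>1$), and no density bound.

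\emph{Intensity bound.} Summing over $i$ gives
\[
\mathscr R_N(X)\le N\|\lambda\|_{L^\infty}A_{\rho_X^N}\mathcal M_{1\wedge q}(\kappa).
\]
Since $\langle x\rangle\ge1$ and $q-1<r$, we have $\widetilde{\mathcal M}_{q-1}(\rho_X^N)\le\Psi_r(X)$, which proves \eqref{eq:particle-total-jump-intensity-bound}.

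\emph{Jump part of \eqref{eq:particle-positive-variation-Lyapunov-bound}.} Note that $\Psi_r(X+\iota_i(z))-\Psi_r(X)=N^{-1}(\langle x_i+z\rangle^r-\langle x_i\rangle^r)$, which \eqref{eq:moment-weight-increment} bounds by $C_rN^{-1}|z|(\langle x_i\rangle^{r-1}+\langle z\rangle^{r-1})$. Multiplying by the acceptance bound, integrating against $\kappa$ (finite because $\kappa\in\mathcal P_{r+1}$ and $1+(1\wedge q)+r-1\le r+1$), and summing over $i$ gives at most
\[
C\,\widetilde{\mathcal M}_{(q-1)_+}(\rho_X^N)\bigl(1+\widetilde{\mathcal M}_{r-1}(\rho_X^N)\bigr).
\]
This is $\le C\Psi_r(X)$ by \eqref{eq:bracket-moment-product-bound}, since $(q-1)_++(r-1)\le r$, exactly as in \eqref{eq:jump-moment-contribution}.

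\emph{Transport part.} Write $\nabla_i\Psi_r=N^{-1}r\langle x_i\rangle^{r-2}x_i$, so that $|\nabla_i\Psi_r|\le C N^{-1}\langle x_i\rangle^{r-1}$. The target term $\nabla K*\mu$ is handled as in Lemma~\ref{lem:transport-input}. For $q\ge1$ it is bounded by $C(1+\mathcal M_{q-1}(\mu)+\langle x\rangle^{(q-1)_+})$; for $q<1$ it is bounded by a constant using $\mu\in L^p$ with $p>p_c$. Its contribution is therefore at most $C\Psi_r$. The genuinely new issue is the self-interaction term, which is singular when $q<1$, because the bound $|\nabla K(x_i-x_j)|=q|x_i-x_j|^{q-1}$ is unbounded as particles approach each other.

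\emph{Main obstacle: the singular self-interaction for $q<1$.} I would bound only the positive part, using antisymmetry. Symmetrizing in $i,j$ gives
\[
-\frac1{N^2}\sum_{i\ne j}\nabla K(x_i-x_j)\cdot\bigl(g(x_i)-g(x_j)\bigr)/2,\qquad g(x)=r\langle x\rangle^{r-2}x.
\]
Here $-\nabla K(w)=q|w|^{q-2}w$ and $|g(x)-g(y)|\le C|x-y|(\langle x\rangle^{r-2}+\langle y\rangle^{r-2})$ for $r\ge1$. One then needs to handle the case $r<2$ separately, where $g$ is globally Lipschitz, versus $r\ge2$. Either way,
\[
|\nabla K(x_i-x_j)|\,|g(x_i)-g(x_j)|\le C|x_i-x_j|^{q}\bigl(\langle x_i\rangle^{(r-2)_+}+\langle x_j\rangle^{(r-2)_+}\bigr),
\]
and this is bounded by
\[
C\bigl(\langle x_i\rangle^{q}+\langle x_j\rangle^{q}\bigr)\bigl(\langle x_i\rangle^{(r-2)_+}+\langle x_j\rangle^{(r-2)_+}\bigr).
\]
Averaging over pairs and applying \eqref{eq:bracket-moment-product-bound}, with $q+(r-2)_+\le r$, gives $\le C\Psi_r(X)$. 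This symmetrization also handles $q\ge1$ uniformly, and it removes the diagonal singularity entirely. That is why the bound holds on all of $\mathsf D_N$ with a constant independent of $N$ and of how close particles are.
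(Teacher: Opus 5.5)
Your proposal is correct and follows the paper's proof essentially step for step. It uses the same bound $\zeta_i^N\le A_{\rho_X^N}|z|^{1\wedge q}$ for both the intensity and jump parts, the same use of \eqref{eq:moment-weight-increment} and \eqref{eq:bracket-moment-product-bound}, and the same antisymmetric symmetrization of the pair term with a pairwise bound of the form \eqref{eq:particle-pair-moment-inequality}; your explicit split between $r<2$ and $r\ge2$ for the Lipschitz estimate of $\langle x\rangle^{r-2}x$ simply spells out the mean-value step that the paper compresses.
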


\begin{proof}
By \cite[Lemma~3.2]{rosenzweig2026wassersteingradientflowsmaximum}
for \(0<q<1\), and \(|\nabla K(x)|=q|x|^{q-1}\) for \(1\le q<2\),
we have
\begin{equation}
\label{eq:particle-target-field-growth}
  |(\nabla K*\mu)(x)|
  \le C
  \begin{cases}
    1,&0<q\le1,\\
    1+\langle x\rangle^{q-1},&1<q<2.
  \end{cases}
\end{equation}
Since \(\nabla_i\Psi_r(X)=rN^{-1}\langle x_i\rangle^{r-2}x_i\),
\eqref{eq:particle-transport-drift} yields
\begin{align}
  \sum_{i=1}^N &b_i^N(X)\cdot\nabla_i\Psi_r(X)
  =\nonumber \\
  &-\frac{r}{2N^2}\sum_{i\ne j}
  \left(\langle x_i\rangle^{r-2}x_i
       -\langle x_j\rangle^{r-2}x_j\right)
  \cdot\nabla K(x_i-x_j)
  +\frac rN\sum_{i=1}^N
  \langle x_i\rangle^{r-2}x_i\cdot(\nabla K*\mu)(x_i).\label{eq:particle-transport-Lyapunov-symmetrization}
\end{align}
By the mean-value theorem and the facts that
\(
  |x-y|\le2(\langle x\rangle\vee\langle y\rangle)
\) and \(r\ge q\),
we have, for any $x, y \in \R^d$,
\begin{equation}
\label{eq:particle-pair-moment-inequality}
\begin{aligned}
 &\left|
 \left(\langle x\rangle^{r-2}x-\langle y\rangle^{r-2}y\right)
 \cdot(x-y)|x-y|^{q-2}
 \right|\le C_{q,r}
 \left(\langle x\rangle^r+\langle y\rangle^r\right).
\end{aligned}
\end{equation}
Substituting
\eqref{eq:particle-target-field-growth} and
\eqref{eq:particle-pair-moment-inequality} into
\eqref{eq:particle-transport-Lyapunov-symmetrization} gives
\begin{equation}
\label{eq:particle-transport-Lyapunov-bound}
  \left[
  \sum_{i=1}^N b_i^N(X)\cdot\nabla_i\Psi_r(X)
  \right]_+
  \le C\Psi_r(X).
\end{equation}
Meanwhile, \eqref{eq:moment-weight-increment} and the definition of \(\Psi_r(X)\) give
\begin{equation}
\label{eq:particle-positive-moment-increment}
  \left[\Psi_r(X+\iota_i(z))-\Psi_r(X)\right]_+
  \le\frac{C_r}{N}|z|
  \left(\langle x_i\rangle^{r-1}+\langle z\rangle^{r-1}\right).
\end{equation}
By the definition of \(\zeta_i^N\) and
\eqref{eq:jump-increment-bound}, applied with
\(\nu=\rho_X^N\) and \(\varepsilon=0\),
\(
  \zeta_i^N(X,z)\le A_{\rho_X^N}|z|^{1\wedge q}.
\)
Using this bound in \eqref{eq:particle-positive-moment-increment}
and integrating against \(\kappa\), we obtain
\begin{equation}
\label{eq:particle-positive-jump-Lyapunov-bound}
\begin{aligned}
\sum_{i=1}^N\int_{\mathbb R^d}
  [\Psi_r(X+\iota_i(z))-\Psi_r(X)]_+
  \lambda(x_i)\zeta_i^N(X,z)\kappa(z)\,dz
  \le C\Psi_r(X).
\end{aligned}
\end{equation}
Here we have used that \(\kappa\in\mathcal P_{r+1}\), along with \eqref{eq:acceptance-coefficient} and \eqref{eq:bracket-moment-product-bound}, since
\((q-1)_++r-1\le r\).
Together with \eqref{eq:particle-transport-Lyapunov-bound},
this proves \eqref{eq:particle-positive-variation-Lyapunov-bound}. The same bound on \(\zeta_i^N\) also gives
\begin{equation}
  \mathscr R_N(X)
  \le N\|\lambda\|_{L^\infty}
      A_{\rho_X^N}\mathcal M_{1\wedge q}(\kappa)
  \le CN\left(
      1+\widetilde{\mathcal M}_{(q-1)_+}(\rho_X^N)
    \right)
  \le CN\Psi_r(X),
\end{equation}
where the last inequality follows from
\((q-1)_+\le r\) and
\(\Psi_r(X)=\widetilde{\mathcal M}_r(\rho_X^N)\).

\end{proof}

\begin{proof}[Proof of Theorem~\ref{thm:particle-wellposedness}]
For every \(X_\star\in\mathsf D_N\),
\cite[Theorem~1.8, cases~\textup{(i)}--\textup{(ii)}]
{rosenzweig2026wassersteingradientflowsmaximum} and
\eqref{eq:continuum-admissible-exponent} give a unique global solution of
\begin{equation}
\label{eq:particle-deterministic-flow}
  \dot{\mathsf x}_i(t)=b_i^N(\mathsf x(t)),
  \qquad \mathsf x(0)=X_\star,
  \qquad 1\le i\le N,
\end{equation}
with \(\mathsf x(t)\in\mathsf D_N\) for every \(t\ge0\).

We now interlace this deterministic flow with accepted Poisson marks. Let
\((\mathcal F_t)_{t\ge0}\) be the right-continuous completion of the
filtration generated by \(X^0\) and the restrictions of
\(\mathcal N_1,\ldots,\mathcal N_N\) to
\([0,t]\times\mathbb R^d\times[0,\infty)\). Set
\(\tau_0:=0\) and \(X(0):=X^0\). Suppose that
\(\tau_k\) and \(X(\tau_k)\) have been defined, and let
\(\mathsf x^{(k)}\) solve the differential equation in
\eqref{eq:particle-deterministic-flow} on \([\tau_k,\infty)\) with
\(\mathsf x^{(k)}(\tau_k)=X(\tau_k)\). Let $\tau_{k+1}$ be the time of the first point $(s, z, u)$ of one of the $\mathcal N_i$ with $ s> \tau_k$ that is accepted, i.e. such that 
\begin{equation}
u \leq \lambda(\mathsf x_i^{(k)}(s))\zeta_i^N(\mathsf x^{(k)}(s), z).
\end{equation}
The construction makes $\tau_k$ a stopping time and $X(\tau_k)$ measurable
with respect to $\mathcal F_{\tau_k}$. On every bounded interval after $\tau_k$,
the deterministic trajectory has bounded $\Psi_r$. The bound
$\zeta_i^N(X,z)\le A_{\rho_X^N}|z|^{\min\{1,q\}}$ therefore gives an integrable
envelope for the accepted marks on that interval. Equivalently,
\begin{equation}
 \int_{\tau_k}^t\mathscr R_N(\mathsf x^{(k)}(s))\,ds<\infty
 \qquad(t>\tau_k).
\end{equation}
Conditioning at $\tau_k$ and localizing its initial state if necessary,
the independent-increment property of the Poisson measures shows that
there are only finitely many such marks on each bounded interval.
This assertion concerns the accepted marks, not all points in the
unbounded auxiliary $u$-coordinate. Absolute continuity of the time
intensity excludes simultaneous accepted marks, after localization and
then a countable union over bounded time and state intervals.
On the event \(\tau_{k+1}<\infty\), let \(i_{k+1}\) and \(z_{k+1}\) be the index and displacement of the unique point accepted at time \(\tau_{k+1}\). Then set
\begin{equation}
\label{eq:particle-recursive-state-update}
  X(t):=\mathsf x^{(k)}(t)\quad(\tau_k\le t<\tau_{k+1}),\qquad
  X(\tau_{k+1}):=\mathsf x^{(k)}(\tau_{k+1})+\iota_{i_{k+1}}(z_{k+1})\quad(\tau_{k+1}<\infty).
\end{equation}
If \(\tau_{k+1}=\infty\), set
\(X(t):=\mathsf x^{(k)}(t)\) for \(t\ge\tau_k\) and
\(\tau_j:=\infty\) for every \(j\ge k+1\). Concatenate the deterministic flow \eqref{eq:particle-deterministic-flow} between the jumps \eqref{eq:particle-recursive-state-update} to obtain an adapted c\`adl\`ag solution of \eqref{eq:jump-particle-system} on $[0, \tau_\infty)$, where $\tau_\infty:=\lim_{k\to\infty}\tau_k$.

The recursion remains in the collision complement. To verify this without
identifying the accepted law with the proposal law, let
\begin{equation}
 C_i(X)=\{x_j-x_i:j\ne i\}.
\end{equation}
For each pre-jump state this is a finite set. After localization, the
expected number of accepted marks with $z\in C_i(X(s-))$ equals
\begin{equation}
 \mathbb E\left[\int_0^t\int_{\mathbb R^d}
 \mathbf1_{\{z\in C_i(X(s-))\}}
 \lambda(X_i(s-))\zeta_i^N(X(s-),z)\kappa(z)\,dz\,ds\right]=0.
\end{equation}
The indicator is predictable in the state variable, and the spatial
integral vanishes because a finite set has Lebesgue measure zero.
The compensation formula first applies to nonnegative integrands, so no
unproved integrability assertion is needed here. Removing the localization
shows that accepted jumps do not create collisions. Induction, together
with the deterministic noncollision result, gives both $X(t)$ and $X(t-)$
in $\mathsf D_N$ before $\tau_\infty$. It remains to exclude finite-time
accumulation of accepted jumps.

For \(\ell\ge1\), set
\begin{equation}
\label{eq:particle-localization-stopping-time}
  \theta_\ell:=\ell\wedge\tau_\ell
  \wedge\inf\{t<\tau_\infty:\Psi_r(X(t))\ge\ell\},
\end{equation}
where \(\inf\varnothing=\infty\). Every compact subinterval of
\([0,\tau_\infty)\) contains finitely many times
\(\tau_k\), and \(\Psi_r(X)\) is bounded on each such interval. Conclude that $\theta_\ell \to \tau_\infty$ almost surely. 

Since \(\theta_\ell\le\tau_\ell\), the path has only finitely many jumps. Applying the chain rule along the deterministic pieces and summing the jump increments, then keeping only the positive contributions, gives
\begin{align}&
    \sup_{0 \leq v \leq t}\Psi_r(X(v\wedge\theta_\ell))-\Psi_r(X^0)\leq\int_0^{t\wedge\theta_\ell}
  \left[\sum_{i=1}^N b_i^N(X(s-))\cdot\nabla_i\Psi_r(X(s-))\right]_+\,ds \nonumber \\& +\sum_{i=1}^N
  \int_{(0,t\wedge\theta_\ell]\times\mathbb R^d\times(0,\infty)}
  \left[\Psi_r(X(s-)+\iota_i(z))-\Psi_r(X(s-))\right]_+\nonumber \\& \times
  \mathbf1_{\{u\le
  \lambda(X_i(s-))\zeta_i^N(X(s-),z)\}}
  \,\mathcal N_i(ds,dz,du).\label{eq:particle-stopped-moment-identity}
\end{align}
The jump that exits the set $\{\Psi_r<\ell\}$ is included in the
stopped sum. Its post-jump value need not be bounded by $\ell$.
The indicator $\mathbf1_{\{s\le\theta_\ell\}}$ is predictable, and before
this jump the state satisfies $\Psi_r(X(s-))\le\ell$ whenever
$\theta_\ell>0$. Tonelli's theorem and the compensation formula for
nonnegative integrands, followed by
\eqref{eq:particle-positive-variation-Lyapunov-bound}, first give the
finite bound $\mathbb E[\Psi_r(X^0)]+C\ell t$ for the expectation of the
stopped running supremum. We may therefore take expectations in
\eqref{eq:particle-stopped-moment-identity} and then use the sharper
bound by the stopped running supremum itself to obtain
\begin{equation}
\label{eq:particle-stopped-running-moment-bound}
  \mathbb E\left[\sup_{0\le u\le t}
    \Psi_r(X(u\wedge\theta_\ell))\right]
  \le \mathbb E[\Psi_r(X^0)]
  +C\int_0^t
    \mathbb E\left[\sup_{0\le u\le s}
      \Psi_r(X(u\wedge\theta_\ell))\right]ds.
\end{equation}
Gronwall's inequality therefore gives, uniformly in \(\ell\),
\begin{equation}
\label{eq:particle-stopped-moment-bound}
  \mathbb E\left[\sup_{0\le t\le T}
    \Psi_r(X(t\wedge\theta_\ell))\right]
  \le e^{CT}\mathbb E[\Psi_r(X^0)].
\end{equation}

To exclude accumulation of jumps, let
\(\mathcal J_{T,\ell}:=\#\{k\ge1:\tau_k\le T\wedge\theta_\ell\}\).
The compensation formula,
\eqref{eq:particle-total-jump-intensity-bound}, and
\eqref{eq:particle-stopped-moment-bound} yield
\begin{equation}
\begin{aligned}
  \mathbb E[\mathcal J_{T,\ell}]
  =\mathbb E\left[\int_0^{T\wedge\theta_\ell}
      \mathscr R_N(X(s-))\,ds\right]\leq CN\int_0^T
      \mathbb E\left[\sup_{0\le u\le s}
        \Psi_r(X(u\wedge\theta_\ell))\right]ds
  \le CNTe^{CT}\mathbb E[\Psi_r(X^0)].
\end{aligned}
\end{equation}
Since \(\theta_\ell\uparrow\tau_\infty\), monotone convergence gives
the same bound for
\(\mathcal J_T:=\lim_{\ell\to\infty}\mathcal J_{T,\ell}\);
in particular, \(\mathcal J_T<\infty\) almost surely.
On \(\{\tau_\infty\le T\}\), however, every jump is eventually
counted, so \(\mathcal J_T=\infty\).
Thus \(\mathbb P(\tau_\infty\le T)=0\) for every \(T<\infty\),
and hence \(\tau_\infty=\infty\) almost surely.
This proves \eqref{eq:particle-solution-noncollision}.
Finally, \(\theta_\ell\uparrow\infty\), so Fatou's lemma applied above gives
\eqref{eq:particle-uniform-moment-bound} with \(C_T=e^{CT}\).

For uniqueness, two solutions with the same initial configuration and the same Poisson
random measures agree on \([0,\tau_1)\) by uniqueness of the deterministic flow 
\eqref{eq:particle-deterministic-flow}. If they agree on
\([0,\tau_k]\), then they must have the same 
\(\tau_{k+1}\), \(i_{k+1}\), and \(z_{k+1}\), so they agree on $[0, \tau_{k+1}]$, too. Equation
\eqref{eq:particle-finite-jump-count} therefore gives equality on every
bounded time interval.
\end{proof}

For completeness, moving one particle gives the exact quadratic expansion
\begin{equation}\label{eq:particle-exact-jump-energy}
 \mathcal E_q(\rho_{X+\iota_i(z)}^N-\mu)-\mathcal E_q(\rho_X^N-\mu)
 =\frac1N\bigl(f_X^N(x_i+z)-f_X^N(x_i)\bigr)+\frac{|z|^q}{N^2}.
\end{equation}
Indeed, the linear term pairs $f_X^N$ with
$N^{-1}(\delta_{x_i+z}-\delta_{x_i})$, and the quadratic term is
$|z|^q/N^2$ because $K(0)=0$ and $K(z)=-|z|^q$.
At every accepted mark, the expression in the positive part defining
$\zeta_i^N$ is positive, so the increment equals $-\zeta_i^N(X,z)/N$.
Between jumps, the transport derivative is
$-N^{-1}\sum_i|b_i^N|^2$. Integrating between jumps and summing the
finitely many jump increments therefore gives, almost surely for all
$0\le s\le t<\infty$,
\begin{equation}
\label{eq:particle-pathwise-energy-dissipation}
  \mathcal E_q(\rho_{X(t)}^N-\mu)
  +\frac1N\int_s^t\sum_{i=1}^N|b_i^N(X(u))|^2\,du
  +\frac1N\sum_{s<\tau\le t}
    \zeta_{i_\tau}^N(X(\tau-),z_\tau)
  =\mathcal E_q(\rho_{X(s)}^N-\mu),
\end{equation}
where \((i_\tau,z_\tau)\) denotes the label and displacement of each
accepted jump. As a consequence:
\begin{proposition}[Monotonicity of the particle MMD]
\label{prop:particle-pathwise-energy-dissipation}
Let \(X\) be the solution of
Theorem~\ref{thm:particle-wellposedness}. Almost surely,
\(t\mapsto\operatorname{MMD}_q(\rho_{X(t)}^N,\mu)\) is non-increasing
on \([0,\infty)\).
\end{proposition}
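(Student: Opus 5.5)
The plan is to read the monotonicity directly off the pathwise energy identity \eqref{eq:particle-pathwise-energy-dissipation}. Its two dissipative terms are nonnegative, so it yields $\mathcal E_q(\rho_{X(t)}^N-\mu)\le\mathcal E_q(\rho_{X(s)}^N-\mu)$ for all $0\le s\le t$ on a single full-probability event. Since $\operatorname{MMD}_q(\rho_{X(t)}^N,\mu)=\mathcal E_q(\rho_{X(t)}^N-\mu)^{1/2}$, and empirical measures and $\mu$ lie in $\mathcal P_q$ so that \eqref{eq:energy} applies, monotonicity of the square root finishes the argument. The real work is therefore to justify that identity rigorously on a common almost-sure event, for all times simultaneously.

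\emph{Step 1: the good event.} By Theorem~\ref{thm:particle-wellposedness}, almost surely the path is c\`adl\`ag, stays in $\mathsf D_N$ together with its left limits, and has finitely many accepted jump times on every bounded interval. Taking a countable intersection over $T\in\mathbb N$ gives one full-measure event $\Omega_0$. Fix $\omega\in\Omega_0$ and $0\le s\le t$. Then $[s,t]$ splits into finitely many intervals separated by accepted jump times $\tau_k$. On each of these intervals the path solves the deterministic collision-free flow \eqref{eq:particle-deterministic-flow}.

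\emph{Step 2: transport pieces.} On a subinterval the configuration stays in a compact subset of $\mathsf D_N$. Pairwise distances are bounded below there, so $X\mapsto\mathcal E_q(\rho_X^N-\mu)$ is $C^1$ near the trajectory. Indeed, the pairwise terms $-|x_i-x_j|^q$ are smooth away from the diagonal, and the cross term $\int K(x_i-y)\,d\mu(y)$ is $C^1$ with gradient $\nabla K*\mu$ by the estimates of Lemma~\ref{lem:transport-input}. Writing the energy as
\[
\frac{1}{2N^2}\sum_{i\ne j}K(x_i-x_j)-\frac1N\sum_i K*\mu(x_i)+\mathrm{const},
\]
we get $\partial_{x_i}\mathcal E=-b_i^N(X)/N$. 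Hence along the flow
\[
\frac{d}{dt}\mathcal E_q=\sum_i\partial_{x_i}\mathcal E\cdot b_i^N=-\frac1N\sum_i|b_i^N|^2\le0.
\]
The main subtlety is the $\mu$ term for $q<1$, where one needs $\nabla K*\mu$ to be continuous. This follows from the $L^p$ subcriticality $p>p_c$, which gives $(1-q)p_*<d$.

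\emph{Step 3: jumps.} At an accepted jump the pre-jump state lies in $\mathsf D_N$. By \eqref{eq:particle-exact-jump-energy}, the energy increment is $N^{-1}(f(x_i+z)-f(x_i))+|z|^q/N^2$. Acceptance requires $u\le\lambda\zeta_i^N$ with $u>0$, so $\zeta_i^N>0$; the bracket in \eqref{eq:particle-potential-acceptance} is then positive and equals $\zeta_i^N$. The increment is therefore $-\zeta_i^N/N<0$.

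Concatenating the finitely many transport pieces and jumps gives the identity \eqref{eq:particle-pathwise-energy-dissipation}, hence monotonicity on $\Omega_0$ for all $s\le t$. I expect Step~2 to be the only mildly delicate point, specifically the regularity of the target potential.
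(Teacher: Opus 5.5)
Your proposal is correct and takes essentially the same route as the paper. The paper also obtains the pathwise identity \eqref{eq:particle-pathwise-energy-dissipation} by integrating the transport derivative $-N^{-1}\sum_i|b_i^N|^2$ along the flow segments and adding the finitely many accepted jump increments, each equal to $-\zeta_i^N/N$ by \eqref{eq:particle-exact-jump-energy} and positivity of the bracket at accepted marks, and then reads off monotonicity. Your extra care about a single full-measure event and the $C^1$ regularity of the energy along collision-free segments fills in details the paper leaves implicit.
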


Under the hypotheses of Theorem~\ref{thm:particle-wellposedness}, the
particle process is determined by the flow of $b^N$, the total jump rate
$\mathscr R_N$, and the post-jump kernel below. These are the local
characteristics of a piecewise deterministic Markov process
\cite{davis1984piecewise}. When $\mathscr R_N(X)>0$, the post-jump kernel is
\begin{equation}
 Q_N(X,A)=\frac1{\mathscr R_N(X)}\sum_i\int
 \mathbf1_A(X+\iota_i(z))\lambda(x_i)\zeta_i^N(X,z)\kappa(z)\,dz.
\end{equation}
When the rate is zero, the choice of $Q_N$ is immaterial. Starting from a
state $X$, the probability of no jump up to time $t$ is
$\exp(-\int_0^t\mathscr R_N(\mathsf x(s;X))\,ds)$.
In particular, for smooth bounded test functions $\varphi$ with bounded
first derivatives, the local generator is
\begin{equation}
 \mathcal L_N\varphi(X)=\sum_i b_i^N(X)\cdot\nabla_i\varphi(X)
 +\sum_i\int\bigl(\varphi(X+\iota_i(z))-\varphi(X)\bigr)
       \lambda(x_i)\zeta_i^N(X,z)\kappa(z)\,dz.
\end{equation}
The integral is finite at each collision-free configuration by
Lemma~\ref{lem:particle-Lyapunov-intensity}. These formulas specify the
meaning of the piecewise-deterministic terminology for the present process.

\subsection{Mean-field convergence}
\label{subsec:particle-mean-field}
Throughout this subsection, let $(\rho_t)$ be the solution of
Theorem~\ref{thm:continuum-wellposedness} from the fixed reference density
$\rho_0$, and let $X$ be the particle solution of
Theorem~\ref{thm:particle-wellposedness}. The initial interacting law may
be arbitrary subject to the initial noncollision, finite expected average
$r$-moment, and noise-independence hypotheses of that theorem.
All continuum data, including $\lambda$, are fixed independently of this
law. The moment exponent is $r\ge\max\{1,q\}$; only the long-time
consequence additionally assumes $r\ge2$. Constants $C_T$ may depend on
the fixed continuum and model data and on $T$, but not on $N$ or the
joint initial law used in the comparison.
Recall \(s_q=(d+q)/2\), and write
\(M_{r,T}:=\sup_{0\le t\le T}\mathcal M_r(\rho_t)\).

The following quantity collects the finite-horizon velocity bounds used
in the transport commutator estimate and the acceptance envelope for the
comparison processes:
\begin{equation}
\label{eq:particle-continuum-control}
  \Lambda_T:=1+\mathop{\rm ess\,sup}_{0\le t\le T}
  \left(
  \|v_{\rho_t}\|_{W^{1,\infty}}
  +\mathbf1_{\{d+q>2\}}
  \bigl\||\nabla|^{s_q}v_{\rho_t}\bigr\|_{L^{2d/(d+q-2)}}
  \right).
\end{equation}
Indeed, \eqref{eq:weak-solution-velocity-class} gives the first norm.
When \(d+q>2\), the Hardy-Littlewood-Sobolev inequality, as applied in
\cite[Lemma~4.16, equations~(4.188)--(4.191)]
{rosenzweig2026wassersteingradientflowsmaximum}, gives
\begin{equation}
\label{eq:particle-velocity-fractional-bound}
  \bigl\||\nabla|^{s_q}v_{\rho_t}\bigr\|_{L^{2p_c}}
  \le C_{d,q}\|\rho_t-\mu\|_{L^{p_c}}.
\end{equation}
Since \(1<p_c<p\le\infty\), interpolation between \(L^1\) and
\(L^p\), together with \eqref{eq:limit-apriori-bounds} and
\eqref{eq:infinite-p-weak-solution-class}, bounds the right-hand side
uniformly on \([0,T]\). At \((d,q)=(1,1)\), the second norm in
\eqref{eq:particle-continuum-control} is absent.

Define
\begin{equation}
\label{eq:particle-mean-field-discrepancy}
  \vartheta_t^N:=\rho_t^N-\rho_t,
  \qquad g_t^N:=K*\vartheta_t^N,
\end{equation}
and
\begin{equation}
\label{eq:particle-modulated-energy}
  F_N(t):=\mathcal E_q(\vartheta_t^N)
  =\operatorname{MMD}_q^2(\rho_t^N,\rho_t).
\end{equation}
Here, the modulated energy is simply the MMD discrepancy between the
empirical measure and the continuum solution. Using this quadratic
quantity preserves cancellations between the two evolutions that would
be lost by estimating each singular pair force separately. This is the
modulated-energy method of \cite{serfaty2020meanfield}; see also
\cite[Sections~5--7]{rosenzweigserfaty2023global}
and \cite[Theorem~1.5 and Section~6]{rosenzweigserfaty2026sharp}.
For \(K=-|\cdot|^q\), the estimate
\eqref{eq:particle-transport-commutator-bound} follows from
\cite[Proposition~4.15]{rosenzweig2026wassersteingradientflowsmaximum};
see also \cite[Remark~1.4]{hesschildsrosenzweigserfaty2025sharp}
for the relation with Riesz modulated energies.

The additional difficulty here is the evolution caused by the jumps.
Proposition~\ref{prop:particle-jump-obstruction} gives smooth data and
pairwise distinct empirical configurations for \((d,q)=(3,1)\) such that
\begin{equation}
\label{eq:particle-false-energy-only-jump-bound}
  \frac{\mathcal J_0^N}{F_N(0)+N^{-1}}\to\infty,
\end{equation}
where \(\mathcal J_t^N\) is defined in \eqref{eq:particle-jump-drift}.
Thus, the jump term cannot, in general, be bounded by
\(C(F_N(t)+N^{-1})\) uniformly over particle configurations.
Lemma~\ref{lem:particle-critical-product} instead yields a bound involving both
\(F_N(t)\) and the bounded-Lipschitz distance between
\(\rho_t^N\) and \(\rho_t\).
To control the latter, we couple each particle
\(X_i\) to a comparison process \(Y_i\) with law \(\rho_t\), using
the same Poisson measure \(\mathcal N_i\) to generate their jump
proposals. Whenever both processes accept a proposal, they move
by the same displacement, leaving their separation unchanged.
The processes \(Y_i\) are independent across labels, so the
bounded-Lipschitz discrepancy splits into a coupling error
between \(X_i\) and \(Y_i\) and a sampling error for the empirical
measure of the \(Y_i\). The dissipative terms in the modulated-energy
inequality absorb the velocity and acceptance errors arising in
the coupling estimate. Combining the two inequalities with the sampling bound then allows us to apply Gronwall's inequality.

We first record two consequences of \eqref{eq:energy}. Since \(  \mathcal E_q(\delta_x-\delta_y)=|x-y|^q\),
Cauchy-Schwarz for the bilinear form associated with \(\mathcal E_q\)
gives
\begin{equation}
\label{eq:particle-discrepancy-potential-Holder}
  |g_t^N(x)-g_t^N(y)|
  \le2F_N(t)^{1/2}|x-y|^{q/2}.
\end{equation}
Moreover, \(K\le0\), \(r\ge q\), and
\eqref{eq:particle-uniform-moment-bound} imply
\begin{equation}
  0\le F_N(t)
  \le\iint_{\R^d\times\R^d}|x-y|^q\,d\rho_t^N(x)\,d\rho_t(y)
   \le C_q\bigl(\Psi_r(X(t))+1+M_{r,T}\bigr)
\end{equation}
and hence
\begin{equation}
\label{eq:particle-energy-moment-bound}
  \mathbb E\left[\sup_{0\le t\le T}F_N(t)\right]
  \le C_T\bigl(1+\mathbb E[\Psi_r(X^0)]\bigr)<\infty.
\end{equation}
We next define the terms that occur in the evolution of \(F_N\).  At the
particle locations, set
\begin{equation}
\label{eq:particle-error-field}
  \mathsf e_t^N(X_i(t))
  :=\frac1N\sum_{j\ne i}\nabla K(X_i(t)-X_j(t))
    -(\nabla K*\rho_t)(X_i(t)).
\end{equation}
Then \eqref{eq:particle-transport-drift} implies
\begin{equation}
\label{eq:particle-drift-error-decomposition}
  b_i^N(X(t))=v_{\rho_t}(X_i(t))-\mathsf e_t^N(X_i(t)).
\end{equation}
Define
\begin{align}
  \mathcal I_t^N
  &:=\frac1N\sum_{i=1}^N|\mathsf e_t^N(X_i(t))|^2,
  \label{eq:particle-transport-dissipation}\\
  \mathcal C_t^N
  &:=\frac12\iint_{x\ne y}
  (v_{\rho_t}(x)-v_{\rho_t}(y))\cdot\nabla K(x-y)
  \,d\vartheta_t^N(x)\,d\vartheta_t^N(y).
  \label{eq:particle-transport-commutator}
\end{align}
By
\eqref{eq:particle-continuum-control},
\eqref{eq:weak-solution-common-class}, and
\eqref{eq:particle-solution-noncollision},
\cite[Proposition~4.15]{rosenzweig2026wassersteingradientflowsmaximum}
applies to \(v_{\rho_t}\) and \(\rho_t^N-\rho_t\), and yields
\begin{equation}
\label{eq:particle-transport-commutator-bound}
  |\mathcal C_t^N|\le C_{d,q}\Lambda_TF_N(t)
  \qquad\text{for a.e. }t\in[0,T].
\end{equation}

For the jump terms, write the continuum acceptance and the corrected
acceptance from \eqref{eq:particle-potential-acceptance} as
\begin{equation}
\label{eq:particle-continuum-and-empirical-acceptance}
  \zeta_t(x,z):=\zeta_{\rho_t}(x,x+z),
  \qquad
  \zeta_t^N(x,z)
  :=\left[f_{\rho_t^N}(x)-f_{\rho_t^N}(x+z)-\frac{|z|^q}{N}\right]_+,
\end{equation}
so that \(\zeta_t^N(X_i(t),z)=\zeta_i^N(X(t),z)\), and define
\begin{align}
  \mathcal J_t^N
  &:=\int_{\R^d}\int_{\R^d}
  (g_t^N(x+z)-g_t^N(x))\lambda(x)
  \bigl(\zeta_t^N(x,z)\,d\rho_t^N(x)
       -\zeta_t(x,z)\,d\rho_t(x)\bigr)\kappa(z)\,dz,
  \label{eq:particle-jump-drift}\\
  \mathcal Q_t^N
  &:=\frac1N\int_{\R^d}\int_{\R^d}
  \lambda(x)|z|^q\zeta_t^N(x,z)\kappa(z)\,dz\,d\rho_t^N(x),
  \label{eq:particle-jump-quadratic-term}\\
  (\mathcal A_t^N)^2
  &:=\frac1N\sum_{i=1}^N\int_{\R^d}\lambda(X_i(t))
  |\zeta_t^N(X_i(t),z)-\zeta_t(X_i(t),z)|^2\kappa(z)\,dz,
  \qquad\mathcal A_t^N\ge0.
  \label{eq:particle-weighted-rate-difference}
\end{align}
The decomposition separates the change in the coefficients from the
evolution driven by the continuum coefficients. For transport, the
identity
\(b_i^N=v_{\rho_t}(X_i)-\mathsf e_t^N(X_i)\)
isolates a velocity directed down the modulated energy, producing
the dissipation \(-\mathcal I_t^N\). Transporting both measures by
\(v_{\rho_t}\) can still change their discrepancy because a spatially
varying velocity deforms pairwise distances. Symmetrization expresses
this contribution as \(\mathcal C_t^N\), whose velocity difference
allows it to be controlled by the regularity of \(v_{\rho_t}\). The jump contribution admits the analogous splitting
\begin{equation}
\begin{aligned}
  \zeta_t^N(x,z)\,d\rho_t^N(x)
       -\zeta_t(x,z)\,d\rho_t(x)=
    \bigl(\zeta_t^N(x,z)-\zeta_t(x,z)\bigr)\,d\rho_t^N(x)
       +\zeta_t(x,z)\,d\vartheta_t^N(x).
\end{aligned}
\end{equation}
After pairing with the increment of \(g_t^N\), monotonicity of the
positive-part function makes the first term dissipative, yielding
\(-(\mathcal A_t^N)^2\) up to a correction of order \(N^{-1}\).
The second term describes the evolution of the discrepancy under
the continuum jump rule and is estimated below by the product
inequality. Finally, each particle jump moves a mass \(1/N\) at
once, so the quadratic energy has an additional second-order
increment, giving \(\mathcal Q_t^N\). Keeping the dissipation terms
\(\mathcal I_t^N\) and \((\mathcal A_t^N)^2\) on the left-hand side of the
energy inequality will allow us to absorb the velocity and acceptance
errors in the coupling estimate.

\begin{lemma}
\label{prop:particle-modulated-evolution}
There is a martingale \(M_N\), with
\(M_N(0)=0\), such that, almost surely, for every \(t\in[0,T]\),
\begin{equation}
\label{eq:particle-modulated-evolution}
  F_N(t)=F_N(0)+M_N(t)
  +\int_0^t\bigl(-\mathcal I_s^N+\mathcal C_s^N
                    +\mathcal J_s^N+\mathcal Q_s^N\bigr)\,ds.
\end{equation}
\end{lemma}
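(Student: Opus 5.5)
We split $F_N(t)=\mathcal E_q(\rho_t^N-\rho_t)$ into a self-interaction part, a cross part and a continuum part,
\[
F_N(t)=\mathcal E_q(\rho_t^N-\mu)-\int f_{\rho_t}\,d(\rho_t^N-\mu)+\mathcal E_q(\rho_t-\mu),
\]
where $\mathcal E_q(\rho_t^N-\rho_t)$ is the bilinear form evaluated at $(\rho_t^N-\mu)-(\rho_t-\mu)$. Each part is differentiated separately.

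The first term evolves by the pathwise identity \eqref{eq:particle-pathwise-energy-dissipation}. The last term evolves by Proposition~\ref{prop:energy-dissipation}. The cross term $-\frac1N\sum_i f_{\rho_t}(X_i(t))+\int f_{\rho_t}\,d\mu$ involves a time-dependent function along the jump process. For it we use an Itô/compensation formula for the PDMP: the chain rule along the deterministic pieces, plus the jump sum rewritten as compensator plus martingale.

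To justify differentiating $t\mapsto f_{\rho_t}(x)$, we use the weak formulation of \eqref{eq:pde} with test function $K(x-\cdot)$, suitably mollified and truncated. This gives
\[
\partial_t f_{\rho_t}(x)=\int\nabla K(x-y)\cdot v_{\rho_t}(y)\,d\rho_t(y)+\int\!\!\int \bigl(K(x-y-z)-K(x-y)\bigr)\lambda(y)\zeta_t(y,z)\kappa(z)\,dz\,d\rho_t(y),
\]
and mollification is removed using the velocity regularity \eqref{eq:weak-solution-velocity-class} and moment bounds. Equivalently, one may regularize the kernel to $K_\varepsilon$, derive the identity for smooth $K_\varepsilon$, and pass $\varepsilon\to0$. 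Noncollision \eqref{eq:particle-solution-noncollision} and the finiteness of jumps make the particle-side chain rule legitimate.

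The martingale is
\[
M_N(t)=\frac1N\sum_i\int_0^t\!\!\int\!\!\int \bigl(g_{s}^N(X_i(s-)+z)-g_s^N(X_i(s-))+\text{quadratic correction}\bigr)\mathbf 1_{\{u\le\lambda\zeta_i^N\}}\,\tilde{\mathcal N}_i(ds,dz,du),
\]
where $\tilde{\mathcal N}_i$ is the compensated measure and the integrand is the exact increment of $F_N$ at a jump of particle $i$. That increment is $\frac1N(g^N(x_i+z)-g^N(x_i))+|z|^q/N^2$, by the same computation as \eqref{eq:particle-exact-jump-energy} with $\mu$ replaced by $\rho_t$. The compensated integral is a true martingale, not merely a local one, because the integrand is bounded by $C(F_N^{1/2}|z|^{q/2}/N+|z|^q/N^2)$ via \eqref{eq:particle-discrepancy-potential-Holder}. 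The intensity is bounded by $\mathscr R_N\le CN\Psi_r$ from Lemma~\ref{lem:particle-Lyapunov-intensity}, and \eqref{eq:particle-uniform-moment-bound} together with \eqref{eq:particle-energy-moment-bound} gives square-integrability. Localizing with the stopping times $\theta_\ell$ from the proof of Theorem~\ref{thm:particle-wellposedness} and passing $\ell\to\infty$ handles the remaining integrability.

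The compensator terms are then collected. On the transport side:
\begin{itemize}
\item the particle drift contributes $\frac1N\sum_i\nabla g_t^N(X_i)\cdot b_i^N$, interpreted with the diagonal removed;
\item the continuum drift contributes $-\int\nabla g_t^N\cdot v_{\rho_t}\,d\rho_t$;
\item inserting $b_i^N=v_{\rho_t}(X_i)-\mathsf e_t^N(X_i)$ from \eqref{eq:particle-drift-error-decomposition} and noting $\nabla g_t^N(X_i)=\mathsf e_t^N(X_i)$ off the diagonal yields $-\mathcal I_t^N$;
\item the remaining $\int\nabla g^N\cdot v_{\rho_t}\,d\vartheta^N$ symmetrizes, by oddness of $\nabla K$, into $\mathcal C_t^N$.
\end{itemize}
On the jump side, the linear parts give $\mathcal J_t^N$ and the quadratic part gives $\mathcal Q_t^N$.

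The main obstacle is the rigorous time-differentiation of the mixed term. The map $f_{\rho_t}$ is only $C^{1,1}$ with a singular kernel, and it must be evaluated at particle positions; in addition, the diagonal convention must be handled in the transport symmetrization. I would first run the whole computation with $K_\varepsilon$, where everything is smooth, and then pass to the limit. For the passage to the limit I would use:
\begin{itemize}
\item uniform convergence of $v^\varepsilon$ from the proof of Lemma~\ref{prop:limit-regularized-jump-transport};
\item positive particle separations on $[0,T]$, almost surely, for the diagonal-free interaction terms;
\item dominated convergence in the jump integrals, via Lemma~\ref{lem:jump-increment}.
\end{itemize}
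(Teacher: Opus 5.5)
Your argument is correct in outline but organized differently from the paper's. You polarize,
$F_N(t)=\mathcal E_q(\rho_t^N-\mu)-\int f_{\rho_t}\,d(\rho_t^N-\mu)+\mathcal E_q(\rho_t-\mu)$,
and import the two energy identities already available, \eqref{eq:particle-pathwise-energy-dissipation} and Proposition~\ref{prop:energy-dissipation}. The only new computation is then an It\^o formula for the cross term. That term evaluates the $C^{1,1}$ function $f_{\rho_t}$ at particle positions and involves no singular particle--particle interaction.

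The paper never splits $F_N$. It mollifies the quadratic form itself, $F_N^\varepsilon=\frac12\iint K_\varepsilon(x-y)\,d\vartheta_t^N\,d\vartheta_t^N$, and differentiates it on each flow segment by the product rule. It then removes $\varepsilon$ using three facts: uniform Lipschitz bounds on $\nabla K*\rho_t$ and $\nabla K*(v_{\rho_t}^j\rho_t)$, particle separation on each segment, and $\|K_\varepsilon*\vartheta_t^N-g_t^N\|_\infty\le4\varepsilon^{q/2}\sqrt{F_N(t)}$. In this way $-\mathcal I_t^N+\mathcal C_t^N-\int g_t^NJ(\rho_t)\,dx$ appears directly. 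The jump step (exact increment, compensation, gain--loss formula) is the same as yours.

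Your split reuses proven identities and lightens the singular analysis. Its price is a regrouping step. The particle dissipation, including the compensator $-\iint\lambda(\zeta_t^N)^2\kappa\,dz\,d\rho_t^N$ of its jump sum, and $-\mathcal D(\rho_t)$ become $-\mathcal I_t^N+\mathcal C_t^N+\mathcal J_t^N+\mathcal Q_t^N$ only after two further steps. You must combine them with the cross-term derivative and use $[a]_+^2=a[a]_+$, namely
$-(\zeta_t^N)^2=\zeta_t^N\bigl(f_{\rho_t^N}(x+z)-f_{\rho_t^N}(x)+|z|^q/N\bigr)$ and $-\zeta_t^2=\zeta_t\bigl(f_{\rho_t}(x+z)-f_{\rho_t}(x)\bigr)$.
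Also, Proposition~\ref{prop:energy-dissipation} is stated under the standing assumption $r\ge2$ of Section~\ref{subsec:stationary-long-time}, while here only $r\ge\max\{1,q\}$ is assumed. Its proof does not use $r\ge2$, but you should check this and say so.

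There are two smaller slips.

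First, if $\nabla K(x-y)$ means $(\nabla K)(x-y)$, testing the weak form with $y\mapsto K(x-y)$ gives $\partial_tf_{\rho_t}(x)=-\int\nabla K(x-y)\cdot v_{\rho_t}(y)\,d\rho_t(y)+\cdots$. Your transport term therefore has the wrong sign.

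Second, and more important, your integrability argument does not give square-integrability of $M_N$. The total rate $\mathscr R_N$ cannot be paired with a $z$-dependent integrand. The $z$-dependent bound $\lambda\zeta_i^N\le\|\lambda\|_{L^\infty}A_{\rho_X^N}|z|^{1\wedge q}$ leads, for $q>1$, to $F_N^{1/2}(1+\Psi_r)$. The square function needs roughly $\mathbb E\int_0^TF_N^{3/2}\,ds$. Under the stated hypotheses, \eqref{eq:particle-uniform-moment-bound} controls neither quantity.

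None of this is needed, since the lemma asks only for a martingale. Write $\Delta_iF_N=\frac1N\bigl(g_t^N(X_i+z)-g_t^N(X_i)\bigr)+|z|^q/N^2$. Then \eqref{eq:particle-empirical-rate-upper-bound}, namely $\zeta_t^N(x,z)\le\Lambda_T|z|+2\sqrt{F_N(t)}\,|z|^{q/2}$, gives $\sum_i\int|\Delta_iF_N|\lambda(X_i)\zeta_i^N\kappa\,dz\le C_T(1+F_N)$. Its time integral has finite expectation by \eqref{eq:particle-energy-moment-bound}. The $L^1$ criterion for compensated Poisson integrals then yields a true martingale directly, with no localization.
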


\begin{proof}
By \eqref{eq:particle-finite-jump-count} and
\eqref{eq:particle-deterministic-flow}, a.e. particle path on
\([0,T]\) consists of finitely many flow segments with bounded,
pairwise separated positions, including their endpoint limits.
We first compute the continuous evolution. With the regularization \eqref{eq:kernel-regularization}, set
\begin{equation}
  F_N^\varepsilon(t):=\frac12
  \iint_{\R^d\times\R^d}K_\varepsilon(x-y)
  \,d\vartheta_t^N(x)\,d\vartheta_t^N(y).
\end{equation}
Since \(\zeta_t(x,z)\le\Lambda_T|z|\), changing variables in the gain
term and using the moment bounds gives
\begin{equation}
\label{eq:particle-source-weighted-integrability}
  \int_{\R^d}\langle x\rangle^q|J(\rho_t)(x)|\,dx
  \le C_T\bigl((1+M_{r,T})\mathcal M_1(\kappa)
                   +\mathcal M_{q+1}(\kappa)\bigr)<\infty.
\end{equation}
On each flow segment, \(t\mapsto\int_{\R^d}\varphi\,d\vartheta_t^N\)
is absolutely continuous for \(\varphi\in C_c^\infty(\R^d)\).
Apply the product rule \cite[Theorem~2.3]{Eberle2013} to products
of these integrals. Approximation by smooth tensor products, followed
by removal of spatial cutoffs, is justified by
\eqref{eq:particle-source-weighted-integrability},
the uniform \(q\)-moment bounds, and
\(|K_\varepsilon(x-y)|+|\nabla K_\varepsilon(x-y)|
\le C_\varepsilon(1+|x|^q+|y|^q)\).
The weak equation and \(\nabla K_\varepsilon(0)=0\) therefore give
\begin{align}
  \frac d{dt}F_N^\varepsilon(t)
  ={}&\frac1N\sum_{i=1}^N b_i^N(X(t))\cdot
           (\nabla K_\varepsilon*\vartheta_t^N)(X_i(t))\nonumber \\
  &-\int_{\R^d}v_{\rho_t}\cdot
           (\nabla K_\varepsilon*\vartheta_t^N)\,d\rho_t
   -\int_{\R^d}(K_\varepsilon*\vartheta_t^N)J(\rho_t)\,dx.
\end{align}
We pass to the limit in the time-integrated form of the preceding
identity. Since
\(
  |v_{\rho_t}^j\rho_t|\le\Lambda_T\rho_t\), \(1\le j\le d
\),
the inequality \(|D^2K|\le C_{d,q}(-\Delta K)\) and
\eqref{eq:transport-laplacian-bound}, applied with \(\mu=\rho_t\),
give, when \(d+q>2\),
\begin{equation}
  \|D^2K*\rho_t\|_\infty
  +\sum_{j=1}^d
    \|D^2K*(v_{\rho_t}^j\rho_t)\|_\infty
  \le C_T.
\end{equation}
For \(p=\infty\), we first interpolate to a finite admissible
exponent. At \((d,q)=(1,1)\), the same bounds follow from
\eqref{eq:one-dimensional-velocity-identities}.
Thus \(\nabla K*\rho_t\) and
\(\nabla K*(v_{\rho_t}^j\rho_t)\) are uniformly Lipschitz.
Since \(K_\varepsilon=\eta_\varepsilon*K\), mollification yields
\begin{equation}
\label{eq:particle-continuum-force-mollification}
\begin{aligned}
  \|\nabla K_\varepsilon*\rho_t-\nabla K*\rho_t\|_\infty+\sum_{j=1}^d
    \|\nabla K_\varepsilon*(v_{\rho_t}^j\rho_t)
       -\nabla K*(v_{\rho_t}^j\rho_t)\|_\infty
  \le C_T\varepsilon.
\end{aligned}
\end{equation}
After expanding \(\vartheta_t^N=\rho_t^N-\rho_t\), the first
term in \eqref{eq:particle-continuum-force-mollification}
controls the convolutions against \(\rho_t\).
The remaining terms control the mixed particle-continuum
pairings, since oddness of \(\nabla K_\varepsilon\) gives
\begin{equation}
  \int_{\R^d}
    v_{\rho_t}(x)\cdot\nabla K_\varepsilon(x-X_i(t))
    \,d\rho_t(x)
  =-\sum_{j=1}^d
    \bigl((\partial_jK_\varepsilon)*
          (v_{\rho_t}^j\rho_t)\bigr)(X_i(t)).
\end{equation}
On each closed flow segment, the particle positions remain
bounded and uniformly separated, so the particle sums converge
uniformly and the drifts \(b_i^N(X(t))\) are bounded. For the source term and the energies,
\eqref{eq:particle-discrepancy-potential-Holder} gives
\begin{equation}
  \|K_\varepsilon*\vartheta_t^N-g_t^N\|_\infty
  +|F_N^\varepsilon(t)-F_N(t)|
  \le4\varepsilon^{q/2}\sqrt{F_N(t)}.
\end{equation}
Combining this with
\eqref{eq:particle-source-weighted-integrability}, we obtain
\begin{equation}
\begin{aligned}
  \left|\int_{\R^d}
    (K_\varepsilon*\vartheta_t^N-g_t^N)J(\rho_t)\,dx\right|\le
    \|K_\varepsilon*\vartheta_t^N-g_t^N\|_\infty
    \|J(\rho_t)\|_1
  \le C_T\varepsilon^{q/2}\sqrt{F_N(t)}.
\end{aligned}
\end{equation}
By \eqref{eq:particle-energy-moment-bound}, these errors tend
to zero uniformly on \([0,T]\), almost surely.
We may therefore pass to the limit.
Substituting \eqref{eq:particle-drift-error-decomposition}
and symmetrizing by the oddness of \(\nabla K\), we obtain
\begin{equation}
\label{eq:particle-continuous-energy-derivative}
  \frac d{dt}F_N(t)
  =-\mathcal I_t^N+\mathcal C_t^N
    -\int_{\R^d}g_t^N(x)J(\rho_t)(x)\,dx
\end{equation}
a.e. on each flow segment.

For a
reference measure \(\rho_s\), the change in \(F_N\) when
particle \(i\) jumps by \(z\) is
\begin{equation}
  \Delta_i F_N(s,z)
  :=\frac{g_{s-}^N(X_i(s-)+z)-g_{s-}^N(X_i(s-))}{N}
    +\frac{|z|^q}{N^2}.
\end{equation}
The Lipschitz property of the positive part and
\eqref{eq:particle-discrepancy-potential-Holder} give
\begin{equation}
\label{eq:particle-empirical-rate-upper-bound}
  \zeta_t^N(x,z)
  \le\Lambda_T|z|+2\sqrt{F_N(t)}\,|z|^{q/2}.
\end{equation}
It then follows that
\begin{equation}
  \sum_{i=1}^N\int_{\R^d}|\Delta_iF_N(s,z)|\lambda(X_i(s-))
    \zeta_{s-}^N(X_i(s-),z)\kappa(z)\,dz
  \le C_T\bigl(1+F_N(s-)\bigr).
\end{equation}
Its time integral has finite expectation by
\eqref{eq:particle-energy-moment-bound}. Define
\begin{equation}
\label{eq:particle-compensated-Poisson-measure}
  \widetilde{\mathcal N}_i(ds,dz,du)
  :=\mathcal N_i(ds,dz,du)-ds\,\kappa(z)\,dz\,du.
\end{equation}
The integrands below are predictable for the filtration generated
by \(X^0\) and \(\mathcal N_1,\ldots,\mathcal N_N\).
The preceding integrability estimate verifies the hypothesis of
\cite[Theorem~1.11]{Eberle2013}, so
\begin{equation}
  M_N(t):=\sum_{i=1}^N
  \int_0^t\int_{\R^d}\int_0^\infty
  \Delta_iF_N(s,z)\,
  \mathbf1_{\{u\le\lambda(X_i(s-))
                        \zeta_{s-}^N(X_i(s-),z)\}}
  \,\widetilde{\mathcal N}_i(ds,dz,du)
\end{equation}
is an integrable martingale with \(M_N(0)=0\). Its paths are
c\`adl\`ag because the uncompensated integrals are c\`adl\`ag by
the same theorem and their compensators are absolutely continuous.
Integrate \eqref{eq:particle-continuous-energy-derivative} over the
flow segments and add the jump increments. Replacing their sum by
\(M_N\) and its compensator, the gain-loss formula for \(J\)
identifies the linear jump contribution as \(\mathcal J_s^N\),
and the quadratic increment gives \(\mathcal Q_s^N\).
This proves \eqref{eq:particle-modulated-evolution} for every
\(t\in[0,T]\) almost surely.
\end{proof}

To estimate \(\mathcal J_t^N\), we use
Lemma~\ref{lem:particle-critical-product} in
Appendix~\ref{app:particle-product-estimate}. It splits a product into a
\(\dot H^{s_q}\) term and a bounded Lipschitz term.

\begin{lemma}
\label{lem:particle-jump-drift}
For a.e. \(t\in[0,T]\),
\begin{equation}
\label{eq:particle-jump-drift-bound}
  \mathcal J_t^N
  \le-(\mathcal A_t^N)^2
     +C_T\bigl(F_N(t)+d_{\rm BL}(\rho_t^N,\rho_t)^2+N^{-1}\bigr).
\end{equation}
Moreover,
\begin{equation}
\label{eq:particle-rate-difference-energy-bound}
  (\mathcal A_t^N)^2
  \le8\|\lambda\|_{L^\infty}\mathcal M_q(\kappa)F_N(t)
     +\frac{2\|\lambda\|_{L^\infty}\Lambda_T}{N}
       \mathcal M_{q+1}(\kappa).
\end{equation}
\end{lemma}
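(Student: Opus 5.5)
The approach is to write everything in terms of the continuum first variation plus the discrepancy potential. Since $f_\nu=K*(\nu-\mu)$, we have $f_{\rho_t^N}=f_{\rho_t}+g_t^N$. For fixed $(x,z)$ set
\[
a:=f_{\rho_t}(x)-f_{\rho_t}(x+z),\qquad h:=g_t^N(x)-g_t^N(x+z),\qquad c:=|z|^q/N .
\]
Then $\zeta_t(x,z)=[a]_+$ and $\zeta_t^N(x,z)=[a+h-c]_+$. I would split $\mathcal J_t^N$ using the decomposition displayed before Lemma~\ref{prop:particle-modulated-evolution}: a part integrated against $\rho_t^N$ with weight $\zeta_t^N-\zeta_t$, and a part integrated against $\vartheta_t^N$ with weight $\zeta_t$. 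In the first part the increment $g_t^N(x+z)-g_t^N(x)$ equals $-h$.

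\emph{First part (dissipative).} The positive part is monotone and $1$-Lipschitz, so
\[
([b]_+-[a]_+)(b-a)\ge([b]_+-[a]_+)^2 .
\]
Applying this with $b=a+h-c$ gives
\[
-h(\zeta_t^N-\zeta_t)\le-(\zeta_t^N-\zeta_t)^2-c(\zeta_t^N-\zeta_t)\le-(\zeta_t^N-\zeta_t)^2+c\,\zeta_t,
\]
where the last step uses $\zeta_t^N\ge0$. Multiply by $\lambda(x)\kappa(z)$ and integrate against $d\rho_t^N$. This produces $-(\mathcal A_t^N)^2$ plus an error $N^{-1}\iint\lambda|z|^q\zeta_t\kappa\,d\rho_t^N$. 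That error is at most $\|\lambda\|_{L^\infty}\Lambda_T\mathcal M_{q+1}(\kappa)/N$, because $\zeta_t(x,z)\le\Lambda_T|z|$ by the $W^{1,\infty}$ velocity bound in \eqref{eq:particle-continuum-control}.

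\emph{Second part (product estimate).} For each fixed $z$, I would write the $\vartheta_t^N$-pairing as $\int u_z\psi_z\,d\vartheta_t^N$, with $u_z:=g_t^N(\cdot+z)-g_t^N$ and $\psi_z(x):=\lambda(x)\zeta_t(x,z)$. I then invoke Lemma~\ref{lem:particle-critical-product}, which I expect to have the form
\[
\Bigl|\int u\psi\,d\vartheta\Bigr|\le C\|u\|_{\dot H^{s_q}}\bigl(\|\psi\|_{W^{1,\infty}}\|\vartheta\|_{\dot H^{-s_q}}+\mathrm{norm}(\psi)\,d_{\rm BL}\bigr).
\]
The inputs are as follows. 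The operator $K*$ maps $\dot H^{-s_q}$ to $\dot H^{s_q}$, so $\|g_t^N\|_{\dot H^{s_q}}\lesssim F_N(t)^{1/2}$. Translation invariance then gives $\|u_z\|_{\dot H^{s_q}}\le 2\|g_t^N\|_{\dot H^{s_q}}\lesssim F_N(t)^{1/2}$. For $\psi_z$, the bound $|\psi_z|\le\|\lambda\|_{L^\infty}\Lambda_T|z|$ holds, and $\psi_z$ is Lipschitz in $x$ with constant $\le C_T(|z|+|z|^{1\wedge q})$. This uses $\lambda\in W^{1,\infty}$, the $1$-Lipschitz property of $[\cdot]_+$, and the Hessian bound on $f_{\rho_t}$, which bounds $|\nabla_x a|\le\|D^2f_{\rho_t}\|_\infty|z|$. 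With these, the pairing is at most $C_T(1+|z|)^2\bigl(F_N+F_N^{1/2}d_{\rm BL}(\rho_t^N,\rho_t)\bigr)$. Young's inequality turns this into $C_T(1+|z|)^2\bigl(F_N+d_{\rm BL}^2\bigr)$, and integrating against $\kappa$ (which has finite second moment since $r\ge1$) gives the claimed $C_T(F_N+d_{\rm BL}^2)$ term. Matching the exact hypotheses of Lemma~\ref{lem:particle-critical-product}, in particular which norm of $\psi$ multiplies $d_{\rm BL}$ and whether a time-uniform higher regularity of $\psi_z$ is needed, is the main obstacle. If the lemma requires more than $W^{1,\infty}$ regularity, I would first mollify $\zeta_t$ in $x$ and absorb the mollification error using \eqref{eq:particle-discrepancy-potential-Holder}.

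\emph{Bound \eqref{eq:particle-rate-difference-energy-bound}.} If $c>0$ only lowers the argument where $a+h\le a$, split into cases.
\begin{itemize}
\item On $\{\zeta_t^N\ge\zeta_t\}$: here $0\le\zeta_t^N-\zeta_t\le|h|$.
\item On $\{\zeta_t^N<\zeta_t\}$: here $0<\zeta_t-\zeta_t^N\le|h|+\min\{c,\zeta_t+|h|\}$. Using $(\alpha+\beta)^2\le2\alpha^2+2\beta^2$ and $\min\{c,\cdot\}^2\le c\,\zeta_t+c|h|$, the cross term is reabsorbed via $c|h|\le h^2+c^2/4$, with $c^2\le c\cdot\Lambda_T|z|$ after restricting to the region where it matters.
\end{itemize}
The target pointwise inequality is $|\zeta_t^N-\zeta_t|^2\le2h^2+2c\,\Lambda_T|z|$. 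Combining this with $|h|\le2F_N^{1/2}|z|^{q/2}$ from \eqref{eq:particle-discrepancy-potential-Holder} and integrating against $\lambda\kappa\,d\rho_t^N$ yields exactly $8\|\lambda\|_{L^\infty}\mathcal M_q(\kappa)F_N+2\|\lambda\|_{L^\infty}\Lambda_T\mathcal M_{q+1}(\kappa)/N$.
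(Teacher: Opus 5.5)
Your overall structure matches the paper's proof, and your dissipative step is identical to it: firm nonexpansiveness of $[\cdot]_+$ with $b=a+h-c$ is the first inequality of \eqref{eq:particle-positive-part-firm}. Two steps, however, fail as written.

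First, the product estimate you guessed for Lemma~\ref{lem:particle-critical-product} is not the one proved, and it cannot hold. In your form both terms carry $\|u\|_{\dot H^{s_q}}$, which vanishes for constant $u$, while $\int u\psi\,d\vartheta$ need not vanish. The lemma actually gives
$C_{d,q}\|\psi\|_{W^{1,\infty}}\bigl(\mathcal E_q(\vartheta)^{1/2}\|u\|_{\dot H^{s_q}}+d_{\rm BL}(\nu,\omega)(\|u\|_{L^\infty}+[u]_{C^{q/2}})\bigr)$.
So the bounded-Lipschitz term requires the bounds $\|u_z\|_{L^\infty}\le2F_N(t)^{1/2}|z|^{q/2}$ and $[u_z]_{C^{q/2}}\le4F_N(t)^{1/2}$. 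Both follow immediately from \eqref{eq:particle-discrepancy-potential-Holder}. The lemma only needs $\psi_z\in W^{1,\infty}$, so no mollification is required. The pairing is then bounded by $C_T(|z|+|z|^{1\wedge q})\bigl(F_N+(1+|z|^{q/2})F_N^{1/2}d_{\rm BL}(\rho_t^N,\rho_t)\bigr)$, and your Young and $z$-integration step goes through.

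Second, your derivation of \eqref{eq:particle-rate-difference-energy-bound} does not give the stated constants, despite the claim that it yields them exactly. Your Case~2 starts from $\zeta_t-\zeta_t^N\le|h|+\min\{c,\zeta_t+|h|\}$, and the absorption $2c|h|\le2h^2+c^2/2$ then gives $4h^2+2c\zeta_t+c^2/2$. That puts $16$ rather than $8$ in front of $F_N$ and leaves a $c^2$ term. The claimed $c^2\le c\Lambda_T|z|$ is equivalent to $|z|^{q-1}\le N\Lambda_T$, which fails for small $|z|$ when $q<1$ and for large $|z|$ when $q>1$.

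The missing observation is that $\zeta_t^N\ge0$ gives $\zeta_t-\zeta_t^N\le\zeta_t$. Combined with the Lipschitz bound $\zeta_t-\zeta_t^N\le|h|+c$, this gives $\zeta_t-\zeta_t^N\le\min\{\zeta_t,|h|+c\}\le|h|+\min\{c,\zeta_t\}$, which is the paper's second inequality in \eqref{eq:particle-positive-part-firm}. Then $(\alpha+\beta)^2\le2\alpha^2+2\beta^2$ and $\min\{c,\zeta_t\}^2\le c\zeta_t\le c\Lambda_T|z|$ give your target $|\zeta_t^N-\zeta_t|^2\le2h^2+2c\Lambda_T|z|$ with no cross term. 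Your final integration then produces exactly \eqref{eq:particle-rate-difference-energy-bound}.
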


\begin{proof}
For \(a,b\in\mathbb R\) and \(c\ge0\), set
\(d=[a-c]_+-[b]_+\). Then it holds that
\begin{equation}
\label{eq:particle-positive-part-firm}
  (b-a)d\le-|d|^2+c[b]_+,
  \qquad
  |d|\le|a-b|+c\wedge[b]_+.
\end{equation}
Applying the first inequality with \( a=f_{\rho_t^N}(x)-f_{\rho_t^N}(x+z)\), \(b=f_{\rho_t}(x)-f_{\rho_t}(x+z)\), and \(c=\frac{|z|^q}{N}\),
we obtain
\begin{equation}
  (g_t^N(x+z)-g_t^N(x))
  (\zeta_t^N(x,z)-\zeta_t(x,z))
  \le-|\zeta_t^N(x,z)-\zeta_t(x,z)|^2
     +\frac{|z|^q}{N}\zeta_t(x,z).
\end{equation}
It follows that
\begin{equation}
    \begin{aligned}
                  \mathcal J_t^N\le-(\mathcal A_t^N)^2
  +\int_{\R^d}\int_{\R^d}u^N_{t, z}(x)
       \lambda(x)\zeta_t(x,z)\,d\vartheta_t^N(x)\,\kappa(z)\,dz +\frac{\|\lambda\|_\infty\Lambda_T}{N}
                \mathcal M_{q+1}(\kappa),
    \end{aligned}
    \label{eq:particle-jump-monotonicity-splitting}
\end{equation}
where we have written $a_{t,z}(x)=\lambda(x)\zeta_t(x,z)$ and $u_{t,z}^N(x)=g_t^N(x+z)-g_t^N(x)$. To apply Lemma~\ref{lem:particle-critical-product} in the form
\eqref{eq:particle-critical-product-bound}, note first that 
\(\|a_{t,z}\|_{W^{1,\infty}}
\le C\|\lambda\|_{W^{1,\infty}}\Lambda_T|z|\).
The Fourier representation \eqref{eq:extended-fourier-energy} and
\eqref{eq:particle-discrepancy-potential-Holder} yield
\begin{equation}
\label{eq:particle-jump-product-norms}
\begin{aligned}
  \|u_{t,z}^N\|_{\dot H^{s_q}}^2
   \le C_{d,q}F_N(t),\quad
  \|u_{t,z}^N\|_\infty
  &\le2\sqrt{F_N(t)}\,|z|^{q/2},
  \quad [u_{t,z}^N]_{C^{q/2}}\le4\sqrt{F_N(t)},
\end{aligned}
\end{equation}
so \eqref{eq:particle-critical-product-bound} gives
\begin{equation}
\label{eq:particle-jump-product-pointwise-bound}
  \left|\int_{\R^d}a_{t,z}u_{t,z}^N\,d\vartheta_t^N\right|
  \le C_T|z|\bigl(F_N(t)
       +(1+|z|^{q/2})\sqrt{F_N(t)}\,
                      d_{\rm BL}(\rho_t^N,\rho_t)\bigr).
\end{equation}
Integrating in \(z\)
and applying Young's inequality in
\eqref{eq:particle-jump-monotonicity-splitting} proves
\eqref{eq:particle-jump-drift-bound}.
Finally, the second inequality in
\eqref{eq:particle-positive-part-firm}, with the same choices
of \(a,b,c\), gives
\begin{equation}
\label{eq:particle-pointwise-rate-difference}
  |\zeta_t^N(x,z)-\zeta_t(x,z)|
  \le2\sqrt{F_N(t)}\,|z|^{q/2}
     +\left(\frac{|z|^q}{N}\wedge\zeta_t(x,z)\right).
\end{equation}
Squaring, using \((u+v)^2\le2u^2+2v^2\) and
\((c\wedge\zeta_t)^2\le c\zeta_t\), and then
\(\zeta_t(x,z)\le\Lambda_T|z|\), yields
\begin{equation}
  |\zeta_t^N(x,z)-\zeta_t(x,z)|^2
  \le8F_N(t)|z|^q+\frac{2\Lambda_T}{N}|z|^{q+1}.
\end{equation}
Integrating against
\(\lambda(x)\rho_t^N(dx)\kappa(z)\,dz\)
proves \eqref{eq:particle-rate-difference-energy-bound}.
\end{proof}

Substituting \eqref{eq:particle-empirical-rate-upper-bound} into
\eqref{eq:particle-jump-quadratic-term} gives
\begin{equation}
\label{eq:particle-jump-quadratic-bound}
  \mathcal Q_t^N
  \le\frac{\|\lambda\|_\infty}{N}
     \bigl(\Lambda_T\mathcal M_{q+1}(\kappa)
       +2\sqrt{F_N(t)}\,\mathcal M_{3q/2}(\kappa)\bigr)
  \le\frac{C_T}{N}\bigl(1+F_N(t)\bigr).
\end{equation}

We now estimate the additional distance in
\eqref{eq:particle-jump-drift-bound} by coupling to independent processes
whose common law at time \(t\) is \(\rho_t\).

Following the coupling used by Andreis, Dai Pra, and
Fischer~\cite{andreis2018mckean}, we drive each comparison process
by the same Poisson measure as its particle. A proposal accepted
by both produces the same displacement, leaving their separation
unchanged. The comparison velocity $v_{\rho_t}$ and acceptance intensity
$\lambda(x)\zeta_{\rho_t}(x,x+z)$ are determined by the fixed deterministic
curve \((\rho_t)_{t\in[0,T]}\). Choose
\begin{equation}
\label{eq:particle-comparison-initial-law}
  Y^0=(Y_1^0,\ldots,Y_N^0)\sim\rho_0^{\otimes N},
\end{equation}
with any joint law of \((X^0,Y^0)\) having the prescribed marginals,
and assume
\begin{equation}
\label{eq:particle-driving-independence}
  \sigma(X^0,Y^0)\quad\text{is independent of}\quad
  \sigma(\mathcal N_1,\ldots,\mathcal N_N).
\end{equation}
From this point on, \((\mathcal F_t)_{t\ge0}\) denotes the
right-continuous completion of the filtration generated by
\((X^0,Y^0)\) and the restrictions of
\(\mathcal N_1,\ldots,\mathcal N_N\) to
\([0,t]\times\R^d\times[0,\infty)\).
Solutions of the comparison equation below are understood to be
\((\mathcal F_t)\)-adapted with c\`adl\`ag paths, and subsequent
martingale statements refer to this filtration. By
\eqref{eq:particle-driving-independence}, adjoining the whole initial pair
leaves the Poisson intensities and compensators unchanged: future noise
increments are independent of the initial pair and of the past noises.
The compensation and martingale arguments below therefore apply to this
filtration for arbitrary dependence between $X^0$ and $Y^0$.
Independence of the coordinates of $Y^0$ is unconditional; no product-law
assertion conditional on $X^0$ is used. With \(1\leq i\leq N\) and using the same \(\mathcal N_i\) as in
\eqref{eq:jump-particle-system}, consider
\begin{equation}
\label{eq:particle-comparison-process}
\begin{aligned}
  Y_i(t)=Y_i^0+\int_0^t v_{\rho_s}(Y_i(s-))\,ds+\int_0^t\int_{\R^d}\int_0^\infty
  z\,\mathbf1_{\{u\le\lambda(Y_i(s-))\zeta_s(Y_i(s-),z)\}}
  \,\mathcal N_i(ds,dz,du).
\end{aligned}
\end{equation}
We identify the marginals as \(\rho_t\) using uniqueness for the
linear forward equation. Independence across labels follows
because each \(Y_i\) is determined by its own pair
\((Y_i^0,\mathcal N_i)\), and these pairs are independent.

\begin{lemma}
\label{prop:particle-comparison-process}
Equation \eqref{eq:particle-comparison-process} has a pathwise unique solution on \([0,T]\). The processes
\(Y_1,\ldots,Y_N\) are independent, and
\begin{equation}
\label{eq:particle-comparison-law}
  \operatorname{Law}(Y_i(t))=\rho_t,
  \qquad 0\le t\le T,\quad 1\le i\le N.
\end{equation}
\end{lemma}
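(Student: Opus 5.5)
The proof follows the construction used for the particle system in Theorem~\ref{thm:particle-wellposedness}, but is simpler because the coefficients are frozen along the deterministic curve $(\rho_t)_{t\in[0,T]}$.

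\emph{Existence and pathwise uniqueness.} Fix $i$. By \eqref{eq:weak-solution-velocity-class}, $v_{\rho_t}$ is continuous and uniformly Lipschitz on $[0,T]\times\mathbb R^d$, so its flow $\Phi_{s,t}$ is globally defined and unique. The acceptance threshold satisfies $\lambda(x)\zeta_t(x,z)\le\|\lambda\|_{L^\infty}A_T|z|^{1\wedge q}$ by Lemma~\ref{lem:jump-increment}, with $A_T$ finite by \eqref{eq:limit-apriori-bounds}. Only marks of $\mathcal N_i$ with $u\le h_T(z):=\|\lambda\|_{L^\infty}A_T|z|^{1\wedge q}$ can be accepted, and these form a Poisson process of finite intensity $dt\,h_T(z)\kappa(z)\,dz$, since $\mathcal M_{1\wedge q}(\kappa)<\infty$. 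Almost surely there are therefore finitely many candidate marks on $[0,T]$. The solution is built by flowing with $\Phi$ between consecutive candidate times, and at each candidate $(s,z,u)$ adding $z$ exactly when $u\le\lambda(Y_i(s-))\zeta_s(Y_i(s-),z)$. This recursion is adapted to $(\mathcal F_t)$ and c\`adl\`ag. The same induction on candidate times as in the uniqueness part of the proof of Theorem~\ref{thm:particle-wellposedness} gives pathwise uniqueness: any solution agrees with this one between candidate times by flow uniqueness, and makes the same accept/reject decision at each candidate time. Because the construction is a measurable map of $(Y_i^0,\mathcal N_i)$ alone, $Y_i=\Gamma(Y_i^0,\mathcal N_i)$ for one deterministic measurable $\Gamma$ independent of $i$.

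\emph{Independence.} The coordinates of $Y^0$ are independent by \eqref{eq:particle-comparison-initial-law}. The $\mathcal N_i$ are independent of one another and, by \eqref{eq:particle-driving-independence}, jointly independent of $Y^0$. Hence the pairs $(Y_i^0,\mathcal N_i)$, $1\le i\le N$, are independent, and so are the $Y_i=\Gamma(Y_i^0,\mathcal N_i)$. The joint law of $(X^0,Y^0)$ plays no role here, since only the marginal law of $Y^0$ enters.

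\emph{Identification of the law.} Let $\sigma_t=\operatorname{Law}(Y_i(t))$. Apply It\^o's formula along flow segments, together with the compensation formula for $\mathcal N_i$, to $\varphi\in C_c^\infty([0,T)\times\mathbb R^d)$. Integrability holds because the total acceptance rate is bounded by $\|h_T\|_{L^1(\kappa)}$. Taking expectations shows that $\sigma$ is a narrowly continuous probability solution of the linear equation with frozen coefficients $v_{\rho_t}$ and $c(t,x,z)=\lambda(x)\zeta_{\rho_t}(x,x+z)$, with initial datum $\rho_0$. The curve $\rho_t$ solves the same linear equation by Definition~\ref{def:continuum-weak-solution}, since its nonlinear coefficients are exactly these frozen ones.

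The main obstacle is uniqueness for this linear equation among narrowly continuous probability-measure solutions, because $\sigma_t$ is not known a priori to have a density. I would reuse the total-variation argument from the proof of Lemma~\ref{prop:regularized-jump-transport}. That argument combines Duhamel's formula along the Lipschitz transport flow (pushforward preserves total variation) with the jump operator's total-variation bound $2\Lambda$, where $\Lambda=\int h_T\kappa$. Gronwall's inequality then gives $\|\sigma_t-\rho_t\|_{\mathrm{TV}}=0$, which is \eqref{eq:particle-comparison-law}.

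The point to check is that the unregularized coefficients still satisfy the inputs of that argument: the velocity must be Lipschitz and continuous, and the rates must be bounded by an integrable envelope. Both hold on $[0,T]$ by \eqref{eq:weak-solution-velocity-class} and Lemma~\ref{lem:jump-increment}. At the endpoint $(d,q)=(1,1)$ they follow instead from \eqref{eq:one-dimensional-velocity-identities} and \eqref{eq:one-dimensional-jump-increment}.
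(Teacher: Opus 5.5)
Your proposal is correct. The construction, pathwise uniqueness, and independence steps match the paper's. The paper bounds the acceptance threshold by $\|\lambda\|_\infty\Lambda_T|z|$ from the velocity bound rather than by $\|\lambda\|_{L^\infty}A_T|z|^{1\wedge q}$, which makes no difference.

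You genuinely differ in identifying the law. The paper avoids measure-valued uniqueness at this step. It fixes $1<a<\infty$ with $\rho_0\in L^a$ and conditions on the time and displacement coordinates of the finitely many candidate marks. Averaging over the uniform mark $u$ turns a density $h$ into the explicit update \eqref{eq:particle-comparison-density-update}, whose $L^a$ norm is at most $2\|h\|_{L^a}$. Summing the resulting factors $2^m$ against the Poisson probabilities shows that $\gamma_t=\operatorname{Law}(Y_i(t))$ has a density bounded in $L^\infty(0,T;L^1\cap L^a)$. Only then does the paper invoke the distributional uniqueness of Lemma~\ref{lem:linear-jump-transport}, proved in the appendix by duality with the backward transport equation. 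This gives $\gamma_t=\rho_t$ for a.e.\ $t$, which the paper upgrades to every $t$ via $\mathbb P(Y_i(t)\ne Y_i(t-))=0$ and narrow continuity.

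You replace all of this with the total-variation Gronwall argument for narrowly continuous measure solutions. The paper itself uses that argument with unregularized coefficients in Proposition~\ref{prop:continuum-W1-stability} and in the thinning step of Lemma~\ref{prop:limit-regularized-jump-transport}. Your route is shorter, needs no density information about $\gamma_t$, and gives equality at every time directly. The price is that it rests on the measure-valued Duhamel formula along the Lipschitz flow, which the paper only sketches; testing against $\varphi(X_{s,t}(x))$ needs a short approximation argument. The paper's route instead reduces everything to the fully proved $L^1\cap L^p$ lemma, and it yields the explicit density bound \eqref{eq:particle-comparison-density-bounds} as a byproduct.

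One small point: say explicitly why $\sigma$ is narrowly continuous, for example from the integrated forward equation with bounded rates together with mass conservation. The total-variation argument uses this to make $t\mapsto\|\sigma_t-\rho_t\|_{\mathrm{TV}}$ measurable and to justify the Duhamel identity.
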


\begin{proof}
Equation \eqref{eq:particle-continuum-control} gives \(  0\le\lambda(x)\zeta_t(x,z)
  \le\|\lambda\|_\infty\Lambda_T|z|\).
Hence the number of points \((s,z,u)\) of \(\mathcal N_i\) with
\(s\le T\) and \(u\le\|\lambda\|_\infty\Lambda_T|z|\) is Poisson
with finite mean
\(T\|\lambda\|_\infty\Lambda_T\mathcal M_1(\kappa)\).
Order these finitely many points by their time coordinates. Between
successive times solve \(\dot y=v_{\rho_t}(y)\); at \((s,z,u)\), set
\begin{equation}
  Y_i(s)=Y_i(s-)+z\,
  \mathbf1_{\{u\le\lambda(Y_i(s-))\zeta_s(Y_i(s-),z)\}}.
\end{equation}
The bounded Lipschitz estimate in
\eqref{eq:weak-solution-velocity-class} proves existence and uniqueness
on \([0,T]\) in this construction.

To identify the law, fix \(1<a<\infty\) with
\(\rho_0\in L^a\): take \(a=p\) if \(p<\infty\), and \(a=2\)
if \(p=\infty\). Conditional on the time and displacement coordinates
\((s,z)\) of the preceding points, the corresponding \(u\) is uniform
on \((0,\|\lambda\|_\infty\Lambda_T|z|)\).
Integrating over \(u\) therefore replaces a density \(h\ge0\) by
\begin{equation}
\label{eq:particle-comparison-density-update}
  \left(1-\frac{\lambda(x)\zeta_s(x,z)}
                    {\|\lambda\|_\infty\Lambda_T|z|}\right)h(x)
  +\frac{\lambda(x-z)\zeta_s(x-z,z)}
                    {\|\lambda\|_\infty\Lambda_T|z|}\,h(x-z).
\end{equation}
Here \(z\ne0\) almost surely. The expression in
\eqref{eq:particle-comparison-density-update} has integral
\(\|h\|_1\) and \(L^a\)-norm at most \(2\|h\|_a\).
The bound \eqref{eq:transport-evolution-bounds} applies between these
times. Conditional on their number being \(m\), the density at any
\(t\le T\) thus has \(L^a\)-norm at most
\begin{equation}
  2^m\exp\!\left((1-a^{-1})
             \int_0^T\|\nabla\!\cdot v_{\rho_s}\|_\infty\,ds\right)
  \|\rho_0\|_a.
\end{equation}
Summing over the Poisson probabilities proves that
\(\gamma_t:=\operatorname{Law}(Y_i(t))\) has a density and
\begin{equation}
\label{eq:particle-comparison-density-bounds}
  \sup_{0\le t\le T}\|\gamma_t\|_a
  \le\exp\!\left(
       (1-a^{-1})\int_0^T\|\nabla\!\cdot v_{\rho_s}\|_\infty\,ds
       +T\|\lambda\|_\infty\Lambda_T\mathcal M_1(\kappa)
       \right)\|\rho_0\|_a.
\end{equation}
For \(\varphi\in C_c^\infty(\R^d)\), the chain rule between jumps
and \eqref{eq:particle-comparison-process} give
\begin{align}
  \varphi&(Y_i(t))=\varphi(Y_i^0)
  +\int_0^t\nabla\varphi(Y_i(s))\cdot v_{\rho_s}(Y_i(s))\,ds\nonumber \\
  &+\int_0^t\int_{\R^d}\int_0^\infty
   \bigl(\varphi(Y_i(s-)+z)-\varphi(Y_i(s-))\bigr)
   \mathbf1_{\{u\le\lambda(Y_i(s-))\zeta_s(Y_i(s-),z)\}}
   \,\mathcal N_i(ds,dz,du).\label{eq:particle-comparison-test-identity}
\end{align}
The integral of the absolute value of the Poisson integrand has expectation at most\newline
\(2T\|\varphi\|_\infty\|\lambda\|_\infty
\Lambda_T\mathcal M_1(\kappa)\). Thus the compensation formula applies, and taking expectations gives
\begin{align}
  \int_{\R^d}\varphi\,d\gamma_t
  ={}&\int_{\R^d}\varphi\,d\rho_0
     +\int_0^t\int_{\R^d}\nabla\varphi(x)\cdot v_{\rho_s}(x)
                                         \,d\gamma_s(x)\,ds\nonumber \\
    &+\int_0^t\iint_{\R^d\times\R^d}
       (\varphi(x+z)-\varphi(x))\lambda(x)\zeta_s(x,z)
                            \kappa(z)\,dz\,d\gamma_s(x)\,ds.\label{eq:particle-comparison-forward-equation}
\end{align}
The weak formulation \eqref{eq:weak-jump-transport-formulation} gives
the same equation with \(\gamma\) replaced by \(\rho\).
The operators in $J^{\varepsilon}_{\rho,\rm in}$ and $J^{\varepsilon}_{\rho, \rm out}$, with
\(\varepsilon=0\) and coefficients fixed by \(\rho\), satisfy the
hypotheses of Lemma~\ref{lem:linear-jump-transport}. Both densities
belong to \(L^\infty(0,T;L^1\cap L^a)\), by
\eqref{eq:particle-comparison-density-bounds},
\eqref{eq:finite-p-weak-solution-class}, and
\eqref{eq:infinite-p-weak-solution-class}. The distributional uniqueness
assertion of Lemma~\ref{lem:linear-jump-transport} gives \(\gamma_t=\rho_t\) for a.e.
\(t\). For every deterministic \(t\in(0,T]\),
\(\mathbb P(Y_i(t)\ne Y_i(t-))=0\).
Bounded convergence along the paths therefore makes \(\gamma\)
narrowly continuous. Since \(\rho\) is
also narrowly continuous, equality holds for every \(t\), proving
\eqref{eq:particle-comparison-law}.

Finally, each \(Y_i\) is a measurable function of
\((Y_i^0,\mathcal N_i)\). These pairs are independent by
\eqref{eq:particle-comparison-initial-law} and
\eqref{eq:particle-driving-independence}, together with the mutual
independence of \(\mathcal N_1,\ldots,\mathcal N_N\).
Therefore \(Y_1,\ldots,Y_N\) are independent.
\end{proof}

Set
\begin{equation}
\label{eq:particle-coupling-quantities}
  \overline\rho_t^N:=\frac1N\sum_{i=1}^N\delta_{Y_i(t)},
  \qquad
  Z_N(t):=\frac1N\sum_{i=1}^N
            \bigl(1-e^{-|X_i(t)-Y_i(t)|}\bigr).
\end{equation}
For every function admissible in \eqref{eq:particle-BL-distance},
\begin{equation}
  |\varphi(x)-\varphi(y)|
  \le2\wedge|x-y|
  \le\frac{2}{1-e^{-1}}\bigl(1-e^{-|x-y|}\bigr).
\end{equation}
Hence
\begin{equation}
\label{eq:particle-BL-coupling-triangle}
  d_{\rm BL}(\rho_t^N,\rho_t)
  \le\frac{2}{1-e^{-1}}Z_N(t)
       +d_{\rm BL}(\overline\rho_t^N,\rho_t).
\end{equation}
The second summand is estimated by Lemma~\ref{lem:particle-iid-BL},
in the form \eqref{eq:particle-iid-BL-rate}, since
\Cref{prop:particle-comparison-process} gives independence and
common law $\rho_t$.
To estimate the first, we use
\begin{equation}
\label{eq:particle-modified-modulated-energy}
  \widetilde F_N(t):=F_N(t)+Z_N(t)^2.
\end{equation}
The bounded coupling cost handles large displacements without higher particle moments. The
next estimate lets the dissipations in \(F_N\) absorb the drift and acceptance errors in $Z^2_N$.

\begin{lemma}
\label{lem:particle-coupling}
There is a square-integrable martingale \(M_N^Z\), with
\(M_N^Z(0)=0\), such that, for every \(\varepsilon>0\),
almost surely, for every \(t\in[0,T]\),
\begin{equation}
\label{eq:particle-coupling-evolution}
\begin{aligned}
  Z_N(t)^2
  \le{}&Z_N(0)^2+M_N^Z(t)+\int_0^t\left[
    \varepsilon\mathcal I_s^N
    +\varepsilon(\mathcal A_s^N)^2
    +C_{\varepsilon,T}Z_N(s)^2
    +\frac{C_{\varepsilon,T}}{N^2}
  \right]ds.
\end{aligned}
\end{equation}
\end{lemma}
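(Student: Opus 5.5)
We apply It\^o's formula for finite-variation jump processes to $Z_N(t)^2$. Write $\phi(w)=1-e^{-|w|}$ and $W_i=X_i-Y_i$. The particle and comparison paths have finitely many jumps on $[0,T]$ by \eqref{eq:particle-finite-jump-count} and \Cref{prop:particle-comparison-process}, so we can use the chain rule along flow segments and sum the jump increments. The function $\phi$ is $1$-Lipschitz with values in $[0,1)$, is smooth away from $w=0$, and has a kink at the origin; we handle the kink by replacing $|w|$ with $\sqrt{|w|^2+\delta^2}$ and letting $\delta\downarrow0$.

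\textbf{Drift part.} Between jumps, \eqref{eq:particle-drift-error-decomposition} gives
\[
\frac{d}{dt}|W_i|\le |v_{\rho_t}(X_i)-v_{\rho_t}(Y_i)|+|\mathsf e_t^N(X_i)|.
\]
The first term is at most $\Lambda_T|W_i|$. Using $|W_i|e^{-|W_i|}\le C\phi(W_i)$, the drift contribution to $\frac{d}{dt}Z_N^2$ is bounded by
\[
2Z_N\cdot\frac1N\sum_i e^{-|W_i|}\bigl(\Lambda_T|W_i|+|\mathsf e_t^N(X_i)|\bigr)
\le C\Lambda_T Z_N^2+2Z_N(\mathcal I_t^N)^{1/2}.
\]
Young's inequality bounds the last term by $\varepsilon\mathcal I_t^N+\varepsilon^{-1}Z_N^2$.

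\textbf{Jump part.} At a common proposal $(s,z,u)$ of $\mathcal N_i$ there are three cases. If both processes accept, $W_i$ is unchanged. If both reject, nothing happens. If exactly one accepts, $W_i$ changes by $\pm z$, so $\phi(W_i)$ changes by at most $1\wedge|z|$. Hence $Z_N$ jumps by at most $N^{-1}(1\wedge|z|)$, and $Z_N^2$ changes by at most $2Z_N N^{-1}(1\wedge|z|)+N^{-2}(1\wedge|z|)^2$. The compensator of these one-sided acceptances at particle $i$ has intensity $|\lambda(X_i)\zeta_t^N(X_i,z)-\lambda(Y_i)\zeta_t(Y_i,z)|\kappa(z)\,dz$. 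We split this rate difference as
\[
\lambda(X_i)|\zeta_t^N(X_i,z)-\zeta_t(X_i,z)|+|\lambda(X_i)\zeta_t(X_i,z)-\lambda(Y_i)\zeta_t(Y_i,z)|.
\]
The second term is at most $C\Lambda_T\|\lambda\|_{W^{1,\infty}}|z|\,|W_i|$, using the uniform Hessian bound on $f_{\rho_t}$ contained in $\Lambda_T$. It is also at most $C|z|$, by the acceptance bound $\zeta_t\le\Lambda_T|z|$. Therefore it is bounded by $C|z|\min\{1,|W_i|\}\le C|z|\phi(W_i)$, and after weighting by $2Z_N/N$, summing in $i$, and integrating in $z$ against the finite moments of $\kappa$, it contributes $CZ_N^2$. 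The first term, weighted by $2Z_N N^{-1}(1\wedge|z|)$ and summed, is handled by Cauchy--Schwarz in $(i,z)$ against $\lambda\kappa$:
\[
2Z_N\Bigl(\frac1N\sum_i\int\lambda(X_i)(1\wedge|z|)^2\kappa\Bigr)^{1/2}\mathcal A_t^N\le \varepsilon(\mathcal A_t^N)^2+C_\varepsilon Z_N^2.
\]

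\textbf{The quadratic $N^{-2}$ term.} This term is the main obstacle. Its compensator is $N^{-2}\sum_i\int(1\wedge|z|)^2|\text{rate difference}|\kappa\,dz$, which is of order $N^{-1}$ times the average rate difference, not $N^{-2}$. We split it the same way. The $W_i$-Lipschitz part contributes $N^{-1}\cdot CZ_N$, and we bound $N^{-1}Z_N\le Z_N^2+N^{-2}$. For the acceptance mismatch part we use Cauchy--Schwarz to get
\[
N^{-1}\mathcal A_t^N\le \varepsilon(\mathcal A_t^N)^2+C_\varepsilon N^{-2}.
\]
Thus every term is of the claimed form, with the constant $C_{\varepsilon,T}/N^2$ coming from Young's inequality.

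\textbf{Martingale.} The compensated jump integral defines $M_N^Z$. It is square integrable because its integrand is bounded by $C/N$ and the compensator's intensity is controlled by $\mathscr R_N$ plus the comparison rate, which has finite expectation by \eqref{eq:particle-total-jump-intensity-bound} and \eqref{eq:particle-uniform-moment-bound}. Since all compensator terms are nonnegative, the compensation formula applies without further integrability assumptions. Removing the $\delta$-regularization by monotone and dominated convergence gives \eqref{eq:particle-coupling-evolution}.
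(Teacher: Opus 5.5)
Your proposal is correct and follows essentially the same route as the paper: you use the chain-rule drift bound with $\ell e^{-\ell}\le 1-e^{-\ell}$ and Cauchy--Schwarz against $\mathcal I_t^N$, observe that common acceptances leave $X_i-Y_i$ unchanged, and split the rate mismatch into an acceptance-error part controlled by $\mathcal A_t^N$ and a spatial-Lipschitz part controlled by $Z_N$, with Young's inequality producing the $C_{\varepsilon,T}N^{-2}$ term. The differences are minor: you smooth the kink of $|w|$ where the paper uses the a.e.\ chain rule for the norm, and you check square integrability of $M_N^Z$ through $\mathscr R_N\le CN\Psi_r$ and the particle moment bound rather than through $\mathcal A_t^N+Z_N$. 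One correction: the jump increments of $Z_N^2$ are signed, so the compensation formula is justified by the envelope $2N^{-1}(1\wedge|z|)$ on the mismatch set, not by nonnegativity.
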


\begin{proof}
Write \(D_i(t)=X_i(t)-Y_i(t)\). On each interval without a jump,
\(D_i\) is absolutely continuous, and
\eqref{eq:particle-drift-error-decomposition} gives, a.e.,
\begin{subequations}\label{eq:particle-coupling-continuous-bound}
\begin{align}
  \frac d{dt}\bigl(1-e^{-|D_i|}\bigr)
  &\le e^{-|D_i|}\bigl(
         |\mathsf e_t^N(X_i)|
         +\|Dv_{\rho_t}\|_\infty|D_i|\bigr),\\
  \frac d{dt}Z_N(t)
  &\le(\mathcal I_t^N)^{1/2}+\Lambda_TZ_N(t).
\end{align}
\end{subequations}
The first inequality also holds a.e. on \(\{D_i=0\}\)
by the chain rule for the norm of an absolutely continuous function.
The second uses Cauchy-Schwarz and
\(\ell e^{-\ell}\le1-e^{-\ell}\). Thus
\begin{equation}
\label{eq:particle-coupling-continuous-square}
  \frac d{dt}Z_N(t)^2
  \le\varepsilon\mathcal I_t^N+C_{\varepsilon,T}Z_N(t)^2.
\end{equation}

Recall \(a_{t,z}:=\lambda(x)\zeta_t(x,z)\) from the proof of Lemma~\ref{lem:particle-jump-drift},
and define \(a_{t,z}^N(x):=\lambda(x)\zeta_t^N(x,z)\).
Notice that
\begin{equation}
\label{eq:particle-coupling-spatial-rate}
\begin{aligned}
  |a_{t,z}(X_i)-a_{t,z}(Y_i)|
  \le C_T|z|(1-e^{-|D_i|}).
\end{aligned}
\end{equation}
Consequently, by \eqref{eq:particle-weighted-rate-difference} and
Cauchy-Schwarz,
\begin{equation}
\label{eq:particle-coupling-rate-integrals}
\begin{aligned}
  \frac1N\sum_{i=1}^N\int_{\R^d}
      (1\wedge|z|)|a_{t,z}^N(X_i)-a_{t,z}(Y_i)|\kappa(z)\,dz\le\|\lambda\|_\infty^{1/2}\mathcal A_t^N+C_T Z_N(t).
\end{aligned}
\end{equation}
At a point \((t,z,u)\) of \(\mathcal N_i\), equations
\eqref{eq:jump-particle-system} and
\eqref{eq:particle-comparison-process} give

\begin{equation}
\label{eq:particle-coupling-jump-bound}
\begin{aligned}
  |Z_N(t)-Z_N(t-)|
  &\le\frac{1\wedge|z|}{N}
       \left|\mathbf1_{\{u\le a_{t-,z}^N(X_i(t-))\}}
             -\mathbf1_{\{u\le a_{t,z}(Y_i(t-))\}}\right|.
\end{aligned}
\end{equation}
In particular, a jump accepted by both equations leaves \(D_i\)
unchanged. Using \eqref{eq:particle-compensated-Poisson-measure}, define
\begin{align}
  M_N^Z(t):=\sum_{i=1}^N\int_0^t\int_{\R^d}\int_0^\infty
  \Biggl[\Biggl(Z_N(s-)+\frac{e^{-|D_i(s-)|}}N
      -\frac1N e^{-\left|D_i(s-)+z\square
       \right|}
       \Biggr)^2&-Z_N(s-)^2\Biggr]
  \nonumber \\
    &\times\widetilde{\mathcal N}_i(ds,dz,du),\label{eq:particle-coupling-martingale}
\end{align}
where we write \(\square= \mathbf1_{\{u\le a_{s-,z}^N(X_i(s-))\}}
        -\mathbf1_{\{u\le a_{s,z}(Y_i(s-))\}}\) for brevity.
Both values of \(Z_N\) in \eqref{eq:particle-coupling-jump-bound}
belong to \([0,1]\). The absolute integrand in
\eqref{eq:particle-coupling-martingale} is therefore at most
\(
  2(1\wedge|z|)
  |\square|/N
\).
By \eqref{eq:particle-coupling-rate-integrals},
the expectation of the integral of its square is finite:
\begin{align}
  &\frac4{N^2}\sum_{i=1}^N\mathbb E\left[\int_0^T\int_{\R^d}
       (1\wedge|z|)^2
       |a_{s,z}^N(X_i(s))-a_{s,z}(Y_i(s))|
       \kappa(z)\,dz\,ds\right]\nonumber \\
  &\qquad\qquad\qquad\le\frac{C_T}{N}\int_0^T
       \mathbb E\bigl[\mathcal A_s^N+Z_N(s)\bigr]\,ds<\infty.\label{eq:particle-coupling-square-integrand-bound}
\end{align}
Here we used \(Z_N\le1\),
\eqref{eq:particle-rate-difference-energy-bound}, and
\eqref{eq:particle-energy-moment-bound}.

The integrands in \eqref{eq:particle-coupling-martingale}
are predictable: they depend on the adapted c\`adl\`ag
processes \(X\) and \(Y\) only through their left limits,
and the remaining coefficients are deterministic.
The estimate
\eqref{eq:particle-coupling-square-integrand-bound}
verifies square-integrability. Thus,
\cite[Theorem~1.12]{Eberle2013} shows that \(M_N^Z\)
is a square-integrable martingale with \(M_N^Z(0)=0\).
The theorem's It\^o isometry for the independent compensated
Poisson measures also gives
\begin{equation}
\label{eq:particle-coupling-quadratic-variation}
  \mathbb E\left[|M_N^Z(T)|^2\right]
  \le\frac{C_T}{N}\int_0^T
       \mathbb E\bigl[\mathcal A_s^N+Z_N(s)\bigr]\,ds
  <\infty.
\end{equation}
In particular, \(\mathbb E[M_N^Z(t)]=0\) for every
\(t\in[0,T]\).

Expand the square in \eqref{eq:particle-coupling-martingale} and use
\eqref{eq:particle-coupling-jump-bound}. After integrating in \(u,z\)
and summing over \(i\), the integral with respect to
\(ds\,\kappa(z)\,dz\,du\) is bounded above by the time integral of
\begin{align}
  &\frac{2Z_N(t)}N\sum_{i=1}^N\int_{\R^d}
       (1\wedge|z|)|a_{t,z}^N(X_i)-a_{t,z}(Y_i)|\kappa(z)\,dz\nonumber \\
  &\qquad\qquad\qquad\qquad+\frac1{N^2}\sum_{i=1}^N\int_{\R^d}
       (1\wedge|z|)^2|a_{t,z}^N(X_i)-a_{t,z}(Y_i)|\kappa(z)\,dz\nonumber \\
  &\quad\le C_T\bigl(2Z_N(t)+N^{-1}\bigr)
                   \bigl(\mathcal A_t^N+Z_N(t)\bigr)\nonumber \\
  &\quad\le\varepsilon(\mathcal A_t^N)^2
          +C_{\varepsilon,T}Z_N(t)^2
          +\frac{C_{\varepsilon,T}}{N^2}.\label{eq:particle-coupling-jump-drift-bound}
\end{align}
Here we used \eqref{eq:particle-coupling-rate-integrals} and Young's
inequality. Integrating \eqref{eq:particle-coupling-continuous-square}
between jumps and adding \eqref{eq:particle-coupling-martingale} and
\eqref{eq:particle-coupling-jump-drift-bound} proves
\eqref{eq:particle-coupling-evolution}.
\end{proof}

Adding the energy and coupling estimates absorbs the coupling errors. 
Lemma~\ref{lem:particle-iid-BL} controls
the remaining sampling error, so Gronwall’s inequality closes the estimate.

\begin{proposition}[Modified modulated energy closure]
\label{prop:modulated-energy-closure}
Let \(Y\) solve \eqref{eq:particle-comparison-process}, with
\eqref{eq:particle-comparison-initial-law} and
\eqref{eq:particle-driving-independence}.
There exists $C_T<\infty$, independent of $N$ and of the joint initial law satisfying the above hypotheses, such that
\begin{equation}
\label{eq:particle-modified-mean-field-bound}
\begin{aligned}
  \sup_{0\le t\le T}\mathbb E\left[\widetilde F_N(t)\right]
  &+\mathbb E\left[\int_0^T\bigl(\mathcal I_s^N+(\mathcal A_s^N)^2\bigr)\,ds\right]\le C_T\bigl(\mathbb E\left[\widetilde F_N(0)\right]
                    +\mathfrak n_d(N)^{r/(r+1)}\bigr).
\end{aligned}
\end{equation}
\end{proposition}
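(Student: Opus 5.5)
We add the modulated-energy identity of Lemma~\ref{prop:particle-modulated-evolution} to the coupling inequality of Lemma~\ref{lem:particle-coupling}, take expectations, and close with Gronwall. First, in \eqref{eq:particle-modulated-evolution} we insert the commutator bound \eqref{eq:particle-transport-commutator-bound}, the jump bound \eqref{eq:particle-jump-drift-bound} and the quadratic bound \eqref{eq:particle-jump-quadratic-bound}. This gives, almost surely for all $t\le T$,
\begin{equation*}
 F_N(t)+\int_0^t\bigl(\mathcal I_s^N+(\mathcal A_s^N)^2\bigr)\,ds
 \le F_N(0)+M_N(t)+C_T\int_0^t\bigl(F_N(s)+d_{\rm BL}(\rho_s^N,\rho_s)^2+N^{-1}\bigr)\,ds.
\end{equation*}
Adding \eqref{eq:particle-coupling-evolution} with $\varepsilon=1/2$, half of the dissipation on the left absorbs the terms $\varepsilon\mathcal I_s^N+\varepsilon(\mathcal A_s^N)^2$. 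The constant $C_{1/2,T}$ is fixed before this step, so the absorption is legitimate.

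Next, we use the triangle inequality \eqref{eq:particle-BL-coupling-triangle} to bound
\[
d_{\rm BL}(\rho_s^N,\rho_s)^2\le C Z_N(s)^2+2d_{\rm BL}(\overline\rho_s^N,\rho_s)^2 .
\]
By Lemma~\ref{prop:particle-comparison-process}, the $Y_i(s)$ are independent with law $\rho_s$. Lemma~\ref{lem:particle-iid-BL}, in the form \eqref{eq:particle-iid-BL-rate}, then bounds $\mathbb E[d_{\rm BL}(\overline\rho_s^N,\rho_s)^2]$ by $C\,\mathfrak n_d(N)^{r/(r+1)}$ times a power of $M_{r,T}$, uniformly on $[0,T]$. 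This sampling rate dominates $N^{-1}$ and $N^{-2}$ in every dimension. Since $\mathfrak n_d(N)\ge N^{-1}$ up to constants and $r/(r+1)<1$, we have $N^{-1}\lesssim\mathfrak n_d(N)^{r/(r+1)}$. We therefore obtain, almost surely,
\begin{equation*}
 \widetilde F_N(t)+\tfrac12\int_0^t\bigl(\mathcal I_s^N+(\mathcal A_s^N)^2\bigr)ds
 \le \widetilde F_N(0)+M_N(t)+M_N^Z(t)+C_T\int_0^t\bigl(\widetilde F_N(s)+d_{\rm BL}(\overline\rho_s^N,\rho_s)^2+\mathfrak n_d(N)^{r/(r+1)}\bigr)ds.
\end{equation*}

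Taking expectations is the main technical point. Both $M_N$ and $M_N^Z$ are true martingales with zero mean: $M_N$ is integrable by Lemma~\ref{prop:particle-modulated-evolution}, and $M_N^Z$ is square-integrable by \eqref{eq:particle-coupling-quadratic-variation}. Every remaining term is integrable by \eqref{eq:particle-energy-moment-bound}, by $Z_N\le1$, and by \eqref{eq:particle-rate-difference-energy-bound}, which bounds $\mathbb E\int(\mathcal A^N)^2$. For $\mathcal I^N$ we use the nonnegativity and take expectations only in the form where it sits on the left side, with Fatou if needed. Thus the function $\phi(t)=\mathbb E[\widetilde F_N(t)]$ is bounded on $[0,T]$. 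Gronwall then gives
\[
\sup_{t\le T}\phi(t)\le C_T\bigl(\mathbb E\widetilde F_N(0)+\mathfrak n_d(N)^{r/(r+1)}\bigr).
\]
Reinserting this into the inequality at $t=T$ bounds the expected integrated dissipations by the same quantity, which proves \eqref{eq:particle-modified-mean-field-bound}.

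Finally, every constant entering the argument depends only on $T$, on $\Lambda_T$ and $M_{r,T}$, and on $\lambda$ and $\kappa$. None of them depends on $N$ or on the joint law of $(X^0,Y^0)$. The one place this needs checking is that \eqref{eq:particle-energy-moment-bound} is used only qualitatively, to guarantee finiteness, and never quantitatively in the final constant.
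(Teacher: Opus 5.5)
Your proposal is correct and follows the paper's proof. You insert the commutator, jump-drift and quadratic bounds into the modulated-energy identity, add the coupling inequality with $\varepsilon=1/2$ to absorb the dissipation terms, and split $d_{\rm BL}(\rho_s^N,\rho_s)$ through the independent comparison empirical measure. You then take expectations using the two martingales and the finiteness from the energy-moment bound, and close with Gronwall before reinserting at $t=T$; this matches the paper in every essential step.
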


\begin{proof}
For each fixed \(t\), \Cref{prop:particle-comparison-process}
and \eqref{eq:particle-iid-BL-rate}, with
\(\nu=\rho_t\) and \(\mathfrak m=M_{r, T}\), give
\begin{equation}
\label{eq:particle-iid-BL-uniform-time}
  \sup_{0\le t\le T}\mathbb E\left[d_{\rm BL}(\overline\rho_t^N,\rho_t)^2\right]
  \le C_T\mathfrak n_d(N)^{r/(r+1)}.
\end{equation}
Squaring \eqref{eq:particle-BL-coupling-triangle}, we have
\begin{equation}
\label{eq:particle-BL-coupling-square}
  d_{\rm BL}(\rho_t^N,\rho_t)^2
  \le C Z_N(t)^2+2d_{\rm BL}(\overline\rho_t^N,\rho_t)^2.
\end{equation}
Using \eqref{eq:particle-modulated-evolution},
\eqref{eq:particle-transport-commutator-bound},
\eqref{eq:particle-jump-drift-bound},
\eqref{eq:particle-jump-quadratic-bound}, and
\eqref{eq:particle-BL-coupling-square}, we obtain
\begin{align}
  F_N(t)+\int_0^t\bigl(\mathcal I_s^N+(\mathcal A_s^N)^2\bigr)\,ds
  \le{}&F_N(0)+M_N(t)\nonumber \\
  &+C_T\int_0^t\left(
     F_N(s)+Z_N(s)^2
     +d_{\rm BL}(\overline\rho_s^N,\rho_s)^2+N^{-1}\right)ds.\label{eq:particle-modulated-energy-inequality}
\end{align}
Add \eqref{eq:particle-coupling-evolution} with
\(\varepsilon=1/2\). By
\eqref{eq:particle-modified-modulated-energy},
\begin{align}
  \widetilde F_N(t)
    +\frac12\int_0^t\bigl(\mathcal I_s^N+(\mathcal A_s^N)^2\bigr)\,ds
  \le{}&\widetilde F_N(0)+M_N(t)+M_N^Z(t)\nonumber \\
  &+C_T\int_0^t\left(
      \widetilde F_N(s)
      +d_{\rm BL}(\overline\rho_s^N,\rho_s)^2+N^{-1}\right)ds.\label{eq:particle-modified-energy-Gronwall}
\end{align}
The right-hand side is integrable by
\eqref{eq:particle-energy-moment-bound}, \(Z_N\le1\), and the
martingale assertions of
\Cref{prop:particle-modulated-evolution,lem:particle-coupling}. Taking expectations and using
\eqref{eq:particle-iid-BL-uniform-time} gives
\begin{align}
  \mathbb E\left[\widetilde F_N(t)\right]
  &+\frac12\mathbb E\left[\int_0^t
                    \bigl(\mathcal I_s^N+(\mathcal A_s^N)^2\bigr)\,ds\right]\le\mathbb E\left[\widetilde F_N(0)\right]
    +C_T\int_0^t\left(
       \mathbb E\left[\widetilde F_N(s)\right]
       +\mathfrak n_d(N)^{r/(r+1)}+N^{-1}\right)ds.
\end{align}
Since \(N^{-1}\le\mathfrak n_d(N)^{r/(r+1)}\), Gronwall's
inequality first bounds \(\sup_{0\le t\le T}\mathbb E\left[\widetilde F_N(t)\right]\).
Substituting this bound into the above inequality at \(t=T\) controls the
integrated dissipation and proves
\eqref{eq:particle-modified-mean-field-bound}.
\end{proof}

\subsection{Initial matching and mean-field consequences}\label{subsec:particle-initial-matching}

For a deterministic $X^0$, coupling its labelled coordinates to $Y^0\sim\rho_0^{\otimes N}$ need not give a small $Z_N(0)$. Sorting or optimally reordering only the sample $Y^0$ generally destroys its product law. The independent comparison construction and its sampling estimate would then no longer apply. We instead relabel both configurations by a common independent uniform permutation after matching. This preserves the product law of the comparison vector and changes only the labels of the interacting empirical measure.

\begin{lemma}\label{lem:particle-initial-matching}
Let $X^0$ have any law on $(\R^d)^N$, let $\nu\in\mathcal P_r(\R^d)$, and write $h(w)=1-e^{-|w|}$. There is a jointly exchangeable pair $(X'^0,Y'^0)$ such that $X'^0$ has the symmetrized law of $X^0$, $Y'^0\sim\nu^{\otimes N}$, $\rho_{X'^0}^N=\rho_{X^0}^N$, and
\begin{equation}\label{eq:particle-initial-matching-bound}
 \mathbb E\left[\left(\frac1N\sum_i h(X_i'^0-Y_i'^0)\right)^2\right]
 \le8\mathbb E\left[d_{\rm BL}(\rho_{X^0}^N,\nu)^2\right]+C_{d,r,\mathcal M_r(\nu)}\mathfrak n_d(N)^{r/(r+1)}.
\end{equation}
The pair can be chosen independently of all future driving noises.
\end{lemma}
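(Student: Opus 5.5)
The construction runs in three stages: independent sampling, optimal matching, then symmetrization by a common random permutation. Take $X^0$ and an independent $Y^0\sim\nu^{\otimes N}$, independent of all driving noises. Conditionally on $(X^0,Y^0)$, choose a permutation $\pi$ of $\{1,\dots,N\}$ that minimizes $\frac1N\sum_i h(X_i^0-Y^0_{\pi(i)})$, with a measurable tie-breaking rule.

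This matching destroys the product law of $Y^0$. To restore it, draw an independent uniform permutation $\sigma$ and set
\begin{equation*}
X_i'^0=X^0_{\sigma(i)},\qquad Y_i'^0=Y^0_{\pi(\sigma(i))}.
\end{equation*}
Then $\rho^N_{X'^0}=\rho^N_{X^0}$, and $X'^0$ has the symmetrized law of $X^0$. The pair $(X'^0,Y'^0)$ is jointly exchangeable, because relabelling by a further fixed permutation is absorbed into $\sigma$.

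The key distributional point is $Y'^0\sim\nu^{\otimes N}$. Conditionally on $(X^0,Y^0)$, the map $i\mapsto\pi(\sigma(i))$ is a uniform random permutation, since $\pi\circ\sigma$ is uniform whenever $\sigma$ is uniform and independent of $\pi$. So $Y'^0$ is a uniformly random relabelling of $Y^0$. An i.i.d.\ vector is invariant in law under any relabelling independent of it, and the randomized relabelling here depends on $Y^0$ only through an independent uniform draw. Hence the law of $Y'^0$ equals the law of $Y^0$. I would verify this by conditioning on the multiset of values of $Y^0$: given that multiset, the order of an i.i.d.\ vector is uniform, and applying an independent uniform permutation preserves this. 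Because $\sigma$, $X^0$ and $Y^0$ are all chosen independently of the noises, the same holds for the pair $(X'^0,Y'^0)$.

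For the bound, the matched cost $\frac1N\sum_i h(X_i^0-Y^0_{\pi(i)})$ equals the optimal transport cost between $\rho^N_{X^0}$ and $\rho^N_{Y^0}$ for the cost $h$, with uniform weights. By Birkhoff's theorem, optimal plans between two uniform empirical measures with $N$ atoms may be taken to be permutations. The function $h$ is concave in $|w|$, so $h(x-y)$ is a metric. Kantorovich--Rubinstein duality then identifies this cost with the supremum of $\int\varphi\,d(\rho^N_{X^0}-\rho^N_{Y^0})$ over $\varphi$ that are $1$-Lipschitz for the metric $h(x-y)$. Since $h\le1$ and $h\le|\cdot|$, every such $\varphi$, after subtracting a constant, satisfies $\|\varphi\|_\infty+\operatorname{Lip}(\varphi)\le 2$. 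Hence the cost is at most $2d_{\rm BL}(\rho^N_{X^0},\rho^N_{Y^0})$.

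The triangle inequality and squaring then give
\begin{equation*}
\text{cost}^2\le 8\,d_{\rm BL}(\rho^N_{X^0},\nu)^2+8\,d_{\rm BL}(\rho^N_{Y^0},\nu)^2.
\end{equation*}
The relabelling by $\sigma$ leaves the cost unchanged. The last term has expectation bounded by $C\,\mathfrak n_d(N)^{r/(r+1)}$ by the i.i.d.\ sampling estimate of Lemma~\ref{lem:particle-iid-BL}, applied with $\mathfrak m=\mathcal M_r(\nu)$. Tracking the constants carefully is the only delicate point. If the constant $2$ from the Lipschitz normalization costs more, the factor $8$ will be recovered by using $h\le 2\wedge|\cdot|$ directly in the duality instead.
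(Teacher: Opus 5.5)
Your proposal is correct and follows essentially the same route as the paper: independent sampling of the comparison vector, a measurably chosen optimal permutation for the cost $h$, relabelling both vectors by a common independent uniform permutation, the bound $W_h\le 2d_{\rm BL}$ from Kantorovich--Rubinstein duality, and Lemma~\ref{lem:particle-iid-BL} for the sampled term, giving the constant $8$ via $(2a+2b)^2\le 8a^2+8b^2$. The differences are minor: you cite Birkhoff's theorem where the paper gives an elementary forest argument for optimality of permutations. You also derive the product law from the fact that $\pi\circ\sigma$ is uniform conditionally on $(X^0,Y^0)$, hence independent of $Y^0$, which is equivalent to the paper's averaging identity over $S_N$.
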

\begin{proof}
Introduce the standard Kantorovich transport distance $W_h$ with cost $h(x-y)=1-e^{-|x-y|}$. This notation is used only for the matching argument. The cost is a bounded complete metric inducing the Euclidean topology: the triangle inequality follows from monotonicity and
$1-e^{-(a+b)}=(1-e^{-a})+e^{-a}(1-e^{-b})$, while completeness follows by converting a small $h$-distance to a small Euclidean distance. By Kantorovich--Rubinstein duality for a bounded metric,
\begin{equation}\label{eq:particle-matching-BL}
 W_h(\nu_1,\nu_2)\le2d_{\rm BL}(\nu_1,\nu_2).
\end{equation}
Indeed, after subtracting its value at zero, an $h$-Lipschitz test has supremum norm at most one and Euclidean Lipschitz seminorm at most one. Its half is admissible for $d_{\rm BL}$. The bounded-metric duality and comparison are also recorded in \cite[Section~2.1, (2.2)--(2.5)]{hauray2014kac}.

For equal-weight empirical measures, a minimum-cost coupling is realized by a permutation. Here is the finite-dimensional argument. A transport plan can be lifted to the labels, even if some locations coincide, giving a matrix of nonnegative weights with all row and column sums $1/N$. Multiply by $N$. Among cost-minimizing matrices choose one with the fewest positive entries. If its positive-entry bipartite graph contains a cycle, adding and subtracting a small number alternately along the cycle preserves all row and column sums. A nonzero cost slope would contradict minimality in one direction. A zero slope permits a perturbation until an entry vanishes, contradicting the minimal number of positive entries. Thus the graph is a forest. A leaf has a single incident edge of weight one; the vertex at its other end can have no other positive edge because its total weight is one. Removing this pair and repeating shows that the matrix is a permutation matrix. Therefore
\begin{equation}
 W_h(\rho_x^N,\rho_y^N)=\min_{\sigma\in S_N}\frac1N\sum_i h(x_i-y_{\sigma(i)}).
\end{equation}
Choose the first minimizing permutation in a fixed ordering of $S_N$. This choice is Borel measurable, since it is a minimizer of finitely many continuous costs with a fixed tie-breaking rule.

Sample $V^0\sim\nu^{\otimes N}$ independently of $X^0$. Match $X^0$ to $V^0$ by this permutation $\sigma$. Let $\tau$ be an independent uniform permutation and define
\begin{equation}
 X_i'^0=X_{\tau(i)}^0,\qquad
 Y_i'^0=V_{\sigma(\tau(i))}^0.
\end{equation}
Conditional on $(X^0,V^0)$, composition of $\tau$ with any fixed permutation leaves its uniform law unchanged. Thus the pair is jointly exchangeable and $X'^0$ has the symmetrized law of $X^0$. Moreover, for every bounded Borel function $\Phi$,
\begin{align}
 \mathbb E\left[\Phi(Y'^0)\right]
 &=\mathbb E\left[\frac1{N!}\sum_{\tau\in S_N}\Phi((V^0_{\sigma(\tau(i))})_{i=1}^N)\right]\nonumber \\
 &=\mathbb E\left[\frac1{N!}\sum_{\pi\in S_N}\Phi((V^0_{\pi(i)})_{i=1}^N)\right]
 =\mathbb E\left[\Phi(V^0)\right].
\end{align}
The middle equality holds for each fixed pair $(X^0,V^0)$ even though $\sigma$ depends on it. The last uses the product law of $V^0$. Hence $Y'^0$ has exactly the product law, not just the correct one-coordinate marginals.

The average matched cost is unchanged by the final common permutation. The triangle inequality for $W_h$ and \eqref{eq:particle-matching-BL} give
\begin{equation}
 \frac1N\sum_i h(X_i'^0-Y_i'^0)
 =W_h(\rho_{X^0}^N,\rho_{V^0}^N)
 \le2d_{\rm BL}(\rho_{X^0}^N,\nu)+2d_{\rm BL}(\rho_{V^0}^N,\nu).
\end{equation}
Squaring and applying Lemma~\ref{lem:particle-iid-BL} proves \eqref{eq:particle-initial-matching-bound}. All these initial random variables can be sampled before and independently of the Poisson measures. This proves the final assertion.
\end{proof}

\begin{proof}[Proof of Theorem~\ref{thm:particle-empirical-mean-field}]
Use Lemma~\ref{lem:particle-initial-matching} with $\nu=\rho_0$, and run the coupled systems from $(X'^0,Y'^0)$. Initial collision-freeness and the moment condition are preserved by permutations. The initial $F_N$ is unchanged, and \eqref{eq:particle-initial-matching-bound} gives
\begin{equation}
 \mathbb E\left[\widetilde F_N(0)\right]+\mathfrak n_d(N)^{r/(r+1)}\le C\Theta_N.
\end{equation}
Apply Proposition~\ref{prop:modulated-energy-closure}, then use \eqref{eq:particle-BL-coupling-square} and \eqref{eq:particle-iid-BL-uniform-time} to bound the additional bounded-Lipschitz error.

It remains to transfer the conclusion from the symmetrized initial law to the original one. The particle drift, acceptance rule, and diagonal convention are equivariant under permutations. Pathwise uniqueness implies that permuting both an initial vector and its labelled driving noises permutes the solution in the same way. The independent noises have a permutation-invariant law. Consequently, for each deterministic initial vector, the law of the empirical path is unchanged by permuting that vector. Averaging over the initial law and permutations proves equality of empirical-path laws for $X$ and $X'$. The quantities $F_N$, $d_{\rm BL}(\rho_t^N,\rho_t)$, $\mathcal I_t^N$, and $(\mathcal A_t^N)^2$ are all invariant under label permutations. Their laws, including their time integrals, therefore agree. The same argument gives the stronger estimate
\begin{equation}\label{eq:particle-empirical-integrated-error}
 \sup_{0\le t\le T}\mathbb E\left[F_N(t)+d_{\rm BL}(\rho_t^N,\rho_t)^2\right]
 +\mathbb E\left[\int_0^T\bigl(\mathcal I_t^N+(\mathcal A_t^N)^2\bigr)\,dt\right]
 \le C_T\Theta_N.
\end{equation}
In particular, \eqref{eq:particle-empirical-mean-field} holds for the original system.
\end{proof}

\begin{proof}[Proof of Corollary~\ref{thm:particle-iid-mean-field}]
    Choose \(Y_i^0=X_i^0\) in
\eqref{eq:particle-comparison-process}. Then \(Z_N(0)=0\), so
\(\widetilde F_N(0)=F_N(0)\). Independence gives
\begin{equation}
\label{eq:particle-initial-iid-MMD}
  \mathbb E[F_N(0)]
  =\frac1{2N}\iint_{\mathbb R^d\times\mathbb R^d}
       |x-y|^q\,d\rho_0(x)\,d\rho_0(y)
  \le\frac{C_q\mathcal M_q(\rho_0)}N.
\end{equation}
Using \eqref{eq:particle-initial-iid-MMD},
\(N^{-1}\le\mathfrak n_d(N)^{r/(r+1)}\) in
\eqref{eq:particle-modified-mean-field-bound}, and Cauchy-Schwarz proves
\eqref{eq:particle-iid-MMD-first-power}. 
\end{proof}

\begin{remark}[Qualitative convergence from the initial MMD]
\label{rem:particle-initial-MMD}
Under the other hypotheses of Theorem~\ref{thm:particle-empirical-mean-field},
$\mathbb E[F_N(0)]\to0$ suffices for $\Theta_N\to0$.
Indeed, \cite[Proposition~A.4]{rosenzweig2026wassersteingradientflowsmaximum}
shows that MMD convergence to the fixed $\rho_0$ implies narrow convergence,
hence bounded-Lipschitz convergence. For every $\eta>0$ there is $\delta>0$ such that
\begin{equation}
 \nu\in\mathcal P_q,\quad\mathcal E_q(\nu-\rho_0)<\delta
 \quad\Longrightarrow\quad d_{\rm BL}(\nu,\rho_0)<\eta.
\end{equation}
Otherwise a sequence contradicts that proposition. Since $d_{\rm BL}\le2$,
Markov's inequality gives
\begin{equation}
 \mathbb E\left[d_{\rm BL}(\rho_0^N,\rho_0)^2\right]
 \le\eta^2+4\mathbb P(F_N(0)\ge\delta)
 \le\eta^2+4\delta^{-1}\mathbb E[F_N(0)].
\end{equation}
Letting $N\to\infty$ and then $\eta\downarrow0$ proves the assertion.
This implication is qualitative. It does not supply a linear quantitative
estimate in the initial MMD alone.
\end{remark}

For exchangeable initial configurations, the mean-field estimate also yields
a result for finitely many labelled paths. Let $Q_T$ denote the law of one comparison process from
\Cref{prop:particle-comparison-process} on
$D([0,T];\mathbb R^d)$, with the Skorokhod $J_1$ topology; see \cite[Chapter~3, Section~5]{ethierkurtz1986markov}.
An exchangeable initial interacting law need not have independent
coordinates. Exchangeability is enough to pass from an averaged coupling
estimate to a bound for specified labels.

\begin{corollary}[Path-space propagation of chaos]
\label{thm:particle-propagation-of-chaos}
Assume the hypotheses of Theorem~\ref{thm:particle-empirical-mean-field}
and that the law of $X^0$ is exchangeable. There is a coupling of $X$
with independent comparison paths of law $Q_T$ such that, for every
$1\le k\le N$,
\begin{equation}\label{eq:particle-general-path-coupling}
 \mathbb E\left[\sum_{i=1}^k\sup_{0\le t\le T}
 \left(1-e^{-|X_i(t)-Y_i(t)|}\right)\right]\le C_T k\sqrt{\Theta_N}.
\end{equation}
If $\Theta_N\to0$, then, for every fixed $k$,
\begin{equation}\label{eq:particle-path-propagation-of-chaos}
 \operatorname{Law}(X_1|_{[0,T]},\ldots,X_k|_{[0,T]})
 \rightharpoonup Q_T^{\otimes k}
 \qquad\text{as }N\to\infty,\quad N\ge k,
\end{equation}
in the product Skorokhod topology. At every fixed deterministic time $t\ge0$,
\begin{equation}
 \operatorname{Law}(X_1(t),\ldots,X_k(t))\rightharpoonup\rho_t^{\otimes k}.
\end{equation}
\end{corollary}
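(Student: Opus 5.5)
We will reuse the coupled construction from the proof of Theorem~\ref{thm:particle-empirical-mean-field}, but now keep track of labelled paths rather than only empirical quantities. Apply Lemma~\ref{lem:particle-initial-matching} with $\nu=\rho_0$. Because $X^0$ is already exchangeable, its symmetrized law equals its own law. We may therefore take $X'^0$ itself as the initial configuration of the interacting system, with no transfer step between laws. Together with $Y'^0\sim\rho_0^{\otimes N}$, this gives a jointly exchangeable initial pair, independent of the Poisson family. From it we run $X$ via \eqref{eq:jump-particle-system} and $Y$ via \eqref{eq:particle-comparison-process}, using the same $\mathcal N_i$. By \Cref{prop:particle-comparison-process}, the paths $Y_1,\dots,Y_N$ are i.i.d.\ with law $Q_T$. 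The pathwise equations are equivariant under relabelling, and the noise family has a permutation-invariant law. Hence the joint law of $(X_i,Y_i)_{i\le N}$ on path space is exchangeable.

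The key step is a bound on $\mathbb E[\sup_{t\le T}Z_N(t)]$ by $C_T\sqrt{\Theta_N}$, and we will take it from the proof of Lemma~\ref{lem:particle-coupling}. Before expectations are taken, that proof gives pathwise
\[
\sup_{t\le T}Z_N(t)\le Z_N(0)+\int_0^T\bigl((\mathcal I_s^N)^{1/2}+\Lambda_TZ_N(s)\bigr)ds+\sup_{t\le T}|\widetilde M(t)|+\int_0^T\frac1N\sum_i\int(1\wedge|z|)|a^N_{s,z}(X_i)-a_{s,z}(Y_i)|\kappa\,dz\,ds .
\]
This combines the continuous derivative bound \eqref{eq:particle-coupling-continuous-bound} with the jump bound \eqref{eq:particle-coupling-jump-bound}. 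Here $\widetilde M$ is the compensated martingale for the jumps of $Z_N$ itself, not of $Z_N^2$.

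We then bound each term by $C_T\sqrt{\Theta_N}$ in expectation.
\begin{itemize}
\item The initial term is controlled by \eqref{eq:particle-initial-matching-bound} and Cauchy--Schwarz.
\item The integral of $(\mathcal I^N)^{1/2}$, and the acceptance-mismatch integral via \eqref{eq:particle-coupling-rate-integrals}, are controlled by Cauchy--Schwarz in time together with the integrated dissipation bound \eqref{eq:particle-empirical-integrated-error}.
\item The $Z_N$ integral is controlled by $\sup_t\mathbb E[Z_N^2]$ from Proposition~\ref{prop:modulated-energy-closure}.
\item For the martingale, Doob's $L^2$ inequality and the isometry bound analogous to \eqref{eq:particle-coupling-quadratic-variation} give $\mathbb E[\sup|\widetilde M|^2]\le C_TN^{-1}\int_0^T\mathbb E[\mathcal A^N_s+Z_N(s)]ds$. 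This is $O(N^{-1})\le C\Theta_N$.
\end{itemize}
Since each label's $\sup_t h(X_i-Y_i)$ is at most $N\sup$ of nothing useful, we bound the averaged quantity instead. We apply the same estimates to $\frac1N\sum_i\sup_t h(D_i(t))$ by running them for each $i$ before averaging. The drift and jump bounds are linear in $h(D_i)$, so summing the per-label inequalities reproduces the same right-hand side. Exchangeability then gives $\mathbb E[\sup_t h(D_i)]$ equal to the average for every $i$, and summing over $i\le k$ yields \eqref{eq:particle-general-path-coupling}. I expect this pathwise supremum step to be the main obstacle: the paper's estimates control only $\sup_t\mathbb E$, and the supremum must be kept inside the expectation through per-label Gronwall-free integration plus Doob's inequality.

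For \eqref{eq:particle-path-propagation-of-chaos}, consider $\mathrm d(\omega,\omega')=\sup_t(1\wedge|\omega(t)-\omega'(t)|)$. This uniform metric dominates a metric for the $J_1$ topology, and it is comparable to $\sup_t h$. So \eqref{eq:particle-general-path-coupling} with $\Theta_N\to0$ makes the $k$-tuple $(X_i)_{i\le k}$ converge in probability to $(Y_i)_{i\le k}$ in the product Skorokhod topology. The latter has law $Q_T^{\otimes k}$ for every $N$, which gives weak convergence. The fixed-time statement follows identically from $\mathbb E[h(X_i(t)-Y_i(t))]\to0$ and $\operatorname{Law}(Y_i(t))=\rho_t$, using \eqref{eq:particle-comparison-law}.
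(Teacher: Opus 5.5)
Your architecture matches the paper's: initial matching via Lemma~\ref{lem:particle-initial-matching} (whose symmetrized law is the original one under exchangeability), common-noise comparison paths, and exchangeability of the pairs. Exchangeability lets you pass from the label average $\frac1N\sum_i\sup_{t\le T}h(D_i(t))$ to the first $k$ labels. Uniform closeness then implies Skorokhod closeness. Your Gronwall-free treatment of the drift term, using $\sup_t\mathbb E[Z_N(t)^2]\le C_T\Theta_N$ from Proposition~\ref{prop:modulated-energy-closure}, is also fine.

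The gap is the jump term once the supremum is moved inside the label average. Running your inequality label by label produces $\frac1N\sum_i\sup_{t\le T}|\widetilde M_i(t)|$, where $\widetilde M_i$ is the compensated jump martingale of $h(D_i)$. This quantity dominates $\sup_{t\le T}\bigl|\frac1N\sum_i\widetilde M_i(t)\bigr|$ rather than being dominated by it. So summing the per-label inequalities does not reproduce your right-hand side. The factor $N^{-1}$ in your isometry bound comes from the jumps of $Z_N$ having size at most $(1\wedge|z|)/N$. It is lost per label, where the jumps of $h(D_i)$ have size up to $1\wedge|z|$. Doob's $L^2$ inequality per label, followed by Jensen over $i$ and \eqref{eq:particle-coupling-rate-integrals}, gives only $\frac1N\sum_i\mathbb E[\sup_t|\widetilde M_i(t)|]\le C_T\bigl(\int_0^T\mathbb E[\mathcal A_s^N+Z_N(s)]\,ds\bigr)^{1/2}$. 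Since $\int_0^T\mathbb E[\mathcal A_s^N]\,ds$ is only known to be $O(\sqrt{\Theta_N})$, this bound is $O(\Theta_N^{1/4})$. That suffices for the qualitative limits but not for the rate $C_Tk\sqrt{\Theta_N}$ in \eqref{eq:particle-general-path-coupling}.

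The fix is to drop the martingale entirely. Bound the jump part of $\sup_{t\le T}h(D_i(t))$ by the total absolute jump variation $\sum_{0<s\le T}|h(D_i(s))-h(D_i(s-))|$, whose label average $V_N(T)$ is nonnegative. The compensation formula for nonnegative integrands, together with \eqref{eq:particle-coupling-jump-bound} and \eqref{eq:particle-coupling-rate-integrals}, then gives $\mathbb E[V_N(T)]\le C_T\int_0^T\mathbb E[\mathcal A_s^N+Z_N(s)]\,ds\le C_T\sqrt{\Theta_N}$. This is linear in the mismatch rate rather than its square root, and it is exactly the paper's step. With it, your remaining estimates close. Your direct use of the closure bound for $\int_0^T\mathbb E[Z_N(s)]\,ds$ is an equivalent alternative to the paper's Gronwall argument for $H_N(t)=\frac1N\sum_i\sup_{u\le t}h(D_i(u))$.
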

\begin{proof}
Apply Lemma~\ref{lem:particle-initial-matching} with $\nu=\rho_0$.
Since the initial law is exchangeable, its symmetrization is the original
law. The joint initial pair is exchangeable and independent of the
exchangeable family of noises. Permutation equivariance and pathwise
uniqueness imply that the paired processes $(X_i,Y_i)_{i=1}^N$ are
exchangeable.

Write $h(w)=1-e^{-|w|}$, $D_i=X_i-Y_i$, and set
\begin{equation}
 H_N(t)=\frac1N\sum_i\sup_{0\le u\le t}h(D_i(u)),\qquad
 V_N(t)=\frac1N\sum_i\sum_{0<s\le t}|h(D_i(s))-h(D_i(s-))|.
\end{equation}
The jump sum is finite on compact intervals. Integrating the first
inequality in \eqref{eq:particle-coupling-continuous-bound} between jumps,
adding the absolute jump increments, and taking the supremum for each
label before averaging gives
\begin{equation}
 H_N(t)\le Z_N(0)+\int_0^t(\mathcal I_s^N)^{1/2}\,ds
             +\Lambda_T\int_0^tH_N(s)\,ds+V_N(t).
\end{equation}
Here Cauchy--Schwarz estimates the drift error, and
$\ell e^{-\ell}\le1-e^{-\ell}$ bounds the remaining continuous term.
The compensation formula, \eqref{eq:particle-coupling-jump-bound}, and
\eqref{eq:particle-coupling-rate-integrals} give
\begin{equation}
 \mathbb E[V_N(t)]\le C_T\int_0^t
                \mathbb E[\mathcal A_s^N+Z_N(s)]\,ds.
\end{equation}
Since $Z_N\le H_N$, Gronwall's inequality and Cauchy--Schwarz yield
\begin{equation}
 \mathbb E[H_N(T)]
 \le C_T\left(\mathbb E[Z_N(0)]
       +\left[\mathbb E\left[\int_0^T
            (\mathcal I_s^N+(\mathcal A_s^N)^2)\,ds\right]\right]^{1/2}\right)
 \le C_T\sqrt{\Theta_N},
\end{equation}
where the final bound follows from Lemma~\ref{lem:particle-initial-matching}
and Proposition~\ref{prop:modulated-energy-closure}.
Exchangeability implies
\begin{equation}
 \mathbb E\left[\sum_{i=1}^k\sup_{t\le T}h(D_i(t))\right]=k\mathbb E[H_N(T)],
\end{equation}
proving \eqref{eq:particle-general-path-coupling}.
For fixed $k$ and $\epsilon>0$, the probability that any of the first
$k$ uniform path separations exceeds $\epsilon$ is at most the left-hand
side divided by $1-e^{-\epsilon}$. Thus uniform separation tends to zero
in probability. The identity time change gives Skorokhod closeness, and
the comparison paths have law $Q_T^{\otimes k}$ for every $N$.
Testing against bounded Lipschitz functions in a metric generating the
product Skorokhod topology proves \eqref{eq:particle-path-propagation-of-chaos}.
The fixed-time assertion follows directly from the uniform coupling,
without requiring continuity of a time-evaluation map on all of path space.
\end{proof}

For independent initial sampling, the direct choice $Y^0=X^0$ has
$Z_N(0)=0$. The same proof and \eqref{eq:particle-initial-iid-MMD} give
\begin{equation}\label{eq:particle-pathwise-coupling-bound}
 \mathbb E\left[\sum_{i=1}^k\sup_{0\le t\le T}
 (1-e^{-|X_i(t)-Y_i(t)|})\right]
 \le C_T k\,\mathfrak n_d(N)^{r/(2(r+1))},\qquad1\le k\le N.
\end{equation}
More generally, \eqref{eq:particle-general-path-coupling} permits
$k=k_N$ whenever $k_N\sqrt{\Theta_N}\to0$. This is the displayed
bounded-cost comparison on changing product spaces, not convergence in
total variation or relative entropy; no optimality of that range is claimed.

Exchangeability cannot be omitted from a conclusion about the first
labels. For example, approximate a bounded density on a cube by distinct
deterministic grid points, but fix the first particle at a point outside
the cube. The configurations can remain in one compact set. Their
empirical measures converge narrowly and in MMD to the reference density,
yet the first initial marginal remains a point mass. The empirical
estimate applies, but cannot imply convergence of that distinguished
label on an interval containing zero.

\subsection{Long-time target approximation}
\label{subsec:particle-long-time}

Finally, the finite-time mean-field estimate and particle energy monotonicity
combine with continuum convergence to control the particle system on
diverging time intervals.
\begin{corollary}[Long-time target approximation]
\label{thm:particle-long-time-convergence}
Assume the hypotheses of Theorem~\ref{thm:particle-empirical-mean-field},
$r\ge2$, and $\Theta_N\to0$. Then
\begin{equation}\label{eq:particle-long-time-convergence}
 \lim_{S\to\infty}\limsup_{N\to\infty}
 \mathbb E\left[\sup_{t\ge S}\operatorname{MMD}_q(\rho_t^N,\mu)\right]=0.
\end{equation}
For every deterministic sequence $t_N\to\infty$, one also has
\begin{equation}\label{eq:particle-long-time-diagonal-convergence}
 \lim_{N\to\infty}\mathbb E\left[\sup_{t\ge t_N}
                    \operatorname{MMD}_q(\rho_t^N,\mu)\right]=0.
\end{equation}
\end{corollary}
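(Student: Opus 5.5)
The plan is to combine three ingredients: the finite-horizon mean-field estimate, pathwise monotonicity of the particle energy, and continuum convergence.

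Fix $S>0$. By Proposition~\ref{prop:particle-pathwise-energy-dissipation}, almost surely $t\mapsto\operatorname{MMD}_q(\rho_t^N,\mu)$ is nonincreasing. Hence
\begin{equation*}
\sup_{t\ge S}\operatorname{MMD}_q(\rho_t^N,\mu)=\operatorname{MMD}_q(\rho_S^N,\mu).
\end{equation*}
Since $\operatorname{MMD}_q$ is a metric (the RKHS norm of the mean-embedding difference), the triangle inequality gives
\begin{equation*}
\operatorname{MMD}_q(\rho_S^N,\mu)\le\operatorname{MMD}_q(\rho_S^N,\rho_S)+\operatorname{MMD}_q(\rho_S,\mu).
\end{equation*}
Take expectations and apply Cauchy--Schwarz to the first term. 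Theorem~\ref{thm:particle-empirical-mean-field} with $T=S$ then yields
\begin{equation*}
\mathbb E\Bigl[\sup_{t\ge S}\operatorname{MMD}_q(\rho_t^N,\mu)\Bigr]\le (C_S\Theta_N)^{1/2}+\operatorname{MMD}_q(\rho_S,\mu).
\end{equation*}
Because $\Theta_N\to0$ and $C_S$ does not depend on $N$, the $\limsup_{N\to\infty}$ of the left side is at most $\operatorname{MMD}_q(\rho_S,\mu)$. Theorem~\ref{thm:continuum-convergence}, which applies since $r\ge2$, sends this to zero as $S\to\infty$. This proves \eqref{eq:particle-long-time-convergence}.

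For \eqref{eq:particle-long-time-diagonal-convergence}, fix $S$. Since $t_N\to\infty$, eventually $t_N\ge S$, and monotonicity gives $\sup_{t\ge t_N}\le\sup_{t\ge S}$. So the $\limsup_N$ of the diagonal quantity is bounded by the same expression for every $S$, and letting $S\to\infty$ gives zero. No rate linking $t_N$ to $N$ is needed, which is exactly why monotonicity is used.

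The only step that needs care is integrability and measurability: the supremum over the uncountable time set must be a finite-mean random variable. Monotonicity reduces it to the single-time value $\operatorname{MMD}_q(\rho_S^N,\mu)$. This value is finite, and its square is integrable by the bound $\mathcal E_q\le C(\Psi_r+1+\mathcal M_r(\mu))$ together with \eqref{eq:particle-uniform-moment-bound}. I also need $\operatorname{MMD}_q$ to satisfy the triangle inequality on $\mathcal P_q$; this holds because it is an RKHS norm of mean-embedding differences, which are finite on $\mathcal P_{q/2}$.
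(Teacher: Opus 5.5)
Your proposal is correct and matches the paper's proof step for step. Pathwise monotonicity reduces the tail supremum to the value at time $S$; the triangle inequality and Cauchy--Schwarz, with the mean-field estimate on $[0,S]$, control the particle--continuum term; Theorem~\ref{thm:continuum-convergence} sends $\operatorname{MMD}_q(\rho_S,\mu)\to0$; and the diagonal statement follows because $t_N\ge S$ eventually, so the tail from $t_N$ is dominated by the tail from $S$.
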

\begin{proof}
By Proposition~\ref{prop:particle-pathwise-energy-dissipation} and the
triangle inequality, for fixed $S$,
\begin{equation}
 \mathbb E\left[\sup_{t\ge S}\operatorname{MMD}_q(\rho_t^N,\mu)\right]
 =\mathbb E\left[\operatorname{MMD}_q(\rho_S^N,\mu)\right]
 \le\sqrt{\mathbb E[F_N(S)]}+\operatorname{MMD}_q(\rho_S,\mu).
\end{equation}
First let $N\to\infty$ using
Theorem~\ref{thm:particle-empirical-mean-field}, then let $S\to\infty$
using Theorem~\ref{thm:continuum-convergence}.
For every fixed $S$, one has $t_N\ge S$ eventually, so the tail supremum
starting at $t_N$ is bounded by that starting at $S$.
Taking expectations, then the upper limit in $N$, and finally $S\to\infty$
proves \eqref{eq:particle-long-time-diagonal-convergence}.
\end{proof}
These conclusions do not give target convergence at fixed $N$ or a
uniform-in-time mean-field rate. The expectation of a tail supremum is
available here because the discrepancy from the target is pathwise
non-increasing; it does not change the order of supremum and expectation
in the finite-time modulated-energy estimate.

\section{Conclusions and further questions}\label{sec:conclusion}
We have established global well-posedness and target convergence for the
jump-modified energy-kernel flow under the stated density and moment hypotheses.
Among Borel probability measures with finite energy, the target is the only measure with zero jump dissipation. This allows us to identify narrow limits without a uniform-in-time density bound. The particle construction preserves energy
monotonicity pathwise, and the mean-field estimate gives quantitative finite-time
approximation and joint long-time target approximation. These results leave several questions concerning the continuum theory, quantitative convergence rates, and particle discretization.

The finite critical density exponent $p=d/(d+q-2)$ remains outside the
continuum construction when $d+q-2>0$. At this endpoint, the target field
$(-\Delta K)*\mu$ need not be bounded, and the density estimate used here
does not close. Extending the Cauchy theory therefore requires both an
appropriate uniqueness estimate and propagation of the critical density class.

The jump mechanism gives qualitative target convergence, but quantitative
rates beyond the stopped far-field regime remain to be determined. One may
seek a \L{}ojasiewicz-type estimate of the form
\begin{equation}\label{eq:future-local-dissipation}
 \mathcal D(\rho)\ge c\,\mathcal E_q(\rho-\mu)^{1+\alpha},
 \qquad c>0,\quad\alpha\ge0,
\end{equation}
for the total continuum dissipation relative to the fixed target $\mu$.
One must identify a subclass of the admissible densities in
Theorem~\ref{thm:continuum-wellposedness}, with $r\ge2$ and the target,
proposal, and spatial rate fixed, that is invariant after entry or contains
$\rho_t$ for every $t\ge T_0$, and on which $c$ is uniform.
A bound for the entry time $T_0$ is also required. Combined with the energy identity in
Proposition~\ref{prop:energy-dissipation}, such an estimate would
give exponential energy decay for $\alpha=0$ and algebraic energy decay of
order $(t-T_0)^{-1/\alpha}$ as $t\to\infty$ for $\alpha>0$; the MMD rates
follow by taking square roots. The size of the jump dissipation near the
ordinary flow's singular critical measures is one issue for such an estimate.

For pure transport, the work
\cite[Theorems~1.7 and~5.23, Proposition~5.21, and Corollary~5.27]
{rosenzweig2026wassersteingradientflowsmaximum} gives compactness and asymptotic
criticality in its stated regimes. The remaining target-selection question
is to determine which hypotheses exclude non-target singular limits.

A discrete-time particle algorithm should account for the singular transport
force and the exact energy constraint on accepted moves. General approximation
results for piecewise-deterministic processes \cite{bertazzi2022approximations}
provide a starting point, but do not by themselves give an energy-monotone
scheme for this model. A useful objective is a finite-time consistency estimate
with explicit dependence on the time step, particle number, and time horizon.
Energy monotonicity could then be combined with such an estimate for long-time
target approximation; it does not alone justify exchanging all three limits.
Numerical comparisons with transport-only and other MMD methods should use
matched computational costs and independent target samples, including in the
conditional-generation and inverse-problem settings of
\cite{hagemann2024posterior}.

\appendix
\section{Auxiliary results}
\label{app:auxiliary-estimates}
This appendix collects auxiliary results for positive linear jump-transport equations, the jump-term product estimate, and empirical bounded-Lipschitz bounds.
\subsection{Positive linear jump-transport equations}
\label{app:positive-linear}

\begin{lemma}
\label{lem:linear-jump-transport}
Let \(X=L^1(\mathbb R^d)\cap L^p(\mathbb R^d)\), where
\(1<p<\infty\), and let \(T>0\). Suppose
\begin{equation}
\label{eq:linear-coefficient-assumptions}
  v\in L^\infty(0,T;W^{1,\infty}(\mathbb R^d;\mathbb R^d)),
\end{equation}
and let \(J_{t,1},J_{t,2}:X\to X\) be strongly
measurable families of bounded operators, uniformly in \(t\). Assume that
\(J_{t,1}\) is positive and that
\(J_{t,2}\) is multiplication by a function
\(j_2\in L^\infty((0,T)\times\mathbb R^d)\) satisfying
\(j_2\ge0\). Then
\begin{equation}
\label{eq:linear-frozen-jump-transport}
  \partial_tu+\nabla\!\cdot(uv)
  =J_{t,1}u-J_{t,2}u,
  \qquad u(0)=u_0\in X,
\end{equation}
has a unique mild solution \(u\in C([0,T];X)\). If \(u_0\ge0\), then
\(u_t\ge0\). If
\begin{equation}
\label{eq:linear-mass-balance}
  \int_{\mathbb R^d}\left(J_{t,1}h\right)(x)\,dx
  =\int_{\mathbb R^d}\left(J_{t,2}h\right)(x)\,dx
\end{equation}
for every nonnegative \(h\in X\), then the solution preserves its total
mass. Every \(L^\infty(0,T;X)\) distributional solution of
\eqref{eq:linear-frozen-jump-transport} agrees
a.e. in time with this mild solution.
\end{lemma}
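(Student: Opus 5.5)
The plan is to build the solution from the transport evolution and the jump operators through a Duhamel (mild) formulation, then prove positivity, mass balance, and the distributional-to-mild identification separately. Since $v\in L^\infty(0,T;W^{1,\infty})$, the Cauchy--Lipschitz theorem gives a flow $\Phi_{t,s}$ of bi-Lipschitz homeomorphisms. The associated transport evolution
\[
U_v(t,s)h=(\Phi_{t,s})_\#h=h\circ\Phi_{s,t}\,\det D\Phi_{s,t}
\]
solves $\partial_t u+\nabla\cdot(uv)=0$. By Liouville's formula, this evolution preserves positivity and total mass. It also satisfies
\[
\|U_v(t,s)\|_{L^a\to L^a}\le\exp\Bigl((1-a^{-1})\int_s^t\|\nabla\cdot v_\tau\|_\infty\,d\tau\Bigr)\quad\text{for }1\le a\le\infty.
\]
In particular it is a uniformly bounded, strongly continuous evolution family on $X=L^1\cap L^p$. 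Strong continuity follows by density of $C_c$ together with continuity of the flow.

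I would handle the multiplication operator by absorbing it into the transport. Define the damped evolution
\[
S(t,s)h=U_v(t,s)\bigl[h\bigr]\cdot\exp\Bigl(-\int_s^t j_2(\tau,\Phi_{\tau,t}(\cdot))\,d\tau\Bigr),
\]
which is the solution of $\partial_t u+\nabla\cdot(uv)=-j_2u$. This family is positive and bounded on $X$ because $j_2\ge0$ is bounded. The mild equation then reads
\[
u_t=S(t,0)u_0+\int_0^tS(t,\tau)J_{\tau,1}u_\tau\,d\tau,
\]
where strong measurability of $J_{\cdot,1}$ makes the integrand Bochner integrable in $X$. Picard iteration converges in $C([0,T];X)$, since the $n$-th term is bounded by $(Ct)^n/n!$. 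Uniqueness of mild solutions follows by Gronwall's inequality. Starting from $u^{(0)}=S(\cdot,0)u_0\ge0$, every Picard iterate is nonnegative because $S$ and $J_{t,1}$ are positive, so the limit is nonnegative. Equivalently, the mild formula with $-J_{t,2}$ treated as a perturbation of $U_v$ yields the same solution, since both fixed points satisfy the same Gronwall-unique equation.

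For mass conservation, I would note that for nonnegative data the solution is nonnegative. I would test the mild formula against $1$, using $\int U_v h=\int h$. This gives
\[
\frac{d}{dt}\int u_t=\int J_{t,1}u_t-\int J_{t,2}u_t=0
\]
in integrated form, by \eqref{eq:linear-mass-balance}. For signed $u_0$, I would split $u_0=u_0^+-u_0^-$ and use linearity. (The hypothesis is stated only for nonnegative $h$, so this splitting is exactly what is needed.)

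The main obstacle is the uniqueness of distributional solutions in $L^\infty(0,T;X)$. I would use the DiPerna--Lions/Ambrosio duality argument; Lipschitz $v$ makes it elementary. Let $w$ be the difference between a distributional solution and the mild solution. Then $w$ solves the same linear equation distributionally with $w(0)=0$, and $J_{t,1}w-j_2w\in L^\infty(0,T;X)$ is known. Hence $w$ is a distributional solution of the pure transport equation with a source $g\in L^\infty(0,T;L^1\cap L^p)$.

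I would mollify in space, $w_\delta=\eta_\delta*w$. The commutator $\nabla\cdot(w v)*\eta_\delta-\nabla\cdot(w_\delta v)$ tends to $0$ in $L^1_{\rm loc}$, by the DiPerna--Lions commutator lemma, which applies since $v\in W^{1,\infty}$. Solving for $w_\delta$ along characteristics gives the Duhamel formula
\[
w_t=\int_0^tU_v(t,\tau)g_\tau\,d\tau
\]
a.e. in $t$. This shows $w$ is itself a mild solution with zero datum, and Gronwall's inequality forces $w=0$. Alternatively, one can test against $\psi_\tau=U_v(t,\tau)^*\varphi=\varphi\circ\Phi_{t,\tau}$, which is Lipschitz and compactly supported, after time-mollification, to obtain the same identity directly. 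The care needed is in justifying these test functions with only $L^\infty$-in-time regularity of $w$. I would handle this by Lebesgue points in $t$ and weak-$*$ continuity of $t\mapsto\int\varphi w_t$, which follows from the equation.
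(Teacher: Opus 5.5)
Your proposal is correct and follows essentially the paper's route: a Duhamel formula built on the bi-Lipschitz transport evolution $U_v$, Picard iteration for existence and positivity, testing against $1$ for mass conservation, and duality against the backward-transport test functions $\varphi\circ\Phi_{t,\tau}$ followed by Gronwall for the distributional-to-mild identification. The differences are minor. You absorb $j_2$ into a damped evolution along characteristics; this needs the short variation-of-constants identity linking it to the $U_v$-Duhamel formula, which you assert rather than verify, but it follows directly along characteristics. The paper instead adds and subtracts $\|j_2\|_{L^\infty}u$, so that $\|j_2\|_{L^\infty}I-J_{t,2}$ is positive. Finally, you split the initial datum to get mass conservation for signed data, whereas the paper extends the balance identity to signed $h$ by linearity.
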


\begin{proof}
Let \(U_v(t,s)\) be the transport evolution induced by the globally
bi-Lipschitz flow of \(v\); see, for example,
\cite[Lemma~8.1.4]{AGS08}. Its Jacobian bounds give
\begin{equation}
\label{eq:transport-evolution-bounds}
  \|U_v(t,s)g\|_{L^1}=\|g\|_{L^1},\qquad
  \|U_v(t,s)g\|_{L^p}
  \le e^{\|\nabla\cdot v\|_\infty|t-s|/p_*}\|g\|_{L^p}.
\end{equation}
The mild formulation of \eqref{eq:linear-frozen-jump-transport} is
\begin{equation}
\label{eq:linear-frozen-duhamel-formula}
  u(t)=U_v(t,0)u_0+
  \int_0^tU_v(t,s)
  \left(J_{s,1}u(s)-J_{s,2}u(s)\right)\,ds.
\end{equation}
The bounds \eqref{eq:transport-evolution-bounds} show that
\eqref{eq:linear-frozen-duhamel-formula} is a contraction on sufficiently short time intervals in
\(C([t_0,t_1];X)\), and iteration gives the unique solution on \([0,T]\).
Positivity follows by adding and subtracting \(\|j_{2}\|_{L^\infty((0,T)\times\mathbb R^d)} u\), and applying
Picard iteration to the resulting positive integral equation; indeed,
\(\|j_{2}\|_{L^\infty((0,T)\times\mathbb R^d)}I-J_{t,2}\) is positive. Since \(U_v(t,s)\) preserves the integral, Fubini's theorem in
\eqref{eq:linear-frozen-duhamel-formula} gives
\begin{equation}
  \int_{\mathbb R^d}u(t,x)\,dx
  =\int_{\mathbb R^d}u_0(x)\,dx
   +\int_0^t\int_{\mathbb R^d}
       \bigl(J_{s,1}u(s)-J_{s,2}u(s)\bigr)(x)\,dx\,ds.
\end{equation}
The last integrand vanishes by \eqref{eq:linear-mass-balance}, whose
validity for signed functions follows by linearity from their positive
and negative parts. This proves mass conservation.
Finally, testing the difference of a distributional and a mild solution
against the backward transport equation yields
\eqref{eq:linear-frozen-duhamel-formula};
Gronwall's inequality then gives uniqueness.
\end{proof}

\subsection{A product estimate for the jump term}
\label{app:particle-product-estimate}

As before, let $0<q<2$ with $d+q\ge2$, and set $s_q=(d+q)/2$, as in Section~\ref{subsec:particle-mean-field}. The bounded-Lipschitz distance is
defined in \eqref{eq:particle-BL-distance}; we use the H\"older seminorm
$[u]_{C^\alpha}=\sup_{x\ne y}|u(x)-u(y)|/|x-y|^\alpha$ and the Fourier
convention \eqref{eq:fourier-convention}.

\begin{lemma}
\label{lem:particle-critical-product}
Let \(\nu,\omega\in\mathcal P_q(\R^d)\), \(\vartheta=\nu-\omega\),
and
\begin{equation}
\label{eq:particle-product-assumptions}
  a\in W^{1,\infty}(\R^d),
  \qquad u\in\dot H^{s_q}(\R^d)\cap L^\infty(\R^d)\cap C^{q/2}(\R^d).
\end{equation}
Then
\begin{equation}
\label{eq:particle-critical-product-bound}
\begin{aligned}
  \left|\int_{\R^d}au\,d\vartheta\right|
  \le C_{d,q}\|a\|_{W^{1,\infty}}\bigl(
    \mathcal E_q(\vartheta)^{1/2}\|u\|_{\dot H^{s_q}}
    +d_{\rm BL}(\nu,\omega)
       (\|u\|_{L^\infty}+[u]_{C^{q/2}})\bigr).
\end{aligned}
\end{equation}
\end{lemma}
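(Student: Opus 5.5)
The plan is a frequency (or spatial-scale) splitting of \(u\) at a unit scale. Fix a smooth Littlewood--Paley cutoff and write \(u=u_{\le1}+u_{>1}\), where \(\widehat{u_{\le1}}=\chi(\xi)\widehat u\) with \(\chi\in C_c^\infty\) equal to one near the origin. Equivalently, \(u_{\le1}=\phi*u\) for a Schwartz kernel \(\phi\) with \(\int\phi=1\). The low-frequency piece \(a\,u_{\le1}\) is bounded and Lipschitz, so its pairing with \(\vartheta\) is controlled by \(d_{\rm BL}\). The high-frequency piece \(a\,u_{>1}\) is paired with \(\vartheta\) in \(\dot H^{s_q}\)--\(\dot H^{-s_q}\) duality, using \(\mathcal E_q(\vartheta)=c_{d,q}\|\vartheta\|_{\dot H^{-s_q}}^2\). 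The pairing is justified for zero-mass \(\vartheta\in\mathcal P_q\)-differences by the same truncation argument as in Lemma~\ref{lem:normalized-witness-pairing}.

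\emph{Low part.} We have \(\|u_{\le1}\|_\infty\le\|\phi\|_{L^1}\|u\|_\infty\). For the Lipschitz bound, since \(\int\nabla\phi=0\),
\[
\nabla u_{\le1}(x)=\int\nabla\phi(y)\bigl(u(x-y)-u(x)\bigr)\,dy ,
\]
so \(\|\nabla u_{\le1}\|_\infty\le[u]_{C^{q/2}}\int|y|^{q/2}|\nabla\phi(y)|\,dy\). Hence
\[
\|a u_{\le1}\|_\infty+\operatorname{Lip}(a u_{\le1})\le C\|a\|_{W^{1,\infty}}\bigl(\|u\|_\infty+[u]_{C^{q/2}}\bigr),
\]
and \(\bigl|\int a u_{\le1}\,d\vartheta\bigr|\) is bounded by that constant times \(d_{\rm BL}(\nu,\omega)\).

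\emph{High part.} The bound
\[
\Bigl|\int a u_{>1}\,d\vartheta\Bigr|\le C\,\mathcal E_q(\vartheta)^{1/2}\|a u_{>1}\|_{\dot H^{s_q}}
\]
reduces the problem to the multiplier estimate \(\|a u_{>1}\|_{\dot H^{s_q}}\le C\|a\|_{W^{1,\infty}}\|u\|_{\dot H^{s_q}}\). Because \(u_{>1}\) has no low frequencies, \(\|u_{>1}\|_{L^2}\le\|u\|_{\dot H^{s_q}}\) and \(\|u_{>1}\|_{\dot H^{s}}\le\|u\|_{\dot H^{s_q}}\) for every \(0\le s\le s_q\), so \(u_{>1}\) lies in the inhomogeneous space \(H^{s_q}\).

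I expect this multiplier bound to be the main obstacle. The issue is that \(s_q\in(d/2,d/2+1)\) may exceed \(1\), while \(a\) is only \(W^{1,\infty}\), and \(W^{1,\infty}\) functions are multipliers on \(H^s\) only for \(s\le1\). When \(s_q\le1\), which happens exactly for \(d=1\), interpolation between \(L^2\) and \(H^1\) multiplier bounds suffices. For \(d\ge2\), I would try either of two routes.

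\emph{First route: a weaker norm.} Replace the \(\dot H^{s_q}\) pairing by the Sobolev-type duality used for \(\mathcal E_q\): write \(s_q=1+(s_q-1)\) and distribute one derivative onto \(a\) via the Kato--Ponce/Leibniz rule with \(\nabla a\in L^\infty\). The cost is that the remaining term \(\|\nabla a\cdot|\nabla|^{s_q-1}u\|\) needs a fractional multiplier bound for \(L^\infty\) functions.

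\emph{Second route: an honest low/high commutator split.} Write \(a u_{>1}=(a_{\le1}u_{>1})+(a_{>1}u_{>1})\). The smooth part \(a_{\le1}\) has all derivatives bounded by \(\|a\|_{W^{1,\infty}}\) and is a multiplier on \(H^{s_q}\). The rough part \(a_{>1}\) still needs the pairing with \(\vartheta\) to be absorbed into the \(d_{\rm BL}\) term, using the Hölder and \(L^\infty\) control on \(u\).

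If neither route closes, the statement as written is likely intended with the constant depending on these structural norms, and I would verify the multiplier step carefully in the application, where \(a=a_{t,z}\) is built from \(\lambda\) and the smooth potential increment.
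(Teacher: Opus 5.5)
Your low-frequency step is correct. The case $s_q\le1$ also closes by interpolation; within the lemma's range $d+q\ge2$ this is only $d=q=1$, not all of $d=1$, since $d=1$, $q>1$ already gives $s_q>1$. In every other case there is a genuine gap at exactly the step you flag: the product $a\,u_{>1}$ is never estimated.

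Your first route cannot close. Any Leibniz or Kato--Ponce bound for $\||\nabla|^{s_q}(a u_{>1})\|_{L^2}$ places $s_q-1>0$ further derivatives on $\nabla a$, and $a\in W^{1,\infty}$ does not have them.

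Your second route treats $a_{\le1}u_{>1}$ correctly but sends the wrong object to the $d_{\rm BL}$ side. The function $a_{>1}u_{>1}$ is not Lipschitz: its gradient contains $a_{>1}\nabla u_{>1}$, which is unbounded wherever $u$ is merely $C^{q/2}$ and $a_{>1}\ne0$. In dyadic terms, consider the interactions in which $u$ oscillates much faster than $a$, namely $\sum_k S_{k-1}(a_{>1})\,\Delta_k u$. Their gradients have size $\sum_k 2^{k}\|\Delta_k u\|_\infty$, which diverges for generic H\"older $u$, so these interactions must go to the $\dot H^{s_q}$ side. Splitting only by the frequency of $a$ cannot separate the two regimes. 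The closing suggestion to change the constants is not a proof of the stated inequality.

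The missing idea is to split according to which factor carries the higher frequency. This is Bony's decomposition $a u_{>1}=T_a u_{>1}+T_{u_{>1}}a+R(a,u_{>1})$, which is what the paper uses.

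\begin{itemize}
\item The paraproduct $T_a u_{>1}=\sum_j S_{j-1}a\,\Delta_j u$ has pieces with boundedly overlapping annular Fourier supports, and $\|S_{j-1}a\|_\infty\le C\|a\|_\infty$. Plancherel then bounds it in $\dot H^{s_q}$ by $C\|a\|_\infty\|u\|_{\dot H^{s_q}}$.
\item The resonant term $R$ uses $\|\Delta_j a\|_\infty\le C2^{-j}\|a\|_{W^{1,\infty}}$ and the fact that its output frequencies are at most comparable to its input frequencies. This again gives a $\dot H^{s_q}$ bound.
\item Only $T_{u_{>1}}a=\sum_j S_{j-1}u_{>1}\,\Delta_j a$ is paired against $d_{\rm BL}$, together with the low-frequency output $P_{\le0}(a u_{>1})$. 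Moving the latter to that side means the Sobolev piece has Fourier support away from the origin.
\end{itemize}

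The Lipschitz bound for $T_{u_{>1}}a$ is where the H\"older seminorm enters.
\begin{itemize}
\item The terms $\nabla S_{j-1}u_{>1}\,\Delta_j a$ have sup norms $O(2^{-jq/2}[u]_{C^{q/2}}\|\nabla a\|_\infty)$, which are summable.
\item The terms $S_{j-1}u_{>1}\,\nabla\Delta_j a$ are not summable individually. Regroup them as $\sum_j(S_{j-1}u_{>1}-u_{>1})\nabla\Delta_j a+u_{>1}\nabla(a-\Delta_0a)$ and use $\|S_{j-1}u_{>1}-u_{>1}\|_\infty\le C2^{-jq/2}[u]_{C^{q/2}}$.
\end{itemize}
This is why $\|u\|_{L^\infty}+[u]_{C^{q/2}}$, rather than a Sobolev norm of $u$, multiplies $d_{\rm BL}(\nu,\omega)$ in the statement.
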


We use the standard Bony decomposition \cite{bony1981calcul}; the estimates below separate a Sobolev term from a bounded Lipschitz term.

\begin{proof}[Proof of Lemma~\ref{lem:particle-critical-product}]
We decompose \(au\) into a term controlled in \(\dot H^{s_q}\)
and a bounded Lipschitz term controlled by \(d_{\rm BL}\).
We first suppose that \(a\) and \(u\) are smooth; the mollification by $\eta_\epsilon$ used later removes this
restriction.

Since \(\nu,\omega\in\mathcal P_q(\R^d)\), the representation
\eqref{eq:energy} implies \(\mathcal E_q(\vartheta)<\infty\).
Set \(\alpha_q:=q/2\in(0,1)\). Choose a radial function
\(\chi\in C_c^\infty(\R^d)\), with \(0\le\chi\le1\), such that
\(\chi=1\) on \(B(0,3/4)\) and
\(\operatorname{supp}\chi\subset B(0,4/3)\), and set
\(\phi(\xi):=\chi(\xi)-\chi(2\xi)\). Then
\begin{equation}
  \chi(\xi)+\sum_{j\ge1}\phi(2^{-j}\xi)=1.
\end{equation}
Define the Fourier multipliers
\begin{subequations}\label{eq:particle-critical-LP-decomposition}
\begin{align}
  \widehat{\Delta_0u}(\xi)&:=\chi(\xi)\widehat u(\xi),
  &\widehat{\Delta_ju}(\xi)&:=\phi(2^{-j}\xi)\widehat u(\xi),
  \quad j\ge1,\\
  P_{\le0}&:=\Delta_0,
  &P_{>0}&:=I-P_{\le0},\\
  S_{j-1}&:=\sum_{0\le k\le j-2}\Delta_k,
  &S_{j-1}^+&:=\sum_{1\le k\le j-2}\Delta_k,
\end{align}
\end{subequations}
where empty sums are zero. Set
\begin{equation}
\label{eq:particle-critical-low-high-functions}
  u_{\rm lo}:=\Delta_0u,
  \qquad
  u_{\rm hi}:=\sum_{j\ge1}\Delta_ju=P_{>0}u.
\end{equation}
The operators \(P_{\le0}\) and \(\nabla P_{\le0}\) have fixed
\(L^1\) convolution kernels. Young's inequality, the product rule,
and the \(L^2\) Fourier-multiplier bound give
\begin{subequations}\label{eq:particle-critical-frequency-splitting}
\begin{align}
  \|u_{\rm hi}\|_{L^\infty}+[u_{\rm hi}]_{C^{\alpha_q}}
  &\le C_{d,q}
       \left(\|u\|_{L^\infty}+[u]_{C^{\alpha_q}}\right),\\
  \|u_{\rm hi}\|_{\dot H^{s_q}}
  &\le C_{d,q}\|u\|_{\dot H^{s_q}},\\
  \|a u_{\rm lo}\|_{L^\infty}
  +\operatorname{Lip}(a u_{\rm lo})
  &\le C_{d,q}\|a\|_{W^{1,\infty}}\|u\|_{L^\infty}.
\end{align}
\end{subequations}
The same convolution bounds yield
\begin{equation}
\label{eq:particle-critical-low-output-bounds}
  \|P_{\le0}(a u_{\rm hi})\|_{L^\infty}
  +\operatorname{Lip}\bigl(P_{\le0}(a u_{\rm hi})\bigr)
  \le C\|a\|_{L^\infty}\|u_{\rm hi}\|_{L^\infty},
\end{equation}
and, since \(P_{>0}=I-P_{\le0}\),
\begin{equation}
\label{eq:particle-critical-high-output-bounds}
  \|P_{>0}h\|_{L^\infty}+\operatorname{Lip}(P_{>0}h)
  \le C\left(\|h\|_{L^\infty}+\operatorname{Lip}(h)\right)
\end{equation}
for every bounded Lipschitz function \(h\).

Grouping the terms in
\(a u_{\rm hi}=(\sum_{j\ge0}\Delta_ja)(\sum_{k\ge1}\Delta_ku)\)
according to whether \(j\le k-2\), \(k\le j-2\), or
\(|j-k|\le1\), we obtain
\begin{equation}
\label{eq:particle-critical-Bony-decomposition}
  a u_{\rm hi}=T_a u_{\rm hi}+T_{u_{\rm hi}}a+R(a,u_{\rm hi}),
\end{equation}
where
\begin{equation}
\label{eq:particle-critical-paraproducts}
  T_a u_{\rm hi}:=\sum_{j\ge1}S_{j-1}a\,\Delta_j u,
  \quad
  T_{u_{\rm hi}}a:=\sum_{j\ge1}S_{j-1}^+u\,\Delta_j a, \quad   R(a,u_{\rm hi})
  :=\sum_{k\ge1}\sum_{\substack{j\ge0\\|j-k|\le1}}
    \Delta_j a\,\Delta_k u.
\end{equation}
Define
\begin{equation}
\label{eq:particle-critical-product-decomposition}
\begin{aligned}
  G:=P_{>0}\left(T_a u_{\rm hi}+R(a,u_{\rm hi})\right),\quad H:=a u_{\rm lo}+P_{\le0}(a u_{\rm hi})
      +P_{>0}T_{u_{\rm hi}}a,
\end{aligned}
\end{equation}
so that \(au=G+H\). 

For the chosen cutoffs, each nonzero term
\(S_{j-1}a\,\Delta_j u\) has Fourier support in an annulus
of size \(2^j\), and these annuli have bounded overlap.
Since \(\|S_{j-1}a\|_\infty\le C\|a\|_\infty\),
Plancherel's theorem gives
\begin{equation}
\label{eq:particle-critical-high-low-bound}
  \|P_{>0}T_a u_{\rm hi}\|_{\dot H^{s_q}}
  \le C_{d,q}\|a\|_{L^\infty}\|u\|_{\dot H^{s_q}}.
\end{equation}
The cancellation of the kernels of \(\Delta_j\), \(j\ge1\),
also gives
\begin{equation}
\label{eq:particle-critical-coefficient-block-bound}
  \|\Delta_j a\|_{L^\infty}
  \le C2^{-j}\|a\|_{W^{1,\infty}},
  \qquad j\ge0,
\end{equation}
where \(j=0\) follows from the \(L^\infty\) convolution bound.
In the remainder, \(\Delta_\ell(\Delta_j a\,\Delta_k u)\)
vanishes unless \(k\ge\ell-C_0\), for a fixed integer \(C_0\).
Consequently, for \(\ell\ge1\),
\begin{align}
  2^{s_q\ell}\|\Delta_\ell R(a,u_{\rm hi})\|_{L^2}
  &\le C\|a\|_{W^{1,\infty}}
       \sum_{k\ge1\vee(\ell-C_0)}
       2^{s_q\ell-k}\|\Delta_k u\|_{L^2}\nonumber \\
  &=C\|a\|_{W^{1,\infty}}2^{-\ell}
       \sum_{m\ge(1-\ell)\vee(-C_0)}
       2^{-(s_q+1)m}
       2^{s_q(\ell+m)}\|\Delta_{\ell+m}u\|_{L^2}.\label{eq:particle-critical-remainder-summability}
\end{align}
Since \(s_q>0\), the sequence
\((2^{-(s_q+1)m})_{m\ge-C_0}\) belongs to \(\ell^1\).
Taking the \(\ell^2\)-norm in \(\ell\), applying Young's
convolution inequality on \(\mathbb Z\), and using the
Littlewood--Paley characterization of \(\dot H^{s_q}\), we obtain
\begin{equation}
\label{eq:particle-critical-remainder-bound}
  \|P_{>0}R(a,u_{\rm hi})\|_{\dot H^{s_q}}
  \le C_{d,q}\|a\|_{W^{1,\infty}}\|u\|_{\dot H^{s_q}}.
\end{equation}
It remains to estimate \(T_{u_{\rm hi}}a\).
For the chosen cutoffs,
\(\Delta_k u=\Delta_k u_{\rm hi}\) for \(k\ge2\), while
\(\Delta_1u=\chi(D/2)u_{\rm hi}\), where \(\chi(D/2)\)
denotes the multiplier with symbol \(\chi(\xi/2)\).
The convolution bounds and cancellation of the dyadic kernels
therefore give, for \(j\ge1\),
\begin{subequations}\label{eq:particle-critical-low-high-input-bounds}
\begin{align}
  \|\Delta_j a\|_{L^\infty}
  &\le C2^{-j}\|\nabla a\|_{L^\infty},\\
  \|S_{j-1}^+u\|_{L^\infty}
  &\le C\|u_{\rm hi}\|_{L^\infty},\\
  \|u_{\rm hi}-S_{j-1}^+u\|_{L^\infty}
  &\le C_{d,q}2^{-\alpha_qj}[u_{\rm hi}]_{C^{\alpha_q}},\\
  \|\nabla S_{j-1}^+u\|_{L^\infty}
  &\le C_{d,q}2^{(1-\alpha_q)j}[u_{\rm hi}]_{C^{\alpha_q}}.
\end{align}
\end{subequations}
Taking norms and regrouping the distributional derivative gives
\begin{align}
  \|T_{u_{\rm hi}}a\|_{W^{1,\infty}}
  &\le \sum_{j\ge1}\Bigl(
    \|S_{j-1}^+u\|_\infty\|\Delta_j a\|_\infty
    +\|\nabla S_{j-1}^+u\|_\infty\|\Delta_j a\|_\infty
  \Bigr)\nonumber \\
  &\quad+\sum_{j\ge1}
    \|S_{j-1}^+u-u_{\rm hi}\|_\infty
    \|\nabla\Delta_j a\|_\infty
    +\|u_{\rm hi}\|_\infty
    \|\nabla(a-\Delta_0a)\|_\infty\nonumber \\
  &\le C_{d,q}\|\nabla a\|_\infty
  \left(\|u_{\rm hi}\|_\infty+[u_{\rm hi}]_{C^{\alpha_q}}\right).
\end{align}
Here, we used that \(0<\alpha_q<1\). Combining the preceding
estimates gives
\begin{subequations}\label{eq:particle-critical-product-components}
\begin{align}
  \|G\|_{\dot H^{s_q}}
  &\le C_{d,q}\|a\|_{W^{1,\infty}}
       \|u\|_{\dot H^{s_q}},\\
  \|H\|_{L^\infty}+\operatorname{Lip}(H)
  &\le C_{d,q}\|a\|_{W^{1,\infty}}
       \left(\|u\|_{L^\infty}+[u]_{C^{\alpha_q}}\right).
\end{align}
\end{subequations}

For \(\delta>0\), define \(\nu_\delta:=\eta_\delta*\nu\), \(\omega_\delta:=\eta_\delta*\omega\), and \(\vartheta_\delta:=\nu_\delta-\omega_\delta\).
Then \(\vartheta_\delta\in L^1(\R^d)\cap L^2(\R^d)\).  Since \(G\) in
\eqref{eq:particle-critical-product-decomposition} has Fourier support
bounded away from zero, \eqref{eq:extended-fourier-energy} and
Plancherel's theorem give
\begin{equation}
\label{eq:particle-critical-high-pairing}
  \left|\int_{\R^d}G\,d\vartheta_\delta\right|
  \le C_{d,q}\mathcal E_q(\vartheta_\delta)^{1/2}
       \|G\|_{\dot H^{s_q}},
\end{equation}
while \eqref{eq:particle-BL-distance} gives
\begin{equation}
\label{eq:particle-critical-low-pairing}
  \left|\int_{\R^d}H\,d\vartheta_\delta\right|
  \le d_{\rm BL}(\nu_\delta,\omega_\delta)
       \left(\|H\|_{L^\infty}+\operatorname{Lip}(H)\right).
\end{equation}
For \(\varepsilon>0\), set \(a_\varepsilon:=\eta_\varepsilon*a\) and \(u_\varepsilon:=\eta_\varepsilon*u\).
Convolution does not increase \(\|a\|_{W^{1,\infty}}\),
\(\|u\|_{L^\infty}\), \([u]_{C^{\alpha_q}}\), or
\(\|u\|_{\dot H^{s_q}}\); moreover,
\(a_\varepsilon u_\varepsilon\to au\) uniformly and
\(u_\varepsilon\to u\) in \(\dot H^{s_q}\).  Apply $au=G+H$,
\eqref{eq:particle-critical-product-components},
\eqref{eq:particle-critical-high-pairing}, and
\eqref{eq:particle-critical-low-pairing} to
\((a_\varepsilon,u_\varepsilon,\vartheta_\delta)\), and let
\(\varepsilon\downarrow0\).  The limit is
\begin{equation}
\label{eq:particle-critical-mollified-product-bound}
\begin{aligned}
  \left|\int_{\R^d}au\,d\vartheta_\delta\right|
  \le C_{d,q}\|a\|_{W^{1,\infty}}
  \bigg(&\mathcal E_q(\vartheta_\delta)^{1/2}
          \|u\|_{\dot H^{s_q}}+d_{\rm BL}(\nu_\delta,\omega_\delta)
          \left(\|u\|_{L^\infty}+[u]_{C^{\alpha_q}}\right)
  \bigg).
\end{aligned}
\end{equation}
If \(\varphi\) is admissible in \eqref{eq:particle-BL-distance}, then
\(\eta_\delta*\varphi\) is also admissible and hence \(  d_{\rm BL}(\nu_\delta,\omega_\delta)
  \le d_{\rm BL}(\nu,\omega)\).
Dominated convergence in \eqref{eq:extended-fourier-energy} gives \(  \mathcal E_q(\vartheta_\delta)\to
  \mathcal E_q(\vartheta)\).
Finally, \(au\) is bounded and uniformly continuous, so
\(\int au\,d\vartheta_\delta\to\int au\,d\vartheta\).  Letting
\(\delta\downarrow0\) in
\eqref{eq:particle-critical-mollified-product-bound} thus proves
\eqref{eq:particle-critical-product-bound}.
\end{proof}

\subsection{Empirical bounded-Lipschitz estimates}
\label{app:particle-iid-BL-estimate}

As in Section~\ref{subsec:particle-mean-field}, let $r\ge\max\{1,q\}$ and $N\ge2$. The reader will recall the bounded-Lipschitz distance and the dimensional sampling factor from \eqref{eq:particle-BL-distance} and \eqref{eq:particle-BL-dimensional-rate}, respectively.

\begin{lemma}
\label{lem:particle-iid-BL}
Let \(Y_1,\ldots,Y_N\) be independent with common law
\(\nu\in\mathcal P_r(\R^d)\), and suppose
\(\mathcal M_r(\nu)\le\mathfrak m\). With $\mathfrak n_d(N)$ from
\eqref{eq:particle-BL-dimensional-rate}, one has
\begin{equation}
\label{eq:particle-iid-BL-rate}
  \mathbb E\left[d_{\rm BL}\left(\frac1N\sum_{i=1}^N\delta_{Y_i},\nu\right)^2\right]
  \le C_{d,r,\mathfrak m}\mathfrak n_d(N)^{r/(r+1)}.
\end{equation}
\end{lemma}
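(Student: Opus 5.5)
We plan to prove the bound by truncation to a ball combined with a dyadic (multiscale) partition estimate for the bounded-Lipschitz distance. We then optimize the truncation radius against the $r$-th moment tail; this optimization is where the exponent $r/(r+1)$ comes from.

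\emph{Truncation.} Fix a radius $L\ge1$ and let $B=B(0,L)$. For every test function $\varphi$ admissible in \eqref{eq:particle-BL-distance}, write $\varphi=\varphi\mathbf 1_B+\varphi\mathbf 1_{B^c}$. The tail part contributes at most
\[
\frac1N\#\{i:|Y_i|\ge L\}+\nu(B^c),
\]
and its expected square is bounded by $C\nu(B^c)+C\nu(B^c)^2\le C\mathfrak m L^{-r}$ by Markov's inequality. To avoid boundary effects, we use a Lipschitz cutoff $\psi_L$ equal to one on $B$ and supported in $B(0,2L)$, and replace $\varphi$ by $\varphi\psi_L$. This product is still bounded-Lipschitz with norm at most $3$.

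\emph{The compact part.} For the compactly supported functions $\varphi\psi_L$, we bound $\mathbb E[\sup|\int\varphi\psi_L\,d(\nu_N-\nu)|^2]$, where $\nu_N$ denotes the empirical measure, by a standard dyadic-partition argument as in \cite{fournier2015rate}. Partition $B(0,2L)$ into cubes of side $2^{-k}L$ for $k=0,\dots,K$, and approximate $\varphi$ on the finest scale by its cube averages, with error $2^{-K}L$. The multiscale sum over levels gives
\[
\mathbb E\bigl[\sup\cdots\bigr]\lesssim 2^{-K}L+\sum_{k\le K}2^{-k}L\,\sqrt{2^{kd}/N}.
\]
This yields a first-moment bound of order $L\cdot\mathfrak n_d(N)^{1/2}$, equivalently $L\cdot N^{-1/d}$ for $d\ge3$, $L\sqrt{\log N/N}$ for $d=2$, and $L N^{-1/2}$ for $d=1$. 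Alternatively, we can use the bound $\mathbb E[W_1^2]$ for measures supported in a ball of radius $L$, which scales like $L^2\mathfrak n_d(N)$. Since $d_{\rm BL}\le W_1$ and $d_{\rm BL}\le 2$, the second moment is controlled by $C L^2\mathfrak n_d(N)$, up to rescaling the measure restricted to the ball. To pass from restricted to normalized measures, we renormalize $\nu|_B$ and the empirical points in $B$. The extra error from the random count is again controlled by $\nu(B^c)$ and by the binomial fluctuation, which is of order $N^{-1}\le\mathfrak n_d(N)$.

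\emph{Optimization.} Combining the two pieces gives
\[
\mathbb E\bigl[d_{\rm BL}(\nu_N,\nu)^2\bigr]\le C\bigl(L^2\mathfrak n_d(N)+\mathfrak m L^{-r}\bigr).
\]
Choosing $L$ to balance the terms gives the exponent $r/(r+2)$, not $r/(r+1)$. So the optimization must use the cruder first-power scaling instead: because $d_{\rm BL}\le2$, we have $d_{\rm BL}^2\le 2d_{\rm BL}$. We therefore bound the first moment by $L\,\mathfrak n_d(N)^{\ldots}$ together with a tail of order $L^{-r}$.

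The cleanest route is to use the Lipschitz seminorm only on scales at most $1$, as in Fournier--Guillin's bounded-cost computation. With the bounded cost $\min\{|x-y|,2\}$, the compact part costs roughly $L^{d}$ cubes of unit size times the local rate. This gives $\mathbb E[d_{\rm BL}^2]\lesssim L\,\mathfrak n_d(N)+L^{-r}$. Choosing $L=\mathfrak n_d(N)^{-1/(r+1)}$ yields exactly $\mathfrak n_d(N)^{r/(r+1)}$.

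\emph{Main obstacle.} The main obstacle is getting the linear-in-$L$ dependence in the compact estimate, since bounded-Lipschitz tests are bounded by $1$. The plan is to tile $B(0,L)$ with $\sim L^d$ unit cubes and to apply the unit-scale multiscale bound in each cube, using the count weighted by cube masses and Cauchy--Schwarz over cubes. This should produce a factor like $(\#\text{cubes}\cdot\text{mass})^{1/2}$, which we then control. We will check whether the exponents really yield $L$ rather than $L^{d/2}$; if they do not, the constant degrades but the structure of the argument stays the same.
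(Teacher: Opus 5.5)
\textbf{The gap is in your tail estimate.} It is the only reason you got the exponent $r/(r+2)$, and that exponent is what sent you looking for a different compact bound. You bounded the expected square of the tail contribution by $C\nu(B^c)$, which only uses $\nu_N(B^c)^2\le\nu_N(B^c)$. But $N\nu_N(B^c)$ is binomial with parameters $N$ and $p_L=\nu(B^c)\le\mathfrak m L^{-r}$, so
\[
\mathbb E\bigl[\nu_N(B^c)^2\bigr]=p_L^2+\frac{p_L(1-p_L)}{N}\le\mathfrak m^2L^{-2r}+\frac{\mathfrak mL^{-r}}{N}.
\]
With this, the structure you set up first already closes:
\[
\mathbb E[d_{\rm BL}(\nu_N,\nu)^2]\lesssim L^2\mathfrak n_d(N)+L^{-2r}+N^{-1}L^{-r}.
\]
Taking $L=\mathfrak n_d(N)^{-1/(2(r+1))}$ makes the first two terms equal to $\mathfrak n_d(N)^{r/(r+1)}$, and $N^{-1}\le\mathfrak n_d(N)\le1$ makes the third smaller. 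This is exactly the optimization in the paper.

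The paper also avoids your renormalization by a random count. It pushes both $\nu$ and the sample forward by $\Pi_R(x)=x\mathbf 1_{\{|x|\le R\}}$. Then $(\Pi_R)_\#\nu_N$ is exactly an i.i.d.\ empirical measure of $\nu_R=(\Pi_R)_\#\nu$, and $d_{\rm BL}(\nu_N,\nu)\le d_{\rm BL}((\Pi_R)_\#\nu_N,\nu_R)+2\nu_N(B_R^c)+2\nu(B_R^c)$.

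\textbf{The repairs you propose instead do not work.}
\begin{enumerate}
\item Passing through $d_{\rm BL}^2\le2d_{\rm BL}$ with a first-moment bound $L\,\mathfrak n_d(N)^{1/2}+L^{-r}$ yields only $\mathfrak n_d(N)^{r/(2(r+1))}$, the square root of the target.
\item The linear-in-$L$ compact bound is asserted rather than derived. The Cauchy--Schwarz over $\sim L^d$ unit cubes that you sketch gives $\sum_Q(\nu(Q)/N)^{1/2}\lesssim(L^d/N)^{1/2}$, i.e.\ $L^d/N$ for the second moment.
\item As a bound depending only on the support radius, this cannot be improved. For the uniform measure on a cube of side $L\le N^{1/d}$, a signed sum of disjoint unit-scale bumps is an admissible test and gives $\mathbb E[d_{\rm BL}^2]\gtrsim L^d/N$. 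For $d=2$ this exceeds $L\,\mathfrak n_2(N)$ once $L\gg\log N$, in particular at your choice of $L$.
\item So if the linear-in-$L$ bound fails, the exponent changes after optimizing in $L$, not merely the constant.
\end{enumerate}

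\textbf{Two smaller points on the compact part.} First, you need a second-moment bound, not only a first-moment one. Second, in $d=2$ your displayed multiscale sum is $(K+1)L/\sqrt N\sim L\log N/\sqrt N$, which after squaring loses a logarithm against $L^2\mathfrak n_2(N)$.

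The paper gets the sharp second moment directly. It mollifies $\sigma_N=\nu_N-\nu$ at scale $h$ and pairs it with the cut-off test $\chi_R\varphi$ through $\dot H^1$--$\dot H^{-1}$ duality. It then uses $\mathbb E|\widehat\sigma_N(\xi)|^2=(1-|\widehat\nu(\xi)|^2)/N\le C(1\wedge R^2|\xi|^2)/N$ and sums dyadically in frequency before choosing $h$.
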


On a ball, smoothing and independence give a Fourier variance estimate. Moment truncation
then removes the bounded-support assumption.

\begin{proof}[Proof of Lemma~\ref{lem:particle-iid-BL}]
For \(N=1\), \eqref{eq:particle-iid-BL-rate} follows from
\(d_{\rm BL}\le2\).  We may therefore assume \(N\ge2\).
Write \(  \overline\nu^N:=\frac1N\sum_{i=1}^N\delta_{Y_i}\).
Assume first that
\(\nu(B_R)=1\) for some \(R\ge1\), and set \(\sigma_N:=\overline\nu^N-\nu\),  \(\sigma_{N,h}:=\eta_h*\sigma_N\) for $0<h\leq R$. If
\(\varphi\) is admissible in \eqref{eq:particle-BL-distance}, then
\begin{equation}
\label{eq:particle-BL-mollification-error}
  \left|\int_{\R^d}\varphi\,d\sigma_N
  -\int_{\R^d}\varphi(x)\sigma_{N,h}(x)\,dx\right|
  \le C_\eta h.
\end{equation}
Indeed, the difference on the left-hand side of
\eqref{eq:particle-BL-mollification-error} equals
\(\int(\varphi-\eta_h*\varphi)\,d\sigma_N\).  The inequalities
\(\operatorname{Lip}(\varphi)\le1\) and
\(|\sigma_N|(\R^d)\le2\) imply
\eqref{eq:particle-BL-mollification-error}.

Choose \(\chi_R\in C_c^\infty(B_{3R})\) such that
\(\chi_R=1\) on \(B_{2R}\), \(0\le\chi_R\le1\), and
\(\|\nabla\chi_R\|_{L^\infty}\le C/R\).  The support condition on
\(\eta\) in \eqref{eq:kernel-regularization} implies that
\(\operatorname{supp}\sigma_{N,h}\subset B_{2R}\).  Moreover,
\begin{equation}
\label{eq:particle-BL-cutoff-gradient-bound}
  \|\nabla(\chi_R\varphi)\|_{L^2}
  \le C_dR^{d/2}.
\end{equation}
The function \(\chi_R\varphi\) belongs to \(\dot H^1(\R^d)\), with
\eqref{eq:particle-BL-cutoff-gradient-bound}.  Since
\(\sigma_{N,h}(\R^d)=0\), a duality argument,
first for a smooth approximation of \(\chi_R\varphi\) and then by
passage to the limit, gives
\begin{equation}
\label{eq:particle-BL-Hminus1-pairing}
  \left|\int_{\R^d}\varphi(x)\sigma_{N,h}(x)\,dx\right|
  \le C_dR^{d/2}\|\sigma_{N,h}\|_{\dot H^{-1}}.
\end{equation}
Taking the supremum over the functions in
\eqref{eq:particle-BL-distance}, then using
\eqref{eq:particle-BL-mollification-error} and
\eqref{eq:particle-BL-Hminus1-pairing}, yields
\begin{equation}
\label{eq:particle-BL-Hminus1-reduction}
  \mathbb E\left[d_{\rm BL}(\overline\nu^N,\nu)^2\right]
  \le C_\eta h^2
      +C_dR^d\mathbb E\left[\|\sigma_{N,h}\|_{\dot H^{-1}}^2\right].
\end{equation}

The independence of the variables in the definition of $\overline{\nu}^N$ implies
\begin{equation}
\begin{aligned}
  \mathbb E\left[|\widehat\sigma_N(\xi)|^2\right]
  =\frac{1-|\widehat\nu(\xi)|^2}{N}&=\frac1{2N}\iint_{\R^d\times\R^d}
    |e^{-2\pi i\xi\cdot x}-e^{-2\pi i\xi\cdot y}|^2
    \,d\nu(x)\,d\nu(y)\le\frac{C}{N}(1\wedge R^2|\xi|^2).
\end{aligned}
\end{equation}
Consequently,
\begin{equation}
\label{eq:particle-BL-Hminus1-variance}
  \mathbb E\left[\|\sigma_{N,h}\|_{\dot H^{-1}}^2\right]
  \le\frac CN\int_{\R^d}|\xi|^{-2}
  (1\wedge R^2|\xi|^2)
  |\widehat\eta(h\xi)|^2\,d\xi.
\end{equation}
Let \(J\ge0\) be the integer satisfying
\(2^J\le R/h<2^{J+1}\).  Split the integral in
\eqref{eq:particle-BL-Hminus1-variance} into the ball
\(\{|\xi|\le R^{-1}\}\), the annuli
\(\{2^jR^{-1}<|\xi|\le2^{j+1}R^{-1}\}\) for \(0\le j\le J\), and
the annuli with indices \(j\ge J+1\).  The ball
\(\{|\xi|\le R^{-1}\}\) contributes at most \(CR^{2-d}\), and the
\(j\)-th annulus contributes at most
\(CR^{2-d}2^{j(d-2)}\) for \(0\le j\le J\).  The rapid decay of
\(\widehat\eta\) bounds the sum over \(j\ge J+1\) by a constant times the
bound at \(j=J\).  Thus
\begin{equation}
\label{eq:particle-BL-dyadic-frequency-sum}
  \mathbb E\left[\|\sigma_{N,h}\|_{\dot H^{-1}}^2\right]
  \le\frac{C_dR^{2-d}}N
  \begin{cases}
    1,&d=1,\\
    1+\log(R/h),&d=2,\\
    (R/h)^{d-2},&d\ge3.
  \end{cases}
\end{equation}
Choosing $h=RN^{-1/2}$ when $d\in \{1,2\}$ and $h=RN^{-1/d}$ when $d\geq 3$ thus proves
\begin{equation}
\label{eq:particle-BL-bounded-support-rate}
  \mathbb E\left[d_{\rm BL}(\overline\nu^N,\nu)^2\right]
  \le C_dR^2\mathfrak n_d(N)
  \qquad\text{if }\nu(B_R)=1.
\end{equation}

Now, for a general \(\nu\in\mathcal P_r(\R^d)\), define
\begin{equation}
\label{eq:particle-BL-tail-projection}
  \Pi_R(x):=\begin{cases}x,&|x|\le R,\\0,&|x|>R,\end{cases}
  \qquad
  \nu_R:=(\Pi_R)_\#\nu,
  \qquad
  \overline\nu_R^N:=(\Pi_R)_\#\overline\nu^N.
\end{equation}
Every function admissible in \eqref{eq:particle-BL-distance} has
\(L^\infty\)-norm at most one.  The triangle inequality and
\eqref{eq:particle-BL-tail-projection} therefore give
\begin{equation}
\label{eq:particle-BL-truncation-bound}
  d_{\rm BL}(\overline\nu^N,\nu)
  \le d_{\rm BL}(\overline\nu_R^N,\nu_R)
      +2\overline\nu^N(B_R^c)+2\nu(B_R^c).
\end{equation}
Set \(p_R:=\nu(B_R^c)\).  The inequality
\(\mathcal M_r(\nu)\le\mathfrak m\) and the binomial variance identity
imply
\begin{equation}
\label{eq:particle-BL-tail-second-moment}
  p_R\le\mathfrak mR^{-r},
  \qquad
  \mathbb E\left[\overline\nu^N(B_R^c)^2\right]
  =p_R^2+\frac{p_R(1-p_R)}N.
\end{equation}
Applying \eqref{eq:particle-BL-bounded-support-rate} to
\((\overline\nu_R^N,\nu_R)\), and then using
\eqref{eq:particle-BL-truncation-bound} and
\eqref{eq:particle-BL-tail-second-moment}, yields
\begin{equation}
\label{eq:particle-BL-preoptimization}
  \mathbb E\left[d_{\rm BL}(\overline\nu^N,\nu)^2\right]
  \le C_{d,r,\mathfrak m}\left(
    R^2\mathfrak n_d(N)+R^{-2r}+N^{-1}R^{-r}
  \right).
\end{equation}
Since \(N\ge2\), \( N^{-1}\le\mathfrak n_d(N)\le1\).
Choosing \(R:=\mathfrak n_d(N)^{-1/(2(r+1))}\)
gives
\begin{equation}
\label{eq:particle-BL-principal-optimization}
  R^2\mathfrak n_d(N)+R^{-2r}
  =2\mathfrak n_d(N)^{r/(r+1)},
\end{equation}
and therefore
\begin{equation}
\label{eq:particle-BL-tail-optimization}
  N^{-1}R^{-r}
  \le\mathfrak n_d(N)^{1+r/(2(r+1))}
  \le\mathfrak n_d(N)^{r/(r+1)}.
\end{equation}
Substitution of \eqref{eq:particle-BL-principal-optimization}--\eqref{eq:particle-BL-tail-optimization} into
\eqref{eq:particle-BL-preoptimization} proves
\eqref{eq:particle-iid-BL-rate}.
\end{proof}

\section{Failure of an energy-only estimate for the jump term}
\label{app:particle-jump-obstruction}

The continuum positive-part acceptance rule \eqref{eq:rate}, with the
source frozen at $\rho_0$, can create a corner even when the first variation $f_{\rho_0}$ is smooth. In the example below, the second derivative of the
frozen jump operator applied to a smooth function contains a measure
supported on a plane. The energy controls the second derivatives of a
perturbation potential in \(L^2\), but does not control their restriction
to this plane. We first use this distinction to construct smooth
perturbations $\nu_n$ of a fixed reference density $\rho_0$, with
$\nu_n-\rho_0\in C_c^\infty(\R^3)$ and the target $\mu$ unchanged, as in
Lemma~\ref{lem:particle-obstruction-smooth}. Their jump remainder is
unbounded relative to their energy, and we then pass to distinct particle
configurations.

Throughout this appendix, we take
\begin{equation}
\label{eq:particle-obstruction-parameters}
  d=3,\qquad q=1,\qquad \lambda\equiv1,\qquad
  \kappa(z)=(2\pi)^{-3/2}e^{-|z|^2/2}.
\end{equation}
For fixed \(\rho_0,\mu\in\mathcal P_1(\R^3)\), write
\(\zeta_0(x,z)=\zeta_{\rho_0}(x,x+z)\), with the first variation defined by
\eqref{eq:witness}. For a finite signed measure \(\vartheta\) of zero
mass and finite first absolute moment, define
\begin{equation}
\label{eq:particle-jump-remainder-functional}
  \mathcal R_{\rho_0}(\vartheta)
  :=\int_{\R^3}\int_{\R^3}
  \bigl((K*\vartheta)(x+z)-(K*\vartheta)(x)\bigr)
  \zeta_0(x,z)\kappa(z)\,dz\,d\vartheta(x).
\end{equation}
The target \(\mu\) is fixed and is suppressed from the notation, as in
\(f_{\rho_0}\). Since \(K=-|\cdot|\) is \(1\)-Lipschitz, subtracting
\(K(x)\) in the convolution with \(\rho_0-\mu\) bounds
\(f_{\rho_0}\) by \(\mathcal M_1(\rho_0)+\mathcal M_1(\mu)\).
Thus \(\zeta_0\) is bounded. The same Lipschitz property bounds the
potential increment in \eqref{eq:particle-jump-remainder-functional}
by \(|z|\|\vartheta\|_{\mathrm{TV}}\), so this integral is absolutely
convergent. For \(\vartheta=\vartheta_0^N\), it is precisely the
remainder in \eqref{eq:particle-jump-monotonicity-splitting} at \(t=0\).

\begin{lemma}
\label{lem:particle-obstruction-smooth}
There exist strictly positive Schwartz probability densities
\(\rho_0,\mu\) and a sequence of strictly positive Schwartz probability
densities \(\nu_n\) such that \(\nu_n-\rho_0\in C_c^\infty(\R^3)\),
\begin{equation}
\label{eq:particle-obstruction-smooth-densities}
  0<\mathcal E_1(\nu_n-\rho_0)\to0,
  \qquad
  \frac{\mathcal R_{\rho_0}(\nu_n-\rho_0)}
       {\mathcal E_1(\nu_n-\rho_0)}\to\infty.
\end{equation}
\end{lemma}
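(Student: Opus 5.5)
The plan is to turn $\mathcal R_{\rho_0}$ and $\mathcal E_1$ into quadratic forms in the potential $g:=K*\vartheta$, and then exploit the kink of the positive part along a plane. For $d=3$, $q=1$ we have $\Delta^2|x|=8\pi\delta_0$, hence $\Delta^2K=-8\pi\delta_0$. So for $\vartheta=-\Delta^2g/(8\pi)$ with $g\in C_c^\infty(\R^3)$, we get $K*\vartheta=g$. Such a $\vartheta$ is automatically compactly supported with zero mass, and
\[
\mathcal E_1(\vartheta)=-\tfrac12\int g\,d\vartheta=\tfrac1{16\pi}\|\Delta g\|_{L^2}^2 .
\]
Writing $\mathcal A h(x):=\int (h(x+z)-h(x))\zeta_0(x,z)\kappa(z)\,dz$, we have
\[
\mathcal R_{\rho_0}(\vartheta)=\int \mathcal A g\,d\vartheta=-\tfrac1{8\pi}\int \Delta(\mathcal A g)\,\Delta g\,dx ,
\]
with the second equality understood distributionally. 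The aim is therefore a frozen first variation for which $\Delta(\mathcal Ag)$ contains a measure on a plane that the $L^2$ norm of $\Delta g$ does not control.

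\textbf{Step 1: data.} The same identity lets me prescribe $f_{\rho_0}$. Take $\mu$ a standard Gaussian and $\Phi\in C_c^\infty(\R^3)$ with $\Phi(x)=-x_1^2/2$ on a box $Q$ around the origin. Set $\rho_0:=\mu-\delta\Delta^2\Phi/(8\pi)$ with $\delta>0$ small, so that $\rho_0$ is a strictly positive Schwartz probability density. Then $f_{\rho_0}=K*(\rho_0-\mu)=\delta\Phi$. For $x\in Q$ and small $z$,
\[
f_{\rho_0}(x)-f_{\rho_0}(x+z)=\delta\bigl(x_1z_1+z_1^2/2\bigr),
\]
so $\zeta_0(x,z)=\delta[x_1z_1+z_1^2/2]_+$ up to the contribution of large $z$, where $\kappa$ is exponentially small and $\zeta_0$ is bounded. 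The map $x\mapsto\zeta_0(x,z)$ has a kink across $\{x_1=-z_1/2\}$. Averaging over $z$ against $\kappa$ gives the following. For $g$ supported in a small neighbourhood of $\{x_1=0\}$ inside $Q$, a Taylor expansion of $g(x+z)-g(x)$ shows that the leading part of $\mathcal Ag$ is $c\,\delta\,|x_1|\,\partial_1 g(x)$ plus terms that are smoother in $x_1$. Consequently $\partial_1^2(\mathcal Ag)$ contains $2c\delta\,\partial_1 g\,\mathcal H^2\llcorner\{x_1=0\}$. I will first verify this at the level of $\mathcal R$ itself: the Gaussian convolution smears the kink only at unit scale, while $g$ oscillates at scale $1/n$. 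This gives
\[
\mathcal R_{\rho_0}(\vartheta)=-c'\delta\int_{\{x_1=0\}}\partial_1g\,\Delta g\,d\mathcal H^2+O\bigl(\|\Delta g\|_{L^2}^2\bigr).
\]

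\textbf{Step 2: the test potentials.} The boundary pairing is critical. By the trace theorem, $\partial_1g|_{x_1=0}\in H^{1/2}$ and $\Delta g|_{x_1=0}\in H^{-1/2}$ when $g\in\dot H^2$, and this pairing fails exactly at this borderline. Single-bump rescalings $g=\varepsilon\psi(nx)$ give a ratio of order one, so the divergence must be logarithmic. I will therefore take
\[
g_n=\varepsilon_n\sum_{k=1}^{n}\frac{1}{k}\,2^{-k/2}\,\psi\bigl(2^k x\bigr)\chi(x'),
\qquad x=(x_1,x').
\]
Here $\psi$ is a fixed profile with $\int_{\{x_1=0\}}\partial_1\psi\,\Delta\psi\ne0$, signed so that every scale contributes with the same sign to the boundary term; $\chi$ is a fixed tangential cutoff; and the dyadic scales are nearly orthogonal in $\dot H^2$. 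Then $\|\Delta g_n\|_{L^2}^2\approx\varepsilon_n^2\sum_k k^{-2}$ stays bounded, while the boundary pairing grows like $\varepsilon_n^2\sum_k k^{-1}\sim\varepsilon_n^2\log n$. Cross terms between scales must be shown summable, using disjoint frequency localization in $x_1$. If they are not, I would fall back on a coefficient sequence $a_k$ in $\ell^2\setminus\ell^1$ together with lacunary scales.

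\textbf{Step 3: conclusion.} Choose $\varepsilon_n\to0$ fast enough that $\|\Delta^2g_n\|_\infty\ll\inf_Q\rho_0$, and set $\nu_n:=\rho_0-\Delta^2g_n/(8\pi)$. Then each $\nu_n$ is a strictly positive Schwartz probability density, $\nu_n-\rho_0\in C_c^\infty(\R^3)$, and $0<\mathcal E_1(\nu_n-\rho_0)\to0$. Moreover the ratio $\mathcal R_{\rho_0}/\mathcal E_1\gtrsim\log n\to\infty$, since $\varepsilon_n^2$ cancels in the ratio.

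The main obstacle is Step 1 together with the cross terms of Step 2. I must show rigorously that the $O(\|\Delta g\|_{L^2}^2)$ remainder in the expansion of $\mathcal R_{\rho_0}$ really is controlled by the energy, and that the plane term survives with a definite sign after the $z$-average. I would handle this by splitting $\mathcal Ag=\delta\,m\ast$-type smooth parts plus an explicit $|x_1|$-kink part computed in closed form from the one-dimensional Gaussian marginal in $z_1$.
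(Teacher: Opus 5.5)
Your proposal has a genuine gap at Step~1, and it undermines the whole strategy. Choosing $f_{\rho_0}=\delta\Phi$ with $\Phi=-x_1^2/2$ on $Q$ makes the plane $\{x_1=0\}$ a critical set of the first variation. That is exactly where all your potentials $g_n$ live.

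For $x,y$ near the plane the frozen kernel is $\tfrac{\delta}{2}[y_1^2-x_1^2]_+\kappa(y-x)$. Its kinks in $x$ lie on $\{x_1=\pm y_1\}$, move with the destination $y$, and have strength $2|y_1|$. After integration against $g(y)$, the second $x$-derivatives of the gain part are therefore ordinary functions with $L^2$ norm at most $C\|g\|_{L^2}$, not surface measures.

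Your own near-field formula $\zeta_0=\delta[x_1z_1+z_1^2/2]_+$ confirms this. The total rate $m(x_1)=\int\zeta_0\kappa\,dz$ satisfies $m''(x_1)=4\delta|x_1|\gamma_1(2x_1)$, with $\gamma_1$ the one-dimensional standard Gaussian, so $m\in C^{2,1}$. The averaged drift $\int z_1\zeta_0\kappa\,dz$ equals $\delta x_1+O(|x_1|^4)$, so there is no $|x_1|$ coefficient. The Taylor expansion of $g(x+z)-g(x)$ is also unusable when $g$ varies on scale $1/n$ and $\kappa$ has unit scale.

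Consequently, for any reasonable extension of $\Phi$ off $Q$ and every $g$ supported in a fixed compact subset of $Q$, you get $\|\Delta(\mathcal Ag)\|_{L^2}\le C\|g\|_{H^2}\le C\|\Delta g\|_{L^2}$, hence $|\mathcal R_{\rho_0}(\vartheta)|\le C\mathcal E_1(\vartheta)$. The plane term $\int_{\{x_1=0\}}\partial_1g\,\Delta g$ never appears, and no choice of $g_n$ in Step~2, multiscale or not, makes the ratio diverge.

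Two smaller corrections. An $L^2$ function such as $\Delta g$ has no trace in $H^{-1/2}$ of a hyperplane, so your borderline bookkeeping in Step~2 does not hold. Also, $\Delta^2|x|=-8\pi\delta_0$ in $\R^3$, so $\Delta^2K=8\pi\delta_0$, $K*(\Delta^2g/8\pi)=g$, and then $\mathcal E_1(\vartheta)=\tfrac12\int g\,d\vartheta$.

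The missing idea is a kink of nondegenerate strength at a \emph{common} location, exploited through a cross term that is linear in the fine scale. The paper takes $f_{\rho_0}=\varepsilon G$ with $G\in C_c^\infty(\R^3)$ and $G=x_1$ on a ball $U$. It pairs this with a fixed potential $\varphi$ equal to $1$ near $\operatorname{supp}G$.

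Every destination $y$ with $\varphi(y)\ne1$ has $G(y)=0$. So all kinks of $x\mapsto[G(x)-G(y)]_+$ relevant to $\mathcal A\varphi$ sit on $\{x_1=0\}$, and $\mathcal A\varphi=-\varepsilon(x_1)_+b$ on $U$, where $b(x)=\int(1-\varphi(y))\kappa(y-x)\,dy>0$. Hence $\Delta\mathcal A\varphi$ contains the surface measure $-\varepsilon b(0,x')\,dx'$ on the plane.

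Now set $u_n=n^{-3/2}h(nx_1)\psi(x')$, $w_n=\Delta^2u_n/8\pi$ and $\theta=\Delta^2\varphi/8\pi$. The non-symmetric cross term $\int(\mathcal A\varphi)\,w_n\,dx=\tfrac1{8\pi}\langle\Delta\mathcal A\varphi,\Delta u_n\rangle$ equals $c\sqrt n+O(1)$. This is because $\|\Delta u_n\|_{L^2}$ is bounded while its trace on the plane is $-n^{1/2}\psi+O(n^{-3/2})$.

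Meanwhile $\mathcal E_1(\theta+w_n)$ stays bounded, since $\Delta\varphi$ and $\Delta u_n$ have disjoint supports. The remaining terms $\int(\mathcal Au_n)\,\theta$ and $\int(\mathcal Au_n)\,w_n$ are $O(1)$. The divergence is thus a power coming from a bilinear coarse--fine term, not a logarithmic borderline in a quadratic boundary form. Your Step~3, rescaling by small amplitudes using quadratic homogeneity, then matches the paper's final step.
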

\begin{proof}
The distributional identities \(\Delta|x|=2|x|^{-1}\) and
\(\Delta(|x|^{-1})=-4\pi\delta_0\) in \(\R^3\) give
\(\Delta^2K=8\pi\delta_0\). Consequently, for
\(u\in C_c^\infty(\R^3)\), convolution and integration by parts give
\begin{equation}
\label{eq:particle-obstruction-potential-energy}
  K*\frac{\Delta^2u}{8\pi}=u,
  \qquad
  \mathcal E_1\!\left(\frac{\Delta^2u}{8\pi}\right)
  =\frac1{16\pi}\int_{\R^3}|\Delta u|^2\,dx.
\end{equation}
In particular, prescribing a compactly supported potential produces a
compactly supported perturbation of zero mass. Choose \(G\in C_c^\infty(\R^3)\) equal to \(x_1\) on a ball \(U\)
centered at the origin, and put \(\chi=(8\pi)^{-1}\Delta^2G\).
Since \(\chi\) has zero integral and compact support, a sufficiently
small \(\varepsilon>0\) makes
\(\rho_0:=\kappa+\varepsilon\chi\) a strictly positive Schwartz
probability density. Take \(\mu:=\kappa\). Applying
\eqref{eq:particle-obstruction-potential-energy} to \(G\) yields
\begin{equation}
\label{eq:particle-obstruction-witness}
  f_{\rho_0}=\varepsilon G,
  \qquad
  \zeta_0(x,z)=\varepsilon[G(x)-G(x+z)]_+.
\end{equation}
These data satisfy the hypotheses of
Theorem~\ref{thm:continuum-wellposedness} for every admissible exponent
\(p\), and have moments of every order.

For the duration of this proof, denote the frozen density operator from
\eqref{eq:frozen-inflow-outflow-operators} by
\(J_0:=J_{\rho_0,\mathrm{in}}-J_{\rho_0,\mathrm{out}}\), and define
the corresponding operator on test functions by
\begin{equation}
  L_0\varphi(x)
  :=\int_{\R^3}\zeta_0(x,z)
       \bigl(\varphi(x+z)-\varphi(x)\bigr)\kappa(z)\,dz.
\end{equation}
Changing variables in the gain term gives
\(\int\varphi J_0v=\int L_0\varphi\,v\) for compactly supported
smooth \(\varphi,v\). It is easy to see by an application of Young's inequality that \(\|J_0v\|_{L^2}\le2\|\zeta_0\|_\infty\|v\|_{L^2}\).

Choose \(0\le\varphi\le1\) in \(C_c^\infty(\R^3)\), equal to one
on a neighborhood of \(\operatorname{supp}G\). For \(x\in U\),
\(\varphi(x)=1\) and \(G(x)=x_1\). At a destination \(y\) where
\(\varphi(y)-1\ne0\), one has \(G(y)=0\). Thus, after changing
variables to \(y=x+z\), only the positive part of \(x_1\) remains in
the acceptance rate, and
\begin{equation}
\label{eq:particle-obstruction-kink}
  L_0\varphi(x)=-\varepsilon(x_1)_+b(x),
  \qquad
  b(x):=\int_{\R^3}(1-\varphi(y))\kappa(y-x)\,dy,
  \qquad x\in U.
\end{equation}
The Gaussian is positive everywhere and \(\varphi\) has compact
support, so \(b\) is smooth and strictly positive. Differentiating
\((x_1)_+b\) twice, the derivative of
\(\mathbf1_{\{x_1>0\}}\) contributes surface measure on
\(\{x_1=0\}\). With \(x'=(x_2,x_3)\), this gives, in
\(\mathcal D'(U)\),
\begin{equation}
\label{eq:particle-obstruction-kink-laplacian}
  \Delta L_0\varphi
  =-\varepsilon b(0,x')\delta_{\{x_1=0\}}
   -\varepsilon\mathbf1_{\{x_1>0\}}
        \bigl(2\partial_1b+x_1\Delta b\bigr).
\end{equation}
Here \(\delta_{\{x_1=0\}}\) denotes integration over the plane with
respect to \(dx'\). This surface term is the source of the obstruction.

To detect it, choose \(h\in C_c^\infty(\R)\) with \(h''(0)=-1\)
and a nonnegative, nonzero \(\psi\in C_c^\infty(\R^2)\), with
supports contained in a sufficiently small cylinder about the origin.
Set
\begin{equation}
  u_n(x):=n^{-3/2}h(nx_1)\psi(x'),
  \qquad w_n:=\frac{\Delta^2u_n}{8\pi},
  \qquad \theta:=\frac{\Delta^2\varphi}{8\pi}.
\end{equation}
Then \(u_n\) and \(w_n\) are supported in a fixed compact subset of
\(U\) for every \(n\ge1\).
Each derivative in \(x_1\) contributes a factor \(n\), while changing
variables to \(s=nx_1\) contributes \(n^{-1/2}\) to the \(L^2\)
norm. Derivatives in \(x'\) do not change these factors. Applying
this observation to derivatives of orders zero, two, and four gives
\begin{equation}
\label{eq:particle-obstruction-potential-scaling}
  \|u_n\|_{L^2}\le Cn^{-2},
  \qquad \|\Delta u_n\|_{L^2}\le C,
  \qquad \|w_n\|_{L^2}\le Cn^2.
\end{equation}
The constants in this proof are independent of \(n\). Since
\(\varphi=1\) on \(U\), the supports of \(\Delta\varphi\) and
\(\Delta u_n\) are disjoint. Using
\eqref{eq:particle-obstruction-potential-energy}, we obtain
\begin{equation}
\label{eq:particle-obstruction-bounded-energy}
  0<\mathcal E_1(\theta)
  \le\mathcal E_1(\theta+w_n)
  =\frac1{16\pi}
       \bigl(\|\Delta\varphi\|_{L^2}^2
             +\|\Delta u_n\|_{L^2}^2\bigr)
  \le C.
\end{equation}

The remainder pairs this bounded bulk energy with the surface term in
\eqref{eq:particle-obstruction-kink-laplacian}. Indeed, duality and
the definition of \(w_n\) give
\begin{equation}
  \int_{\R^3}\varphi J_0w_n\,dx
  =\frac1{8\pi}\langle\Delta L_0\varphi,\Delta u_n\rangle.
\end{equation}
On the plane, direct differentiation yields
\begin{equation}
  \Delta u_n(0,x')
  =-n^{1/2}\psi(x')+n^{-3/2}h(0)\Delta_{x'}\psi(x').
\end{equation}
The surface term in \eqref{eq:particle-obstruction-kink-laplacian}
therefore contributes
\(\frac{\varepsilon}{8\pi}n^{1/2}\int b(0,x')\psi(x')\,dx'
+O(n^{-3/2})\). The remaining coefficient in that identity is
bounded on the fixed support of \(u_n\), so its pairing with
\(\Delta u_n\) is bounded by
\eqref{eq:particle-obstruction-potential-scaling}. It follows that
\begin{equation}
\label{eq:particle-obstruction-growing-pairing}
  \int_{\R^3}\varphi J_0w_n\,dx
  =c\sqrt n+O(1),
  \qquad
  c:=\frac{\varepsilon}{8\pi}
      \int_{\R^2}b(0,x')\psi(x')\,dx'>0.
\end{equation}
Since \(K*(\theta+w_n)=\varphi+u_n\), Fubini's theorem rewrites the
remainder as
\begin{align}
  \mathcal R_{\rho_0}(\theta+w_n)
  &=\int_{\R^3}(\varphi+u_n)J_0(\theta+w_n)\,dx\nonumber \\
  &=\int_{\R^3}\varphi J_0\theta\,dx+\int_{\R^3}\varphi J_0w_n\,dx
    +\int_{\R^3} u_nJ_0\theta\,dx+\int_{\R^3} u_nJ_0w_n\,dx.
\end{align}
The first term is fixed. The boundedness of $J_0$ and
\eqref{eq:particle-obstruction-potential-scaling} bound the last two terms by \(Cn^{-2}\) and \(C\), respectively. Combining
these bounds with \eqref{eq:particle-obstruction-growing-pairing}
gives
\begin{equation}
\label{eq:particle-obstruction-growing-remainder}
  \mathcal R_{\rho_0}(\theta+w_n)=c\sqrt n+O(1)
  \to\infty.
\end{equation}

Finally, all the functions \(\vartheta_n:=\theta+w_n\) have support
in one compact set \(C\), on which \(\rho_0\) has a positive
minimum. Choose \(a_n>0\) such that \(a_n\le n^{-1}\) and
\(a_n\|\vartheta_n\|_\infty\le\frac12\min_C\rho_0\).
Then \(\nu_n:=\rho_0+a_n\vartheta_n\) is a strictly positive
Schwartz probability density. Both \(\mathcal E_1\) and
\(\mathcal R_{\rho_0}\) are quadratic in their signed-measure
argument. Thus,
\begin{equation}
  0<\mathcal E_1(\nu_n-\rho_0)\le Cn^{-2},
  \qquad
  \frac{\mathcal R_{\rho_0}(\nu_n-\rho_0)}
       {\mathcal E_1(\nu_n-\rho_0)}
  =\frac{\mathcal R_{\rho_0}(\vartheta_n)}
        {\mathcal E_1(\vartheta_n)}\to\infty.
\end{equation}
\end{proof}

\begin{lemma}
\label{lem:particle-obstruction-approximation}
Fix \(\rho_0,\mu\in\mathcal P_1(\R^3)\). For every probability
density \(\nu\in\mathcal P_1(\R^3)\), there are configurations
\(X^{(N)}\in\mathsf D_N\) such that
\begin{equation}
\label{eq:particle-obstruction-approximation}
\begin{aligned}
  \mathcal E_1(\rho_{X^{(N)}}^N-\rho_0)
    \to\mathcal E_1(\nu-\rho_0),\quad
  \mathcal R_{\rho_0}(\rho_{X^{(N)}}^N-\rho_0)
    \to\mathcal R_{\rho_0}(\nu-\rho_0).
\end{aligned}
\end{equation}
\end{lemma}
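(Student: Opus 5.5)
The plan is to approximate \(\nu\) by \(i\)-th quantile-type samples that are pairwise distinct, and to show both functionals are continuous along a suitable mode of convergence. Both \(\mathcal E_1(\cdot-\rho_0)\) and \(\mathcal R_{\rho_0}(\cdot-\rho_0)\) are quadratic in \(\vartheta=\sigma-\rho_0\). I would therefore show that each is continuous with respect to \(W_1\) convergence of probability measures in \(\mathcal P_1(\R^3)\). Under such continuity, any sequence of distinct configurations with \(W_1(\rho^N_{X^{(N)}},\nu)\to0\) suffices.

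\emph{Construction of the configurations.} Take an i.i.d. sample from \(\nu\). Since \(\nu\) is absolutely continuous, its points are almost surely distinct. By the strong law of large numbers for empirical measures in \(\mathcal P_1\) (Varadarajan plus convergence of first moments), \(W_1(\rho^N,\nu)\to0\) almost surely. Fixing one realization gives deterministic \(X^{(N)}\in\mathsf D_N\).

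\emph{Continuity of the energy.} For \(q=1\), the estimate quoted in the proof of Proposition~\ref{prop:energy-dissipation} (\cite[Proposition~A.3(1)]{rosenzweig2026wassersteingradientflowsmaximum}) gives \(\operatorname{MMD}_1(\sigma,\sigma')\le C W_1(\sigma,\sigma')^{1/2}\). The triangle inequality for \(\operatorname{MMD}_1\) then yields \(\mathcal E_1(\rho^N-\rho_0)\to\mathcal E_1(\nu-\rho_0)\).

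\emph{Continuity of the remainder.} This is the main obstacle, since \(\rho^N\) enters twice: through the potential \(g=K*\vartheta\) and as the integrating measure. Write \(\vartheta_N=\rho^N-\rho_0\) and \(\vartheta=\nu-\rho_0\), and expand \(\mathcal R_{\rho_0}(\vartheta_N)-\mathcal R_{\rho_0}(\vartheta)\) bilinearly into two terms.
\begin{enumerate}
\item The first term has the potential difference \(K*(\rho^N-\nu)\) paired against \(\vartheta_N\). Since \(K\) is \(1\)-Lipschitz, the increment \((K*\sigma)(x+z)-(K*\sigma)(x)\) is bounded by \(|z|\) for any probability \(\sigma\). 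It also converges to zero locally uniformly in \((x,z)\) when \(\sigma=\rho^N\to\nu\) narrowly with uniformly integrable first moments, by a finite-net argument as in Lemma~\ref{lem:potential-increment-convergence}. The factor \(\zeta_0\) is bounded, and \(\kappa\) has a first moment. Combining these with tightness of \(\vartheta_N\) (split \(x\) into a ball and its complement, and \(z\) likewise), this term tends to zero by dominated convergence.
\item The second term is \(\int H\,d(\rho^N-\nu)\), where \(H(x)=\int\bigl(g(x+z)-g(x)\bigr)\zeta_0(x,z)\kappa(z)\,dz\) with \(g=K*\vartheta\) fixed. The potential \(g\) is \(2\)-Lipschitz and \(\zeta_0\) is bounded and continuous, because \(f_{\rho_0}\) is continuous. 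Hence \(H\) is continuous and bounded by \(C\mathcal M_1(\kappa)\), and narrow convergence gives this term's vanishing.
\end{enumerate}

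The delicate point is verifying continuity of \(\zeta_0\) and the uniform tail control in \(x\). For these I would use only \(\rho_0,\mu\in\mathcal P_1\), via dominated convergence on \(f_{\rho_0}\).
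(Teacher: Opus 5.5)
Your proof is correct. The construction of the configurations is the paper's: an i.i.d.\ sample from $\nu$, almost surely pairwise distinct because $\nu$ has no atoms, with $W_1$ convergence from the strong law plus convergence of first moments. The stray mention of ``quantile-type samples'' in your first sentence plays no role.

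The two continuity steps are done differently. For the energy, the paper does not cite the MMD--$W_1$ comparison. It expands the difference of the two quadratic forms against a coupling $\pi\in\Pi(\eta,\nu)$ and uses that $K$ is $1$-Lipschitz, which gives directly $|\mathcal E_1(\eta-\rho_0)-\mathcal E_1(\nu-\rho_0)|\le 2W_1(\eta,\nu)$. Your route via $\operatorname{MMD}_1\le CW_1^{1/2}$ and the triangle inequality is equally valid; for $q=1$ that bound is itself a one-line coupling computation.

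For the remainder, the paper avoids your bilinear splitting, finite-net argument and tail cutoffs. It observes that
\[
H(x,y)=\int_{\R^3}\zeta_0(x,z)\bigl(K(x+z-y)-K(x-y)\bigr)\kappa(z)\,dz
\]
is continuous and bounded by $\|\zeta_0\|_\infty\mathcal M_1(\kappa)$, because the increment of $K$ removes its growth at infinity. Hence $\mathcal R_{\rho_0}(\eta-\rho_0)$ expands into four integrals of a single $C_b(\R^3\times\R^3)$ function against product measures. Narrow convergence $\eta_N\otimes\eta_N\rightharpoonup\nu\otimes\nu$ then finishes the argument at once.

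Your argument essentially reproves that product-measure convergence by hand for this kernel, and it works. Read ``dominated convergence'' there as the explicit $\epsilon$-splitting in $x$ and $z$ that you describe, since the integrating measure $\vartheta_N$ changes with $N$. Like the paper's argument, yours needs only narrow convergence for the remainder term. The uniform integrability of first moments you invoke there is therefore unnecessary, because the potential increments are already bounded by $2|z|$.
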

\begin{proof}
We first show that approximation in \(W_1\) preserves the two
quantities in the statement.

Let \(\eta\in\mathcal P_1(\R^3)\) and
\(\pi\in\Pi(\eta,\nu)\). Since \(K\) is even, expanding the
difference of the quadratic energies and using the marginals
of \(\pi\) gives
\begin{equation}
\begin{aligned}
  &\mathcal E_1(\eta-\rho_0)-\mathcal E_1(\nu-\rho_0)
  =\frac12\int_{\R^3\times\R^3}\int_{\R^3}
     \bigl(K(x-z)-K(y-z)\bigr)
     \,d(\eta+\nu-2\rho_0)(z)\,d\pi(x,y).
\end{aligned}
\end{equation}
The \(1\)-Lipschitz property of \(K\) bounds the kernel difference
by \(|x-y|\), while
\(\|\eta+\nu-2\rho_0\|_{\mathrm{TV}}\le4\). Taking the infimum over \(\pi\) yields
\begin{equation}
\label{eq:particle-obstruction-energy-continuity}
  \bigl|\mathcal E_1(\eta-\rho_0)
        -\mathcal E_1(\nu-\rho_0)\bigr|
  \le2W_1(\eta,\nu).
\end{equation}
For the remainder, the increment of \(K\) removes its growth at
infinity. More precisely, define
\begin{equation}
  H(x,y):=\int_{\R^3}\zeta_0(x,z)
       \bigl(K(x+z-y)-K(x-y)\bigr)\kappa(z)\,dz.
\end{equation}
Recall that \(f_{\rho_0}\) is bounded and continuous, and hence
so is \(\zeta_0\). The integrand defining \(H\) is continuous in
\((x,y)\) and satisfies
\begin{equation}
  \left|
  \zeta_0(x,z)
  \bigl(K(x+z-y)-K(x-y)\bigr)\kappa(z)
  \right|
  \le \|\zeta_0\|_\infty |z|\kappa(z).
\end{equation}
The right-hand side is integrable. Dominated convergence therefore
gives \(H\in C_b(\R^3\times\R^3)\), with
\(\|H\|_\infty\le
\|\zeta_0\|_\infty\mathcal M_1(\kappa)\).
Substituting the convolution into
\eqref{eq:particle-jump-remainder-functional} and applying
Fubini's theorem now gives
\begin{align}
  \mathcal R_{\rho_0}(\eta-\rho_0)
  =&\iint_{\R^3\times\R^3}
       H(x,y)\,d(\eta\otimes\eta)(x,y)-\iint_{\R^3\times\R^3}
       H(x,y)\,d(\eta\otimes\rho_0)(x,y)\nonumber \\
     &-\iint_{\R^3\times\R^3}
       H(x,y)\,d(\rho_0\otimes\eta)(x,y)+\iint_{\R^3\times\R^3}
       H(x,y)\,d(\rho_0\otimes\rho_0)(x,y).
\end{align}
If \(\eta_n\) converges narrowly to \(\nu\), testing with \(H\) shows that
\(
  \mathcal R_{\rho_0}(\eta_n-\rho_0)
  \to\mathcal R_{\rho_0}(\nu-\rho_0).
\)

It remains to construct empirical measures with distinct atoms
that converge to \(\nu\) in \(W_1\). Let \((Y_i)_{i\ge1}\) be independent random variables with
common law \(\nu\), and set
\(
  \eta_N:=\frac1N\sum_{i=1}^N\delta_{Y_i}.
\)
Choose a countable family
\(\{\phi_j\}_{j\ge1}\subset C_c^\infty(\R^3)\) dense in
\(C_0(\R^3)\) in the uniform norm. The strong law of large
numbers gives, almost surely and simultaneously for every \(j\),
\begin{equation}
  \int_{\R^3}\phi_j\,d\eta_N\to\int_{\R^3}\phi_j\,d\nu,
  \qquad
  \mathcal M_1(\eta_N)\to\mathcal M_1(\nu).
\end{equation}
Uniform approximation extends the first convergence to every
function in \(C_0(\R^3)\). Since \(\nu\) has mass one, this
implies narrow convergence. Together with convergence of the
first moments, it yields \(W_1(\eta_N,\nu)\to0\) almost surely.
Moreover, independence and the absence of atoms in \(\nu\)
imply that \(Y_i\ne Y_j\) almost surely for each \(i<j\).
There are only countably many such pairs, so all sampled
points are pairwise distinct almost surely. Choose a realization \(y_1,y_2,\ldots\) for which these
properties hold, and set
\(
  X^{(N)}:=(y_1,\ldots,y_N)\in\mathsf D_N.
\)
Then \(\rho_{X^{(N)}}^N\) converges to \(\nu\) in \(W_1\).
Applying the two continuity statements proved above gives
\eqref{eq:particle-obstruction-approximation}.
\end{proof}

\begin{lemma}
\label{lem:particle-obstruction-drift}
Fix \(\rho_0,\mu\in\mathcal P_1(\R^3)\) and
\(X\in\mathsf D_N\). With \(\vartheta_0^N\), \(F_N(0)\), and
\(\mathcal J_0^N\) defined by
\eqref{eq:particle-mean-field-discrepancy},
\eqref{eq:particle-modulated-energy}, and
\eqref{eq:particle-jump-drift}, respectively, one has
\begin{equation}
\label{eq:particle-obstruction-drift-comparison}
  \mathcal J_0^N
  \ge\mathcal R_{\rho_0}(\vartheta_0^N)
     -4\mathcal M_1(\kappa)F_N(0)
     -\frac{2\mathcal M_{3/2}(\kappa)}{N}\sqrt{F_N(0)}.
\end{equation}
\end{lemma}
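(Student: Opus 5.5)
We compare the definitions directly. At $t=0$, with $g=g_0^N=K*\vartheta_0^N$, $\zeta^N=\zeta_0^N$ and $\zeta_0$ the continuum acceptance, the same splitting used in the proof of Lemma~\ref{lem:particle-jump-drift} (now with $\lambda\equiv1$) gives
\begin{equation*}
 \mathcal J_0^N=\iint \bigl(g(x+z)-g(x)\bigr)\bigl(\zeta^N(x,z)-\zeta_0(x,z)\bigr)\kappa(z)\,dz\,d\rho_X^N(x)+\mathcal R_{\rho_0}(\vartheta_0^N).
\end{equation*}
Indeed, $\zeta^N\,d\rho^N_X-\zeta_0\,d\rho_0=(\zeta^N-\zeta_0)\,d\rho^N_X+\zeta_0\,d\vartheta_0^N$, and the second piece paired with the increment of $g$ is exactly \eqref{eq:particle-jump-remainder-functional}. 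The proof therefore reduces to a lower bound on the first integral, which I call the mismatch term.

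For the mismatch term, set $a=f_{\rho^N_X}(x)-f_{\rho^N_X}(x+z)$, $b=f_{\rho_0}(x)-f_{\rho_0}(x+z)$ and $c=|z|/N$. Since $f_{\rho^N_X}-f_{\rho_0}=g$, we have $g(x+z)-g(x)=b-a$. The mismatch integrand is then $(b-a)\bigl([a-c]_+-[b]_+\bigr)$, and here I need a lower bound rather than the upper bound of \eqref{eq:particle-positive-part-firm}. Since $t\mapsto[t]_+$ is $1$-Lipschitz, $\bigl|[a-c]_+-[b]_+\bigr|\le|a-b|+c$, and hence
\begin{equation*}
 (b-a)\bigl([a-c]_+-[b]_+\bigr)\ge-|a-b|^2-c|a-b|.
\end{equation*}

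Now apply \eqref{eq:particle-discrepancy-potential-Holder} with $q=1$, which gives $|a-b|=|g(x+z)-g(x)|\le2\sqrt{F_N(0)}\,|z|^{1/2}$. Substituting, the integrand is at least
\begin{equation*}
 -4F_N(0)|z|-\frac{2}{N}\sqrt{F_N(0)}\,|z|^{3/2}.
\end{equation*}
Integrating against $\kappa(z)\,dz\,d\rho^N_X$, where $\rho^N_X$ is a probability measure, gives exactly $-4\mathcal M_1(\kappa)F_N(0)-2N^{-1}\mathcal M_{3/2}(\kappa)\sqrt{F_N(0)}$. Combined with the decomposition above, this proves \eqref{eq:particle-obstruction-drift-comparison}.

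There is no real obstacle. The only care needed is in the signs and in checking absolute convergence. Absolute convergence holds because $\zeta_0$ and $\zeta^N$ are bounded by $2|z|$, $g$ has increments bounded by $|z|\,\|\vartheta_0^N\|_{\mathrm{TV}}$, and $\kappa$ has all moments; these bounds justify splitting $\mathcal J_0^N$ into the two integrals.
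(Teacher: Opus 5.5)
Your proof is correct and follows essentially the paper's route: the splitting $\zeta^N\,d\rho_X^N-\zeta_0\,d\rho_0=(\zeta^N-\zeta_0)\,d\rho_X^N+\zeta_0\,d\vartheta_0^N$ isolates $\mathcal R_{\rho_0}(\vartheta_0^N)$, and the remaining mismatch is bounded using the $1$-Lipschitz property of the positive part together with \eqref{eq:particle-discrepancy-potential-Holder}. The only difference is cosmetic. The paper inserts an intermediate positive-part term ($[a+b]_+$ in its notation) and bounds the two resulting pieces separately, while you bound the combined difference in one step; both routes give the same constants.
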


\begin{proof}
Write \(F=F_N(0)\), \(g=g_0^N\), and, within this proof, set \(a(x,z):=f_{\rho_0}(x)-f_{\rho_0}(x+z)\) and  \(b(x,z):=g(x)-g(x+z)\).    
Add and subtract \([a+b]_+\) and then
\([a]_+\) in \eqref{eq:particle-jump-drift}. This isolates the
remainder and gives the exact identity
\begin{align}
  \mathcal J_0^N
  ={}&\mathcal R_{\rho_0}(\vartheta_0^N)
  -\iint_{\R^3\times \R^3} b\bigl([a+b]_+-[a]_+\bigr)
        \kappa(z)\,dz\,d\rho_X^N(x)\nonumber \\
  &-\iint_{\R^3\times \R^3} b\bigl([a+b-|z|/N]_+-[a+b]_+\bigr)
        \kappa(z)\,dz\,d\rho_X^N(x).\label{eq:particle-obstruction-drift-decomposition}
\end{align}
Monotonicity and the \(1\)-Lipschitz property of the positive part
give \(0\le b([a+b]_+-[a]_+)\le b^2\). By
\eqref{eq:particle-discrepancy-potential-Holder},
\(|b(x,z)|\le2\sqrt F\,|z|^{1/2}\). Integrating these bounds
shows that the second term in
\eqref{eq:particle-obstruction-drift-decomposition} lies between
\(-4\mathcal M_1(\kappa)F\) and zero. For the last term, the same
Lipschitz property bounds the difference of positive parts by
\(|z|/N\). Multiplying by the bound for \(|b|\) and integrating
bounds its absolute value by
\(2\mathcal M_{3/2}(\kappa)\sqrt F/N\). Substitution in
\eqref{eq:particle-obstruction-drift-decomposition} proves
\eqref{eq:particle-obstruction-drift-comparison}.
\end{proof}

\begin{proposition}[Failure of an energy-only jump-drift estimate]
\label{prop:particle-jump-obstruction}
For the parameters \eqref{eq:particle-obstruction-parameters}, there
exist strictly positive Schwartz probability densities \(\rho_0,\mu\)
such that
\begin{equation}
\label{eq:particle-obstruction-smooth}
  \sup_{\substack{\vartheta\in C_c^\infty(\R^3),\ \int\vartheta=0\\
                   \mathcal E_1(\vartheta)>0}}
  \frac{\mathcal R_{\rho_0}(\vartheta)}{\mathcal E_1(\vartheta)}
  =\infty.
\end{equation}
Moreover, there are integers \(N_k\to\infty\) and configurations
\(X^{(k)}\in\mathsf D_{N_k}\) such that
\begin{equation}
\label{eq:particle-obstruction-empirical}
  F_{N_k}(0)=\mathcal E_1(\rho_{X^{(k)}}^{N_k}-\rho_0)\to0,
  \qquad
  \frac{\mathcal J_0^{N_k}}{F_{N_k}(0)+N_k^{-1}}
  \to\infty.
\end{equation}
Here \(\mathcal J_0^{N_k}\) is evaluated at the configuration
\(X^{(k)}\), with initial continuum density \(\rho_0\) and target
\(\mu\).
\end{proposition}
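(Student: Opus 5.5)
The plan is to assemble the three preceding lemmas of this appendix.

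For \eqref{eq:particle-obstruction-smooth}, take $\rho_0,\mu$ and the densities $\nu_n$ from Lemma~\ref{lem:particle-obstruction-smooth}, and set $\vartheta=\nu_n-\rho_0$. This is admissible: it lies in $C_c^\infty(\R^3)$, has zero integral because both densities are probability densities, and has positive energy. The ratio in \eqref{eq:particle-obstruction-smooth-densities} diverges, so the supremum is infinite.

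For the empirical statement, fix $n$ and write $\varepsilon_n:=\mathcal E_1(\nu_n-\rho_0)$ and $R_n:=\mathcal R_{\rho_0}(\nu_n-\rho_0)$. Then $\varepsilon_n>0$, $\varepsilon_n\to0$, and $R_n/\varepsilon_n\to\infty$. Since $\nu_n$ is a probability density in $\mathcal P_1(\R^3)$, Lemma~\ref{lem:particle-obstruction-approximation} supplies configurations $X^{(N)}\in\mathsf D_N$ along which the empirical energy tends to $\varepsilon_n$ and the empirical remainder tends to $R_n$. Choose $N_n\ge n$ so large that both quantities are within $\varepsilon_n^2$ of their limits and, in addition, $N_n^{-1}\le\varepsilon_n^2$. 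Set $X^{(n)}:=X^{(N_n)}$ and $F:=F_{N_n}(0)$. Then $F\in[\varepsilon_n-\varepsilon_n^2,\varepsilon_n+\varepsilon_n^2]$, so $F\to0$ and $F\sim\varepsilon_n$. This gives the first part of \eqref{eq:particle-obstruction-empirical}.

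Lemma~\ref{lem:particle-obstruction-drift} then gives
\[
\mathcal J_0^{N_n}\ge R_n-\varepsilon_n^2-4\mathcal M_1(\kappa)F-2\mathcal M_{3/2}(\kappa)N_n^{-1}\sqrt F .
\]
Divide by $F+N_n^{-1}\le 2\varepsilon_n$ for large $n$. The term $R_n/(2\varepsilon_n)$ tends to infinity. Each error term, divided by $F+N_n^{-1}$, stays bounded:
\begin{itemize}
\item $\varepsilon_n^2/(F+N_n^{-1})\le\varepsilon_n^2/F\to0$;
\item $4\mathcal M_1(\kappa)F/(F+N_n^{-1})\le4\mathcal M_1(\kappa)$;
\item $N_n^{-1}\sqrt F/(F+N_n^{-1})\le\sqrt F\to0$.
\end{itemize}

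The only point needing care is that the error terms are measured against $F+N^{-1}$ and not against $R_n$. The ratio $R_n/\varepsilon_n$ diverges, but $R_n$ itself need not be large. This is handled by the quantitative choice of $N_n$, which forces $N_n^{-1}$ and the approximation errors to be $o(\varepsilon_n)$. If $R_n\le0$ were possible, the argument would fail, but the divergence of the ratio forces $R_n>0$ for large $n$. All Gaussian moments used here are finite, so every constant is fixed.
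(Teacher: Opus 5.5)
Your proposal is correct and follows the paper's proof: the paper also takes $\rho_0,\mu,\nu_k$ from Lemma~\ref{lem:particle-obstruction-smooth}. It then uses Lemma~\ref{lem:particle-obstruction-approximation} to choose $N_k\to\infty$ and $X^{(k)}$ with $F_{N_k}(0)$ and $N_k^{-1}$ of order $e_k$ and the empirical remainder at least $\tfrac12 r_ke_k$, and divides the bound of Lemma~\ref{lem:particle-obstruction-drift} by $F_{N_k}(0)+N_k^{-1}$. The only differences are cosmetic: you use additive $\varepsilon_n^2$ tolerances where the paper uses multiplicative ones, and you bound $N^{-1}\sqrt F/(F+N^{-1})\le\sqrt F$ where the paper uses the AM--GM estimate $2\sqrt{F/N}\le F+N^{-1}$.
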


\begin{proof}
Choose \(\rho_0,\mu,\nu_k\) as in
Lemma~\ref{lem:particle-obstruction-smooth}, and put
\(e_k:=\mathcal E_1(\nu_k-\rho_0)\) and
\(r_k:=\mathcal R_{\rho_0}(\nu_k-\rho_0)/e_k\).
Then \(e_k>0\), \(e_k\to0\), and \(r_k\to\infty\).
Since \(\nu_k-\rho_0\) is smooth, compactly supported, and of zero
mass, this already proves \eqref{eq:particle-obstruction-smooth}. Discard finitely many terms so that \(r_k>0\). For each \(k\),
Lemma~\ref{lem:particle-obstruction-approximation} permits us to
choose strictly increasing integers \(N_k\) and
\(X^{(k)}\in\mathsf D_{N_k}\) with
\begin{equation}
  F_{N_k}(0)\le2e_k,
  \qquad N_k^{-1}\le e_k,
  \qquad
  \mathcal R_{\rho_0}(\rho_{X^{(k)}}^{N_k}-\rho_0)
  \ge\tfrac12 r_ke_k.
\end{equation}
Thus \(F_{N_k}(0)\to0\), and the remainder divided by
\(F_{N_k}(0)+N_k^{-1}\) is at least \(r_k/6\).
To control the finite-particle correction, use
\(2\sqrt{F/N}\le F+N^{-1}\); after division, this gives
\begin{equation}
  \frac{2\mathcal M_{3/2}(\kappa)\sqrt F/N}{F+N^{-1}}
  \le\frac{\mathcal M_{3/2}(\kappa)}{\sqrt N}.
\end{equation}
Applying Lemma~\ref{lem:particle-obstruction-drift} and dividing by
\(F_{N_k}(0)+N_k^{-1}\), we conclude that
\begin{align}
      \frac{\mathcal J_0^{N_k}}{F_{N_k}(0)+N_k^{-1}}
  \ge\frac{r_k}{6}-4\mathcal M_1(\kappa)
       -\frac{\mathcal M_{3/2}(\kappa)}{\sqrt{N_k}}
  \to\infty.
\end{align}
\end{proof}

\printbibliography
\end{document}